\DeclareMathOperator{\rk}{rk}
\newcommand{\KL}{Kazhdan-Lusztig\ }
\newtheorem{thm}{Theorem}[section]
\newtheorem{lem}[thm]{Lemma}
\newtheorem{prop}[thm]{Proposition}
\newtheorem{conj}[thm]{Conjecture}
\numberwithin{equation}{section}
\definecolor{lattice}{RGB}{182,219,219}
\begin{document}
\begin{center}
{\large \bf  Kazhdan-Lusztig polynomials of fan matroids, wheel matroids and whirl matroids}
\end{center}

\begin{center}
Linyuan Lu$^1$, Matthew H.Y. Xie$^2$ and Arthur L.B. Yang$^{3}$\\[6pt]

$^{1}$Department of Mathematics\\
University of South Carolina, Columbia, SC 29208, USA\\[6pt]

$^{1,2,3}$Center for Combinatorics, LPMC\\
Nankai University, Tianjin 300071, P. R. China\\[6pt]

Email: $^{1}${\tt lu@math.sc.edu},
       $^{2}${\tt xiehongye@163.com},
       $^{3}${\tt yang@nankai.edu.cn}
\end{center}


\begin{abstract}
The \KL polynomial of a matroid was introduced by Elias, Proudfoot and Wakefield, whose properties need to be further explored. In this paper we prove that the \KL polynomials  of fan matroids coincide with Motzkin polynomials, which was recently conjectured by Gedeon. As a byproduct, we determine
the \KL polynomials  of graphic matroids of squares of paths. We further obtain explicit formulas of the \KL polynomials of wheel matroids and whirl matroids. We prove the real-rootedness of the \KL polynomials of these matroids, which provides positive evidence for a conjecture due to Gedeon, Proudfoot and Young. Based on the results on the \KL polynomials, we also determine the $Z$-polynomials of fan matroids, wheel matroids and whirl matroids, and prove their real-rootedness, which provides further evidence in support of a conjecture of Proudfoot, Xu, and Young.
\end{abstract}

\emph{AMS Classification 2010:} 05A15, 05B35, 26C10

\emph{Keywords:} Fan matroid, wheel matroid, whirl matroid, Kazhdan-Lusztig polynomial, $Z$-polynomial, real-rootedness

\section{Introduction}

In the study of the Hecke algebra of Coxeter groups, Kazhdan and Lusztig \cite{kazhdan1979representations} associated to each pair of group elements an integral polynomial,
now known as the Kazhdan-Lusztig polynomial. In analogy with the classic Kazhdan-Lusztig polynomials, Elias, Proudfoot and Wakefield  \cite{elias2016kazhdan} associated to every matroid
an integral polynomial, which can also be defined for each pair of comparable elements in the lattice of flats.
As noted by Gedeon, Proudfoot and   Young \cite{gedeon2016survey}, both the \KL polynomials of matroids and the classic \KL polynomials can be considered as special cases of the Kazhdan-Lusztig-Stanley functions, first introduced by Stanley \cite{stanley1992subdivisions} and further studied by Brenti \cite{brenti1999twisted, brenti2003p}. Computer experiments suggest that the \KL polynomials of matroids has many special properties such as real-rootedness, as conjectured by Gedeon, Proudfoot and Young \cite{gedeon2016survey}.
However, there still remains challenge for simple matroids such as braid matroids, even to determine the leading coefficients of their \KL polynomials. The main objective of this paper is to determine the \KL  polynomials of fan matroids, wheel matroids  and whirl matroids, and to prove their real-rootedness in support of Gedeon, Proudfoot and Young's conjecture.

Let us give an overview of some background. We begin with the definition of the Kazhdan-Lusztig polynomial of a matroid, as introduced by Elias, Proudfoot and Wakefield  \cite{elias2016kazhdan}. We follow their notation and terminology to a large extent. Given a matroid $M$, let $L(M)$ denote the lattice of flats of $M$ and
let $\chi_M(t)$ denote its characteristic polynomial.
For instance, the lattice of flats of the graphic matroid of a fan graph with four vertices is given in Figure \ref{fig-lattice}.
For any flat $F \in L(M)$, let $M^F$ be the  contraction of $M$ at $F$ and let $M_F$ be the  localization  of $M$ at $F$.
Figure \ref{fig-contraction} gives an illustration of $M^F$ and
$M_F$. Let $\rk M$ denote the rank of $M$. Elias, Proudfoot and Wakefield \cite{elias2016kazhdan} proved that there is a unique way to associate to each loopless matroid $M$ a polynomial $P_M(t) \in \mathbb{Z}[t]$ satisfying the following properties:
\begin{itemize}
\item If $\rk M=0$, then $P_M(t)=1$.

\item If $\rk M>0$, then $\deg P_M(t) < \frac 12 \rk M$.

\item For every $M$, $t^{\rk M}P_M(t^{-1}) = \displaystyle\sum_{F \in L(M)} \chi_{M_F}(t) P_{M^F}(t)$.
\end{itemize}
\begin{minipage}[b]{0.5\textwidth}
\centering
\begin{tikzpicture}[scale=0.5,line width=0.5pt]
\node[shape=circle,fill=lattice,minimum size=3mm,inner sep=0pt] (0) at (0,12) {};
\node [shape=circle,fill=lattice,minimum size=3mm,inner sep=0pt] (12) at (0,0) {};
\foreach \s in {1,...,6}  \node [shape=circle,fill=lattice,minimum size=3mm,inner sep=0pt]  (\s) at ({5-2 * (\s-1)},8) {};
\foreach \s in {7,8,10,11}  \node[shape=circle,fill=lattice,minimum size=3mm,inner sep=0pt]  (\s) at ({3-1.5* (\s-7)},4) {};
\node[shape=circle,fill=lattice,minimum size=3mm,inner sep=0pt]  (9) at (0,4) {};
\node[draw=none,minimum size=3mm,inner sep=0pt]   at (-0.7,3.8) {$F$};
\foreach \s in {7,8}   \draw[->] (12)--(\s);
\foreach \s in {10,...,11}   \draw[->] (12)--(\s);
\draw[->,line width=1pt] (12)--(9);
\foreach \s in {3,4,6} \draw[->] (11)--(\s);
\foreach \s in {5,6}   \draw[->] (7)--(\s);
\foreach \s in {1,2,6}  \draw[->] (10)--(\s);
\foreach \s in {2,4,5} \draw[->,line width=1pt,color=blue] (9)--(\s);
\foreach \s in {1,3,5} \draw[->] (8)--(\s);
\foreach \s in {1,3,6} \draw[->] (\s)--(0);
\foreach \s in {2,4,5} \draw[->,line width=1pt,color=blue] (\s)--(0);
\end{tikzpicture}
\captionsetup{font=footnotesize,type=figure}
\captionof{figure}{$L(M)$}
\label{fig-lattice}
\end{minipage}
\begin{minipage}[b]{0.3\textwidth}
\centering
\begin{tikzpicture}[scale=0.5,line width=0.5pt]
\node[shape=circle,fill=lattice,minimum size=3mm,inner sep=0pt]  (0) at (0,12) { };
\foreach \s in {2,4,5}  \node[shape=circle,fill=lattice,minimum size=3mm,inner sep=0pt]   (\s) at ({5-2 * (\s-1)},8) { };
\node[shape=circle,fill=lattice,minimum size=3mm,inner sep=0pt]   (f1) at (0,4) {};
\node[draw=none,minimum size=3mm,inner sep=0pt]   at (-0.8,3.5) {$F$};
\foreach \s in {2,4,5}   \draw[->,color=blue,line width=1pt] (f1)--(\s);
\foreach \s in {2,4,5}   \draw[->,color=blue,line width=1pt] (\s)--(0);
\node[draw=none,minimum size=3mm,inner sep=0pt]   at (4.2,3.5) {$F$};
\node[shape=circle,fill=lattice,minimum size=3mm,inner sep=0pt]  (f2) at (5,4) {};
\node[shape=circle,fill=lattice,minimum size=3mm,inner sep=0pt]   (12) at (5,-1) { };
\draw[->,line width=1pt] (12)--(f2);
\end{tikzpicture}
\captionsetup{font=footnotesize,type=figure}

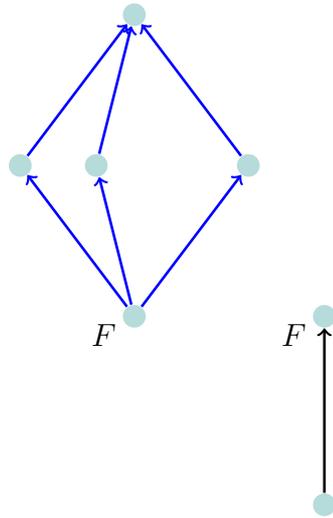
\captionof{figure}{$L(M^F)$ and $L(M_F)$}
\label{fig-contraction}
\end{minipage}


The polynomial $P_M(t)$ is called the \KL polynomial of $M$.
Note that if $M$ and $M'$ are two matriods satisfying $L(M)\cong L(M')$, then $P_M(t)=P_{M'}(t)$. Thus, we may always assume that all matroids appeared in the definition have no parallel elements.

Elias,  Proudfoot and Wakefield \cite{elias2016kazhdan} pointed out that the \KL polynomials for matroids behave very differently from the ordinary \KL polynomials for
Coxeter groups. However, they do possess some similar properties. For example, the coefficients of the classic \KL polynomials  are non-negative,
as proved by  Elias and  Williamson \cite{elias2012hodge}.
Elias,  Proudfoot and Wakefield
\cite{elias2016kazhdan} showed that the coefficients of the \KL polynomials of representable matroids are also non-negative, and they further conjectured that
this is true for any matroids. They also used the defining recursion of $P_M(t)$ to interpret the coefficients of some lower degree terms in terms of
the doubly indexed Whitney numbers of the first and second kinds, introduced by Green and Zaslavsky\cite{greene1983interpretation}.
While, except for the constant term (always equal to $1$) and the linear term, neither the quadratic term nor the cubic term can be proven to be positive by using such explicit
formulas. As noted by Elias, Proudfoot and Wakefield, it is difficult to find a closed formulas for every coefficient by the same method.
Later Wakefield \cite{wakefield2016partial} expressed every coefficient  as an alternating sum of $r$-Whitney  numbers, and Proudfoot, Xu and  Young \cite{proudfoot2017z} gave another combinatorial formula for every coefficient. But neither of these formulas is manifestly positive.

Though the positivity conjecture on the  \KL polynomials for matroids is still open, much work has been focused on determining the  \KL polynomials for specific families of matroids.
Elias,  Proudfoot and Wakefield \cite{elias2016kazhdan}  already showed the \KL polynomial of a finite Boolean matroid is equal to $1$. Furthermore, they proved that the \KL polynomial of a matroid is equal  to $1$  if and only if  its lattice of flats is modular. For uniform matroids, they also obtained a recursive relation among the coefficients of the \KL polynomials, which was transformed into an equivalent functional equation concerning the generating functions of the \KL polynomials. Since uniform matroids are representable matroids, the coefficients of such \KL polynomials should be nonnegative. But the recursive formula given by Elias, Proudfoot and Wakefield seems not helpful to show that the positivity conjecture is valid for general uniform matroids. Based on the recursive formula, Proudfoot, Wakefield and Young \cite{proudfoot2016intersection} gave an explicit formula of the \KL polynomial of the uniform matroid of rank $n-1$ on $n$ elements.
For general uniform matroids, explicit formulas of the \KL polynomials can be deduced from those of the corresponding equivariant \KL polynomials due to Gedeon, Proudfoot and  Young \cite{gedeon2017equivariant}, by which the nonnegativity of the coefficients becomes evident.

Even for graphic matroids, determining the explicit expressions of the \KL polynomials remains a challenge except for few families of graphs.
It is easy to show that the \KL polynomial for any forest is always equal to $1$ since its graphic matroid is isomorphic to a boolean matroid. The \KL polynomial for a cycle graph with $n$ vertices was also known since its graphic matroid is just the uniform matroid of rank $n-1$ on $n$ elements, see Gedeon, Proudfoot and Young \cite{gedeon2016survey}. Gedeon \cite{gedeon2016thagomizer} determined the \KL polynomials
for thagomizer matroids. Gedeon, Proudfoot and Young \cite{gedeon2016survey} further determined the \KL polynomials for complete bipartite graphs with one part having exactly two vertices. The \KL polynomials for braid matroids, which are graphic matroids associated with complete graphs, were first studied by Elias,  Proudfoot and  Wakefield \cite{elias2016kazhdan}. For these polynomials, they explicitly determined the coefficients of some terms of degree less than four in terms of Stirling numbers. However, there are no conjectured explicit formulas for general coefficients of the \KL polynomials for braid matroids, and even their leading coefficients are still mysterious, see \cite{elias2016kazhdan, gedeon2016survey}.

In this paper we obtain explicit expressions of the \KL polynomials for some matroids associated with square of paths, fan graphs and wheel graphs. Recall that a square of a path
with $n$ ($n\geq 1$) vertices, denoted by $S_n$, is the graph formed by joining every pair of vertices of distance two in the path. A fan graph $F_n$  ($n\geq 1$) is a graph with $n+1$ vertices, formed by connecting a single vertex to all vertices of a path of $n$ vertices. A wheel  graph $W_n$  ($n\geq 3$) is a graph with $n+1$ vertices, formed by connecting a single vertex to all vertices of an $n$-edge cycle graph.
Let $P_{S_n}(t),P_{F_n}(t)$ and $P_{W_n}(t)$ denote the \KL polynomials of the associated graphic matroids. Let $P_{W^n}(t)$ denote the \KL polynomial of the whirl matroid $W^n$, which is obtained from the graphic matroid  of $W_n$ by declaring the outer cycle of $W_n$  to be an independent set and leaving the remaining independent sets the same. Gedeon \cite{gedeon2017preparation} conjectured that the \KL polynomials $P_{F_n}$ coincide with Motzkin polynomials. The first main result of this paper is as follows, which in particular provides an affirmative answer to Gedeon's conjecture.

\begin{thm}\label{klcoef}
 The \KL polynomials $P_{F_n}(t), P_{S_n}(t), P_{W_n}(t)$ and $P_{W^n}(t)$ are respectively given by
 \begin{align}
P_{F_n}(t)&=\sum_{k=0}^{\lfloor \frac{n-1}{2}\rfloor} {\frac{1}{k+1}\binom{n-1}{k,k,n-2k-1} t^k},\quad \mbox{for } n\geq 1\label{eq-klpol-fan}\\[5pt]
P_{S_n}(t)&=\sum_{k=0}^{\lfloor \frac{n-1}{2}\rfloor} {\frac{1}{k+1}\binom{n-1}{k,k,n-2k-1} t^k},\quad \mbox{for } n\geq 1\label{eq-klpol-square}\\[5pt]
P_{W_n}(t)&=\sum_{k=0}^{\left\lfloor \frac{n-1}{2}\right\rfloor} {
\left(\frac{k+1}{n-k} +\frac{k}{n-k+1}-\frac{k}{n-k-1} \right)\binom{n}{k,k+1,n-2k-1}t^{k} },\quad \mbox{for } n\geq 3\label{eq-klpol-wheel}\\[5pt]
P_{W^n}(t)&=\sum _{k=0}^{\left\lfloor \frac{n-1}{2}\right\rfloor }{\frac{n}{n-k}\binom{n-1}{k, k, n-2k-1} t^k}, \quad \mbox{for } n\geq 3\label{eq-klpol-whirl}
\end{align}
  where $\lfloor x \rfloor$ stands for the smallest integer less than or equal to $x$.
\end{thm}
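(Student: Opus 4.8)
The whole argument runs through the defining recursion $t^{\rk M}P_M(t^{-1})=\sum_{F\in L(M)}\chi_{M_F}(t)P_{M^F}(t)$ together with its uniqueness, after translating everything into combinatorial language. For a connected graph $G$ on $m$ vertices, the flats of $M(G)$ are exactly the partitions $\pi$ of $V(G)$ each of whose blocks spans a connected subgraph; for such a flat one has $\rk M(G)=m-1$, $M^\pi=M(G/\pi)$ for the contracted graph $G/\pi$, $M_\pi=\bigoplus_{B\in\pi}M(G[B])$, and hence $\chi_{M_\pi}(t)=\prod_{B\in\pi}\chi_{M(G[B])}(t)$. I will repeatedly use that $P_{M\oplus N}(t)=P_M(t)P_N(t)$ and that the graphic matroid of a graph assembled by identifying several graphs at one common vertex is the direct sum of the corresponding matroids; in particular, when $G/\pi$ is a \emph{generalized fan} --- a hub joined to a disjoint union of paths $P_{\ell_1},\dots,P_{\ell_r}$ --- we get $P_{M^\pi}(t)=\prod_i P_{F_{\ell_i}}(t)$. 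The characteristic polynomials that occur come from chromatic polynomials: $\chi_{M(P_k)}(t)=(t-1)^{k-1}$, $\chi_{M(F_k)}(t)=(t-1)(t-2)^{k-1}$, $\chi_{M(C_k)}(t)=t^{-1}\bigl((t-1)^k+(-1)^k(t-1)\bigr)$, together with products of these over the blocks of a partition.

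\noindent\textbf{Squares of paths and fans.} Both $S_n$ and $F_n$ are ``triangular snakes'': $n-1$ triangles glued consecutively along single edges, an interior triangle sharing one edge with each of its two neighbours. Any two graphs with this structure are related by Whitney flips, hence $M(S_n)\cong M(F_n)$ and $P_{S_n}(t)=P_{F_n}(t)$, so \eqref{eq-klpol-square} follows from \eqref{eq-klpol-fan}. For the fan, the plan is to first describe $L(M(F_n))$: a flat is specified by the block $B_0$ containing the hub together with a partition into intervals of the remaining path-vertices; for such a flat, $F_n/\pi$ is a generalized fan, $\chi_{M_\pi}(t)$ is an explicit product of factors $(t-1)$ and $(t-2)$ read off from the runs of $B_0$ and the interval blocks, and $P_{M^\pi}(t)$ is a product of strictly smaller fan polynomials --- except for the bottom flat $\pi=\hat0$, which returns $P_{F_n}(t)$ itself. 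Substituting into the recursion, multiplying by $x^n$ and summing over $n\ge1$, the ``choose a hub-block and hang smaller fans into the gaps'' combinatorics should collapse the flat-sum into a functional equation for $y(x,t):=\sum_{n\ge1}P_{F_n}(t)\,x^n$; using the degree bound $\deg P_{F_n}(t)<n/2$ to peel off the high-degree ``reversed'' half of $t^nP_{F_n}(t^{-1})$, this equation should reduce to $y=x\,(1+y+ty^2)$. Lagrange inversion for $y=xf(y)$ with $f(u)=1+u+tu^2$ then gives $[x^n]y=\tfrac1n[u^{n-1}](1+u+tu^2)^n=\sum_k\tfrac1{k+1}\binom{n-1}{k,k,n-2k-1}t^k$, which is \eqref{eq-klpol-fan}. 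Equivalently, one may simply verify that the $(n-1)$st Motzkin polynomial on the right of \eqref{eq-klpol-fan} satisfies the recursion and the degree bound and invoke uniqueness, thereby confirming Gedeon's conjecture.

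\noindent\textbf{Wheels and whirls.} The wheel is handled the same way. A flat of $M(W_n)$ is a hub-block $B_0$ together with an arc-partition of the remaining cycle-vertices; when $B_0=\{h\}$ the contraction is a smaller wheel $W_j$ (with $W_1,W_2$ degenerate and $P=1$), otherwise it is a generalized fan. Hence the recursion expresses $t^nP_{W_n}(t^{-1})$ through $\{P_{W_j}:j<n\}$ and products of the already-determined fan polynomials; passing to generating functions --- with the fan series $y$ as input --- yields a (now linear) equation for $\sum_{n}P_{W_n}(t)x^n$, and coefficient extraction gives \eqref{eq-klpol-wheel}. The whirl must be treated separately since $W^n$ is not graphic: here I would describe $L(W^n)$ directly as the relaxation of $L(M(W_n))$ at its unique circuit-hyperplane, the outer cycle $H$ --- that is, delete the coatom $H$ and adjoin the $n$ rank-$(n-1)$ flats given by the $(n-1)$-subsets of the outer edges. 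Tracking the (only near-the-top) changes to $M_F$ and $M^F$, the recursion for $P_{W^n}$ turns out to be the wheel recursion with the hub-only flats now contributing smaller \emph{whirls} $W^j$ and with the single top term $\chi_{M(C_n)}(t)$ replaced by $n\,(t-1)^{n-1}$; the same generating-function manipulation, once more fed by the fan polynomials, produces \eqref{eq-klpol-whirl}.

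\noindent\textbf{Main obstacle.} The crux is twofold. First, the combinatorial bookkeeping in passing from the flat-by-flat recursion to a clean functional equation: one must enumerate the flats correctly, recognise the generalized-fan contractions, keep exact track of the powers of $(t-1)$ and $(t-2)$ coming from the characteristic polynomials, and carefully strip off the high-degree ``reversed'' part using $\deg P_M(t)<\tfrac12\rk M$. Second, the coefficient extraction for the wheel and whirl: the generating functions stay quadratic, but the presence of the smaller-wheel (resp.\ smaller-whirl) terms and of the altered top of $L(W^n)$ makes the Lagrange-type inversion and the simplification into the closed forms of \eqref{eq-klpol-wheel} and \eqref{eq-klpol-whirl} --- with their opaque coefficients $\tfrac{k+1}{n-k}+\tfrac{k}{n-k+1}-\tfrac{k}{n-k-1}$ and $\tfrac{n}{n-k}$ --- considerably more delicate than in the fan case.
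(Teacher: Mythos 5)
Your architecture is in fact the same as the paper's: the graph-composition description of flats, the observation that contractions of $F_n$ (and of $W_n$, $W^n$ at flats whose hub-block is nontrivial) are fans glued at the hub so that $P_{M^F}$ is a product of smaller fan polynomials, Whitney's 2-isomorphism theorem to get $M(S_n)\cong M(F_n)$, and your relaxation description of $L(W^n)$ (remove the circuit-hyperplane $O_n$, adjoin its $n$ subsets of size $n-1$) is exactly the paper's lemma on whirl flats. Your Lagrange-inversion endgame is also arithmetically sound: if the series $y=\sum_{n\ge1}P_{F_n}(t)u^n$ were known to satisfy $y=u(1+y+ty^2)$, then $[u^n]y=\frac1n[v^{n-1}](1+v+tv^2)^n$ gives \eqref{eq-klpol-fan}, and this equation is indeed equivalent to the closed form of the generating function obtained in the paper.

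The genuine gap is that every step which actually proves the theorem is left as a ``should''. The defining recursion does not hand you $y=u(1+y+ty^2)$: summing it over $n$ produces an equation relating $\Phi_F(t^{-1},tu)$ to an expression built from $\Phi_F\bigl(t,\tfrac{u}{1-(t-1)u}\bigr)$ and $\tfrac{(t-1)u}{1-(t-2)u}$, and there is no visible way to ``peel off the high-degree reversed half by the degree bound'' and collapse this to the Motzkin equation; the paper resolves it only by guessing the closed form of $\Phi_F$, verifying (with computer algebra) that it satisfies the functional equation, invoking uniqueness, and then matching coefficients via a Zeilberger recurrence converted to an ODE. For the wheel and whirl the missing content is larger still: the flats with nontrivial hub-block wrap around the rim, so before any functional equation exists one needs a cyclic variant of the composition formula for ordinary generating functions (the paper proves this as a separate proposition), and the resulting equations for $\Phi_W$ and $\Phi^W$ are again solved only by verify-the-guess followed by holonomic coefficient extraction; your sketch asserts the outcome but supplies no mechanism for it, and it also glosses over the fact that for the whirl the top term changes as well, since the localization at the full ground set is $W^n$ itself with $\chi_{W^n}(t)=(t-2)^n-(-1)^n$ rather than the wheel value. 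So the proposal sets up the correct framework, but none of \eqref{eq-klpol-fan}, \eqref{eq-klpol-wheel}, \eqref{eq-klpol-whirl} is actually established: the ``main obstacle'' you name is precisely where the proof has to happen, and no method beyond hoping the algebra collapses is offered for overcoming it.
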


The second part of this paper is devoted to the study of the real-rootedness of the \KL polynomials $P_{F_n}(t), P_{S_n}(t), P_{W_n}(t)$ and $P_{W^n}(t)$. This was motivated by the following conjecture due to Gedeon, Proudfoot and  Young \cite{gedeon2016survey}.

\begin{conj}[{\cite[Conjecture 3.2]{gedeon2016survey}}]\label{klconjroot}
The \KL polynomial $P_M(t)$  has only negative zeros for any matroid $M$.
\end{conj}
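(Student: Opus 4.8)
The assertion is a conjecture about \emph{all} loopless matroids, and no elementary argument is expected to settle it; what follows is the line of attack I would pursue, together with the point at which it stalls. The only structural handle available in full generality is the defining recursion. Writing $\hat0$ and $\hat1$ for the bottom and top of $L(M)$, and separating off the terms $F=\hat0$ (which contributes $P_M(t)$, since $M_{\hat0}$ is the empty matroid and $M^{\hat0}=M$) and $F=\hat1$ (which contributes $\chi_M(t)$, since $M_{\hat1}=M$ and $\rk M^{\hat1}=0$), the recursion reads
\[
t^{\rk M}P_M(t^{-1})-P_M(t)=\chi_M(t)+\sum_{\hat0<F<\hat1}\chi_{M_F}(t)\,P_{M^F}(t).
\]
Each contraction $M^F$ with $F>\hat0$ has rank strictly smaller than $\rk M$, so the natural plan is induction on $\rk M$: assuming every $P_{M^F}(t)$ has only negative zeros, one solves the displayed identity for $P_M(t)$ using the degree constraint $\deg P_M<\tfrac12\rk M$ and tries to read off real-rootedness of the result.

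The plan founders on two independent difficulties. First, the summands on the right are products $\chi_{M_F}(t)P_{M^F}(t)$, and characteristic polynomials of matroids are \emph{not} real-rooted in general---already chromatic polynomials of graphs (the graphic case) routinely have nonreal zeros---so even an individual summand need not have only negative zeros. Second, a sum of real-rooted polynomials is real-rooted only under an interlacing (compatibility) hypothesis, and there is at present no mechanism producing such compatibility among the terms indexed by the flats of $M$. Compounding both is the reciprocal $t^{\rk M}P_M(t^{-1})$ on the left: one must pass from the reversed polynomial back to $P_M$ while preserving the location of zeros, which is delicate precisely because the degree of $P_M$ is not determined in advance but forced by the bound $\deg P_M<\tfrac12\rk M$.

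To have any hope of controlling the flat-indexed sum I would invoke the theory of interlacing families of polynomials (in the spirit of Chudnovsky--Seymour and Marcus--Spielman--Srivastava): if the family $\{\chi_{M_F}(t)P_{M^F}(t)\}_F$ admitted a common interleaver, then their sum---hence $t^{\rk M}P_M(t^{-1})$---would be real-rooted. Producing such an interleaver would require a uniform combinatorial statement, governed by the order structure of $L(M)$, relating the zeros of $P_{M^F}$ across comparable flats; the vehicle would be a strengthened inductive hypothesis asserting not merely real-rootedness of each $P_M$ but an interlacing between $P_M$ and a companion polynomial. A promising companion is the $Z$-polynomial $Z_M(t)=\sum_{F}t^{\rk F}P_{M^F}(t)$ of Proudfoot, Xu and Young, whose conjectured real-rootedness is closely intertwined with that of $P_M$; I would attempt to prove the two interlacing statements \emph{simultaneously} by induction, so that the extra rigidity of the pair supplies the compatibility that neither polynomial provides alone.

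A parallel, geometric route is available for representable $M$, where $P_M(t)$ is the intersection-cohomology Poincar\'e polynomial of an associated variety. A hard-Lefschetz/Hodge--Riemann structure on this cohomology yields log-concavity and unimodality of the coefficients, and the combinatorial Hodge theory of Braden, Huh, Matherne, Proudfoot and Wang would export these to arbitrary matroids; but such Lefschetz input gives only log-concavity, which is strictly weaker than real-rootedness, so one would additionally have to uncover a hidden symmetry---an $\mathfrak{sl}_2$-type action forcing a Hermite- or Laguerre-like factorization of the characteristic function. I expect the decisive obstacle to be the interlacing problem of the previous paragraph: absent a lattice-theoretic mechanism that forces the polynomials $\chi_{M_F}(t)P_{M^F}(t)$ to interlace uniformly across every matroid, the recursion cannot by itself deliver real-rootedness, which is exactly why the conjecture remains open beyond the explicit families established in this paper.
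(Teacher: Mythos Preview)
The statement you were asked to address is a \emph{conjecture}, not a theorem, and the paper does not prove it in general; it only verifies the conjecture for the specific families of fan, wheel, and whirl matroids (Theorem~\ref{klroot}). Your write-up correctly recognizes this and offers a thoughtful discussion of why a direct inductive attack via the defining recursion stalls---the non-real-rootedness of characteristic polynomials and the absence of any interlacing mechanism among the flat-indexed summands are exactly the right obstructions to name.

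Since there is no general proof in the paper to compare against, the relevant contrast is methodological. The paper's positive results for the special families do \emph{not} proceed via the recursion-and-interlacing strategy you sketch. Instead, the authors first obtain closed-form expressions for the coefficients of $P_{F_n}$, $P_{W_n}$, and $P_{W^n}$ (Theorem~\ref{klcoef}), and then deduce real-rootedness from those explicit formulas: for fan graphs via a change of variables linking $P_{F_n}$ to the Narayana polynomials; for wheel and whirl matroids by factoring the coefficient sequence as a Hadamard product and applying the theory of multiplier sequences and $n$-sequences (Lemmas~\ref{ms-shift}--\ref{fib} and Theorem~\ref{n-char}). None of this machinery generalizes to arbitrary matroids, which is consistent with your conclusion that the conjecture remains open beyond such explicit families.
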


A weaker conjecture than Conjecture \ref{klconjroot} was proposed by Elias,  Proudfoot, Wakefield \cite[Conjecture 2.5]{elias2016kazhdan}, which states that for any matroid $M$ the
\KL polynomial $P_M(t)$ is a log-concave polynomial with no internal zeros. Recall that a polynomial
$$f(t)=a_0+a_1t+\cdots +a_nt^n$$
with real coefficients is said to be log-concave if ${a_i}^2 \geq  a_{i-1} a_{i+1}$ for any $0<i<n$, and it is said to have no internal zeros if there are not three indices $0 \leq i < j < k \leq n$ such that $a_i,a_k \not  = 0$ and $a_j = 0$. By the well known Newton inequality, if $f(t)$ has only negative zeros,
then it must be a log-concave polynomial without internal zeros.

Gedeon, Proudfoot and  Young \cite{gedeon2016survey} also studied the interlacing property concerning the \KL polynomials for non-degenerate matroids. A matroid $M$ is called  non-degenerate  if $\rk M=0$  or its \KL polynomial $P_{M}(t)$ is of degree $\lfloor  \frac{\rk M-1}{2}\rfloor$. Given two real-rooted polynomials $f(t)$ and $g(t)$ with positive leading coefficients, let $\{u_i\}$ be the set of zeros of $f(t)$ and let $\{v_j\}$ be the set of zeros of $g(t)$. We say that {$g(t)$ is an interleaver of $f(t)$}, denoted $g(t) \preceq  f(t)$, if either of the following two conditions is satisfied:
\begin{itemize}
\item[(1)] $\deg f(t)=\deg g(t)=n$ and
\begin{align}\label{alt-def}
v_n\le u_n\le v_{n-1}\le\cdots\le v_2\le u_2\le v_1\le u_1;
\end{align}

\item[(2)]
$\deg f(t)=\deg g(t)+1=n$ and
\begin{align}
u_{n}\le v_{n-1}\le\cdots\le v_{2}\le u_{2}\le v_{1}\le u_{1}.\label{int-def}
\end{align}
\end{itemize}
For the second case, we usually say that $g(t)$ interlaces $f(t)$, which is different from the definition given in \cite{gedeon2016survey}. Gedeon, Proudfoot and  Young
 \cite{gedeon2016survey} proposed the following conjecture.

\begin{conj}[{\cite[Conjecture 3.4]{gedeon2016survey}}] \label{conjinterlaced}
Given a matroid $M$ and an element $e$ of the ground set of $M$,
let $M/e$ be the contraction of $M$ at $e$. If both $M$ and $M/e$  are non-degenerate, then $P_{M/e}(t)\preceq P_M(t)$.
\end{conj}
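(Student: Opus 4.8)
The plan is to prove Conjecture~\ref{conjinterlaced} not in isolation but bundled together with the real-rootedness Conjecture~\ref{klconjroot}, by a single strong induction on $\rk M$. The two statements feed each other, and indeed the relation $\preceq$ is only defined between real-rooted polynomials, so Conjecture~\ref{conjinterlaced} already presupposes Conjecture~\ref{klconjroot}: the interleaving $P_{M/e}\preceq P_M$ at rank $r$ consumes real-rootedness of $P_M$ and $P_{M/e}$ (ranks $r$ and $r-1$), while real-rootedness of $P_M$ is most plausibly obtained by exhibiting it as a nonnegative combination of lower-rank polynomials sharing a common interleaver, which is exactly the data the interlacing statement supplies. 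The structural hook that makes the induction possible is that contraction at a single element is, at the level of \KL polynomials, contraction at an atom: writing $\bar e=\operatorname{cl}(\{e\})$ for the rank-one flat spanned by $e$, the matroids $M/e$ and $M^{\bar e}$ have isomorphic lattices of flats, so $P_{M/e}=P_{M^{\bar e}}$ appears \emph{literally} as the summand attached to $\bar e$ in the defining recursion. Since we may assume $M$ simple, $M_{\bar e}$ has rank one with $\chi_{M_{\bar e}}(t)=t-1$, so this summand is exactly $(t-1)P_{M/e}(t)$ inside $\sum_{F}\chi_{M_F}(t)P_{M^F}(t)=t^{\rk M}P_M(t^{-1})$.

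The engine of the induction would be the classical theory of compatible polynomials: a nonnegative linear combination of real-rooted polynomials possessing a common interleaver is again real-rooted, and the relation $\preceq$ is preserved under such combinations and, with care about the location of the new root, under multiplication by a linear factor. The first step is the parity bookkeeping. When $\rk M=2m$ both $P_M$ and $P_{M/e}$ have degree $m-1$, placing us in case~(1) of the definition of $\preceq$; when $\rk M=2m+1$ we have $\deg P_M=m=\deg P_{M/e}+1$, placing us in case~(2). These are precisely the two alternatives forced by the non-degeneracy of $M$ and of $M/e$, which is where that hypothesis is used. The second step is to strengthen the inductive hypothesis to a \emph{simultaneous} web of interleavings among the family $\{P_{M^F}:F\in L(M)\}$, compatible with the flat order, so that every summand of the recursion is governed by one common interleaver; one would then assemble the right-hand side flat-by-flat and transfer both real-rootedness and the desired interleaving to $P_M$.

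The hard part, and the reason the conjecture remains open, is twofold. First, the characteristic polynomials $\chi_{M_F}(t)$ weighting the summands are \emph{not} real-rooted in general---they are only known to be log-concave, by Adiprasito--Huh--Katz---so the product $\chi_{M_F}(t)P_{M^F}(t)$ need not be real-rooted even when $P_{M^F}$ is, and the compatible-family argument collapses term by term. Overcoming this would require reorganizing the sum, for instance proving that each rank-graded block $\sum_{\rk F=i}\chi_{M_F}(t)P_{M^F}(t)$ is real-rooted and correctly interleaved, or replacing the recursion by a deletion--contraction identity expressed purely in \KL polynomials $P_M,P_{M\setminus e},P_{M/e}$ that avoids $\chi$ altogether. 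Second, $P_M$ is recovered from the recursion only after \emph{truncating} the explicit sum to degrees below $\tfrac12\rk M$, and truncation destroys real-rootedness in general; controlling its effect on the roots is the genuine analytic obstruction on which any honest attempt must concentrate.

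Finally, I would treat the representable case first as a proving ground, but with a caveat that sharpens rather than resolves the difficulty: there $P_M$ is the Poincaré polynomial of an intersection-cohomology module, yet the Hodge--Riemann relations yield only log-concavity (the weaker Elias--Proudfoot--Wakefield statement), not the real-rootedness and interleaving demanded here, so Hodge theory alone is insufficient even for representable $M$. To locate the missing combinatorial mechanism I would calibrate the strengthened hypothesis on the fan, wheel and whirl families of Theorem~\ref{klcoef}, whose explicit formulas satisfy three-term recurrences from which both real-rootedness and consecutive interleaving can be read off directly, and then attempt to abstract the common interleaver and the rank-graded regrouping exhibited there into a mechanism valid for an arbitrary non-degenerate matroid.
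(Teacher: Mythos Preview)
This statement is a \emph{conjecture}, and the paper does not prove it. The paper explicitly presents it as open, remarking that ``there were very few results about Conjectures~\ref{klconjroot} and~\ref{conjinterlaced}.'' What the paper does establish is the single special case of fan matroids: contracting any edge of $F_{n+1}$ yields a matroid whose simplification is $M(F_n)$, and Theorem~\ref{kl-fan-roots} shows $P_{F_n}\preceq P_{F_{n+1}}$ by the explicit relation \eqref{eq-nara-kl} to Narayana polynomials, together with the known fact that $N_n\preceq N_{n+1}$ and the monotonicity of $t\mapsto t/(1+t)^2$ on $(-1,0)$. No inductive mechanism of the kind you outline is used or needed for that case.

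Your proposal is not a proof but a research plan, and you say so yourself. The two obstructions you isolate --- that the weights $\chi_{M_F}(t)$ in the defining recursion are not real-rooted, and that passing from $t^{\rk M}P_M(t^{-1})$ to $P_M(t)$ involves a truncation that does not preserve real-rootedness --- are exactly why the general conjecture is open, and nothing in the paper suggests a way around them. Your structural observation that $P_{M/e}$ appears as the summand for the atom $\bar e$ is correct, but it does not by itself yield the interleaving: the remaining summands, each weighted by a non-real-rooted $\chi_{M_F}$, are not a compatible family, so the ``common interleaver'' machinery cannot be applied as stated. In short, there is no proof in the paper to compare against, and your outline correctly identifies why no general proof currently exists.
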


Despite strong interest, there were very few results
about Conjectures \ref{klconjroot} and \ref{conjinterlaced}.
For any $n\geq 1$, it was known that Conjectures \ref{klconjroot} and \ref{conjinterlaced} are both valid for the uniform matroid of rank $n-1$ on $n$ elements, see \cite{gedeon2016survey,zhang2016multiplier}.
Based on the theory of multiplier sequences and $n$-sequences (see \cite{craven1977multiplier,craven1983location1}),
we prove that Conjecture \ref{klconjroot} holds for fan matroids, wheel matroids and whirl matroids, and Conjecture \ref{conjinterlaced} holds for fan matroids.

\begin{thm}\label{klroot}
For $n\geq 3$, each of the  \KL  polynomials $P_{F_n}(t), P_{S_n}(t), P_{W_n}(t)$ and $P_{W^n}(t)$ has only negative zeros. In particular, we have
$P_{F_n}(t) \preceq P_{F_{n+1}}(t)$.
\end{thm}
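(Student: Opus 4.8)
The plan is to combine the explicit product formulas of Theorem~\ref{klcoef} with the theory of multiplier sequences and $n$-sequences. The basic device is to display each of $P_{F_n}=P_{S_n}$ (see \eqref{eq-klpol-fan}--\eqref{eq-klpol-square}), $P_{W^n}$ and $P_{W_n}$ as the image, under a diagonal operator $t^k\mapsto\gamma_k t^k$, of an \emph{a priori} real-rooted polynomial. When $(\gamma_k)$ is a multiplier sequence of the first kind, real-rootedness is preserved automatically (P\'olya--Schur); when it is only a finite $n$-sequence, one instead verifies the Craven--Csordas criterion that the associated Jensen polynomial $\sum_k\binom nk\gamma_k t^k$ has only real zeros of one sign. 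A preliminary rewriting is useful: from $\binom{n-1}{k,k,n-2k-1}=\binom{n-1}{2k}\binom{2k}{k}$ we get $P_{F_n}(t)=\sum_k\frac{1}{k+1}\binom{n-1}{2k}\binom{2k}{k}t^k$, a short computation gives $P_{W^n}(t)=\sum_k\binom{n}{k}\binom{n-1-k}{k}t^k$, and $P_{W_n}$ admits an analogous three-term rewriting.

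\emph{Fan and square of a path.} Set $E_n(t):=\sum_k\binom{n-1}{2k}(2k-1)!!\,(2t)^k=\sum_k\frac{(n-1)!}{k!\,(n-1-2k)!}\,t^k$. This is a rescaling of the Hermite polynomial $He_{n-1}$, equivalently the matching generating polynomial of $K_{n-1}$, hence real-rooted with only negative zeros (classically, or by Heilmann--Lieb). Dividing the coefficient of $t^k$ in $E_n$ by $(k+1)!$ returns precisely the coefficient of $t^k$ in $P_{F_n}$, and $\bigl(\frac{1}{(k+1)!}\bigr)_{k\ge0}$ \emph{is} a multiplier sequence of the first kind: its generating function $\sum_k\frac{t^k}{k!\,(k+1)!}=\frac{I_1(2\sqrt{t})}{\sqrt{t}}$ is entire of order $\frac12$ with all zeros real and negative, so it lies in the Laguerre--P\'olya class. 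Hence $P_{F_n}=P_{S_n}=T_{1/(k+1)!}[E_n]$ is real-rooted. This passage through $E_n$ is essential rather than cosmetic: the naive multiplier $\frac{1}{k+1}$ relating $P_{F_n}$ to the Legendre-type polynomial $\sum_k\binom{n-1}{2k}\binom{2k}{k}t^k$ has generating function $\frac{e^t-1}{t}$, whose zeros are not real, so $\bigl(\frac{1}{k+1}\bigr)$ is not a multiplier sequence.

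\emph{Interlacing.} For $P_{F_n}\preceq P_{F_{n+1}}$ I would use that consecutive Hermite polynomials interlace, hence so do the $E_n$, and that a multiplier sequence with strictly positive entries preserves both the degree and the relation $\preceq$ (it preserves real-rootedness of all real linear combinations, which characterizes interlacing); applying $T_{1/(k+1)!}$ then gives $P_{F_n}=T[E_n]\preceq T[E_{n+1}]=P_{F_{n+1}}$ after matching the degree parities with \eqref{alt-def}--\eqref{int-def}. Alternatively one verifies the Motzkin recurrence $(n+1)P_{F_n}(t)=(2n-1)P_{F_{n-1}}(t)-(n-2)(1-4t)P_{F_{n-2}}(t)$ (obtainable from \eqref{eq-klpol-fan} by creative telescoping) and runs the classical induction deriving interlacing from a three-term recurrence: on $(-\infty,0]$, which contains all the zeros, the factor $1-4t$ is positive and $2n-1>0$ is a constant, exactly the sign pattern the induction requires, with trivial base cases $P_{F_1}=P_{F_2}=1$; this route simultaneously reproves real-rootedness.

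\emph{Wheel and whirl; the main obstacle.} The same template governs $P_{W^n}$ and $P_{W_n}$, but the bookkeeping is heavier. For $P_{W^n}$ one again seeks a real-rooted base polynomial $Q_n$ --- a transform of a classical orthogonal polynomial, or one defined by a three-term recurrence so that its real-rootedness and its interlacing with $Q_{n\pm1}$ are built in --- from which $P_{W^n}$ is recovered by a diagonal operator built from the multiplier sequence $\bigl(\frac{1}{k!}\bigr)$ (generating function $I_0(2\sqrt{t})\in\mathcal{LP}^+$) together with the residual finite factor $\frac{n}{n-k}$; the latter is \emph{not} a classical multiplier sequence, so here one must appeal to the $n$-sequence theory and check the Craven--Csordas single-sign condition for the resulting Jensen polynomial. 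The polynomial $P_{W_n}$ is the most delicate, its coefficient being a three-term combination; since the obvious candidate multiplier relative to $P_{W^n}$ fails the Craven--Csordas test, one should instead write $P_{W_n}$ as $P_{W^n}$ minus a correction polynomial, prove that the correction is real-rooted and interlaces $P_{W^n}$ appropriately, and conclude from the fact that such a difference of interlacing real-rooted polynomials is again real-rooted. The main obstacle is precisely this: isolating the correct base polynomials together with their multiplier/$n$-sequences for the wheel and whirl, and then establishing the Craven--Csordas single-sign property for the auxiliary polynomials (or, in a recurrence-based variant, pinning down recurrences with the right sign behaviour on the locus where the zeros live). The fan case is essentially routine once $E_n$ is recognized; the three-term coefficient of $P_{W_n}$ is where the real work lies.
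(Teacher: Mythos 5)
Your treatment of the fan/square case is correct and takes a genuinely different route from the paper: you realize $P_{F_n}$ as the image of the matching-generating (Hermite-type) polynomial $E_n(t)=\sum_k\frac{(n-1)!}{k!\,(n-1-2k)!}t^k$ under the multiplier sequence $\{\frac{1}{(k+1)!}\}$, whereas the paper instead uses the identity $N_n(t)=(1+t)^{n-1}P_{F_n}\bigl(\frac{t}{(1+t)^2}\bigr)$ with the Narayana polynomial $N_n$ and reads off both the negativity of the zeros and $P_{F_n}\preceq P_{F_{n+1}}$ from the known zeros of $N_n$ via the monotone substitution $t\mapsto t/(1+t)^2$ on $(-1,0)$. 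Your recurrence $(n+1)P_{F_n}=(2n-1)P_{F_{n-1}}-(n-2)(1-4t)P_{F_{n-2}}$ is the same one the paper derives by creative telescoping, and either of your two interlacing routes can be made to work, though the Narayana substitution settles the direction of $\preceq$ with less bookkeeping.

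The genuine gap is that the wheel and whirl cases are not proved: you explicitly defer them as ``the main obstacle'' and ``where the real work lies,'' but they are half of the theorem. For the whirl, you in fact already wrote down the factorization that finishes the proof, $P_{W^n}(t)=\sum_k\binom nk\binom{n-1-k}{k}t^k$, and then abandoned it for an unnecessary detour through the non-multiplier factor $\frac{n}{n-k}$ and $n$-sequences; the missing observations are that $\sum_k\binom{n-1-k}{k}t^k=t^{(n-1)/2}F_n(t^{-1/2})$ is a transformed Fibonacci polynomial, hence real-rooted, and that $\{\frac{n!}{k!(n-k)!}\}$ is the composition of the multiplier sequences $\{\frac1{k!}\}$ and $\{\frac1{(n-k)!}\}$, so no $n$-sequence argument is needed there at all. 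For the wheel, your proposed strategy (subtract a correction from $P_{W^n}$ and argue via interlacing of the difference) is not carried out: the correction polynomial is never identified, and its real-rootedness and interlacing with $P_{W^n}$ --- which is exactly the hard content --- are simply asserted as things ``one should prove.'' The paper's actual device is the Hadamard factorization $[t^k]P_{W_n}(t)=a_kb_kc_k$ with $a_k=(k+1)n^2-(2k^2+4k)n+k^3+3k^2-k-1$ shown to be a positive $\lfloor\frac{n-1}{2}\rfloor$-sequence by expanding it in falling factorials and checking that $\sum_ka_k\binom mkt^k=(1+t)^{m-3}h(t)$ for an explicit cubic $h$ with three negative roots (intermediate value theorem at $-\infty,-2,-1,0$), $b_k=\frac{n!}{(n-1)(k+1)!(n+1-k)!}$ a multiplier sequence, and $\sum_kc_kt^k$ a transformed Lucas polynomial; nothing equivalent to this appears in your proposal, so as written the theorem is established only for $P_{F_n}$ and $P_{S_n}$.
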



The third part of this paper is concerned with the $Z$-polynomials for fan matroids, wheel matroids and whirl matroids. The notion of the $Z$-polynomial of a matroid
was introduced by Proudfoot, Xu and  Young \cite{proudfoot2017z}.
Given a matroid $M$, its $Z$-polynomial is defined by
$$Z_M(t):= \sum_{F\in L(M)}{t^ {\rk M_F} P_{M^F}(t)},$$
Analogous to the real-rootedness conjecture for  the \KL polynomials of matroids, Proudfoot, Xu and  Young \cite{proudfoot2017z} posed he following conjecture.

\begin{conj}[{\cite[Conjecture 5.1]{proudfoot2017z}}]\label{zconjroot}
The $Z$-polynomial $Z_M(t)$ has only negative zeros for any matroid $M$.
\end{conj}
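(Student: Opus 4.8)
Because the statement concerns an arbitrary matroid rather than one of the explicit families of Theorem~\ref{klcoef}, no closed formula is available and the multiplier-sequence machinery used for the fan, wheel and whirl cases cannot be applied directly. My plan is structural: reduce to connected matroids, set up an induction on the ground set, and control the defining sum over flats by the method of interlacing families, using the palindromicity of $Z_M(t)$ to rigidify the interlacing data. First I would reduce to the connected case: since $L(M_1\oplus M_2)\cong L(M_1)\times L(M_2)$ and both the flat ranks $\rk M_F$ and the \KL polynomials $P_{M^F}$ are multiplicative across such a product, one gets $Z_{M_1\oplus M_2}(t)=Z_{M_1}(t)\,Z_{M_2}(t)$, and a product of polynomials with only negative zeros again has only negative zeros. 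Next I would isolate the dependence on Conjecture~\ref{klconjroot}: in $Z_M(t)=\sum_{F\in L(M)}t^{\rk M_F}P_{M^F}(t)$ the bottom flat $F=\hat0$ contributes exactly $P_M(t)$, so real-rootedness of $Z_M$ cannot be cleanly separated from that of $P_M$ itself. I would therefore first target the conditional implication ``Conjecture~\ref{klconjroot} $\Rightarrow$ Conjecture~\ref{zconjroot}'', treating the negative-rootedness of every $P_{M^F}$ (including $F=\hat0$) as an input, and only afterwards attempt to remove the hypothesis by a joint induction on $|E(M)|$ that proves both conjectures in a single step.

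Granting that each $P_{M^F}(t)$ has only negative zeros, every summand $t^{\rk M_F}P_{M^F}(t)$ is real-rooted with all zeros in $(-\infty,0]$, the factor $t^{\rk M_F}$ contributing a zero of multiplicity $\rk M_F$ at the origin. To conclude that their sum is real-rooted I would invoke the common-interlacer theorem: if the polynomials $\{t^{\rk M_F}P_{M^F}(t)\}_{F\in L(M)}$ admit a single real-rooted interlacer, then every nonnegative combination, in particular $Z_M(t)$, is real-rooted. Strict negativity of the zeros then follows at once, since the bottom-flat term gives $Z_M(0)=P_M(0)=1\neq0$, so $t=0$ is excluded.

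To make the interlacing verification tractable I would exploit the palindromicity $t^{\rk M}Z_M(t^{-1})=Z_M(t)$ of Proudfoot, Xu and Young \cite{proudfoot2017z} to halve the effective degree. Writing $w=t+t^{-1}$, negative-rootedness of the symmetric polynomial $Z_M(t)$ is equivalent to real-rootedness of an auxiliary polynomial $Q_M(w)$ of degree $\lfloor\rk M/2\rfloor$ with all roots in $(-\infty,-2]$, a root $t=-1$ corresponding to $w=-2$. This reformulation both lowers the degree and severely constrains any candidate common interlacer, which must itself respect the symmetry; I would use it to reduce the task of the previous paragraph to a statement about the single polynomial $Q_M$.

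The decisive difficulty is establishing the common interlacer for the family $\{t^{\rk M_F}P_{M^F}(t)\}_F$. For the fan, wheel and whirl families this difficulty is invisible because the explicit formulas of Theorem~\ref{klcoef} permit a direct root analysis, but for a general $M$ one must control the zeros of $P_{M^F}$ uniformly as $F$ ranges over the entire lattice of flats, while simultaneously reconciling the differing origin-multiplicities $\rk M_F$ and the differing degrees of the summands, and there is no evident structural reason that contraction should produce a compatible interlacing pattern across all flats at once. I note that a purely Hodge-theoretic input---hard Lefschetz or Hodge--Riemann for the graded M\"obius algebra or the intersection cohomology attached to $M$---would deliver unimodality but not real-rootedness, and so cannot close the gap by itself. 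The most promising route, I expect, is to replace the full flat sum by a finer single-element deletion--contraction recursion for $Z_M$ directly, along which interlacing can be propagated inductively; finding such a recursion and proving that it preserves the interlacing relation $\preceq$ defined above is the crux, and is the step I expect to be the main obstacle.
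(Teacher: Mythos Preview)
The statement you are trying to prove is stated in the paper as a \emph{conjecture}, not a theorem, and the paper does not attempt a proof in general. What the paper actually does is verify the conjecture for the three explicit families of Theorem~\ref{zroot} (fan, wheel, whirl), using the closed formulas of Theorem~\ref{zcoef} together with the multiplier-sequence and $n$-sequence arguments developed in Section~\ref{sec-4}. There is therefore no general proof in the paper against which to compare your proposal.

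Your outline is honest about its own incompleteness, and the gap you flag is real and, to my knowledge, remains open. The common-interlacer step is not a mere technicality: the summands $t^{\rk M_F}P_{M^F}(t)$ have degrees ranging from $\rk M$ (for the top flat, where the summand is $t^{\rk M}$) down to $\deg P_M(t)<\tfrac12\rk M$ (for the bottom flat), and a collection of real-rooted polynomials whose degrees differ by more than one cannot possess a common interleaver in the sense of \eqref{alt-def}--\eqref{int-def}, so the standard ``common interleaver $\Rightarrow$ real-rooted conic combinations'' mechanism does not apply as stated. The palindromicity reduction to an auxiliary polynomial $Q_M(w)$ of degree $\lfloor\rk M/2\rfloor$ does not repair this, since the individual summands $t^{\rk M_F}P_{M^F}(t)$ are not palindromic about the same centre and hence do not descend to polynomials in $w$. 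Even the conditional implication ``Conjecture~\ref{klconjroot} $\Rightarrow$ Conjecture~\ref{zconjroot}'' that you set as an intermediate target is not known; your sketch does not establish it. The preliminary reductions you make (multiplicativity $Z_{M_1\oplus M_2}=Z_{M_1}Z_{M_2}$, hence reduction to connected $M$; the observation $Z_M(0)=1$) are correct but do not touch the essential difficulty.
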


Proudfoot, Xu and Young obtained explicit formulas of the $Z$-polynomials for
the uniform matroid of rank $n-1$ on $n$ elements and the matroid represented by all vectors of the vector space
$\mathbb{F}_q^n$, as well as the real-rootedness of these $Z$-polynomials. Let $Z_{F_n}(t), Z_{W_n}(t), Z_{W^n}(t)$ denote the $Z$-polynomials corresponding to the fan graph $F_n$, the wheel graph $W_n$ and the whirl matroid $W^n$ respectively.
In this paper we obtain explicit formulas of $Z_{F_n}(t), Z_{W_n}(t)$ and $Z_{W^n}(t)$ as given below.

\begin{thm}\label{zcoef}
We have
 \begin{align}
Z_{F_n}(t)&=\sum _ {k = 0}^{n}\frac{1}{n+1} \binom {n + 1} {k+1}\binom {n + 1} {k}  t^k,\quad \mbox{for } n\geq 1\label{eq-zpol-fan-sect1}\\[5pt]
Z_{W_n}(t)&= \sum _ {k = 0}^n \left (\binom {n} {k}^2 - \frac {2}{n} \binom {n} {k +1}\binom {n} {k - 1}\right)t^k,\quad \mbox{for } n\geq 3\label{eq-zpol-wheel-sect1}\\[5pt]
Z_{W^n}(t)&=\sum _ {k = 0}^n  \binom {n} {k}^2  t^k, \quad \mbox{for } n\geq 3.\label{eq-zpol-whirl-sect1}
\end{align}
\end{thm}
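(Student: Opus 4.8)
The plan is to evaluate $Z_M(t)=\sum_{F\in L(M)}t^{\rk M_F}P_{M^F}(t)$ directly from the definition, using Theorem~\ref{klcoef} to compute every $P_{M^F}(t)$ that occurs. The first ingredient is a description of the lattice of flats. For a graphic matroid $M(G)$ the flats are the partitions of $V(G)$ all of whose parts induce connected subgraphs, the flat being the set of edges with both endpoints in one part; then $\rk M_F$ equals $|V(G)|$ minus the number of parts, and $M^F\cong M(G/F)$, where $G/F$ is obtained by contracting the edges of $F$. For $G=F_n$ a part not containing the hub is an interval of consecutive path vertices, and for $G=W_n$ it is an arc of the rim, while the part containing the hub consists of the hub together with an arbitrary subset $S$ of the remaining vertices. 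The crucial observation is that in $G/F$ the images of the non-hub parts pairwise meet only at the image of the hub part; hence, unless $G=W_n$ and $S=\varnothing$, that image is a cut vertex and $M(G/F)$ is the direct sum of the graphic matroids of the pieces hanging off it, each of which is, up to parallel elements (which do not affect the \KL polynomial), a smaller fan $F_p$, where $p$ is the number of sub-intervals into which the corresponding run of non-hub vertices is cut. In the exceptional wheel case $G/F$ is itself a wheel $W_m$, $m$ being the number of rim arcs. Since the \KL polynomial is multiplicative over direct sums and depends only on the lattice of flats, Theorem~\ref{klcoef} now expresses each $P_{M^F}(t)$ as an explicit product of fan (and, in the wheel case, one wheel) \KL polynomials, while $\rk M_F$ is read off the partition. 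For the whirl $W^n$ one uses its lattice of flats, which agrees with that of $M(W_n)$ except that the rim hyperplane is replaced by the $n$ ``co-rim'' flats $\{r_i:i\ne k\}$; the same petal analysis then applies, with the proviso that when $S=\varnothing$ the relevant contraction is the whirl $W^m$ rather than the wheel $W_m$, so that \eqref{eq-klpol-whirl} is what gets substituted.

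Once this is done, each $Z_M(t)$ is an explicit combinatorial sum, which I would evaluate with generating functions. For the fan, after factoring out $t^{n}$, a flat is encoded by a word over the two-letter alphabet $\{\text{hub vertex},\ \text{run}\}$ in which no two runs are adjacent, a hub vertex carrying weight $1$ and a run of length $\ell$ cut into $p$ sub-intervals carrying weight $\binom{\ell-1}{p-1}t^{-p}P_{F_p}(t)$; summing the associated weights gives
\[
\sum_{n\ge 0}Z_{F_n}(t)\,y^n=\frac{1+\Phi(z)}{1-ty\bigl(1+\Phi(z)\bigr)},\qquad z=\frac{y}{1-ty},\qquad \Phi(z)=\sum_{p\ge 1}P_{F_p}(t)\,z^p.
\]
From \eqref{eq-klpol-fan} one checks that $\Phi(z)=z\Psi(z)$, where $\Psi$ satisfies the $t$-deformed Motzkin equation $\Psi=1+z\Psi+tz^2\Psi^2$; substituting this into the display collapses it, after elementary manipulations, to $\mathcal N(y,t)/(ty)$, the generating function of the Narayana polynomials, whose degree-$n$ coefficient is $\sum_k\frac1{n+1}\binom{n+1}{k+1}\binom{n+1}{k}t^k$; this proves \eqref{eq-zpol-fan-sect1}. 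For $W_n$ the flats with $S\ne\varnothing$ contribute a ``broken-rim'' analogue of the fan sum (one linearises the rim by cutting it at a chosen element of $S$), while the flats with $S=\varnothing$ contribute $\sum_{m\ge 2}\binom nm t^{n-m}P_{W_m}(t)$ together with the term $t^{n-1}$ coming from the full-rim flat; feeding in the explicit $P_{W_m}(t)$ and simplifying yields \eqref{eq-zpol-wheel-sect1}. The whirl is the same but shorter: here the $S=\varnothing$ flats contribute $\sum_{m\ge 2}\binom nm t^{n-m}P_{W^m}(t)$, the co-rim flats contribute $nt^{n-1}$, the $S\ne\varnothing$ flats contribute the broken-rim fan sum, and after simplification everything telescopes to $\sum_k\binom nk^2 t^k$. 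In all three cases the final identification can instead be made by verifying that the two sides satisfy the same recursion in $n$.

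The step I expect to be the real obstacle is the cyclic bookkeeping for the wheel and whirl together with the closing coefficient identities. One must enumerate the $S=\varnothing$ flats without error---distinguishing the full-rim flat (a flat for the wheel but not for the whirl), the flats with two rim arcs (whose contraction has trivial \KL polynomial), and the genuine wheel, respectively whirl, contractions for $m\ge 3$---and then fold the linear ``broken-rim'' fan sum back around the cycle without over- or undercounting, which calls for a short cycle-lemma argument. Pinning down the lattice of flats of the non-graphic matroid $W^n$ near the rim, and checking that the petal decomposition there produces only fans and whirls, is the other delicate point. After that, the generating-function computations are routine but not brief; the Narayana identity, the Legendre-polynomial identity $\sum_k\binom nk^2 t^k=(1-t)^n L_n\!\bigl(\tfrac{1+t}{1-t}\bigr)$ underlying \eqref{eq-zpol-whirl-sect1}, and the corresponding hypergeometric identity behind \eqref{eq-zpol-wheel-sect1} are exactly what the stated closed forms are arranged to reproduce.
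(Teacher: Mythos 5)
Your proposal is correct and follows essentially the same route as the paper: Section 5 likewise computes $Z_M(t)$ straight from the definition by reusing the composition-of-flats decomposition and the cut-vertex/direct-sum multiplicativity already set up for the Kazhdan--Lusztig recursion, replacing the weight $\chi_{M_F}(t)$ by $t^{\rk M_F}$, which yields exactly your identity $\sum_n Z_{F_n}(t)u^n=\Phi_F(t,z)/(1-tu\,\Phi_F(t,z))$ with $z=u/(1-tu)$, the analogous cyclic-composition formulas for $W_n$ and $W^n$, and the same split of the whirl's lattice of flats into the $n$ co-rim flats plus the $O_n$-free flats of $M(W_n)$. The only difference is in the final coefficient extraction, where the paper verifies the Narayana and $\sum_k\binom{n}{k}^2t^k$ identities by computer algebra (Zeilberger/holonomic recurrences) rather than by the classical identities you cite, which is immaterial.
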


Based on the above formulas, we further show that Conjecture \ref{zconjroot} holds for fan matroids, wheel matroids and whirl matroids. Precisely, we have the following result.

\begin{thm}\label{zroot}
The $Z$-polynomials $Z_{F_n}(t), Z_{W_n}(t)$ and $Z_{W^n}(t)$  are all real-rooted.
\end{thm}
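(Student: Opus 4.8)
The plan is to deduce the theorem from the explicit formulas in Theorem~\ref{zcoef} by turning each $Z$-polynomial into a classical orthogonal polynomial through one fixed substitution. I start with the elementary observation that all three polynomials have strictly positive coefficients: those of $Z_{F_n}$ are Narayana numbers, those of $Z_{W^n}$ are $\binom{n}{k}^{2}$, and for $Z_{W_n}$ one has
\[
\binom{n}{k}^{2}-\frac{2}{n}\binom{n}{k+1}\binom{n}{k-1}
=\binom{n}{k}^{2}\!\left(1-\frac{2}{n}\cdot\frac{k(n-k)}{(k+1)(n-k+1)}\right)>0\qquad(n\ge 3).
\]
Hence none of these polynomials has a nonnegative real zero, so it suffices to prove that each of them is real-rooted; negativity of the zeros then comes for free. (This is also consistent with the known fact that a matroidal $Z$-polynomial is palindromic, which on its own forces the zeros to pair up as $r,1/r$.)

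The tool is the M\"obius substitution $x=\frac{1+t}{1-t}$, equivalently $t=\frac{x-1}{x+1}$. For a degree-$n$ polynomial $Z(t)$ with $Z(1)\neq 0$, put $\widehat Z(x):=(1+x)^{n}Z\!\left(\frac{x-1}{x+1}\right)$; this is again a polynomial of degree $n$ (leading coefficient $Z(1)$, and $\widehat Z(-1)\neq 0$), and since $t=\frac{x-1}{x+1}$ is a bijection of $\mathbb{R}\cup\{\infty\}$ preserving $\mathbb{R}$, $Z(t)$ is real-rooted if and only if $\widehat Z(x)$ is. Using the explicit coefficients together with Pfaff's transformation of ${}_{2}F_{1}$ and the standard identifications with Jacobi polynomials $P_{n}^{(\alpha,\beta)}$, I would show
\[
\widehat Z_{F_n}(x)=\frac{2^{n}}{n+1}\,P_{n}^{(1,1)}(x),\qquad
\widehat Z_{W^n}(x)=2^{n}P_{n}(x),
\]
where $P_{n}=P_{n}^{(0,0)}$ is the Legendre polynomial (the second identity being Murphy's classical formula $\sum_{k}\binom{n}{k}^{2}u^{k}=(1-u)^{n}P_{n}(\frac{1+u}{1-u})$). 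Since $P_{n}^{(\alpha,\beta)}$ has $n$ real zeros for all $\alpha,\beta>-1$, both $\widehat Z_{F_n}$ and $\widehat Z_{W^n}$ are real-rooted, and hence so are $Z_{F_n}$ and $Z_{W^n}$. (Alternatively, for $Z_{F_n}$ one may quote the classical real-rootedness of the Narayana polynomials.)

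The remaining and most delicate case is $Z_{W_n}$, which does not transform onto a single orthogonal polynomial. I would write $Z_{W_n}(t)=Z_{W^n}(t)-\frac{2}{n}S(t)$ with $S(t)=\sum_{k}\binom{n}{k-1}\binom{n}{k+1}t^{k}$, identify $S(t)=t(1-t)^{n-2}P_{n-2}^{(2,2)}(\frac{1+t}{1-t})$ by the same hypergeometric bookkeeping, and thereby obtain, after the substitution,
\[
\widehat Z_{W_n}(x)=2^{n}P_{n}(x)-\frac{2^{n-1}}{n}(x^{2}-1)\,P_{n-2}^{(2,2)}(x).
\]
Applying $P_{n-2}^{(2,2)}=\frac{4}{(n+1)(n+2)}P_{n}''$ (two uses of the derivative rule for Jacobi polynomials) and then Legendre's equation $(x^{2}-1)P_{n}''=n(n+1)P_{n}-2xP_{n}'$, this collapses to
\[
\widehat Z_{W_n}(x)=\frac{2^{n+2}}{n(n+1)(n+2)}\bigl(c\,P_{n}(x)+x\,P_{n}'(x)\bigr),\qquad c=\frac{n^{2}(n+1)}{4}\ge 0.
\]
The operator $f\mapsto cf+xf'$ multiplies the $k$-th coefficient of $f$ by $k+c$, and for $c\ge 0$ the sequence $(k+c)_{k\ge0}$ is a multiplier sequence --- its generating function $\sum_{k}(k+c)\frac{x^{k}}{k!}=(x+c)e^{x}$ lies in the Laguerre--P\'olya class --- so it carries real-rooted polynomials to real-rooted polynomials (\cite{craven1977multiplier,craven1983location1}). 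Applying it to the real-rooted polynomial $P_{n}$ shows $\widehat Z_{W_n}$, and hence $Z_{W_n}$, is real-rooted.

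The points I expect to demand the most care are the hypergeometric/Jacobi manipulations leading to the closed form of $\widehat Z_{W_n}$, and the exact multiplier-sequence statement invoked for $Z_{W_n}$: since $P_{n}$ has zeros of both signs, one needs that a multiplier sequence with generating function in $\mathcal{LP}$ preserves real-rootedness of \emph{all} real-rooted polynomials, not only of those whose zeros have a single sign. If that step is awkward to cite cleanly, it can be replaced by a direct count of the zeros of $c\,P_{n}(x)+x\,P_{n}'(x)$ in $(-1,1)$: writing $\psi(x)=-xP_{n}'(x)/P_{n}(x)=-n-\sum_{i}\frac{\xi_{i}}{x-\xi_{i}}$ with $\xi_{1}<\cdots<\xi_{n}$ the zeros of $P_{n}$, the symmetry $P_{n}(-x)=(-1)^{n}P_{n}(x)$ makes $\psi'$ have the sign of $x$ on each open interval between consecutive $\xi_{i}$, and then the values of $\psi$ at $0$ and at $\pm1$ (all less than $c$) yield exactly $n$ solutions of $\psi(x)=c$ in $(-1,1)$.
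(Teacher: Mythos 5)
Your proposal is correct, and for the hard case $Z_{W_n}$ it takes a genuinely different route from the paper. For $Z_{F_n}$ and $Z_{W^n}$ both arguments amount to the same thing: the paper simply observes these are the Narayana polynomials and the type-$B$ Narayana polynomials and quotes their classical real-rootedness, which is equivalent to your Jacobi/Legendre identifications under $x=\tfrac{1+t}{1-t}$. For $Z_{W_n}$ the paper stays in the $t$-variable: it factors the $k$-th coefficient as $\bigl((1+k)n^2+(1-2k-k^2)n+2k^2\bigr)\cdot\tfrac{(n-1)!}{(k+1)!\,(n+1-k)!}\binom{n}{k}$, shows the middle factor is a multiplier sequence (Laguerre applied to shifted factorials, as in their Lemma 4.4 and 4.5), and shows the quadratic-in-$k$ factor is an $n$-sequence via the Craven--Csordas criterion, which reduces to checking that an explicit cubic--times--$(1+t)^{n-2}$, in fact $n\bigl((n+1)t^2+(n^2-n+4)t+n+1\bigr)(1+t)^{n-2}$, has only negative zeros (a positive-discriminant computation). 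You instead exploit palindromicity: after the M\"obius substitution you collapse $\widehat Z_{W_n}$ to $\tfrac{2^{n+2}}{n(n+1)(n+2)}\bigl(cP_n+xP_n'\bigr)$ with $c=\tfrac{n^2(n+1)}{4}$ --- I checked this identity (e.g.\ for $n=3$ both sides give $16x^3-8x$), and the Jacobi manipulations you flag ($S(t)=t(1-t)^{n-2}P^{(2,2)}_{n-2}$, $P^{(2,2)}_{n-2}=\tfrac{4}{(n+1)(n+2)}P_n''$, Legendre's equation) all go through --- and then apply the single multiplier sequence $\{k+c\}_{k\ge0}$, $c\ge 0$, to the real-rooted $P_n$. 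The worry you raise about sign restrictions is not an issue: by Laguerre's theorem (the same result the paper invokes for its Lemma 4.4), $\phi(x)=x+c$ with its lone zero at $-c\le 0$ gives a multiplier sequence of the first kind, which preserves real-rootedness of arbitrary real-rooted polynomials, so $P_n$ having zeros of both signs is harmless; your fallback zero-count is not needed. What each approach buys: the paper's is self-contained and purely coefficientwise but rests on an ad hoc three-factor splitting of the coefficients and a discriminant check; yours reveals the structural reason ($Z_{W_n}$ is a nonnegative combination $cP_n+xP_n'$ of a Legendre polynomial and its ``pointed'' version after the standard palindromic change of variable), at the cost of importing classical Jacobi-polynomial identities.
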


This paper is organized as follows. In Section \ref{sec-2} we will give an alternative description of the \KL  polynomial of a graphic matroid directly in the language of graph theory.
In Section \ref{sec-2.5}, we will recall some known results on the ordinary generating functions. We also give a variation of the composition formula with respect to cycles, which turns out to be new but very useful. In Section \ref{sec-3}, we first determine the \KL polynomials for fan matroids by using the generating function technique. Based on this result, we further obtain explicit formulas of the \KL polynomials for wheel matroids and whirl matroids. By Whitney's $2$-isomorphism theorem, we also determine the \KL polynomials for squares of paths. In Section \ref{sec-4} we will prove the real-rootedness of the \KL polynomials obtained in Section $3$ by using the theory of multiplier sequences and the theory of $n$-sequences. In Section \ref{sec-5} we get closed formulas for the $Z$-polynomials of fan matroids, wheel matroids and whirl matroids, based on which we verified the validity of Conjecture \ref{zconjroot} for these matroids.

\section{Graphic matroids}\label{sec-2}

In this section we aim to give an alternative description of
the Kazhdan-Lusztig polynomial, as well as that of the $Z$-polynomial, for a graphic matroid by directly using the language of graph theory. As will be shown later, such a formulation is convenient for computing the \KL polynomials and $Z$-polynomials for fan matroids, wheel matriods and whirl matroids.

Given a loopless graph $G$ with edge set $E(G)$ and vertex set $V(G)$, let $M(G)$ denote the corresponding graphic matroid.
It is known that the ground set of  $M(G)$ is $E(G)$ and its independent sets are the forests in $G$.
To reformulate the definition of  the \KL  polynomial of ${M(G)}$, we first recall how to describe its rank, flats and characteristic polynomial in graph theory terminology. Recall that the rank of $M(G)$ is equal to the rank of $G$, denoted by $\rk G$, which is defined as the cardinality of $V(G)$ minus the number of connected components of $G$.
A flat of $M(G)$ can be identified with a partition of $V(G)$ into vertex sets of connected induced subgraphs of $G$. Such a partition was called a composition of $G$ in \cite{knopfmacher2001graph}, and we use $\mathcal{C}(G)$ to denote the set of all compositions of $G$.
Given a flat $F$ of $M(G)$, the localization $M(G)_F$ and the contraction $M(G)^F$ are naturally identified with graphic matroids as follows:
\begin{align}\label{eq-submatr}
M(G)_F =  M(G[F]),\quad M(G)^F = M(G/F),
\end{align}
where $G[F]$ is the subgraph of $G$ induced by $F$ and $G/F$ is the graph obtained from $G$ by contracting all edges of $F$ (in any order), see \cite[p.~61 and p.~63]{welsh1976matroid}.
If $G$ is a graph with $k$ connected components, then we have
\begin{align}\label{eq-c-chr}
\chi_{M(G)}(t)={t^{-k}}{\chi_G(t)},
\end{align}
where $\chi_G(t)$ is the chromatic polynomial of $G$, see
\cite[p.~262]{welsh1976matroid}.
If a given composition $C\in \mathcal{C}(G)$ corresponds to a flat $F$ of $M(G)$, then we also use $G[C]$ to denote the graph $G[F]$ and use $G/C$ to denote the graph $G/F$. Further, suppose that $C=\{V_1,V_2, \ldots, V_{|C|}\}$, and then it is readily to see that $G[C]$ is just the graph union $\cup_{i=1}^{|C|} G[V_i]$, where each $G[V_i]$ is the subgraph of $G$ induced by $V_i$.
For example,  given a graph $G$ as shown in  Figure  \ref{fig:gc}, let  $C=\{ \{1,3,4\}, \{2,5\} ,\{6,7,10\},\{8\} ,\{9,11,12\}  \}$. Then $E(G[C])$  is highlighted by bold lines  in  Figure  \ref{fig:gc} and $G/C$ is isomorphic to the graph  in  Figure  \ref{fig:g/c}.

\begin{minipage}[b]{0.5\textwidth}
\centering
\begin{tikzpicture}[line cap=round,line join=round,>=triangle 45]
\draw (-0.5,3)  node[circle,fill,inner sep=1pt,label=above  :1]   (1) {} ;
\draw (1.5,3)  node[circle,fill,inner sep=1pt,label=above  :2]   (2) {} ;
\draw (-1,2)  node[circle,fill,inner sep=1pt,label=above  :3]   (3) {} ;
\draw (0.5,2) node[circle,fill,inner sep=1pt,label=above  :4]   (4) {} ;
\draw (2.5,2)  node[circle,fill,inner sep=1pt,label=above  :5]   (5) {} ;
\draw (-2,1)    node[circle,fill,inner sep=1pt,label=above  :6]   (6) {} ;
\draw (-1,1)   node[circle,fill,inner sep=1pt,label=above  :7]   (7) {} ;
\draw  (0.5,1)  node[circle,fill,inner sep=1pt,label=below right  :8]   (8) {} ;
\draw (2,1)   node[circle,fill,inner sep=1pt,label=right :9]   (9) {} ;
\draw (-1.5,0)  node[circle,fill,inner sep=1pt,label=below  :10]   (10) {};
\draw (0.,0.)   node[circle,fill,inner sep=1pt,label=below  :11]   (11) {};
\draw (1.5,0)  node[circle,fill,inner sep=1pt,label=below  :12]   (12) {};
\draw  (11)-- (8)  (9)-- (8)  (11)--(10) (11)-- (3) (8)-- (1)  (4)-- (5)   (2)-- (4)  (8)-- (2)   (7)-- (8)  ;
\draw[line width=1.5pt,color=blue]  (1)-- (3) (4)-- (3) (1)-- (4)   ;
\draw[line width=1.5pt,color=blue]  (6)-- (10) (6)-- (7)  (10)-- (7)   ;
\draw[line width=1.5pt,color=blue]  (11)-- (12)   (12)-- (9)  ;
\draw[line width=1.5pt,color=blue]   (5)-- (2) ;
\end{tikzpicture}
\captionsetup{font=footnotesize,type=figure}

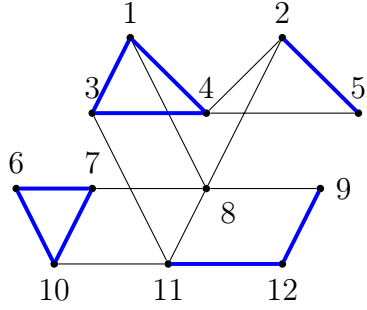
\captionof{figure}{G and  G[C]}
\label{fig:gc}
\end{minipage}
\begin{minipage}[b]{0.4\textwidth}
\centering
\begin{tikzpicture}[line cap=round,line join=round,>=triangle 45]
\draw (-0.5,1.5) node[circle,fill,inner sep=1pt]  (1) {};
\draw (-1.3,1.5) node[draw=none,inner sep=1pt]  {\{1,3,4\}};
\draw (1.5,1.5)  node[circle,fill,inner sep=1pt]   (2) {} ;
\draw (2.2,1.5)  node[draw=none,inner sep=1pt]   {\{2,5\}} ;

\draw (-1,0.5)  node[circle,fill,inner sep=1pt]   (3) {} ;
\draw (-1.9,0.5)  node[draw=none,inner sep=1pt]   {\{6,7,10\}} ;
\draw (0.5,0.5)  node[circle,fill,inner sep=1pt,label=right :$\{8\}$] (4) {} ;

\draw (0.5,-0.5) node[circle,fill,inner sep=1pt]   (5) {} ;
\draw (0.5,-0.9)  node[draw=none,inner sep=1pt]   {\{9,11,12\}} ;

\draw (4)--(5)   (4)-- (1)   (1)--(2)    (4)-- (2)    (3)-- (4)  (1)--(5) (3)--(5) ;
\end{tikzpicture}
\captionsetup{font=footnotesize,type=figure}
\captionof{figure}{G/C}
\label{fig:g/c}
\end{minipage}


Now we can give an alternative description of the \KL  polynomial of $M(G)$. For notational convenience, we denote the \KL  polynomial of $M(G)$ by $P_G(t)$ instead of $P_{M(G)}(t)$.
We also say that $P_G(t)$ is the \KL polynomial of $G$.
By the preceding arguments, the polynomial $P_G(t)$ can be defined
in the following way:
\begin{itemize}
\item If $\rk G=0$, that is $E(G)=\emptyset$ , then $P_G(t)=1$.

\item If $\rk G>0$, then $\deg P_G(t) < \frac {1}{2} \rk G$, and moreover
    \begin{align}\label{klpol-reform}
    t^{\rk G}P_G(t^{-1}) = \sum_{ C\in \mathcal{C}(G)} t^{-|C|} \chi_{G[C]}(t) P_{G/C}(t).
    \end{align}
\end{itemize}
Note that an edge contraction operation may result in a graph with multiple edges even if the original graph was a simple graph. Since for any graph $G$ and its associated simple graph $G'$ we have $P_{G}(t)=P_{G'}(t)$, we always assume that all graphs appeared in the above definition are simple. In this sense we can consider $G/C$ as a quotient graph by  identifying  vertices in each block  of $C$.

We proceed to show the multiplicativity of the \KL polynomial $P_G(t)$, which will be frequently used in Section \ref{sec-3}.
Recall that a connected graph is called biconnected if the resulting graph remains connected whenever any vertex is removed.
A biconnected component of a graph is a maximal biconnected subgraph. We have the following result.

\begin{lem}\label{graphsum}
Suppose that $G$ is a graph with $k$ connected components and $m$ biconnected components, say $\{G_1,\ldots,G_m\}$. Then we have
\begin{align}
P_{G}(t)&=\prod_{i=1}^{m}{P_{G_i}(t)},\label{multi-klpol}\\
\chi_{G}(t)&=t^{-(m-k)}\prod_{i=1}^{m}{\chi_{G_i}(t)}.\label{multi-chrpol}
\end{align}
\end{lem}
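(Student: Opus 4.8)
The plan is to prove the two identities separately but by the same mechanism, namely reduction to the case of a graph with a single cut vertex joining two subgraphs, followed by induction on the number of biconnected components. I would first reduce \eqref{multi-chrpol} to a standard fact: for a graph $G$ obtained by gluing two graphs $H_1$ and $H_2$ along a single vertex, one has $\chi_{G}(t)=\chi_{H_1}(t)\chi_{H_2}(t)/t$, which follows from the deletion-contraction recursion or, more directly, from the product rule for proper colorings (a proper coloring of $G$ is a proper coloring of $H_1$ together with a proper coloring of $H_2$ agreeing on the shared vertex, whence the count is $\chi_{H_1}(t)\chi_{H_2}(t)/t$ since the shared vertex can be colored in $t$ ways). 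Writing $G$ as the disjoint union of its connected components and each connected component as an iterated one-point gluing of its biconnected components (a standard block-tree decomposition), a routine induction on $m$ gives the factor $t^{-(m-k)}$ in \eqref{multi-chrpol}; the exponent bookkeeping is: each of the $m-k$ gluings contributes one factor of $t^{-1}$.

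For \eqref{multi-klpol} the strategy is the same block-tree induction, so the crux is the one-cut-vertex case: if $L(M(G))$ decomposes compatibly with a one-point gluing $G=H_1\cup_v H_2$, then $P_G(t)=P_{H_1}(t)P_{H_2}(t)$. The cleanest route is to observe that the lattice of flats of $M(G)$ is the \emph{direct product} $L(M(H_1))\times L(M(H_2))$ — equivalently, $M(G)$ is the direct sum (parallel connection is irrelevant here since $H_1,H_2$ share no edge) $M(H_1)\oplus M(H_2)$ of matroids, because a block tree gives $M(G)=\bigoplus_i M(G_i)$ on the level of matroids. Then I would invoke, or quickly reprove, the fact that the Kazhdan-Lusztig polynomial is multiplicative over direct sums of matroids: $P_{M\oplus N}(t)=P_M(t)P_N(t)$. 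To reprove it I would check that $P_M(t)P_N(t)$ satisfies the three defining axioms for $M\oplus N$: the degree bound $\deg P_M+\deg P_N<\tfrac12(\rk M+\rk N)$ follows from the individual bounds, the rank-zero case is immediate, and the defining recursion follows because $L(M\oplus N)\cong L(M)\times L(N)$, $(M\oplus N)_{(F,F')}=M_F\oplus N_{F'}$, $(M\oplus N)^{(F,F')}=M^F\oplus N^{F'}$, and both $\chi$ and (inductively) $P$ are multiplicative, so the sum over $L(M\oplus N)$ factors as a product of the two sums; by uniqueness of $P_{M\oplus N}$ this product equals $P_{M\oplus N}(t)$. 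Translating back through the identifications $M(G)_F=M(G[F])$, $M(G)^F=M(G/F)$ from \eqref{eq-submatr} and $P_G=P_{M(G)}$ completes the argument, and an induction on $m$ assembles the full product $\prod_{i=1}^m P_{G_i}(t)$.

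The main obstacle I anticipate is not the inductive scaffolding but making the matroid-direct-sum step airtight: one must be careful that contracting and localizing a graphic matroid along a flat interact correctly with the block decomposition, i.e. that $G[F]$ and $G/F$ again decompose into biconnected pieces that are the corresponding pieces of $H_1$ and $H_2$, so that the induction hypothesis applies to strictly smaller data. This amounts to checking that a composition $C$ of $G$ restricts to compositions of $H_1$ and of $H_2$ and that $\mathcal{C}(G)\cong\mathcal{C}(H_1)\times\mathcal{C}(H_2)$ when $H_1\cap H_2$ is a single vertex — essentially the statement that the cut vertex forces every block of $C$ to lie entirely within $H_1$ or entirely within $H_2$, except possibly the unique block containing $v$, which splits as the union of a connected piece in $H_1$ and a connected piece in $H_2$; handling that shared block correctly (and seeing that it still contributes multiplicatively, using the degree bound to control the recursion) is where I would spend the most care. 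Everything else — the chromatic identity, the rank arithmetic, and the final induction on $m$ — is routine once this local step is established.
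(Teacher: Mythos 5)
Your proposal is correct and rests on the same two facts the paper uses: $M(G)=\bigoplus_{i=1}^{m}M(G_i)$ together with the multiplicativity of the Kazhdan-Lusztig polynomial (and of the characteristic polynomial) over direct sums of matroids, which the paper simply cites from Elias--Proudfoot--Wakefield and Welsh rather than reproving. The extra scaffolding you supply --- the block-tree induction, the one-point-gluing formula for $\chi$, and the verification of the three defining axioms for $P_{M\oplus N}$ --- is a correct expansion of the same route, not a different one.
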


\begin{proof}
By definition we know that  $M(G)=\oplus_{i=1}^{m}{M(G_i)}$.
It is known that  for any matroids $M_1$ and  $M_2$ there holds $P_{M_1\oplus M_2}(t)=P_{M_1}(t)P_{M_2}(t)$, see \cite[Proposition 2.7]{elias2016kazhdan}.  Iteration of  this property leads to \eqref{multi-klpol}.
It is also known that the characteristic polynomial $\chi_{M(G)}(t)$ is also multiplicative on direct sums, see \cite[p.~265]{welsh1976matroid}.
Thus, we have
$$\chi_{M(G)}(t)=\prod_{i=1}^{m}{\chi_{M(G_i)}(t)}.$$
Combining the above identity and \eqref{eq-c-chr},
we immediately obtain \eqref{multi-chrpol}.
\end{proof}

Next we give an alternative description of the
the $Z$-polynomial of $M(G)$. For notational convenience, we denote the $Z$-polynomial of $M(G)$ by $Z_G(t)$ instead of $Z_{M(G)}(t)$. As before, we also say that $Z_G(t)$ is the $Z$-polynomial of $G$. Using the language of graph theory, the $Z$-polynomial of $G$ can be defined by
\begin{align}\label{zpol-reform}
Z_G(t):= \sum_{ C\in \mathcal{C}(G)} {t^ {\rk G[C]} P_{G/C}(t)}.
\end{align}

From the multiplicativity of $P_G(t)$ it follows that the
$Z$-polynomial $Z_G(t)$  also admits the same property.

\begin{lem}\label{graphsum-z}
Suppose that $G$ is a graph with $k$ connected components and $m$ biconnected components, say $\{G_1,\ldots,G_m\}$. Then we have
\begin{align}
Z_{G}(t)&=\prod_{i=1}^{m}{Z_{G_i}(t)}.\label{multi-zpol}
\end{align}
\end{lem}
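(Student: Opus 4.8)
The plan is to repeat the reduction used in the proof of Lemma~\ref{graphsum}, but now at the level of matroids, where the relevant ground sets are genuinely disjoint. As recorded there, the biconnected components of $G$ give the direct sum decomposition $M(G)=\bigoplus_{i=1}^{m}M(G_i)$, because the edge sets $E(G_i)$ partition $E(G)$. Arguing with compositions of $G$ directly would be slightly awkward: distinct biconnected components may share a cut vertex, so a composition of $G$ need not factor literally as a tuple of compositions of the $G_i$; passing to matroids removes this nuisance.

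First I would record the standard behaviour of flats, localization, contraction and rank under direct sums (see \cite{welsh1976matroid}): every flat $F$ of $M(G)$ is uniquely a disjoint union $F=F_1\sqcup\cdots\sqcup F_m$ with $F_i\in L(M(G_i))$, so that $(F_1,\dots,F_m)\mapsto F$ identifies $L(M(G))$ with $\prod_{i=1}^{m}L(M(G_i))$, and with respect to this identification $M(G)_F=\bigoplus_{i=1}^{m}M(G_i)_{F_i}$ and $M(G)^F=\bigoplus_{i=1}^{m}M(G_i)^{F_i}$. Hence $\rk M(G)_F=\sum_{i=1}^{m}\rk M(G_i)_{F_i}$, and iterating \cite[Proposition~2.7]{elias2016kazhdan} gives $P_{M(G)^F}(t)=\prod_{i=1}^{m}P_{M(G_i)^{F_i}}(t)$.

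Then I would substitute these into the defining expression $Z_M(t)=\sum_{F\in L(M)}t^{\rk M_F}P_{M^F}(t)$, which is \eqref{zpol-reform} transcribed in matroid language, to obtain
\begin{align*}
Z_G(t)&=\sum_{F\in L(M(G))}t^{\rk M(G)_F}P_{M(G)^F}(t)\\
      &=\sum_{(F_1,\dots,F_m)}\ \prod_{i=1}^{m}t^{\rk M(G_i)_{F_i}}P_{M(G_i)^{F_i}}(t),
\end{align*}
the last sum being over $\prod_{i=1}^{m}L(M(G_i))$. Since the summand is a product of factors each depending on a single coordinate $F_i$, the distributive law rearranges this as
\[
Z_G(t)=\prod_{i=1}^{m}\Bigl(\sum_{F_i\in L(M(G_i))}t^{\rk M(G_i)_{F_i}}P_{M(G_i)^{F_i}}(t)\Bigr)=\prod_{i=1}^{m}Z_{G_i}(t),
\]
which is \eqref{multi-zpol}.

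There is no real obstacle in this argument: everything is a bookkeeping consequence of the direct sum decomposition together with the already-established multiplicativity of $P_M$. The only points deserving a word of care are the standard direct-sum identities invoked in the middle step and the verification that \eqref{zpol-reform} indeed agrees with the original definition of $Z_M$ under the dictionary between compositions of $G$ and flats of $M(G)$ set up earlier in this section.
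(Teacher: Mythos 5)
Your proof is correct and is exactly the argument the paper has in mind: the paper states this lemma without proof, remarking only that it "follows from the multiplicativity of $P_G(t)$," and your direct-sum bookkeeping (flats, localization, contraction and rank all factoring over $\bigoplus_i M(G_i)$, plus \cite[Proposition~2.7]{elias2016kazhdan}) is the standard way to make that remark precise. No gaps.
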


\section{Ordinary generating functions}\label{sec-2.5}

In this section we aim to give a variation of the usual composition formula of ordinary generating functions, which will be used to determine the ordinary generating functions of the \KL polynomials of wheel graphs.

Let us first give an overview of the combinatorial significance of the derivative formula, the product formula and the composition formula of ordinary generating functions.

Suppose that we are interested in enumerating a family $\mathcal{A}$ of combinatorial structures. For each $a\in \mathcal{A}$, let $|a|$ denote its size. If $|a|=n$, we also say that $a$ is a combinatorial structure of type $\mathcal{A}$
which is built on an interval $I$ of size $n$. Let
$\mathcal{A}_n$ denote the set of combinatorial structures of type $\mathcal{A}$ which can be built on an interval $I$ of size $n$. We always assumed that the allowed structures depend only on the size of the interval $I$. If each element $a$ of $\mathcal{A}$ is weighted by ${w_{\mathcal{A}}}(a)$, then define the ordinary generating function of $\mathcal{A}$ with respect to the weight function $w_{\mathcal{A}}$ as the formal power series
$$A(u)=\sum_{a\in \mathcal{A}} w_{\mathcal{A}}(a) u^{|a|},$$
or equivalently,
$$A(u)=\sum_{n \in \mathbb{A} } \left(\sum_{a\in \mathcal{A}_n}w_{\mathcal{A}}(a)\right) u^{n},$$
where  $\mathbb{A}=\{n\in \mathbb{N}\, |\, \mathcal{A}_n\neq \emptyset\}$.

The derivative of the ordinary generating function $A(u)$ admits a very natural combinatorial interpretation, as illustrated below. For $n\in \mathbb{A}\backslash \{0\}$ let $\mathcal{\dot{A}}_n$ denote the set of pointed type $\mathcal{A}$ structures built on an interval $I$ of size $n$ by a special ``pointer'' of size $0$ attached to an element of $I$.
Let $\mathcal{\dot{A}}=\cup_{n=1}^{\infty} \mathcal{\dot{A}}_n$.
Each pointed structure $\dot{a}$ will have the same weight as the underlying structure $a$. For this reason, we still denote the weight function of $\mathcal{\dot{A}}$ by $w_{\mathcal{A}}$.
 Note that the coefficient of $u^n$ in $A(u)$, denoted by $[u^n]A(u)$, is the weighted sum of structures in $\mathcal{A}_n$. Thus $n[u^n]A(u)$ can be interpreted as the weighted sum of structures in $\mathcal{\dot{A}}_n$.
 Therefore, we have the following result.

\begin{prop}\label{gf-pointed}
The ordinary generating function of $\mathcal{\dot{A}}$ is given by
$$\dot{A}(u)=\sum_{n \in \mathbb{A}\backslash \{0\}} \left(\sum_{\dot{a}\in \mathcal{\dot{A}}_n}w_{\mathcal{A}}(\dot{a})\right) u^{n}=uA'(u).$$
\end{prop}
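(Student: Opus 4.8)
The statement to prove is Proposition \ref{gf-pointed}: the ordinary generating function of the pointed structures $\mathcal{\dot{A}}$ equals $uA'(u)$.

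This is essentially a one-line observation once the combinatorial setup is unpacked, so the plan is to make the bookkeeping explicit rather than to overcome any real obstacle. First I would recall the definition $A(u)=\sum_{n\in\mathbb{A}}a_n u^n$ where $a_n=\sum_{a\in\mathcal{A}_n}w_{\mathcal{A}}(a)$ is the weighted count of type-$\mathcal{A}$ structures on an interval of size $n$. Next I would count the pointed structures in $\mathcal{\dot{A}}_n$: a pointed structure consists of a structure $a\in\mathcal{A}_n$ together with a choice of one of the $n$ points of the underlying interval $I$ to which the size-$0$ pointer is attached, and by convention $w_{\mathcal{A}}(\dot a)=w_{\mathcal{A}}(a)$. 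Hence the weighted count of $\mathcal{\dot{A}}_n$ is $\sum_{\dot a\in\mathcal{\dot{A}}_n}w_{\mathcal{A}}(\dot a)=n\sum_{a\in\mathcal{A}_n}w_{\mathcal{A}}(a)=n\,a_n$, since each $a$ gives rise to exactly $n$ pointed structures all of the same weight.

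Finally I would assemble the generating function: since $\mathcal{\dot{A}}_0=\emptyset$ (there is no point to attach a pointer to in an empty interval), we have
\[
\dot A(u)=\sum_{n\in\mathbb{A}\backslash\{0\}}\Bigl(\sum_{\dot a\in\mathcal{\dot{A}}_n}w_{\mathcal{A}}(\dot a)\Bigr)u^n=\sum_{n\in\mathbb{A}\backslash\{0\}} n\,a_n\,u^n = u\sum_{n\ge 1} n\,a_n\,u^{n-1}=uA'(u),
\]
where the last equality is just the termwise differentiation of the formal power series $A(u)$ (the $n=0$ term contributes nothing to $A'(u)$, consistent with $\mathcal{\dot{A}}_0=\emptyset$). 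This completes the proof.

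There is no genuine obstacle here; the only thing requiring a little care is making sure the conventions agree — that the pointer has size $0$ so that $|\dot a|=|a|$, that pointing is a choice among exactly $|a|$ positions, and that the weight is inherited unchanged — all of which were already fixed in the paragraph preceding the statement. So the "hard part," such as it is, is purely expository: stating clearly that $n[u^n]A(u)$ enumerates $\mathcal{\dot{A}}_n$ by weight, which is immediate from the product-of-a-scalar interpretation.
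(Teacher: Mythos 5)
Your proposal is correct and matches the paper's argument exactly: the paper likewise observes that $n[u^n]A(u)$ is the weighted sum over $\mathcal{\dot{A}}_n$ (each underlying structure yielding $n$ equally weighted pointed structures) and then identifies $\sum_n n[u^n]A(u)\,u^n$ with $uA'(u)$ by termwise differentiation. Nothing further is needed.
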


Given two types of combinatorial structures, say $\mathcal{A}$ and $\mathcal{B}$, denote their weight functions respectively by $w_{\mathcal{A}}$ and $w_{\mathcal{B}}$, and denote their ordinary generating functions respectively by $A(u)$ and $B(u)$.
Suppose that $\mathcal{C}$ is another type of combinatorial structures with the weight function $w_{\mathcal{C}}$, which is built from $\mathcal{A}$ and $\mathcal{B}$ in the following way: a structure $c$ of type $\mathcal{C}$ built on an interval $I$ is obtained by first splitting $I$
into two intervals $I_1$ and $I_2$, and then building a structure $a$ of type $\mathcal{A}$ on $I_1$ and a structure $b$ of type $\mathcal{B}$ on $I_2$; and moreover $w_{\mathcal{C}}(c)=w_{\mathcal{A}}(a)w_{\mathcal{B}}(b)$. Here $I_1$ and $I_2$ are allowed to be empty. If $\mathcal{C}$ is obtained in such a way, we also write $\mathcal{C}=\mathcal{A}\times \mathcal{B}$.
The product formula of ordinary generating functions is stated as follows.

\begin{prop}\label{prop-product-formula} Suppose that $\mathcal{A},\mathcal{B},\mathcal{C}$ are three types of combinatorial structures such that $\mathcal{C}=\mathcal{A}\times \mathcal{B}$. If we let
\begin{align*}
A(u)&=\sum_{n=0}^{\infty} \left(\sum_{a\in \mathcal{A}_n}w_{\mathcal{A}}(a)\right) u^{n},\\
B(u)&=\sum_{n=0}^{\infty} \left(\sum_{b\in \mathcal{B}_n}w_{\mathcal{B}}(b)\right) u^{n},\\
C(u)&=\sum_{n=0}^{\infty} \left(\sum_{c\in \mathcal{C}_n}w_{\mathcal{C}}(c)\right) u^{n},
\end{align*}
then
$$C(u)=A(u)B(u).$$
\end{prop}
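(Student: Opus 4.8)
The plan is to prove the identity $C(u)=A(u)B(u)$ by comparing coefficients of $u^n$ on both sides, exploiting the combinatorial description of $\mathcal{C}=\mathcal{A}\times\mathcal{B}$. First I would fix $n\in\mathbb{N}$ and recall that, by definition of the ordinary generating function, $[u^n]C(u)=\sum_{c\in\mathcal{C}_n}w_{\mathcal{C}}(c)$. The key structural observation is that a type-$\mathcal{C}$ structure $c$ built on the interval $I=[n]$ is, by construction, exactly a triple consisting of an ordered splitting $I=I_1\sqcup I_2$ into an initial segment $I_1$ and a terminal segment $I_2$, together with a type-$\mathcal{A}$ structure $a$ on $I_1$ and a type-$\mathcal{B}$ structure $b$ on $I_2$. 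Since the allowed structures depend only on the size of the underlying interval, choosing such a splitting amounts to choosing an integer $j$ with $0\le j\le n$ (namely $j=|I_1|$), and then the set of valid pairs is in bijection with $\mathcal{A}_j\times\mathcal{B}_{n-j}$.

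Next I would carry out the weighted count. Using $w_{\mathcal{C}}(c)=w_{\mathcal{A}}(a)w_{\mathcal{B}}(b)$ and the bijection above, we obtain
\begin{align*}
[u^n]C(u)=\sum_{c\in\mathcal{C}_n}w_{\mathcal{C}}(c)
=\sum_{j=0}^{n}\ \sum_{a\in\mathcal{A}_j}\ \sum_{b\in\mathcal{B}_{n-j}} w_{\mathcal{A}}(a)\,w_{\mathcal{B}}(b)
=\sum_{j=0}^{n}\left(\sum_{a\in\mathcal{A}_j}w_{\mathcal{A}}(a)\right)\left(\sum_{b\in\mathcal{B}_{n-j}}w_{\mathcal{B}}(b)\right).
\end{align*}
On the other hand, by the Cauchy product of formal power series,
\begin{align*}
[u^n]\bigl(A(u)B(u)\bigr)=\sum_{j=0}^{n}\bigl([u^j]A(u)\bigr)\bigl([u^{n-j}]B(u)\bigr)
=\sum_{j=0}^{n}\left(\sum_{a\in\mathcal{A}_j}w_{\mathcal{A}}(a)\right)\left(\sum_{b\in\mathcal{B}_{n-j}}w_{\mathcal{B}}(b)\right),
\end{align*}
so the two coefficients agree for every $n$, which gives $C(u)=A(u)B(u)$ as formal power series. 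One should also note the edge conventions: when $I_1$ or $I_2$ is empty the corresponding structure lives in $\mathcal{A}_0$ or $\mathcal{B}_0$, which is consistent with the sums starting at $j=0$, so no special case is needed.

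The main obstacle is not any hard estimate but rather making the bijection precise, namely that a type-$\mathcal{C}$ structure on $I$ is determined by and determines a triple $(j, a, b)$ with $a\in\mathcal{A}_j$, $b\in\mathcal{B}_{n-j}$. This rests on the standing assumption stated earlier in the section that ``the allowed structures depend only on the size of the interval,'' which lets us replace the actual pair of subintervals $(I_1,I_2)$ by the single datum $j=|I_1|$ without over- or under-counting; I would state this explicitly as the one place where that hypothesis is used. Everything else is the routine manipulation of the Cauchy product, which I would not belabor.
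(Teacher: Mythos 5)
Your proof is correct and is exactly the standard coefficient-comparison/Cauchy-product argument that the paper implicitly relies on (the paper states this proposition without proof, treating it as routine). Your explicit remark about where the hypothesis that allowed structures depend only on interval size enters is a sensible addition, but nothing in your argument deviates from the expected route.
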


Next we consider the composition formula of ordinary generating functions. Let $\mathcal{A}$ and $\mathcal{B}$ be two types of combinatorial structures as before. Note that for a type $\mathcal{B}$ structure $b$ of size $n$ we may also say that $b$ is built on an ordered $n$-element set since the allowed structures only depend on the size of the interval.
Suppose that $\mathcal{C}$ is a combinatorial structure with the weight function $w_{\mathcal{C}}$, which is built from $\mathcal{A}$ and $\mathcal{B}$ in the following way: a structure $c$ of type $\mathcal{C}$ built on an interval $I$ is obtained by first splitting $I$  into an unspecified number of nonempty intervals, say $I_1,I_2,\ldots,I_k$ listed from left to right, then building a structure $a_{j}$ of type $\mathcal{A}$ on each interval  $I_j$, and then building a type $\mathcal{B}$ structure $b$ on ordered intervals $(I_1,I_2,\ldots,I_k)$; and moreover $w_{\mathcal{C}}(c)=w_{\mathcal{B}}(b) \prod_{j=1}^k{w_{\mathcal{A}}(a_j)} $. If $\mathcal{C}$ is obtained in such a way, we also write $\mathcal{C}=  \mathcal{B} \circ \mathcal{A}$.
The composition formula of ordinary generating functions is stated as follows.

\begin{prop}\label{prop-composition-formula} Suppose that $\mathcal{A},\mathcal{B},\mathcal{C}$ are three types of combinatorial structures such that $\mathcal{C}=\mathcal{B} \circ \mathcal{A}$ and $0 \not \in \mathbb{A} $, where  $\mathbb{A}=\{n\in \mathbb{N}\, |\, \mathcal{A}_n\neq \emptyset\}$ as defined before.
If we let
\begin{align*}
A(u)&=\sum_{n \in \mathbb{A} } \left(\sum_{a\in \mathcal{A}_n}w_{\mathcal{A}}(a)\right) u^{n},\\
B(u)&=\sum_{n=0}^{\infty} \left(\sum_{b\in \mathcal{B}_n}w_{\mathcal{B}}(b)\right) u^{n},\\
C(u)&=\sum_{n=0}^{\infty} \left(\sum_{c\in \mathcal{C}_n}w_{\mathcal{C}}(c)\right) u^{n},
\end{align*}
then
$$C(u)=B(A(u)).$$
\end{prop}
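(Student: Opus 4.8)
The plan is to establish $C(u)=B(A(u))$ by comparing the coefficient of $u^n$ on the two sides for every $n\ge 0$. First I would fix notation: write $a_n=\sum_{a\in\mathcal{A}_n}w_{\mathcal{A}}(a)$, $b_k=\sum_{b\in\mathcal{B}_k}w_{\mathcal{B}}(b)$ and $c_n=\sum_{c\in\mathcal{C}_n}w_{\mathcal{C}}(c)$, so that $A(u)=\sum_n a_n u^n$ with $a_0=0$ (this is precisely the hypothesis $0\notin\mathbb{A}$), $B(u)=\sum_k b_k u^k$ and $C(u)=\sum_n c_n u^n$.

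Next I would unwind the definition of $\mathcal{C}=\mathcal{B}\circ\mathcal{A}$ to obtain a closed expression for $c_n$. A structure $c$ of type $\mathcal{C}$ built on an interval $I$ of size $n$ is the same datum as: a splitting of $I$ into nonempty consecutive subintervals $I_1,\dots,I_k$ for some $k\ge0$ (equivalently, a composition $n_1+\cdots+n_k=n$ into positive parts), together with a structure $a_j\in\mathcal{A}_{n_j}$ on each $I_j$ and a structure $b\in\mathcal{B}_k$ on the ordered tuple $(I_1,\dots,I_k)$, weighted by $w_{\mathcal{C}}(c)=w_{\mathcal{B}}(b)\prod_{j=1}^{k}w_{\mathcal{A}}(a_j)$. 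Summing over all such data and grouping by the number $k$ of parts gives
\[
c_n=\sum_{k\ge0} b_k \sum_{\substack{n_1+\cdots+n_k=n\\ n_j\ge1}}\prod_{j=1}^{k}a_{n_j}=\sum_{k\ge0} b_k\,[u^n]A(u)^k .
\]
When $n=0$ only $k=0$ contributes (the empty splitting, the empty product, and some $b\in\mathcal{B}_0$), so $c_0=b_0$, which matches $[u^0]B(A(u))$ since $A(0)=0$; this settles the boundary case.

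Finally I would note that, because $a_0=0$, the power series $A(u)^k$ is divisible by $u^k$; hence for each fixed $n$ only the finitely many indices $k\le n$ contribute to the coefficient of $u^n$, and the formal sum $\sum_{k\ge0}b_k A(u)^k$ is a well-defined power series equal to $B(A(u))$. Interchanging the (locally finite) sums then yields $[u^n]B(A(u))=\sum_{k\ge0}b_k\,[u^n]A(u)^k=c_n$ for all $n$, and comparing coefficients gives $C(u)=B(A(u))$.

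I expect the only delicate point --- the ``main obstacle'', such as it is --- to be making this formal-power-series manipulation legitimate, namely checking that substituting $A(u)$ into $B$ is allowed. This is exactly where the hypothesis $0\notin\mathbb{A}$, i.e.\ $A(0)=0$, is indispensable: without it the composite $B(A(u))$ need not even be defined as a formal power series. Everything else is a bookkeeping argument entirely parallel to the proofs of Propositions \ref{gf-pointed} and \ref{prop-product-formula}.
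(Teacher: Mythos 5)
Your proof is correct: the coefficient extraction $c_n=\sum_{k\ge 0}b_k\,[u^n]A(u)^k$ together with the observation that $a_0=0$ makes $B(A(u))$ a well-defined formal power series is exactly the standard argument. The paper states this composition formula without proof (treating it, like Propositions \ref{gf-pointed} and \ref{prop-product-formula}, as a known fact about ordinary generating functions), so there is nothing to compare against; your write-up supplies precisely the bookkeeping the paper omits.
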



We proceed to consider a variation of the above composition formula, which is very useful for determining the ordinary generating functions of certain combinatorial structures related to cycles. From two types of combinatorial structures $\mathcal{A}$ and $\mathcal{B}$ we construct the third family of combinatorial structures $\mathcal{C}$, which will build
a structure $c$ on an interval $I$ of size $n$ in the following way: first decompose a cycle of length $n$ into at least two nonempty segments, say $\{I_1,I_2,\ldots,I_k\}$, then build  a structure $a_{j}$ of type $\mathcal{A}$ for each $I_j$ (considered as an interval), and then build  a type $\mathcal{B}$ structure $b$ on ordered segments $(I_1,I_2,\ldots,I_{k})$; and moreover $w_{\mathcal{C}}(c)=w_{\mathcal{B}}(b) \prod_{j=1}^{k}{w_{\mathcal{A}}(a_j)} $. If $\mathcal{C}$ is obtained in such a way, we also write $\mathcal{C}=  \mathcal{B} \bullet  \mathcal{A}$. We have the following result.

\begin{prop}\label{prop-composition-formula-variant}
Suppose that $\mathcal{A},\mathcal{B},\mathcal{C}$ are three types of combinatorial structures such that $\mathcal{C}=\mathcal{B} \bullet \mathcal{A},0 \not \in \mathbb{A} $ and $0,1 \not \in \mathbb{B}$, where  $\mathbb{A}=\{n\in \mathbb{N}\, |\, \mathcal{A}_n\neq \emptyset\}$ and $\mathbb{B}=\{n\in \mathbb{N}\, |\, \mathcal{B}_n\neq \emptyset\}$. If we let
\begin{align*}
A(u)&=\sum_{n \in \mathbb{A} } \left(\sum_{a\in \mathcal{A}_n}w_{\mathcal{A}}(a)\right) u^{n},\\
B(u)&=\sum_{n\in \mathbb{B}} \left(\sum_{b\in \mathcal{B}_n}w_{\mathcal{B}}(b)\right) u^{n},\\
C(u)&=\sum_{n=2}^{\infty} \left(\sum_{c\in \mathcal{C}_n}w_{\mathcal{C}}(c)\right) u^{n},
\end{align*}
then
$$C(u)=uA'(u)\frac{B(A(u))}{A(u)}  .$$
\end{prop}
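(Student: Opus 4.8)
The plan is to reduce the cyclic composition formula to the ordinary composition formula (Proposition~\ref{prop-composition-formula}) together with the pointing operation (Proposition~\ref{gf-pointed}), by carefully accounting for the difference between decomposing an interval and decomposing a cycle. The key observation is that a decomposition of a cycle of length $n$ into cyclically ordered segments $\{I_1,\dots,I_k\}$ ($k\geq 2$) is not the same as a linear decomposition: there is a rotational ambiguity. To fix this, I would distinguish one of the segments — say, the segment containing a marked point of the interval $I$ — and ``cut'' the cycle at the right endpoint of that segment. Concretely, pointing the size-$n$ interval $I$ at one of its $n$ elements singles out the segment $I_j$ to which that element belongs; cutting just after $I_j$ turns the cyclic arrangement $(I_1,\dots,I_k)$ into an honest linear arrangement, namely $(I_j, I_{j+1}, \dots, I_k, I_1, \dots, I_{j-1})$, with the marked point lying inside the \emph{first} segment.

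First I would set up the bijection precisely. Let $\mathcal{C}=\mathcal{B}\bullet\mathcal{A}$ and consider the pointed family $\dot{\mathcal{C}}$. I claim
\[
\dot{\mathcal{C}} \;\cong\; \dot{\mathcal{A}} \times (\mathcal{B}\circ\mathcal{A}),
\]
where $\dot{\mathcal{A}}$ supplies the first (pointed) segment and $\mathcal{B}\circ\mathcal{A}$ supplies the remaining linear concatenation of $\mathcal{A}$-structures carrying a $\mathcal{B}$-structure on the list of segments. The weight is multiplicative across this decomposition because $w_{\mathcal{C}}(c)=w_{\mathcal{B}}(b)\prod_j w_{\mathcal{A}}(a_j)$ already factors as (weight of the pointed $\mathcal{A}$-structure on $I_j$) times (weight of the $\mathcal{B}$-structure times the product of the remaining $\mathcal{A}$-weights). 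A subtlety is that the $\mathcal{B}$-structure is built on the \emph{cyclically} ordered list, whereas $\mathcal{B}\circ\mathcal{A}$ records it on the \emph{linearly} ordered list starting at $I_j$; but since $\mathcal{B}$-structures of a given size depend only on the (ordered) size of the underlying set and we have chosen a canonical starting point via the marked element, this is a genuine bijection. The hypotheses $0\notin\mathbb{A}$ (segments are nonempty, so the marked point determines $I_j$ unambiguously) and $0,1\notin\mathbb{B}$ (at least two segments, so $\mathcal{B}\circ\mathcal{A}$ really involves $k-1\geq 1$ further segments and never the empty list) are exactly what make this clean.

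Next I would translate the bijection into generating functions. By Proposition~\ref{gf-pointed}, the generating function of $\dot{\mathcal{C}}$ is $uC'(u)$ and that of $\dot{\mathcal{A}}$ is $uA'(u)$. By the composition formula (Proposition~\ref{prop-composition-formula}), the generating function of $\mathcal{B}\circ\mathcal{A}$ is $B(A(u))$ — here I should note that in $\dot{\mathcal{C}}\cong\dot{\mathcal{A}}\times(\mathcal{B}\circ\mathcal{A})$ the $\mathcal{B}$-structure sees one more segment than the linear concatenation in $\mathcal{B}\circ\mathcal{A}$; to handle this I would instead phrase the decomposition so that the \emph{first} segment $I_j$ is one of the arguments of the $\mathcal{B}$-structure, which amounts to writing the right-hand factor as $B(A(u))/A(u)$ once one factor $A(u)$ (the contribution of $I_j$) has been pulled out into the pointed part. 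Then the product formula (Proposition~\ref{prop-product-formula}) gives
\[
uC'(u) \;=\; uA'(u)\cdot\frac{B(A(u))}{A(u)}.
\]
Since $C(u)$ has no constant term (indeed $C(u)=\sum_{n\geq 2}(\cdots)u^n$), dividing by $u$ and recognizing that $uC'(u)$ and $C(u)$ have the same support up to the scalar $n$ on each coefficient does not immediately give $C(u)$; but the statement to be proved is precisely $C(u)=uA'(u)B(A(u))/A(u)$, i.e. that $C(u)$ \emph{equals} the pointed generating function. This is because every structure counted by $C(u)$ of size $n$ is pointed in exactly $n$ ways, so $[u^n]\big(uC'(u)\big)=n[u^n]C(u)$; wait — this would only yield $uC'(u)=n C$-coefficientwise, not $C=uC'$. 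The resolution, and the one genuinely delicate point, is that the right way to set things up is to point $\mathcal{C}$ and observe the pointed structure is counted by $uA'(u)B(A(u))/A(u)$ \emph{with the same extra factor of $n$ built in}: namely one shows directly that $[u^n]\,uA'(u)\frac{B(A(u))}{A(u)} = n\cdot[u^n]C(u)$ would be wrong, so instead one must arrange the bijection to land on $\mathcal{C}$ itself. I would do this by \emph{not} pointing globally but rather using the marked point only to break the rotational symmetry: a cyclic decomposition into $k$ segments has exactly $n$ choices of marked element, but these are partitioned into the $k$ segments, and summing the pointed count over all structures and dividing appropriately recovers $C$ exactly when combined with $uA'(u)$ counting ``a segment with a distinguished element.''

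The main obstacle I anticipate is exactly this bookkeeping of the factor of $u$ and the rotational symmetry: one must verify that ``decompose a cycle and mark an element'' is in weight-preserving bijection with ``a pointed $\mathcal{A}$-structure (the marked segment, re-rooted as an interval), followed linearly by a $\mathcal{B}\circ\mathcal{A}$-structure on the remaining segments,'' \emph{and} that this produces $C(u)$ rather than $uC'(u)$. The clean way, which I would adopt in the write-up, is: the cyclic symmetry of a length-$n$ cycle means a decomposition into an ordered list of $k$ segments around the cycle is the same as a linear list of $k$ segments up to cyclic rotation; picking the segment that ``starts'' at position $1$ of the interval $I$ (rather than marking an arbitrary element) canonically linearizes it, giving $\mathcal{C}\cong \mathcal{A}'\times(\mathcal{B}\circ\mathcal{A})$ where $\mathcal{A}'$ is $\mathcal{A}$ with a distinguished first element — and the generating function of $\mathcal{A}'$ is $uA'(u)$ by Proposition~\ref{gf-pointed}, because distinguishing the element at the cut is the same data as pointing. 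Then the product and composition formulas give $C(u)=uA'(u)\cdot\frac{B(A(u))}{A(u)}$ directly, where the division by $A(u)$ records that one of the $\mathcal{A}$-arguments of $\mathcal{B}$ has been absorbed into the $uA'(u)$ factor. Verifying that the remaining list has length $k-1\geq 1$ (using $1\notin\mathbb{B}$) and is never empty (using $0\notin\mathbb{B}$) completes the argument.
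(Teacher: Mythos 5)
Your final formulation is correct and is essentially the paper's own proof: the paper likewise linearizes the cycle at the labeled element $1$, encodes the segment containing $1$ together with the position of $1$ inside it as a pointed $\mathcal{A}$-structure (contributing $uA'(u)$ by Proposition \ref{gf-pointed}), and handles the remaining $k-1$ segments plus the size-$k$ $\mathcal{B}$-structure by an ordinary product-and-composition argument. Since your write-up visibly wavers on the central point, let me settle it: the statement to prove is indeed $C(u)=uA'(u)B(A(u))/A(u)$, and your first decomposition $\dot{\mathcal{C}}\cong\dot{\mathcal{A}}\times(\mathcal{B}\circ\mathcal{A})$ would instead prove $uC'(u)=uA'(u)B(A(u))/A(u)$, which is false; the reason no global pointing of $\mathcal{C}$ appears is that the mark is not a free choice but is forced to be the labeled element $1$, and the factor $m$ in $uA'(u)$ arises because, for fixed segment sizes, the size-$m$ segment containing $1$ can sit in exactly $m$ positions relative to $1$ (equivalently, $1$ may occupy any of its $m$ places). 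Relatedly, ``$\mathcal{A}$ with a distinguished \emph{first} element'' taken literally has generating function $A(u)$, not $uA'(u)$; the correct object is the pointed family $\dot{\mathcal{A}}$, as your parenthetical clarification indicates. Finally, instead of ``dividing by $A(u)$,'' the paper makes the bookkeeping combinatorial by introducing the shifted family $\bar{\mathcal{B}}$ with $\bar{\mathcal{B}}_n=\mathcal{B}_{n+1}$, so that $\bar{B}(u)=B(u)/u$ and the right-hand factor is the honest composition $\bar{\mathcal{B}}\circ\mathcal{A}$ with generating function $B(A(u))/A(u)$; with that adjustment your argument is the paper's.
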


\begin{proof}
First we give an alternative description of cycle decompositions involved in type $\mathcal{C}$ structures.
Let $\mathcal{C}^{\langle 1 \rangle}_n$ denote the set of cycle decompositions of an $n$-cycle (with vertices labelled $1,2,\ldots,n$) into at least two segments.
Let $\mathcal{C}^{\langle 2 \rangle}_n$ denote the set of weak integer compositions of $n$, say $(\alpha_0,\alpha_1,\ldots,\alpha_{k-1},\alpha_{k})$, satisfying $k\in \mathbb{B},a_1\geq 1,a_0+a_1 \in \mathbb{A}$ and $\alpha_i \in \mathbb{A} $ for $2\leq i\leq k$.
We proceed to show that there is a one to one correspondence between $\mathcal{C}^{\langle 1 \rangle}_n$ and $\mathcal{C}^{\langle 2 \rangle}_n$ for each $n\geq 2$. The case of $n=2$ or $\mathcal{C}^{\langle 1 \rangle}_n  =\mathcal{C}^{\langle 2\rangle}_n = \emptyset$ is obvious. For $n\geq 3$ and $\mathcal{C}^{\langle 1 \rangle}_n \not = \emptyset$,
we construct a bijective map  $\phi$ from $\mathcal{C}^{\langle 1 \rangle}_n$ to $\mathcal{C}^{\langle 2 \rangle}_n$. For a given element $C$ in $\mathcal{C}^{\langle 1 \rangle}_n$ containing $k\geq 2$ segments, say $\{I_1, I_2,\ldots,I_k\}$ arranged clockwise in the cycle with $1\in I_1$, we can define $\phi(C)$ as follows.
Then let $\phi(C)=(\alpha_0,\alpha_1,\ldots,\alpha_{k-1},\alpha_{k})$,
where $\alpha_0$ is the integer such that $n-\alpha_0$ is the maximal number in $I_k$, $\alpha_1=|I_1|-\alpha_0$ and $\alpha_i=|I_i|$ for $2 \leq i \leq k$. It is clear that $\phi(C)\in \mathcal{C}^{\langle 2 \rangle}_n$ and the map $\phi$ is injective. Conversely, given a weak composition $(\alpha_0,\alpha_1,\ldots,\alpha_{k-1},\alpha_{k})\in \mathcal{C}^{\langle 2 \rangle}_n$, we can define a cycle decomposition $C'$ with $k$ segments $I_1,\ldots,I_k$ such that $I_1=[n-\alpha_0+1,n]\cup[1,\alpha_1]$ and
$I_i=[\alpha_1+\cdots+\alpha_{i-1}+1,\alpha_1+\cdots+\alpha_{i}]$ for
$2\leq i\leq k$. It is not difficult to show that $\phi(C')$ is just $(\alpha_0,\alpha_1,\ldots,\alpha_{k-1},\alpha_{k})$. Thus $\phi$ is also surjective.

To prove the desired result, we shall introduce three more families of combinatorial structures.
Let $\mathcal{\dot{A}}$ be the set of pointed type $\mathcal{A}$ structures defined as before.
From $\mathcal{{B}}$ we may define another family of combinatorial structures, denoted by $\mathcal{\bar{B}}$, simply by requiring that $\mathcal{\bar{B}}_{n}=\mathcal{{B}}_{n+1}$ for $n\geq 0$. (The equality $\mathcal{\bar{B}}_{n}=\mathcal{{B}}_{n+1}$ means that all possible type $\mathcal{\bar{B}}$ structures built on an interval of size $n$ are exactly those possible type $\mathcal{{B}}$ structures built on an interval of size $n+1$.) A type $\mathcal{\bar{B}}$ structure $\bar{b}$ will be weighted as a type $\mathcal{{B}}$ structure $b$. Therefore, the ordinary generating function of $\mathcal{\bar{B}}$ is given by
\begin{align*}
\bar{B}(u)&=\sum_{n=1}^{\infty} \left(\sum_{\bar{b}\in \mathcal{\bar{B}}_n}w_{\mathcal{B}}(\bar{b})\right) u^{n}=\sum_{n=1}^{\infty} \left(\sum_{{b}\in \mathcal{{B}}_{n+1}}w_{\mathcal{B}}({b})\right) u^{n}=\frac{B(u)}{u}.
\end{align*}

Now we define another family of combinatorial structures, denoted by $\mathcal{\tilde{C}}$, in the following way: a structure $\tilde{c}$ built  built on an interval $\tilde{I}$ is obtained by first splitting $I$ into $k\in \mathbb{B}$ intervals  $(\tilde{I}_1,\tilde{I}_2,\ldots,\tilde{I}_k)$, build a type $\mathcal{\dot{A}}$ structure $\dot{a}$ on $\tilde{I}_1$,
for each $2\leq j\leq k$ build a type $\mathcal{A}$ structure $a_j$ on $\tilde{I}_j$, and then build a type $\mathcal{\bar{B}}$ structure $\bar{b}$ on ordered intervals $(\tilde{I}_2,\ldots,\tilde{I}_k)$. We define the weight function $w_{\mathcal{\tilde{C}}}$ of $\mathcal{\tilde{C}}$ by
$$w_{\mathcal{\tilde{C}}}(\tilde{c})=w_{\mathcal{A}}(\dot{a})
w_{\mathcal{A}}({a}_2)\cdots w_{\mathcal{A}}({a}_k)
w_{\mathcal{B}}(\bar{b}).$$
It is readily to see that
$$\mathcal{\tilde{C}} =\mathcal{\dot{A}}\times (\mathcal{\bar{B}}  \circ \mathcal{A}).$$
By Propositions \ref{prop-product-formula} and \ref{prop-composition-formula}, the generating function of $\mathcal{\tilde{C}}$ is given by
$$\tilde{C}(u)=\dot{A}(u)\times \bar{B}(A(u))=uA'(u)\frac{{B}(A(u))}{A(u)}.$$

With the above bijection $\phi$ between $\mathcal{C}^{\langle 1 \rangle}_n$ and $\mathcal{C}^{\langle 2 \rangle}_n$, we are able to establish a weight preserving bijection (also denoted by $\phi$) between $\mathcal{C}_n$ and $\mathcal{\tilde{C}}_n$.
Given a structure $c\in \mathcal{C}_n$, we define a structure
$\tilde{c}\in \mathcal{\tilde{C}}_n$ in the following way.
Suppose that $c$ corresponds to a cycle decomposition $C\in \mathcal{C}^{\langle 1 \rangle}_n$ containing $k\geq 2$ segments, say $\{I_1, I_2,\ldots,I_k\}$ arranged clockwise in the cycle with $1\in I_1$. If $\phi(C)=(\alpha_0,\alpha_1,\ldots,\alpha_{k-1},\alpha_{k})\in \mathcal{C}^{\langle 2 \rangle}_n$, then $|I_1|=\alpha_0+\alpha_1$ and $|I_j|=\alpha_j$ for each $2\leq j\leq k$. Recall that $c$ is obtained by first taking the cycle decomposition $C=\{I_1, I_2,\ldots,I_k\}$, then building a type $\mathcal{A}$ structure $a_j$ for each $I_j$, and then building a type $\mathcal{B}$ structure $b$ for the ordered blocks $(I_1, I_2,\ldots,I_k)$. To define $\tilde{c}$, we first decompose an interval $\tilde{I}$ of length $n$ into $k$ intervals $\tilde{I}_1,\tilde{I}_2,\ldots,\tilde{I}_k$ with $|\tilde{I}_1|=\alpha_0+\alpha_1$ and $|\tilde{I}_j|=\alpha_j$ for each $2\leq j\leq k$; assign the pointed   type $\mathcal{A}$  structure $\dot{a}_1$ to $\tilde{I}_1$ with its $(\alpha_0+1)$-th element attached to the pointer of size $0$;
for $2\leq j\leq k$ assign the type $\mathcal{A}$ structure $a_j$ to each $I_j$; and finally assign the type $\mathcal{\bar{B}}$ structure $b$ to the ordered set $(I_2,\ldots,I_k)$. Let $\phi(c)=\tilde{c}$. It is clear that $w_{\mathcal{\tilde{C}}}(\tilde{c})=w_{\mathcal{{C}}}({c})$, and hence $\phi$ is a weight preserving bijection between $\mathcal{{C}}_n$ and $\mathcal{\tilde{C}}_n$.
Thus, we have
$$C(u)=\tilde{C}(u).$$
This completes the proof.
\end{proof}

\section{Kazhdan-Lusztig polynomials}\label{sec-3}

The aim of this section is to prove Theorem \ref{klcoef}, namely, to determine the \KL  polynomials of fan graphs, squares of paths, wheel graphs and whirl matroids.
We will first determine the \KL  polynomials of fan graphs, which form the basis for computing the other three families of \KL  polynomials. It should be mentioned that generating function methodology is crucial for our computations.

\subsection{Fan graphs}\label{sec:non-ekl}

Let
\begin{align}\label{eqn-fankl-gf}
\Phi_F(t,u) := \sum_{n=0}^\infty P_{F_n}(t) u^{n},
\end{align}
where $F_0$ is the single-vertex graph.
The main result of this subsection is as follows.

\begin{thm}\label{thm-klpol-fan}
We have
\begin{align}\label{eq-gf-klpol-fan}
\Phi_F(t,u)=1+\frac{2 u}{1-u+\sqrt{(u-1)^2-4 tu^2}}.
\end{align}
\end{thm}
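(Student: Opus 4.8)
The plan is to derive a functional equation for $\Phi_F(t,u)$ directly from the graph-theoretic recursion \eqref{klpol-reform} applied to the fan graphs $F_n$, and then solve that equation. First I would recall the structure of a fan graph $F_n$: it is a path $v_1 v_2 \cdots v_n$ together with a hub vertex $h$ joined to every $v_i$. The key is to understand the compositions $C \in \mathcal{C}(F_n)$, i.e. the partitions of $V(F_n)$ into vertex sets inducing connected subgraphs, together with the contracted graph $F_n/C$ and the induced graph $F_n[C]$, and to organize this data so that the right-hand side of \eqref{klpol-reform} becomes a product/composition of generating functions in the sense of Section \ref{sec-2.5}.

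The main structural observation I would exploit is that a connected induced subgraph of $F_n$ is determined by which $v_i$'s it contains and whether it contains $h$: if it does not contain $h$, it must be a contiguous block $\{v_a,\dots,v_b\}$ of the path; if it does contain $h$, the remaining vertices form an arbitrary subset of $\{v_1,\dots,v_n\}$ (since $h$ is adjacent to all of them). Thus a composition $C$ of $F_n$ consists of exactly one block $B_0 \ni h$ together with a partition of the complementary path-vertices into contiguous intervals. The contracted graph $F_n/C$ is itself (the simple graph underlying) a smaller fan, where the hub is the contracted $B_0$ and the path vertices correspond to the intervals in the complement that are ``visible'' — so $P_{F_n/C}(t)$ is again a fan \KL polynomial, which is what makes a self-referential generating-function equation possible. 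I would compute $\chi_{F_n[C]}(t)$ via multiplicativity (Lemma \ref{graphsum}): $F_n[C]$ is a disjoint union of the path-intervals together with the subgraph induced on $B_0$ (a ``fan-like'' piece), and the chromatic polynomial of a path on $m$ vertices is $t(t-1)^{m-1}$, while the induced subgraph on $B_0$ has a known chromatic polynomial as well. Packaging $t^{-|C|}\chi_{F_n[C]}(t)$ appropriately turns each path-interval of length $m$ into a factor like $(t-1)^{m-1}$ (after absorbing one factor of $t/t$ per block), which is exactly the kind of weight to which the composition formula (Proposition \ref{prop-composition-formula}, or its cycle variant) applies. Carrying this out, \eqref{klpol-reform} should become an identity of the form $t^{\rk F_n}P_{F_n}(t^{-1}) = $ (coefficient extraction from a generating function built out of $\Phi_F$, a geometric-type series in $t-1$ and $u$, and the hub-block contribution).

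Concretely, I would introduce the generating function $\widehat\Phi(t,u) = \sum_n t^{\rk F_n} P_{F_n}(t^{-1}) u^n$ (noting $\rk F_n = n$), rewrite the sum over compositions as a composition of the ``interval structure'' generating function with the fan generating function, and obtain a closed algebraic equation relating $\widehat\Phi$ and $\Phi_F$; since $t^n P_{F_n}(t^{-1})$ and $P_{F_n}(t)$ are related by the degree bound $\deg P_{F_n} < n/2$, the two generating functions determine each other, and one arrives at a single quadratic equation for $\Phi_F(t,u)$. Solving that quadratic (choosing the branch with $\Phi_F(t,0)=P_{F_0}(t)=1$) yields \eqref{eq-gf-klpol-fan}; the square root $\sqrt{(u-1)^2 - 4tu^2}$ and the normalization of the constant term are exactly what one expects from such a quadratic, and one checks the sign of the radical by expanding to first order in $u$ (the coefficient of $u^1$ must be $P_{F_1}(t)=1$).

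The step I expect to be the main obstacle is bookkeeping the combinatorial decomposition of $\mathcal{C}(F_n)$ precisely enough that the recursion \eqref{klpol-reform} translates cleanly into generating-function operations — in particular, correctly handling the single distinguished hub-block (which is where the ``pointed structure'' / cycle-variant composition formula of Proposition \ref{prop-composition-formula-variant} may be needed, since the hub plays a role analogous to a marked point), and making sure the factors of $t$ from $t^{-|C|}$ and from the chromatic polynomials of the pieces are matched up so that everything assembles into a rational-times-algebraic expression rather than a mess. Once the functional equation is set up correctly, solving it is routine; getting the equation right, including edge cases like $F_0$ and $F_1$ and the precise interaction with the palindromic substitution $t \mapsto t^{-1}$, is the delicate part.
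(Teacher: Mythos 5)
Your overall strategy --- translate \eqref{klpol-reform} for $F_n$ into a functional equation for $\Phi_F(t,u)$ via the structure of compositions of the fan, then pin down $\Phi_F$ --- is exactly the paper's, but two of your key structural claims are wrong in ways that would derail the execution. First, $F_n/C$ is \emph{not} in general a smaller fan. The block $B_0\ni h$ may contain path vertices that separate the remaining intervals into several maximal runs; after contraction the interval-vertices in different runs are all adjacent to the contracted hub but not to each other, so $F_n/C$ is a one-point union of several fans $F_{\ell_1},\dots,F_{\ell_k}$ glued at the hub. For instance, with $n=5$, $B_0=\{h,v_3\}$ and the other blocks singletons, $F_5/C$ is two triangles sharing a vertex, whose \KL polynomial is $P_{F_2}^2=1$, not $P_{F_4}=1+3t$. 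One must therefore invoke multiplicativity over biconnected components (Lemma \ref{graphsum}) for $P_{F_n/C}$ as well as for $\chi_{F_n[C]}$, and the correct bookkeeping is an \emph{alternating} decomposition of each composition into hub-attached runs and hub-free runs; this is what produces the paper's functional equation $\Phi_F(t^{-1},tu)=\frac{(1+\Psi^o(u))(1+\Psi^e(u))}{1-\Psi^e(u)\Psi^o(u)}$ with $\Psi^e(u)=\Phi_F\bigl(t,\tfrac{u}{1-(t-1)u}\bigr)-1$, rather than a single composition of generating functions as your sketch suggests.

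Second, the resulting equation is not ``a single quadratic equation for $\Phi_F(t,u)$'' that you can solve by choosing a branch of a square root: it relates $\Phi_F$ evaluated at $(t^{-1},tu)$ to $\Phi_F$ evaluated at $(t,\,u/(1-(t-1)u))$, so the quadratic formula does not apply. (The closed form does happen to satisfy $tuY^2-(1-u)Y+u=0$ with $Y=\Phi_F-1$, but that quadratic is not what the recursion hands you.) The way to close the argument --- and what the paper does --- is to note that the defining recursion together with the degree bound determines the $P_{F_n}$ uniquely, then \emph{verify} that the candidate \eqref{eq-gf-klpol-fan} satisfies the functional equation. Your remark that the degree bound lets $\sum t^nP_{F_n}(t^{-1})u^n$ and $\Phi_F(t,u)$ determine each other is the germ of this uniqueness argument, but as written your plan to ``solve the quadratic'' would stall at this step.
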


Our proof of Theorem \ref{thm-klpol-fan} is based on the recursive definition of the \KL polynomials. In fact, we could derive a functional equation satisfied by $\Phi_F(t,u)$ from \eqref{klpol-reform}. Due to the unique existence of the \KL polynomials, we will complete the proof by verifying that the right hand side of \eqref{eq-gf-klpol-fan}
satisfies this functional equation. We now  proceed to get such a equation. By \eqref{klpol-reform} we obtain
$$t^{\rk F_n} P_{F_n}(t^{-1}) =\sum_{C\in \mathcal{C}(F_n)}t^{-|C|}\chi_{F_n[C]}(t) P_{F_n/C}(t) .$$
Now multiply both sides by $u^n$ and then sum over all $n\geq 0$. For the left hand side, we have
\begin{align*}
\sum_{n=0}^{\infty}\left(t^{\rk F_n}P_{F_n}(t^{-1})\right)u^n
&=\sum_{n=0}^{\infty}\left(t^n P_{F_n}(t^{-1})\right)u^n
=\sum_{n=0}^{\infty}P_{F_n}(t^{-1})(tu)^n
=\Phi_F(t^{-1},tu)
\end{align*}
by the fact $\rk F_n=n$ and the equation \eqref{eqn-fankl-gf}.
Thus, we have
\begin{align}\label{eq-compostion}
\Phi_F(t^{-1},tu)=\sum_{n=0}^{\infty}
\left(\sum_{C\in \mathcal{C}(F_n)}t^{-|C|}\chi_{F_n[C]}(t) P_{F_n/C}(t)\right)u^n.
\end{align}
If the right hand side of \eqref{eq-compostion} can be expressed in terms of $\Phi_F(t,u)$, then we will obtain a functional equation satisfied by $\Phi_F(t,u)$.

For this purpose, we first give a characterization of those compositions appeared in \eqref{eq-compostion}.
Suppose that the fan graph $F_n$ has vertex set $V(F_n)=[0,n]$, where we use the convention that $[a,b]=\{a,a+1,\ldots,b\}$ for $a\leq b$. The size of the interval $[a,b]$ is defined to be $b-a+1$.
If vertex $0$ is adjacent to  each vertex in $[1,n]$, then $\mathcal{C}(F_n)$ can be characterized by using certain
set of weak integer compositions of $n$. Recall that a weak composition of $n$ is a sequence $(a_1,a_2,\ldots,a_k)$ of nonnegative integers such that
$a_1+a_2+\cdots+a_k=n$, denoted by $(a_1,a_2,\ldots,a_k)\models n$.
Let $\mathcal{S}_n$ denote the set of compositions with each part strictly greater than zero.
Let $\mathcal{E}_n$ denote the set of weak integer compositions of $n$ with even number of parts, say $(a_1,a_2,\ldots,a_{2k-1},a_{2k})$, satisfying
$a_i\geq 1$ for $1< i< 2k$. Note that $\mathcal{E}_0=\{(0,0)\}$.
For convenience we set
$\mathcal{S}_0=\{(\,)\}$.
For each $\sigma = (a_1,a_2,\ldots,a_{2k-1},a_{2k})\in \mathcal{E}_n$,
let
$$\theta(\sigma)=\{
(A_1,A_2,\ldots,A_{2k-1},A_{2k})\, | \,
A_{2i-1}=(a_{2i-1}) \mbox{ and } A_{2i}\in \mathcal{S}_{a_{2i}} \mbox{ for } 1\leq i\leq k.\}$$
Let
\begin{align}\label{eq-cnprime}
\mathcal{C}'_n=
\cup_{\sigma\in \mathcal{E}_n} \theta(\sigma).
\end{align}
Then we have the following result.

\begin{lem}\label{combij}
For any $n\geq 1$,  there exists a bijective map $\phi$ from $\mathcal{C}(F_n)$ to $\mathcal{C}'_n$.
\end{lem}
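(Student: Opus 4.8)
The plan is to describe explicitly what a composition of $F_n$ looks like, and then match this description term-by-term with the combinatorial data defining $\mathcal{C}'_n$. Recall that a composition $C \in \mathcal{C}(F_n)$ is a partition of $V(F_n) = [0,n]$ into blocks, each of which induces a connected subgraph of $F_n$. First I would distinguish the block $B_0$ containing the hub vertex $0$. Since $0$ is adjacent to every vertex of $[1,n]$, the induced subgraph on $B_0$ is automatically connected no matter which vertices of $[1,n]$ we throw in; so $B_0 = \{0\} \cup T$ for an arbitrary subset $T \subseteq [1,n]$. Every other block $B$ of $C$ is a subset of the path $[1,n]$, and $F_n[B]$ is connected if and only if $B$ is a set of consecutive integers, i.e. an interval. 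Hence a composition of $F_n$ is precisely the data of: a subset $T$ of $[1,n]$ (the non-hub part of $B_0$), together with a partition of the complement $[1,n]\setminus T$ into intervals (maximal runs may be further subdivided arbitrarily into sub-intervals).

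Next I would encode this data as the weak composition recording the alternating pattern of ``hub-gaps'' and ``non-hub runs'' along $[1,n]$. Reading the path $1,2,\ldots,n$ from left to right, the set $T$ of hub-vertices and its complement break $[1,n]$ into an alternating sequence of maximal $T$-segments and maximal $\overline T$-segments. Writing $a_1$ for the length of the initial (possibly empty) $T$-segment, $a_2$ for the length of the first $\overline T$-segment, and so on, padding with a trailing zero if the last segment is a $T$-segment, we obtain a weak composition $\sigma = (a_1, a_2, \ldots, a_{2k-1}, a_{2k}) \models n$ in which all interior parts $a_2, \ldots, a_{2k-1}$ are strictly positive (since they are \emph{maximal} runs lying between two others) — this is exactly the condition defining $\mathcal{E}_n$, with the degenerate case $n=0$ (or $T = [1,n]$) giving $\sigma = (0,0) \in \mathcal{E}_0$ correctly once we interpret $F_n$ appropriately; note that for $n\geq 1$ the relevant $\sigma$ has $\sum a_i = n \geq 1$. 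The remaining freedom is that each $\overline T$-segment, of length $a_{2i}$, must still be partitioned into consecutive sub-intervals, which is recorded by an arbitrary composition $A_{2i} \in \mathcal{S}_{a_{2i}}$ of $a_{2i}$ into positive parts; the $T$-segments carry no further data, so $A_{2i-1} = (a_{2i-1})$. Collecting these choices is precisely the definition of $\theta(\sigma)$, and letting $\sigma$ range over $\mathcal{E}_n$ gives the map
\begin{align*}
\phi : \mathcal{C}(F_n) \longrightarrow \mathcal{C}'_n = \bigcup_{\sigma \in \mathcal{E}_n} \theta(\sigma).
\end{align*}

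Finally I would check that $\phi$ is a bijection by writing down the inverse: given $(A_1, \ldots, A_{2k}) \in \theta(\sigma)$, lay the parts of $A_1, A_2, \ldots, A_{2k}$ consecutively along $[1,n]$; the parts coming from the odd-indexed $A_{2i-1}$ assemble into the hub-block together with vertex $0$, and each part of each even-indexed $A_{2i}$ becomes a block of the composition. That this is a two-sided inverse of $\phi$ is immediate from the construction, since both directions just translate back and forth between ``maximal runs plus refinement data'' and ``the actual block structure.'' The one point requiring a little care — and the only place I expect any friction — is bookkeeping at the boundary: making sure the leading/trailing empty segments ($a_1 = 0$ and/or $a_{2k} = 0$) and the small cases $n = 0, 1$ are handled so that the parity of the number of parts of $\sigma$ is always even and the maximality conventions match the ``$a_i \geq 1$ for $1 < i < 2k$'' condition exactly; once the convention $\mathcal{E}_0 = \{(0,0)\}$, $\mathcal{S}_0 = \{(\,)\}$ is fixed as in the statement, this is a routine verification rather than a genuine obstacle.
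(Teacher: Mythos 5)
Your proposal is correct and follows essentially the same route as the paper: both proofs single out the hub block $\{0\}\cup T$, observe that every other block must be an interval of the path $[1,n]$, and encode the resulting alternating pattern of maximal hub-runs and gap-runs (with their refinements into sub-intervals) as an element of $\theta(\sigma)$ for $\sigma\in\mathcal{E}_n$, inverting by laying the parts back along the path. (The only blemish is the aside claiming $T=[1,n]$ yields $\sigma=(0,0)$ — it yields $(n,0)$ — but this does not affect the argument for $n\geq 1$.)
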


\begin{proof}
To construct a map $\phi$ from
$\mathcal{C}(F_n)$ and $\mathcal{C}'_n$,
one may draw $F_n$ in the plane such that $0$ is connected to
the $n$-vertex path with vertices labeled $1,2,\ldots,n$ from left to right. For each composition $C \in  \mathcal{C}(F_n)$, consider the unique component containing $0$ of its induced subgraph. Removing $0$ from this component leads to a sequence of subpaths, which lie on the $n$-vertex path from left to right, say $T_1,T_2,\ldots,T_k$. It is clear that these subpaths naturally divide the other components into $k+1$ segments  from left to right, say $S_1,S_2,\ldots,S_{k+1}$. Each $S_i$ can be considered as a forest (actually a sequence of paths), which naturally contribute a composition
$A_i$ of $|V(S_i)|$ with parts being the cardinalities of its ordered connected components.
Note that $S_1$ and $S_{k+1}$ might be the null graph, but each $S_i$ for $2\leq i\leq k$ is not the null graph since $C$ is a graph composition.
For example, for the composition  $C=\{\{0,1,2,7,9,10\},\{3,4\},\{5\},\{6\},\{8\} ,\{11\},\{12\}  \}$ of $F_{12}$, the corresponding $S_i$'s and $T_i$'s are illustrated in Figure \ref{f12}.

\begin{figure}[H]
\centering
 \begin{tikzpicture}[line cap=round,line join=round]
\draw (2,0)   node[circle,fill,inner sep=0pt,label=above:0] (0) {};
\foreach \s in {1,2,...,12}   \draw (\s-4,-3)   node[circle,fill,inner sep=0.5pt,label=below:\s] (\s) {};
\draw[color=blue,line width=1.5pt] (-3,-3)--(-2,-3)--(2,0)--cycle;
\draw[color=blue,line width=1.5pt] (5,-3)--(6,-3)--(2,0)--cycle;
\draw[color=blue,line width=1.5pt] (2,0)--(3,-3);
\draw[color=blue,line width=1.5pt]  (-1,-3)--(0,-3);
\foreach \s in {3,4,...,6}  \draw (0)--(\s);
\foreach \s in {8,11,12}  \draw (0)--(\s);
\draw (2)--(3);
\draw (4)--(9);
\draw (10)--(12);
\foreach \s in {1,2,...,12}   \draw (\s-4,-3.8)   node[circle,fill,inner sep=0.5pt,label=below:] (\s) {};
\draw[color=blue,line width=1.5pt] (1)--(2);
\node[draw=none,minimum size=3mm,inner sep=0pt,yshift=-8pt,label=below:$S_1$]   at  (-4,-3.7)   {};
\node[draw=none,minimum size=3mm,inner sep=0pt,yshift=-8pt,label=below:$T_1$]   at  (-2.5,-3.7)   {};
\draw [decorate,decoration={brace,amplitude=5pt},xshift=-0.0cm,yshift=-4pt]
(2,-3.8) -- (-1,-3.8) node [black,midway,xshift=-0pt,yshift=-15pt]  {$S_2$};
\draw[color=blue,line width=1.5pt] (3)--(4);
\node[draw=none,minimum size=3mm,inner sep=0pt,yshift=-8pt,label=below:$T_2$]   at  (3,-3.7)   {};
\node[draw=none,minimum size=3mm,inner sep=0pt,yshift=-8pt,label=below:$S_3$]   at  (4,-3.7)   {};
\node[draw=none,minimum size=3mm,inner sep=0pt,yshift=-8pt,label=below:$T_3$]   at  (5.5,-3.7)   {};
\draw[color=blue,line width=1.5pt] (9)--(10);
\draw [decorate,decoration={brace,amplitude=3pt},xshift=-0.0cm,yshift=-4pt]
(8,-3.8) -- (7,-3.8) node [black,midway,xshift=-0pt,yshift=-15pt]  {$S_4$};
\end{tikzpicture}
\captionsetup{font=footnotesize}
\captionof{figure}{Construction of $S_i$'s and $T_i$'s}
\label{f12}
\end{figure}
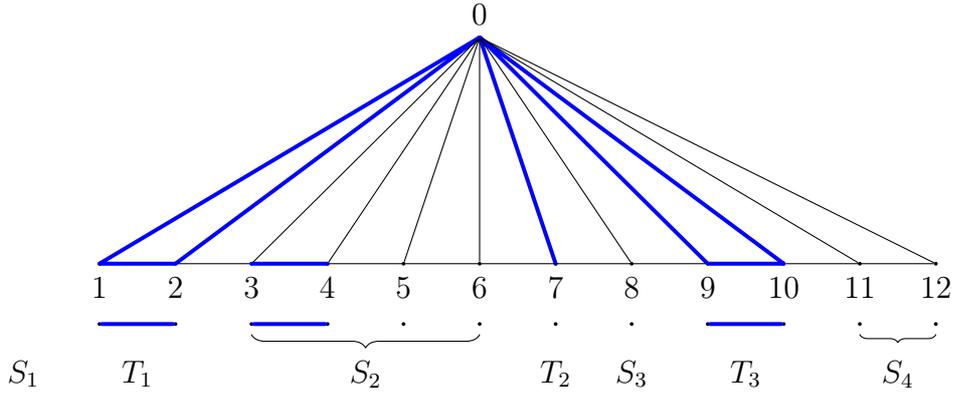
If $S_1$ is the null graph, let
$$\phi(C)=((|V(T_1)|),A_2,(|V(T_2)|),A_3,\cdots,A_k,(|V(T_k)|),A_{k+1}).$$
If $S_1$ is not empty, we add the null graph on the left of $S_1$ and then let
$$\phi(C)=((\,),A_1,(|V(T_1)|),A_2,\cdots,A_k,(|V(T_k)|),A_{k+1}).$$
In both cases, we have $\phi(C)\in \mathcal{C}'_n$.

The injectivity of $\phi$ is obvious by its construction.
It remains to show that $\phi$ is surjective.
Note that removing all parentheses from
a given element $(A_1,A_2,\ldots,A_{2k-1},A_{2k})\in\mathcal{C}'_n$
will lead to a composition $\sigma$ of $n$. If $A_1=()$
or (resp. and) $A_{2k}=()$
then we add a component $0$ at the beginning of $\sigma$
or (resp. and) add a component $0$ at the end of $\sigma$.
Suppose that the resulting weak composition is  $(c_1,c_2,\ldots,c_m)$.
Then decompose the $n$-vertex path into a series of subpaths $T_1,T_2,\ldots, T_m$ from left to right
with $|V(T_i)|=c_i$. Suppose that for $1\leq j\leq k$ the part $c_{i_j}$ originally comes from $A_{2j-1}$. Then we can get a set partition $C$ of $[0,n]$ with one block being $\{\{0\}\cup V(T_{i_1})\cup \cdots \cup V(T_{i_k})\}$ and other blocks being $V(T_i)$'s where $i\neq i_j$ for $1\leq j\leq k$. Clearly, this set partition is a graph composition of
$F_n$, and $\phi(C)=(A_1,A_2,\ldots,A_{2k-1},A_{2k})$. This completes the proof.
\end{proof}

With the above lemma, we proceed to show that for any
$C\in\mathcal{C}(F_n)$ the summand $t^{-|C|}\chi_{F_n[C]}(t) P_{F_n/C}(t) u^n$ can be evaluated by weighting the corresponding element $\phi(C)$ of $\mathcal{C}'_n$. Given $A=(A_1,A_2,\ldots,A_{2k-1},A_{2k}) \in \mathcal{C}'_n$,
suppose that $A_{2i-1}=(a_{2i-1})$ and $A_{2i}=(b_{i1},b_{i2},\ldots,b_{i\ell_i})\in\mathcal{S}_{a_{2i}}$ for $1\leq i\leq k$. Then define the weight  of $A$ to be
\begin{align}\label{eq-weight-function}
w(A)=\prod_{i=1}^{k}\frac{\chi_{F_{a_{2i-1}}}(t)}{t} \cdot P_{F_{\ell_i}}(t) \cdot\prod_{j=1}^{\ell_i} \frac{\chi_{H_{b_{ij}}}(t)}{t},
\end{align}
where we use $H_b$ to denote a path with $b$ vertices.
We have  the following result.

\begin{lem}\label{wterm}
For any $C\in \mathcal{C}(F_n)$, we have
\begin{align}\label{eq-transform}
t^{-|C|}\chi_{F_n[C]}(t) P_{F_n/C}(t)=w(\phi(C)),
\end{align}
where $\phi(C)$ is defined as in Lemma \ref{combij} and
the weight function $w$ is given by \eqref{eq-weight-function}.
\end{lem}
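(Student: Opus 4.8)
The plan is to unravel both sides of \eqref{eq-transform} in terms of the graph-theoretic decomposition produced by the bijection $\phi$ of Lemma \ref{combij}, and then match factor by factor. Fix $C\in\mathcal{C}(F_n)$ and let $B_0$ be the block of $C$ containing the hub vertex $0$. Removing $0$ from $F_n[B_0]$ yields the subpaths $T_1,\dots,T_k$, and the remaining blocks of $C$ organize into the segments $S_1,\dots,S_{k+1}$ exactly as in the proof of Lemma \ref{combij}; write $A=\phi(C)=(A_1,\dots,A_{2k})$ with $A_{2i-1}=(a_{2i-1})$ recording $|V(T_i)|$ and $A_{2i}\in\mathcal{S}_{a_{2i}}$ recording the ordered component sizes of $S_{i+1}$ (with the boundary convention for $S_1$). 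The first step is to analyze the induced-subgraph factor $t^{-|C|}\chi_{F_n[C]}(t)$. Since $F_n[C]$ is the disjoint union of $F_n[B_0]$ and the blocks inside the $S_i$'s, its chromatic polynomial is the product of the chromatic polynomials of these pieces. Each block inside some $S_i$ is a single induced subpath $H_b$ of the bottom path, contributing $\chi_{H_b}(t)$; and $F_n[B_0]$ is the ``broken fan'' consisting of the hub $0$ joined to the disjoint paths $T_1,\dots,T_k$, so its chromatic polynomial factors — conditioning on the color of $0$ — as $(t-1)^{?}\cdots$; more precisely one checks $\chi_{F_n[B_0]}(t)=t^{\,1-k}\prod_{i=1}^k\chi_{F_{a_{2i-1}}}(t)$, because coloring the hub once and then each fan-on-$T_i$ independently over-counts the hub color by a factor of $t$ for each of the $k$ pieces past the first. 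Next, distributing the prefactor $t^{-|C|}$: the number of blocks $|C|$ equals $1$ (for $B_0$) plus the total number of path-blocks in all the $S_i$'s, i.e. $1+\sum_{i=1}^{k}\ell_i$ where $\ell_i$ is the number of parts of $A_{2i}$. Combining, $t^{-|C|}\chi_{F_n[C]}(t)$ becomes $t^{-1}\cdot t^{\,1-k}\prod_i\chi_{F_{a_{2i-1}}}(t)\cdot\prod_i\prod_j t^{-1}\chi_{H_{b_{ij}}}(t)$, and a bookkeeping check reconciles the powers of $t$ with the product $\prod_{i=1}^k \frac{\chi_{F_{a_{2i-1}}}(t)}{t}\cdot\prod_{j=1}^{\ell_i}\frac{\chi_{H_{b_{ij}}}(t)}{t}$ appearing in \eqref{eq-weight-function} — this is the routine-but-careful part of the argument that I would present in full.

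The second step is to identify the contraction factor $P_{F_n/C}(t)$. Contracting the block $B_0$ identifies $0$ with every vertex of every $T_i$; contracting each path-block inside an $S_i$ identifies that subpath to a single vertex. What remains: the vertices surviving from each $S_i$ form, after contraction, a path $H_{\ell_i}$ on $\ell_i$ vertices (its original component subpaths become single vertices, adjacent along the bottom path), and these are each still joined to the contracted hub vertex — because every bottom vertex was adjacent to $0$, and contracting interior path-blocks or $B_0$ preserves incidence to the hub. Hence $G/C$, up to deleting the already-contracted $T_i$-part which contributes nothing, is a disjoint union (connected only through the shared hub) whose biconnected components are fan graphs $F_{\ell_1},\dots,F_{\ell_k}$; here I must be slightly careful about the two boundary segments $S_1,S_{k+1}$ (one of which may be empty) and about whether consecutive $S_i$'s can merge — they cannot, precisely because the $T_j$'s separate them and each $S_i$, $2\le i\le k$, is nonempty, which is why $\mathcal{E}_n$ imposes $a_i\ge 1$ for $1<i<2k$. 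By the multiplicativity of the \KL polynomial over biconnected components, Lemma \ref{graphsum}, we get $P_{F_n/C}(t)=\prod_{i=1}^k P_{F_{\ell_i}}(t)$, which is exactly the remaining product in \eqref{eq-weight-function}.

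Putting the two steps together gives \eqref{eq-transform}. The main obstacle I anticipate is not any deep idea but the careful handling of the biconnected-component structure of $F_n/C$ together with the exact power-of-$t$ accounting: I need to verify that contracting $B_0$ does not accidentally create extra edges or extra biconnected components, that the two end segments $S_1$ and $S_{k+1}$ (possibly null, possibly of size $0$ encoded by an empty composition) are correctly absorbed into the $A_1$ and $A_{2k}$ slots, and that the $t^{\,1-k}$ coming from the chromatic polynomial of the broken fan $F_n[B_0]$ exactly cancels against the $k$-fold product of $t^{-1}$'s in $w$ after accounting for $t^{-|C|}$. Once the decomposition is set up cleanly, the identity \eqref{eq-transform} follows by comparing the two sides factor by factor over the index $i$ (the fan pieces) and the nested index $j$ (the path pieces).
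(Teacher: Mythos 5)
Your proposal is correct and follows essentially the same route as the paper's proof: decompose $F_n[C]$ into the hub component (fans $F_{a_{2i-1}}$ glued at $0$) plus the path blocks $H_{b_{ij}}$, apply the multiplicativity of the chromatic polynomial (Lemma \ref{graphsum}) together with the count $|C|=1+\sum_i\ell_i$ to match the $t$-powers, and identify $F_n/C$ with the fans $F_{\ell_i}$ glued at the hub so that Lemma \ref{graphsum} gives $P_{F_n/C}=\prod_i P_{F_{\ell_i}}$. The power-of-$t$ bookkeeping you set up does close correctly ($-1-\sum_i\ell_i+(1-k)=-k-\sum_i\ell_i$), so nothing further is missing.
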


\begin{proof}
Suppose that
$$\phi(C)=(A_1,A_2,\ldots,A_{2k-1},A_{2k}),$$
where $A_{2i-1}=(a_{2i-1})$ and $A_{2i}=(b_{i1},b_{i2},\ldots,b_{i\ell_i})\in \mathcal{S}_{a_{2i}}$
for $1\leq i\leq k$.
By the construction of $\phi(C)$ in Lemma \ref{combij}, it is clear that
$$|C|=1+\ell_1+\cdots+\ell_k.$$
As illustrated in Figure \ref{fig:f12c}, the induced subgraph $F_n[C]$ is composed of subpaths $H_{b_{ij}}$ (where $i$ varies from $1$ to $k$ and $j$
varies from $1$ to $\ell_i$) and the unique connected component containing $0$.
The latter is obtained from fan graphs
$F_{a_1},F_{a_3},\ldots,F_{a_{2k-1}}$ by identifying their unique $0$ vertices.
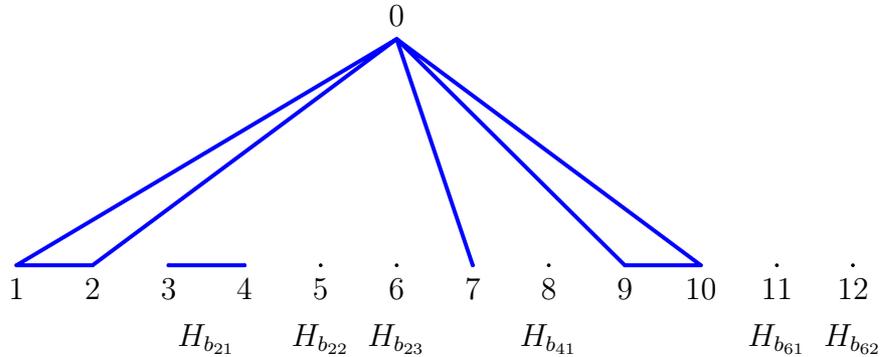
\begin{figure}[H]
\centering
\begin{tikzpicture}[line cap=round,line join=round]
\draw (2,0)   node[circle,fill,inner sep=0pt,label=above:0] (0) {};
\foreach \s in {1,2,...,12}   \draw (\s-4,-3)   node[circle,fill,inner sep=0.5pt,label=below:\s] (\s) {};
\draw[color=blue,line width=1.5pt] (-3,-3)--(-2,-3)--(2,0)--cycle;
\draw[color=blue,line width=1.5pt] (5,-3)--(6,-3)--(2,0)--cycle;
\draw[color=blue,line width=1.5pt] (2,0)--(3,-3);
\draw[color=blue,line width=1.5pt]  (-1,-3)--(0,-3);
\node[draw=none,minimum size=3mm,inner sep=0pt,yshift=-8pt,label=below:$H_{b_{21}}$]   at  (-0.5,-3.2)   {};
\node[draw=none,minimum size=3mm,inner sep=0pt,yshift=-8pt,label=below:$H_{b_{22}}$]   at  (1,-3.2)   {};
\node[draw=none,minimum size=3mm,inner sep=0pt,yshift=-8pt,label=below:$H_{b_{23}}$]   at  (2,-3.2)   {};
\node[draw=none,minimum size=3mm,inner sep=0pt,yshift=-8pt,label=below:$H_{b_{41}}$]   at  (4,-3.2)   {};
\node[draw=none,minimum size=3mm,inner sep=0pt,yshift=-8pt,label=below:$H_{b_{61}}$]   at  (7,-3.2)   {};
\node[draw=none,minimum size=3mm,inner sep=0pt,yshift=-8pt,label=below:$H_{b_{62}}$]   at  (8,-3.2)   {};
\end{tikzpicture}
\captionsetup{font=footnotesize}
\captionof{figure}{$F_{12}[\{\{0,1,2,7,9,10\},\{3,4\},\{5\},\{6\},\{8\} ,\{11\},\{12\}  \}]$}
\label{fig:f12c}
\end{figure}
By Lemma \ref{graphsum}, we have
$$\chi_{F_n[C]}(t)=t^{-k+1}\prod_{i=1}^{k} \chi_{F_{a_{2i-1}}}(t) \cdot \prod_{j=1}^{\ell_i} \chi_{H_{b_{ij}}}(t).$$
We proceed to compute $P_{F_n/C}(t)$. Recall that $F_n/C$ can be considered as the quotient graph of $F_n$ by identifying each block of $C$ as a single vertex, see Figure \ref{fig:f12/c}.
Thus $F_n/C$ is isomorphic to the graph obtained from fan graphs
$F_{\ell_1},F_{\ell_2},\ldots,F_{\ell_{k}}$ by identifying their unique $0$ vertices.
\begin{figure}[H]
\centering
\begin{tikzpicture}[line cap=round,line join=round]
\tikzstyle{every node}=[draw,shape=circle]
\draw (2,0.3)   node[draw=none,inner sep=0pt] (0) {\{0,1,2,7,9,10\}};
\draw (2,0)   node[circle,fill,inner sep=0pt] (0) {};
\foreach \s in {4,5,6,8,11,12}   \draw (\s-4,-3)   node[circle,fill,inner sep=0.5pt,label=below:] (\s) {};
\foreach \s in {4,5,6,8,11,12}  \draw (0)--(\s);
\draw (4)--(6) (11)--(12);
\node[draw=none,minimum size=3mm,inner sep=0pt]   at  (-0.2,-3.4)   {\{3,4\}};
\node[draw=none,minimum size=3mm,inner sep=0pt]   at  (1,-3.4)   {\{5\}};
\node[draw=none,minimum size=3mm,inner sep=0pt]   at  (2,-3.4)   {\{6\}};
\node[draw=none,minimum size=3mm,inner sep=0pt]   at  (4,-3.4)   {\{8\}};
\node[draw=none,minimum size=3mm,inner sep=0pt]   at  (7,-3.4)   {\{11\}};
\node[draw=none,minimum size=3mm,inner sep=0pt]   at  (8,-3.4)   {\{12\}};
 \end{tikzpicture}
\captionsetup{font=footnotesize}
\captionof{figure}{$F_{12}/\{\{0,1,2,7,9,10\},\{3,4\},\{5\},\{6\},\{8\} ,\{11\},\{12\}  \}$}
\label{fig:f12/c}
\end{figure}
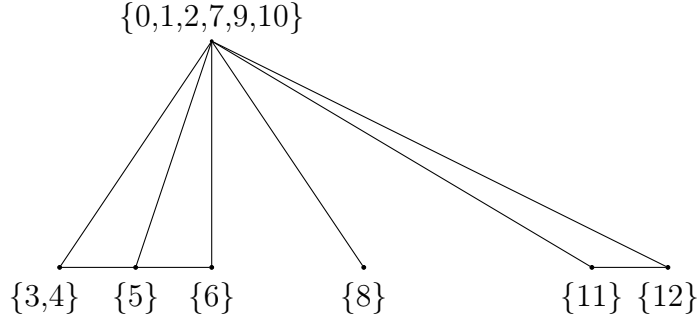

Again by Lemma \ref{graphsum},
we have
$$P_{F_n/C}(t)=\prod_{i=1}^{k}{P_{F_{\ell_i}}(t)}.$$
Combining the above identities, we obtained the desired result.
\end{proof}

In view of Lemma \ref{combij} and Lemma \ref{wterm}, it immediately follows from  \eqref{eq-compostion} that
\begin{align*}
\Phi_F(t^{-1},tu)=\sum_{n= 0}^{\infty} \left(\sum_{A\in \mathcal{C}'_n}w(A)\right)u^n,
\end{align*}
where $w(A)$ is given by \eqref{eq-weight-function}.

Let
\begin{align*}
\Psi(u)=\sum_{n= 0}^{\infty} \left(\sum_{A\in \mathcal{C}'_n}w(A)\right)u^n.
\end{align*}
Then
\begin{align}\label{eq-key-gf}
\Psi(u)=\Phi_F(t^{-1},tu).
\end{align}
In order to obtain a functional equation satisfied by $\Phi_F(t,u)$, we shall give another expression of $\Psi(u)$ in terms of $\Phi_F(t,u)$ by using the method of generating functions.

Note that each $A=(A_1,A_2,\ldots,A_{2k-1},A_{2k}) \in \mathcal{C}'_n$ may be considered as a combinatorial structure, say type $\mathcal{A}$ structure, on an interval of size $n$. In order to use the generating function methodology, let $\mathcal{A}^o$ and $\mathcal{A}^e$ denote the two types of structures respectively corresponding to the components of $A$. Precisely, type $\mathcal{A}^o$ structure will assign to an interval of size $n$ the weak composition $(n)$ with the weight function $w^o$ defined by $$w^o((n))=\frac{\chi_{F_{n}}(t)}{t},$$ and type $\mathcal{A}^e$ structure will assign to an interval of size $n$ a composition $(b_1,\ldots,b_k)\in\mathcal{S}_n$
with the weight function $w^e$ defined by
$$w^e((b_1,\ldots,b_k))=P_{F_{k}}(t) \cdot\prod_{j=1}^{k} \frac{\chi_{H_{b_{j}}}(t)}{t}.$$
Note that the unique $\mathcal{A}^o$ structure of size $0$ is $(0)$, weighted by $1$, and the unique $\mathcal{A}^e$ structure of size $0$ is $(\,)$, also weighted by $1$.
Let $\mathcal{A}^o_n$ (resp. $\mathcal{A}^e_n$) denote the set of
type $\mathcal{A}^o$ (resp. $\mathcal{A}^e$) structures which can be built on an interval of size $n$.
Let
\begin{align}
\Psi^o(u)&=\sum_{n= 1}^{\infty} \left(\sum_{A^o\in \mathcal{A}^o_n}w^o(A^o)\right)u^n,\label{gf-def-1}\\
\Psi^e(u)&=\sum_{n= 1}^{\infty} \left(\sum_{A^e\in \mathcal{A}^e_n}w^e(A^e)\right)u^n,\label{gf-def-2}
\end{align}
where the index $n$ in each summation starts with $1$ other than $0$.

We claim that the following results hold.
\begin{lem}\label{gen-o and e}
We have
\begin{align}
\Psi^o(u)&=\frac{(t-1) u}{1-(t-2) u},\label{gf-1}\\
\Psi^e(u)&=\Phi_F\left(t, \frac{u}{1-(t-1)u}\right)-1\label{gf-2}.
\end{align}
\end{lem}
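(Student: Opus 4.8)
The plan is to compute each generating function by unwinding the definitions of the weights $w^o$ and $w^e$, using the classical fact that the chromatic polynomial of a path $H_b$ on $b$ vertices is $t(t-1)^{b-1}$, and that $F_a$ is obtained by coning a path $H_a$ over a single vertex, so that $\chi_{F_a}(t) = t\,\chi_{H_a}(t-1) = t(t-1)(t-2)^{a-1}$ for $a\geq 1$ (with $\chi_{F_0}(t)=t$). For \eqref{gf-1}, I would simply observe that $\mathcal{A}^o_n$ is a singleton for $n\geq 1$, so
\[
\Psi^o(u)=\sum_{n\geq 1}\frac{\chi_{F_n}(t)}{t}u^n=\sum_{n\geq 1}(t-1)(t-2)^{n-1}u^n=\frac{(t-1)u}{1-(t-2)u},
\]
a geometric series; this step is routine.

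For \eqref{gf-2}, the key is to recognize the defining combinatorial structure of $\mathcal{A}^e$ as a composition-type construction. A type $\mathcal{A}^e$ structure on an interval of size $n$ is a composition $(b_1,\dots,b_k)\models n$ with each $b_j\geq 1$, weighted by $P_{F_k}(t)\prod_{j=1}^k \chi_{H_{b_j}}(t)/t$. I would introduce the auxiliary family $\mathcal{H}$ of nonempty paths, where a path on $b\geq 1$ vertices carries weight $\chi_{H_b}(t)/t = (t-1)^{b-1}$, so its generating function is $H(u) = \sum_{b\geq 1}(t-1)^{b-1}u^b = u/(1-(t-1)u)$. Then the structure ``a composition of the interval into $k$ nonempty blocks, a path-weight on each block, and a factor $P_{F_k}(t)$ depending only on the number of blocks $k$'' is exactly an instance of the composition formula (Proposition~\ref{prop-composition-formula}) with inner family $\mathcal{H}$ and outer family $\mathcal{B}$ whose size-$k$ structures have total weight $P_{F_k}(t)$, i.e.\ with outer generating function $\sum_{k\geq 0}P_{F_k}(t)u^k = \Phi_F(t,u)$. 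Hence $\mathcal{A}^e = \mathcal{B}\circ\mathcal{H}$, and Proposition~\ref{prop-composition-formula} gives the full generating function (including the empty structure of weight $P_{F_0}(t)=1$) as $\Phi_F(t, H(u)) = \Phi_F\!\left(t,\tfrac{u}{1-(t-1)u}\right)$. Since $\Psi^e(u)$ in \eqref{gf-def-2} sums only over $n\geq 1$, I must subtract the $n=0$ term, which is $1$, yielding \eqref{gf-2}.

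The only subtlety — and the step I would be most careful about — is the bookkeeping of the empty-block conventions: the composition formula as stated in Proposition~\ref{prop-composition-formula} requires $0\notin\mathbb{A}$ for the inner family, which holds here since $\mathcal{H}$ has only nonempty structures, and it automatically includes the empty structure via the $k=0$ term of the outer series $B$. I would double-check that the stated weight $w^e((b_1,\dots,b_k))=P_{F_k}(t)\prod_j \chi_{H_{b_j}}(t)/t$ matches the product of the inner weights times the outer weight $P_{F_k}(t)$, which it does verbatim, and that the $n=0$ case ($\mathcal{A}^e_0=\{(\,)\}$ with weight $P_{F_0}(t)=1$) is precisely the term removed in passing from $\Phi_F(t,H(u))$ to $\Psi^e(u)$. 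Everything else is a direct application of the enumerative results already established in Section~\ref{sec-2.5}.
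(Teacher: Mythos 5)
Your proposal is correct and follows essentially the same route as the paper: a geometric series for $\Psi^o(u)$ using $\chi_{F_n}(t)=t(t-1)(t-2)^{n-1}$, and the composition formula with inner blocks weighted by $\chi_{H_b}(t)/t$ and outer weight $P_{F_k}(t)$ for $\Psi^e(u)$. The only (immaterial) difference is bookkeeping: the paper composes with the outer series $\Phi_F(t,u)-1$ directly, whereas you compose with the full series $\Phi_F(t,u)$ and subtract the $n=0$ term afterwards.
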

\begin{proof}
Let us first prove \eqref{gf-1}.
By the definition of the chromatic polynomial of a graph, it is clear that
$$\chi_{F_n}(t)=t (t-1)(t-2)^{n-1}.$$
Thus, by \eqref{gf-def-1}, we get that
\begin{align*}
\Psi^o(u)&=\sum_{n= 1}^{\infty} \frac{\chi_{F_n}(t)}{t} u^n=\sum_{n= 1}^{\infty}(t-1)(t-2)^{n-1} u^n=\frac{(t-1) u}{1-(t-2) u}
\end{align*}
by a straightforward computation.

We proceed to prove \eqref{gf-2}. Note that a weighted  composition $(b_1,\ldots,b_k)\in\mathcal{S}_n$
can be considered as a composition of two weighted structures. Precisely, we first split the interval $[1,n]$ into a sequence of $k$ nonempty intervals, weight the interval of size $b_j$ by $\frac{\chi_{H_{b_{j}}}(t)}{t}$, and then weight the sequence by $P_{F_{k}}(t)$. Recall that
\begin{align}
\chi_{H_{b_j}}(t)&=t(t-1)^{b_j-1}\label{eq-path-chro}\\[5pt]
\sum_{b_j=1}^{\infty}{\frac{\chi_{H_{b_j}}(t)}{t} u^{b_j}}&=\frac{u}{1-(t-1)u}\label{eq-path-gf}\\[5pt]
\sum_{k=1}^{\infty}P_{F_{k}}(t) u^k&= \Phi_F(t,u)-1.
\end{align}
By the composition formula of generating functions, we obtain
\begin{align}
\Psi^e(u)&=\Phi_F\left(t, \frac{u}{1-(t-1)u}\right)-1,
\end{align}
as desired.
\end{proof}

Let $\mathcal{A}^{eo}$ be the set of pairs $(A^e,A^o)$, where $A^e$ is a structure of type $\mathcal{A}^e$, and $A^o$ is of type $\mathcal{A}^o$, and moreover neither  $A^e$ nor $A^o$ is empty. The weight function $w^{eo}$ of
$\mathcal{A}^{eo}$ is defined by
\begin{align}\label{eq-weight-eo}
w^{eo}((A^e,A^o))=w^e(A^e)w^o(A^o).
\end{align}
Let $\mathcal{A}^{eo}_n$ denote the set of  type $\mathcal{A}^{eo}$  structures which can be build on an interval of size $n$.
Consider the following generating function
\begin{align}
\Psi^{eo}(u)&=\sum_{n= 0}^{\infty} \left(\sum_{A^{eo}\in \mathcal{A}^{eo}_n}w^{eo}(A^{eo})\right)u^n.\label{gf-def-3-eo}
\end{align}
By the product formula of generating functions, we get that
\begin{align}\label{psi_eo_gf}
\Psi^{eo}(u)&=\Psi^{e}(u)\Psi^{o}(u).
\end{align}
Furthermore, let $\mathcal{A}^{m}$ be the set of combinatorial structures each of which is a sequence $(A^{eo}_1,\ldots,A^{eo}_k)$ of $\mathcal{A}^{eo}$ structures. Define the weight function $w^m$ of $\mathcal{A}^{m}$ as
$$w^{m}((A^{eo}_1,\ldots,A^{eo}_k))=\prod_{i=1}^k w^{eo}(A^{eo}_i).$$
Let $\mathcal{A}^m_n$ denote the set of  type $\mathcal{A}^m$  structures which can be build on an interval of size $n$.
Consider the following generating function
\begin{align}
\Psi^m(u)&=\sum_{n= 0}^{\infty} \left(\sum_{A^m\in \mathcal{A}^m_n}w^m(A^m)\right)u^n.\label{gf-def-3}
\end{align}
Applying the composition formula of generating functions once again, we get the following result, whose proof is omitted here.
\begin{lem}\label{gen-oe}
We have
\begin{align}
\Psi^m(u)&=\frac{1}{1-\Psi^e(u)\Psi^o(u) }.\label{gf-3}
\end{align}
\end{lem}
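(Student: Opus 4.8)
The plan is to read off $\Psi^m(u)$ from the sequence construction, exactly as in the classical identity $\sum_{k\ge 0}x^k=(1-x)^{-1}$ for generating functions of sequences, applied with $x=\Psi^{eo}(u)$. First I would make the sequence structure explicit: let $\mathcal{B}$ be the type of combinatorial structures for which $\mathcal{B}_k$ consists of a single element (a ``linear order'' on the $k$ blocks) for every $k\ge 0$, each weighted by $1$; then $\mathbb{B}=\mathbb{N}$ and $B(u)=\sum_{k=0}^{\infty}u^k=\frac{1}{1-u}$. By the very definition of $\mathcal{A}^{m}$ as the set of sequences $(A^{eo}_1,\ldots,A^{eo}_k)$ of $\mathcal{A}^{eo}$ structures, together with the multiplicative weight $w^{m}((A^{eo}_1,\ldots,A^{eo}_k))=\prod_{i=1}^{k}w^{eo}(A^{eo}_i)$, we have $\mathcal{A}^{m}=\mathcal{B}\circ\mathcal{A}^{eo}$ in the sense of Section~\ref{sec-2.5}.

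Next I would check the hypotheses of Proposition~\ref{prop-composition-formula}. Since an $\mathcal{A}^{eo}$ structure is a pair $(A^e,A^o)$ in which \emph{neither} $A^e$ \emph{nor} $A^o$ is empty, every $\mathcal{A}^{eo}$ structure has size at least $2$; in particular $0\notin\mathbb{A}^{eo}$, so the composition formula applies and gives
$$\Psi^{m}(u)=B\bigl(\Psi^{eo}(u)\bigr)=\frac{1}{1-\Psi^{eo}(u)}.$$
Substituting $\Psi^{eo}(u)=\Psi^{e}(u)\Psi^{o}(u)$ from \eqref{psi_eo_gf} then yields \eqref{gf-3}.

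There is no genuine obstacle here; the only point requiring care is the bookkeeping at size $0$: namely that the empty sequence is the unique $\mathcal{A}^{m}$ structure built on an interval of size $0$, which matches the constant term $1$ of $B(u)$ and the lower summation index in \eqref{gf-def-3}, and that $0\notin\mathbb{A}^{eo}$, so that $\Psi^{eo}(u)$ has zero constant term and $\bigl(1-\Psi^{eo}(u)\bigr)^{-1}$ is a well-defined formal power series. Both facts are immediate from the definitions, so the proof is as short as the statement ``whose proof is omitted'' suggests.
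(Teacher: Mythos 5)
Your proof is correct and follows exactly the route the paper intends: the paper omits the proof of Lemma \ref{gen-oe}, remarking only that it is an application of the composition formula (Proposition \ref{prop-composition-formula}), and you have simply supplied the routine details — identifying $\mathcal{A}^m=\mathcal{B}\circ\mathcal{A}^{eo}$ with $B(u)=(1-u)^{-1}$ and checking that $0\notin\mathbb{A}^{eo}$ so the formula applies and the resulting power series is well defined.
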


Now we are in the position to give another expression of $\Psi(u)$ in terms of $\Phi_F(t,u)$, which is stated as below.

\begin{lem}\label{fan-eqn}
Let $\Psi^o(u)$ and $\Psi^e(u)$ be as in Lemma \ref{gen-o and e}. Then
\begin{align}\label{eq-another}
\Psi(u)=\frac{(1+\Psi^o(u))(1+\Psi^e(u))}{1-\Psi^e(u)\Psi^o(u)}.
\end{align}
\end{lem}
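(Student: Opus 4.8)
The plan is to give $\Psi(u)$ a second combinatorial meaning by regrouping the pieces of the structures in $\mathcal{C}'_n$, and then to read off \eqref{eq-another} from the product formula together with Lemma \ref{gen-oe}. Recall from \eqref{eq-cnprime} that an element $A\in\mathcal{C}'_n$ is a sequence $(A_1,A_2,\ldots,A_{2k-1},A_{2k})$ whose odd-indexed pieces $A_{2i-1}=(a_{2i-1})$ are type $\mathcal{A}^o$ structures and whose even-indexed pieces $A_{2i}\in\mathcal{S}_{a_{2i}}$ are type $\mathcal{A}^e$ structures, the sizes $(a_1,\ldots,a_{2k})$ forming an element of $\mathcal{E}_n$; by the definition of $\mathcal{E}_n$, only the leftmost piece $A_1$ and the rightmost piece $A_{2k}$ are allowed to have size $0$, while $a_2,\ldots,a_{2k-1}\ge 1$. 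Moreover, by \eqref{eq-weight-function} and the definitions of $w^o$ and $w^e$, the weight factors as $w(A)=\prod_{i=1}^{k}w^o(A_{2i-1})\,w^e(A_{2i})$.

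First I would set up the regrouping. Given $A=(A_1,\ldots,A_{2k})\in\mathcal{C}'_n$, put $A^o:=A_1$ (a possibly empty type $\mathcal{A}^o$ structure), $A^e:=A_{2k}$ (a possibly empty type $\mathcal{A}^e$ structure), and form the sequence $A^m:=\bigl((A_2,A_3),(A_4,A_5),\ldots,(A_{2k-2},A_{2k-1})\bigr)$ of $k-1$ ordered pairs. Since $A_{2i}$ is of type $\mathcal{A}^e$ and $A_{2i+1}$ is of type $\mathcal{A}^o$, and both indices lie strictly between $1$ and $2k$ (so both pieces are nonempty), each pair $(A_{2i},A_{2i+1})$ is a genuine type $\mathcal{A}^{eo}$ structure; hence $A^m$ is a type $\mathcal{A}^m$ structure, the empty sequence when $k=1$. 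Conversely, any triple $(A^o,A^m,A^e)$ consisting of a type $\mathcal{A}^o$ structure, a type $\mathcal{A}^m$ structure, and a type $\mathcal{A}^e$ structure reassembles, by interleaving $A^o$, the pairs of $A^m$, and $A^e$, into a unique element of $\mathcal{C}'_n$: the outer pieces $A^o,A^e$ may be empty, but the two components of each $\mathcal{A}^{eo}$ block of $A^m$ are nonempty, which is exactly the size condition defining $\mathcal{E}_n$. Thus $A\mapsto(A^o,A^m,A^e)$ is a size-preserving bijection identifying $\mathcal{C}'_n$ with the product (in the sense of Proposition \ref{prop-product-formula}) of $\mathcal{A}^o$-, $\mathcal{A}^m$-, and $\mathcal{A}^e$-structures on an interval of size $n$. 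Regrouping the factors,
\begin{align*}
w(A)=w^o(A_1)\prod_{i=1}^{k-1}\bigl(w^e(A_{2i})w^o(A_{2i+1})\bigr)w^e(A_{2k}),
\end{align*}
and using $w^{eo}((A^e,A^o))=w^e(A^e)w^o(A^o)$ together with the definition of $w^m$, one sees that this bijection is weight preserving: $w(A)=w^o(A^o)\,w^m(A^m)\,w^e(A^e)$.

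Iterating the product formula (Proposition \ref{prop-product-formula}) over this three-factor decomposition, and noting that the generating function of the $\mathcal{A}^o$-structures (resp. $\mathcal{A}^e$-structures) including the weight-$1$ empty structure is $1+\Psi^o(u)$ (resp. $1+\Psi^e(u)$), while $\Psi^m(u)$ already accounts for the empty sequence, we obtain $\Psi(u)=\bigl(1+\Psi^o(u)\bigr)\Psi^m(u)\bigl(1+\Psi^e(u)\bigr)$. Substituting $\Psi^m(u)=\frac{1}{1-\Psi^e(u)\Psi^o(u)}$ from Lemma \ref{gen-oe} then gives \eqref{eq-another}. I expect the only delicate step to be the verification that the regrouping is a bijection onto \emph{all} such triples --- i.e., that the constraint ``$a_i\ge 1$ for $1<i<2k$'' in the definition of $\mathcal{E}_n$ matches exactly ``both components of every $\mathcal{A}^{eo}$ block of $A^m$ are nonempty, the outer pieces $A^o,A^e$ possibly empty'' --- together with the correct handling of the degenerate cases $k=1$ (empty $\mathcal{A}^m$) and $n=0$; the rest is bookkeeping with the product formula.
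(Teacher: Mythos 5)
Your proposal is correct and follows essentially the same route as the paper: decompose each $A\in\mathcal{C}'_n$ as the product of $A_1$ (type $\mathcal{A}^o$, possibly empty), the middle block $(A_2,\ldots,A_{2k-1})$ regrouped into $k-1$ nonempty $\mathcal{A}^{eo}$ pairs forming a type $\mathcal{A}^m$ structure, and $A_{2k}$ (type $\mathcal{A}^e$, possibly empty), then apply the product formula and Lemma \ref{gen-oe}. The only difference is that you spell out the bijection and weight-preservation that the paper dismisses as ``straightforward to verify,'' which is a welcome addition but not a different argument.
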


\begin{proof}
Giving a structure $A=(A_1,A_2,\ldots,A_{2k-1},A_{2k})$ of $\mathcal{A}$,
the component $A_1$ is a structure $A^o$ of type $\mathcal{A}^o$, the component $A_{2k}$ is a structure $A^e$ of type $\mathcal{A}^e$,
and the subsequence $(A_2,A_3,\ldots,A_{2k-1})$ could be considered as a structure $A^m$ of type $\mathcal{A}^m$.
Note that $A_0$ is allowed to be $(0)$ and $A_{2k}$ is allowed to be $(\,)$. Moreover, it is straightforward to verify that
$$w(A)=w^o(A^o)w^m(A^m)w^e(A^e).$$
Therefore, the structure $\mathcal{A}$ can be considered as the product
$\mathcal{A}^o\times \mathcal{A}^{m} \times \mathcal{A}^e$ of three types of structures. By the product formula of generating functions, we have
\begin{align*}
\Psi(u)&=\sum_{n= 0}^{\infty} \left(\sum_{A^o\in \mathcal{A}^o_n}w^o(A^o)\right)u^n\times
\sum_{n= 0}^{\infty} \left(\sum_{A^m\in \mathcal{A}^m_n}w^m(A^m)\right)u^n \times \sum_{n= 0}^{\infty} \left(\sum_{A^e\in \mathcal{A}^e_n}w^e(A^e)\right)u^n\\
&=(1+\Psi^o(u))\times \frac{1}{1-\Psi^e(u)\Psi^o(u)}\times (1+\Psi^e(u)),
\end{align*}
as desired. This completes the proof.
\end{proof}

Finally, we come to the proof of Theorem \ref{thm-klpol-fan}.

\noindent \textit{Proof of Theorem \ref{thm-klpol-fan}.}
We proceed to prove \eqref{eq-gf-klpol-fan}.
Combining \eqref{eq-key-gf} and \eqref{eq-another}, we see that
$\Phi_F(t,u)$
satisfies the functional equation
\begin{align}\label{eq-gf-klpol-fan-funceqn}
 \Phi_F(t^{-1},tu)=\frac{(1+\Psi^o(u))(1+\Psi^e(u))}{1-\Psi^e(u)\Psi^o(u)}.
\end{align}

To complete the proof, we need to verify that
the above equation still holds if we substitute
$\Phi_F(t,u)$ by using the right hand side of \eqref{eq-gf-klpol-fan}. Though this could be verified by a tedious computation,
we prefer to give a computer aided proof as follows.


{\renewcommand\baselinestretch{2.5}
\begin{mma}
\In \Phi_{F}[u\_]:=\frac{2u}{1-u+\sqrt{(u-1)^{2}-4tu^2}}+1;\\
\In \Psi^{o}[u\_]:=\frac{(t-1)u}{1-(t-2)u};\\
\In \Psi^{F}[u\_]:=\frac{(1+\Psi^{o}[u])(1+\Psi^{e}[u])}{1-\Psi^{e}[u]\Psi^{o}[u]};\\
\In \Psi^{e}[u\_]:=\Phi_{F}\left[\frac{u}{1-(t-1)u}\right]-1;\\
\end{mma}
}
\begin{mma}
\In |Simplify|[(\Phi_F[u]/.\{t\to t^{-1},u\to t u\})==\Psi_F[u],|Assumptions|\to 1-(t-1)u>0]\\
\end{mma}
\begin{mma}
\Out  |True|\\
\end{mma}

The assumption $1-(t-1) u>0$ in the last step is reasonable since  $|u|$ is sufficiently small. This completes the proof. \qed

Now we can prove \eqref{eq-klpol-fan} of Theorem \ref{klcoef}.

\begin{proof}[Proof of \eqref{eq-klpol-fan}.]
It is sufficient to show the equivalence between \eqref{eq-klpol-fan} and \eqref{eq-gf-klpol-fan}. This equivalence might be known, see A055151 in \cite{oeis}. To be self-contained, we shall give a proof by utilizing  two Mathematica packages, one of which is  \textit{fastZeil} due to Paule and Schorn \cite{paule1995mathematica}  and  the other is \textit{GeneratingFunctions} due to Mallinger \cite{GeneratingFunctionsThesis}. To this end, let
$$a(n,k)={\frac{1}{k+1}\binom{n-1}{k,k,n-2k-1}}$$
and
$$a_n(t)=\sum_{k=0}^{n} a(n,k)t^k.$$
Note that
$P_{F_n}(t)=a_n(t)$, but here, to be compatible with \textit{fastZeil}, the upper bound of summation is set to be $n$ other than $\left\lfloor \frac{n-1}{2}\right\rfloor $.
To prove
\begin{align}\label{eq-temp-fan}
\sum_{n=0}^{\infty}a_n(t)u^n=1+\frac{2 u}{1-u+\sqrt{(u-1)^2-4 tu^2}},
\end{align}
we first import the packages and define one variable.
\begin{mma}
\In <<|RISC| ~\grave{} |fastZeil|~\grave{};\\
\In <<|RISC| ~\grave{} |GeneratingFunctions|~\grave{};\\
\end{mma}
\begin{mma}
\In  a[n\_, k\_] :=|FunctionExpand|\left[\frac{1}{k + 1}|Multinomial|[k,k,n-2 k- 1]\right];\\
\end{mma}
Then we use the command \textbf{Zb} to obtain a recursive relation of $a_n(t)$.
\begin{mma}
\In |ReleaseHold|[|First|[|Zb|[a[n, k] t^k, {k, 0, n}, n] /. |SUM| \to  a]];  \\
\end{mma}
\begin{mma}
\In rec = |Simplify|[|FunctionExpand|[\%], |Assumptions| \to n  \in \mathbb{Z}] \\
\Out n (-1+4 t) a[n]+(3+2 n) a[1+n]==(3+n) a[2+n] \\
\end{mma}
%

Together with the initial values $a_0(t)=a_1(t)=1$, the above recurrence can be transformed into an equivalent differential equation satisfied by
$$f(u)=\sum_{n=0}^{\infty}{a_n(t) u^n}.$$
This could be done by using the command \textbf{RE2DE} automatically.

\begin{mma}
\In de=|RE2DE|[{rec, a[0] == 1, a[1] == 1}, a[n], f[u]]\\
\Out  \{1+u+(-1+u)f[u]+(-u+2u^2-u^3+4tu^3)f'[u]==0,f[0]==1\}\\
\end{mma}

%

It remains to show that the right hand side of \eqref{eq-temp-fan} satisfies the differential equation \textbf{de}.
\begin{mma}
\In |Simplify|[|de|/.f\to \phi_F]\\
\Out \{|True|,|True|\}\\
\end{mma}
Thus we complete the proof of the equivalence between \eqref{eq-klpol-fan} and \eqref{eq-gf-klpol-fan}.
\end{proof}

\subsection{Wheel graphs}\label{subsect-wheel}

Based on the preceding results on fan graphs, we are able to determine the \KL polynomials of wheel graphs. As before, we consider the following generating function of $P_{W_n}(t)$ given by
\begin{align}\label{eqn-wheelkl-gf}
\Phi_W(t,u) := \sum_{n=2}^\infty P_{W_n}(t) u^{n},
\end{align}
where $W_2$ is a simple circle with three vertices.
Although the graph  $W_2$ is not a wheel graph,  we can consider it as a wheel graph in some sense.  Let $W_2^{'}$ be a multigraph with $V(W_2^{'})=\{0,1,2\}$ and $E(W_2^{'})=\{(0,1),(0,2),(1,2),(1,2)\}$. Then the graph $W_2^{'}$  can be considered as a wheel graph and  $W_2$ can be  obtained by deleting an edge $(1,2)$ from $W_2^{'}$.
Note that $P_{W_2}(t)$ and $P_{W_2^{'}}(t)$ are equal to each other.
 As will be shown later, it is convenient to include $P_{W_2}(t) u^{2}$ in the above summation.

The main result of this subsection is as follows.

\begin{thm}\label{thm-klpol-wheel}
We have
\begin{align}
\Phi_W(t,u)&=\frac{2 (u-1)}{\sqrt{(u-1)^2-4 t u^2}-u+1}-\frac{2 \left(u^2+u-1\right)}{(u+1) \left(\sqrt{(u-1)^2-4 t u^2}+u+1\right)}\nonumber\\
&\quad +\frac{2 u}{(u+1) \sqrt{(u-1)^2-4 t u^2}}.\label{eq-gf-klpol-wheel}
\end{align}
\end{thm}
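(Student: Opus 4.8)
The plan is to mimic the strategy used for fan graphs in Theorem \ref{thm-klpol-fan}, namely to reduce the recursive definition \eqref{klpol-reform} of the Kazhdan-Lusztig polynomials of wheel graphs to a functional equation for $\Phi_W(t,u)$, and then verify that the right-hand side of \eqref{eq-gf-klpol-wheel} satisfies it. First I would apply \eqref{klpol-reform} to $W_n$, multiply by $u^n$ and sum over $n\geq 2$, obtaining $\Phi_W(t^{-1},tu)$ on the left (using $\rk W_n = n$) and a sum over compositions $C\in\mathcal{C}(W_n)$ on the right. The key combinatorial step will be to classify these compositions according to the block containing the central vertex $0$. If $C$ is the trivial composition, or more generally if the block containing $0$ already contains every rim vertex, then $W_n/C$ and $W_n[C]$ behave like cycles or complete graphs and must be handled separately; otherwise, cutting the rim cycle at the gaps produced by the $0$-block decomposes the picture into a cyclic arrangement of fan-type pieces (the $0$-block together with consecutive rim vertices it absorbs) interspersed with path-type pieces (runs of rim vertices not in the $0$-block), exactly as in Lemma \ref{combij} but with the interval replaced by a cycle.

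This is precisely where Proposition \ref{prop-composition-formula-variant}, the cyclic composition formula, enters. I would set up an ``odd'' structure $\mathcal{A}^o$ carrying the weight $\chi_{F_a}(t)/t$ of a fan piece and an ``even'' structure $\mathcal{A}^e$ carrying the contribution $P_{F_\ell}(t)\prod_j \chi_{H_{b_j}}(t)/t$ of a run of path components, with the inner ``$\mathcal{B}$'' structure recording the $P_{W_k}$ factor coming from $W_n/C\cong$ wheel-like graph obtained by identifying rim blocks — noting that here the quotient is a wheel $W_k$ rather than a fan $F_k$, because the rim forms a cycle. Summing, and using the already-established generating functions $\Psi^o(u)$ from \eqref{gf-1} and $\Psi^e(u)$ from \eqref{gf-2}, together with $\sum_k P_{W_k}(t)u^k = \Phi_W(t,u)$ and the derivative $\frac{d}{du}$ appearing in Proposition \ref{prop-composition-formula-variant}, I expect to arrive at a functional equation of the shape
\begin{align*}
\Phi_W(t^{-1},tu) = (\text{boundary terms from small cases}) + u\frac{d}{du}\!\left(\text{fan-run generating function}\right)\cdot\frac{\Phi_W\bigl(t,\,\text{argument}\bigr)}{\text{fan-run generating function}},
\end{align*}
in which the ``argument'' is the same substituted series $\tfrac{u}{1-(t-1)u}$-type expression that already showed up in the fan computation, composed appropriately with $\Psi^o$ and $\Psi^e$. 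The boundary terms account for compositions where the $0$-block swallows the whole rim (giving contributions of characteristic polynomials of cycles, computable in closed form) and for the normalization coming from $W_2$.

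Once the functional equation is in hand, the proof concludes exactly as for the fan case: by uniqueness of the Kazhdan-Lusztig polynomials it suffices to check that the explicit formula \eqref{eq-gf-klpol-wheel} satisfies it, and this I would do by a computer-algebra verification in the style of the \texttt{Simplify} call following Theorem \ref{thm-klpol-fan}, substituting $t\mapsto t^{-1}$, $u\mapsto tu$ and simplifying under the assumption that $|u|$ is small (so $1-(t-1)u>0$). The main obstacle I anticipate is purely combinatorial bookkeeping: getting the cyclic decomposition of $\mathcal{C}(W_n)$ correct, in particular correctly isolating the degenerate cases (the $0$-block meeting the rim in zero gaps, one gap, etc.) so that Proposition \ref{prop-composition-formula-variant}'s hypotheses $0\notin\mathbb{A}$ and $0,1\notin\mathbb{B}$ are genuinely satisfied, and matching the weight $w_{\mathcal{C}}$ to the summand $t^{-|C|}\chi_{W_n[C]}(t)P_{W_n/C}(t)$ via the analogue of Lemma \ref{wterm} — the chromatic-polynomial multiplicativity of Lemma \ref{graphsum} does the analytic work, but the ``$0$ vertex is shared'' gluing for wheels versus fans is the delicate point. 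The final algebraic simplification, while lengthy, is routine and best left to Mathematica.
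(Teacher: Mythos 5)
Your overall strategy is exactly the paper's: derive the functional equation $\Phi_W(t^{-1},tu)=\Psi_W(u)$ from \eqref{klpol-reform}, classify the compositions of $W_n$ according to the block containing the hub vertex $0$, evaluate each class with the cyclic composition formula of Proposition \ref{prop-composition-formula-variant}, and finish by a computer-algebra verification that the right-hand side of \eqref{eq-gf-klpol-wheel} satisfies the resulting equation. The degenerate cases you isolate (the $0$-block swallowing the whole rim, plus the $W_2$ normalization) also match the paper's $\mathcal{C}^{\langle 1\rangle}(W_n)$, whose contribution is computed directly from the chromatic polynomials of wheels and cycles.

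There is, however, one concrete point where your sketch would go wrong if followed literally: you conflate two cases that must be kept separate, and your single displayed term double-counts. The quotient $W_n/C$ is a wheel $W_{|C|-1}$ \emph{only} when $\{0\}$ is itself a block of $C$; in that case the rim is cut into bare paths, there are no fan-type pieces at all, the cyclic pieces carry only the weights $\chi_{H_{i_j}}(t)/t$, and the outer $\mathcal{B}$-weight is $P_{W_{|C|-1}}(t)$ — this yields the term $\frac{1}{1-(t-1)u}\,\Phi_W\bigl(t,\frac{u}{1-(t-1)u}\bigr)$. When instead the $0$-block absorbs some rim vertices — the situation your ``fan pieces interspersed with path runs'' describes — the quotient is \emph{not} a wheel but a one-point union of fans $F_{\ell_1},\ldots,F_{\ell_k}$ glued at their apexes, so $P_{W_n/C}=\prod_i P_{F_{\ell_i}}(t)$ is already distributed into the $\mathcal{A}^e$-weights of the individual runs and the outer $\mathcal{B}$-weight must be $1$, not $P_{W_k}(t)$; this yields the separate term $u\,\frac{\partial \Psi^{eo}(u)}{\partial u}\cdot\frac{1}{1-\Psi^{eo}(u)}$. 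Attaching both the $P_{F_\ell}$ factors to the runs \emph{and} an outer $P_{W_k}$ factor, as your schematic functional equation does, multiplies the quotient's Kazhdan-Lusztig polynomial in twice. A further bookkeeping point you only gesture at: the subcase with exactly one fan piece ($\kappa_C=1$) is not covered by Proposition \ref{prop-composition-formula-variant}, which requires at least two segments ($0,1\notin\mathbb{B}$), and has to be handled separately via the pointed-structure formula of Proposition \ref{gf-pointed}. Once these two contributions are separated and the $\kappa_C=1$ case is treated on its own, the rest of your argument goes through as described.
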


We first give an outline of the proof of Theorem  \ref{thm-klpol-wheel}. By using the same arguments as for fan graphs, it is not difficult to show that
\begin{align}\label{eq-compostion-wheel}
\Phi_W(t^{-1},tu)=\sum_{n=2}^{\infty}
\left(\sum_{C\in \mathcal{C}(W_n)}t^{-|C|}\chi_{W_n[C]}(t) P_{W_n/C}(t)\right)u^n
\end{align}
in view of \eqref{klpol-reform} and the fact that $\rk W_n=n$ for any $n\geq 2$. Denote the right hand side of \eqref{eq-compostion-wheel} as $\Psi_W(u)$.
As in the case of fan graphs, we shall further give an alternative expression of $\Psi_W(u)$
in terms of $\Phi_W(t,u)$, which will lead to a functional equation satisfied by $\Phi_W(t,u)$. Then Theorem \ref{thm-klpol-wheel} could be proved with the help of computer packages.

To determine $\Psi_W(u)$, we need to analyze what a composition
of $W_n$ could be. Suppose that  the  wheel graph $W_n$ has vertex set $V(W_n)=[0,n]$, where the induced subgraph $W_n[[1,n]]$ is its outer cycle. Graphically, one may draw $W_n$ in the plane such that $0$ is connected to the $n$-vertex cycle  with vertices labeled $1,2,\ldots,n$ in clockwise when $n\geq 3$. It is not easy to show that $\mathcal{C}(W_n)$ is the disjoint union of the following three families:
\begin{align}
\mathcal{C}^{\langle 1\rangle}(W_n) &= \{\{[0,n]\},\{\{0\},[1,n]\} \}, \\
\mathcal{C}^{\langle 2\rangle}(W_n) &= \{C \in \mathcal{C}(W_n)|[0] \in C \text{ and } |C|\geq 3  \}, \label{eq-wheel-gd-2}\\
\mathcal{C}^{\langle 3\rangle}(W_n) &= \{C \in \mathcal{C}(W_n)|[0] \not \in C \text{ and } |C|\geq 2  \}.\label{eq-wheel-gd-3}
\end{align}
Correspondingly, for $i=1,2,3$ let
\begin{align}\label{eq-psi-123}
\Psi_i(u)=\sum_{n=2}^{\infty}\left(\sum_{C\in \mathcal{C}^{\langle i\rangle}(W_n)}t^{-|C|}\chi_{W_n[C]}(t) P_{W_n/C}(t)\right)u^n.
\end{align}
Then
$$\Psi_W(u)=\Psi_1(u)+\Psi_2(u)+\Psi_3(u).$$
In the following we shall determine $\Psi_1(u),\Psi_2(u)$ and $\Psi_3(u)$ successively.

Firstly, the series $\Psi_1(u)$ is given by the following result.

\begin{lem}\label{w1} We have
\begin{align}
 \Psi_1(u)=\frac{(t-2) (t-1) u^2}{(u+1) (1-(t-2) u)}+\frac{(t-1) u^2}{(u+1) (1-(t-1) u)}.
\end{align}
\end{lem}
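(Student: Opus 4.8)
The plan is to compute $\Psi_1(u)$ directly from its definition in \eqref{eq-psi-123} by enumerating the two compositions that make up $\mathcal{C}^{\langle 1\rangle}(W_n)$ and evaluating the corresponding summand term-by-term. For the composition $C = \{[0,n]\}$, the induced subgraph $W_n[C]$ is all of $W_n$, so we need $\chi_{W_n}(t)$, and the quotient $W_n/C$ is a single vertex so $P_{W_n/C}(t) = 1$; since $|C| = 1$ the summand is $t^{-1}\chi_{W_n}(t)$. For the composition $C = \{\{0\}, [1,n]\}$, the induced subgraph $W_n[C]$ is the disjoint union of the isolated vertex $\{0\}$ with the cycle $C_n$ on $[1,n]$, so $\chi_{W_n[C]}(t) = t\cdot\chi_{C_n}(t)$; the quotient $W_n/C$ contracts the $n$-cycle to one vertex and identifies it with $0$ through all $n$ spokes, producing (after passing to the associated simple graph) a single edge, i.e.\ $W_n/C \cong K_2$, whence $P_{W_n/C}(t) = 1$; since $|C| = 2$ the summand is $t^{-2}\cdot t\,\chi_{C_n}(t) = t^{-1}\chi_{C_n}(t)$.

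Next I would substitute the standard chromatic polynomials. We have $\chi_{C_n}(t) = (t-1)^n + (-1)^n(t-1)$, and since the wheel $W_n$ is the cone over $C_n$, the deletion-contraction / cone identity gives $\chi_{W_n}(t) = t\,\chi_{C_n}(t-1) = t\bigl((t-2)^n + (-1)^n(t-2)\bigr)$. Therefore
\begin{align*}
\Psi_1(u) &= \sum_{n=2}^{\infty}\Bigl(t^{-1}\chi_{W_n}(t) + t^{-1}\chi_{C_n}(t)\Bigr)u^n\\
&= \sum_{n=2}^{\infty}\Bigl((t-2)^n + (-1)^n(t-2) + (t-1)^{n-1} + (-1)^n\tfrac{t-1}{t}\cdot t\cdot t^{-1}\Bigr)u^n,
\end{align*}
after which the task is purely the summation of a handful of geometric series in $u$.

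Concretely, I would split $\Psi_1(u)$ into the four geometric pieces $\sum_{n\ge2}(t-2)^nu^n = \frac{(t-2)^2u^2}{1-(t-2)u}$, $\sum_{n\ge2}(t-1)^{n-1}u^n = \frac{(t-1)u^2}{1-(t-1)u}$, and the two alternating pieces $(t-2)\sum_{n\ge2}(-u)^n = \frac{(t-2)u^2}{1+u}$ and $\frac{t-1}{t}\sum_{n\ge2}(-u)^n = \frac{(t-1)u^2}{t(1+u)}$ (being careful that $t^{-1}\chi_{C_n}(t) = (t-1)^{n-1} + (-1)^n\frac{t-1}{t}$). Then I would collect terms over the common structure: the two non-alternating pieces combine with, respectively, the $(t-2)$-pole and the $(t-1)$-pole from the alternating pieces to yield the two claimed summands $\frac{(t-2)(t-1)u^2}{(u+1)(1-(t-2)u)}$ and $\frac{(t-1)u^2}{(u+1)(1-(t-1)u)}$; this recombination is the one slightly delicate algebraic step, and I would double-check it by clearing denominators and comparing low-order coefficients in $u$, or simply by a \texttt{Simplify} in a computer algebra system as the authors do elsewhere in the paper. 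No step here presents a genuine obstacle — the only things to get right are the contraction $W_n/\{\{0\},[1,n]\} \cong K_2$ (hence contributes $1$, not something larger) and the bookkeeping of the $1/t$ factor in $t^{-1}\chi_{C_n}$ versus $t^{-1}\chi_{W_n}$.
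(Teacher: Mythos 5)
Your setup is exactly the paper's: enumerate the two compositions $\{[0,n]\}$ and $\{\{0\},[1,n]\}$, note $W_n/C$ is a point (resp.\ $K_2$) so $P_{W_n/C}=1$ in both cases, and reduce to summing $t^{-1}\chi_{W_n}(t)+t^{-1}\chi_{C_n}(t)$ over $n\ge 2$. All of that, including $\chi_{W_n}(t)=t\bigl((t-2)^n+(-1)^n(t-2)\bigr)$ and the bookkeeping of $t^{-|C|}$, agrees with the paper's proof.

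However, there is a concrete arithmetic error that breaks your final assembly: you assert $t^{-1}\chi_{C_n}(t)=(t-1)^{n-1}+(-1)^n\frac{t-1}{t}$, i.e.\ you simplify $\frac{(t-1)^n}{t}$ to $(t-1)^{n-1}$. That identity is false (it is the path, not the cycle, whose chromatic polynomial is $t(t-1)^{n-1}$). With your version, the two pieces you try to recombine are $\frac{(t-1)u^2}{1-(t-1)u}$ and $\frac{(t-1)u^2}{t(1+u)}$, whose sum is $\frac{(t-1)u^2\,(t+1+u)}{t(1+u)(1-(t-1)u)}$, not the claimed $\frac{(t-1)u^2}{(1+u)(1-(t-1)u)}$; the coefficient check you propose would flag this. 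The correct pieces are $\frac{(t-1)^2u^2}{t(1-(t-1)u)}$ and $\frac{(t-1)u^2}{t(1+u)}$, which combine as
\begin{align*}
\frac{(t-1)u^2}{t}\cdot\frac{(t-1)(1+u)+\bigl(1-(t-1)u\bigr)}{(1+u)\bigl(1-(t-1)u\bigr)}
=\frac{(t-1)u^2}{t}\cdot\frac{t}{(1+u)\bigl(1-(t-1)u\bigr)}
=\frac{(t-1)u^2}{(1+u)\bigl(1-(t-1)u\bigr)},
\end{align*}
the $1/t$ cancelling exactly. With that one correction (and the analogous, correctly handled, $(t-2)$-pair) your argument reproduces the lemma.
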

\begin{proof}
Note that if $C=\{[0,n]\}$ then $W_n[C]$ is just the graph
$W_n$ and $W_n/C$ is just the single-vertex graph. Recall that
the \KL polynomial of the single-vertex graph is equal to $1$ and $$\chi_{W_n}(t)=t((t-2)^{n}-(-1)^{n-1}(t-2)),$$
see \cite[p. 68]{biggs1993algebraic}.
Next we consider the composition $C=\{\{0\},[1,n]\}$, in which case $W_n/C \cong K_2$ and hence
$P_{W_n/C}(t)=1$. Moreover, $W_2[C]$ is the union of the single-edge graph and the single-vertex graph, and for $n\geq 3$  the subgraph $W_n[C]$ is the union of a cycle of length $n$ and the single-vertex graph. In any case we have
$$\chi_{W_n[C]}(t)=t((t-1)^n+(-1)^n (t-1)),$$
for the chromatic polynomial of a circle see \cite[p. 65]{biggs1993algebraic}.
Therefore, we have
\begin{align*}
\Psi_1(u)&=\sum_{n=2 } u^n \left(\frac{\chi_{W_n}(t)}{t}+\frac{\chi_{Q_n}(t)}{t}\right)\\
&=\sum_{n=2 } u^n \left((t-2)^{n}-(-1)^{n-1}(t-2)+ \frac{ (t-1)^n+(-1)^n (t-1) }{t}\right)\\
&=\frac{(t-2) (t-1) u^2}{(u+1) (1-(t-2) u)}+\frac{(t-1) u^2}{(u+1) (1-(t-1) u)},
\end{align*}
as desired.
\end{proof}

We proceed to compute the series $\Psi_2(u)$ by using the method of generating functions.

\begin{lem}\label{w2}
We have
\begin{align*}
\Psi_2(u)&=\frac{1}{1-u(t-1)}\times \Phi_W\left(t,\frac{u}{1-u(t-1)}\right).
\end{align*}
\end{lem}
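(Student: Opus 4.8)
The plan is to mimic the structure used for fan graphs, analyzing the combinatorial structure of the compositions in $\mathcal{C}^{\langle 2\rangle}(W_n)$, namely those graph compositions $C$ of $W_n$ in which the vertex $0$ forms its own block $\{0\}$ and $|C|\geq 3$. Since $\{0\}$ is a singleton block, the remaining blocks partition the outer $n$-cycle $W_n[[1,n]]$ into vertex sets of connected induced subpaths, so $C\setminus\{\{0\}\}$ is exactly a decomposition of the $n$-cycle into at least two nonempty arcs $I_1,\ldots,I_k$ (the condition $|C|\geq 3$ forces $k\geq 2$). This is precisely the cyclic decomposition situation handled by the composition formula variant in Proposition~\ref{prop-composition-formula-variant}, so I expect $\Psi_2(u)$ to arise as $\mathcal{B}\bullet\mathcal{A}$ for appropriately chosen $\mathcal{A}$ and $\mathcal{B}$, or as a byproduct to be re-derived by the standard composition formula applied to the $\mathcal{\tilde{C}}$-type reformulation; the statement to prove suggests the final answer simplifies to a single substitution, which hints that the extra $uA'(u)/A(u)$-type factor is absorbed.

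First I would pin down the weight of each such composition. For $C\in\mathcal{C}^{\langle 2\rangle}(W_n)$ with arcs $I_1,\ldots,I_k$ of sizes $b_1,\ldots,b_k$, the induced subgraph $W_n[C]$ is the disjoint union of the single vertex $0$ together with $k$ paths $H_{b_1},\ldots,H_{b_k}$, so by Lemma~\ref{graphsum} we get $\chi_{W_n[C]}(t)=t^{-(k+1)+1}\,t\cdot\prod_{j=1}^k\chi_{H_{b_j}}(t)$ — wait, more carefully $\chi_{W_n[C]}(t)=t^{-k}\cdot t\cdot\prod_j \chi_{H_{b_j}}(t)$ since there are $k+1$ components. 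Then $t^{-|C|}\chi_{W_n[C]}(t)=t^{-(k+1)}\cdot t^{-k}\cdot t\cdot\prod_j\chi_{H_{b_j}}(t)$; I will have to track the powers of $t$ exactly, but the upshot is each arc contributes a factor $\chi_{H_{b_j}}(t)/t$ just as in the fan case, with generating function $u/(1-(t-1)u)$ by \eqref{eq-path-gf}. The quotient graph $W_n/C$ is obtained by contracting each arc to a point while keeping $0$; since every arc vertex was adjacent to $0$, this is exactly the wheel $W_k$ (with the subtle boundary cases $k=2$ giving $W_2$, which is why $W_2$ was deliberately included in the definition of $\Phi_W$). Hence each composition contributes $P_{W_k}(t)$ for the quotient part, with generating function $\Phi_W(t,u)$.

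Next I would assemble these pieces. Writing $v=\dfrac{u}{1-(t-1)u}$ for the generating function of a single nonempty arc weighted by $\chi_{H_b}(t)/t$, the family $\mathcal{C}^{\langle 2\rangle}(W_n)$ is built by: decomposing an $n$-cycle into $k\geq 2$ arcs, putting an ``arc structure'' on each, and an overall structure weighted by $P_{W_k}(t)$ on the ordered list of arcs. Applying Proposition~\ref{prop-composition-formula-variant} with $\mathcal{A}$ having generating function $A(u)=v(u)$ and $\mathcal{B}$ having generating function $B(u)=\Phi_W(t,u)$ (note $0,1\notin\mathbb{B}$ since $P_{W_n}$ is only defined for $n\geq 2$), one gets $\Psi_2(u)=u\,A'(u)\,\dfrac{B(A(u))}{A(u)}$. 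I would then verify by direct computation (or by a routine \textbf{Simplify} as was done for the fan equation) that $u\,\dfrac{v'(u)}{v(u)}=\dfrac{1}{1-u(t-1)}$, which reduces the expression to exactly $\dfrac{1}{1-u(t-1)}\,\Phi_W\!\left(t,\dfrac{u}{1-u(t-1)}\right)$, as claimed. The main obstacle, as I see it, is not the generating-function bookkeeping but the careful justification that the quotient $W_n/C$ really is the wheel $W_k$ in all cases — in particular handling the degenerate small cases ($k=2$, and parallel edges produced by contraction) correctly, since this is what legitimizes using $\Phi_W$ itself in the composition and thereby produces a genuine functional equation rather than a mere identity; getting the exact power of $t$ in the weight so that no spurious factor of $t$ survives is the other place where care is needed.
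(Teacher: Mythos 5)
Your proposal is correct and follows essentially the same route as the paper: decompose the outer cycle into $k=|C|-1$ arcs each weighted by $\chi_{H_{b_j}}(t)/t$, identify $W_n/C\cong W_{|C|-1}$, and apply Proposition~\ref{prop-composition-formula-variant} with $A(u)=u/(1-(t-1)u)$ and $B(u)=\Phi_W(t,u)$, noting that $uA'(u)/A(u)=1/(1-(t-1)u)$. The only blemish is the garbled intermediate power-of-$t$ bookkeeping (the disjoint union gives $\chi_{W_n[C]}(t)=t\cdot\prod_j\chi_{H_{b_j}}(t)$ directly, so $t^{-|C|}\chi_{W_n[C]}(t)=\prod_j\chi_{H_{b_j}}(t)/t$), but your stated per-arc weight and conclusion are the correct ones.
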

\begin{proof}
Note that a composition $C \in \mathcal{C}^{\langle 2\rangle}(W_n)$ naturally decomposes the outer cycle of $W_n$
into $|C|-1$ paths. Together with the single vertex $0$, these paths form the induced subgraph $W_n[C]$.
Suppose that these paths have  $i_1,i_2,\ldots,i_{|C|-1}$ vertices   respectively.
For example, for $C=\{\{0\},\{12,1,2,3\},\{4\},\{5,6,7\},\{8,9\},\{10\},\{11\}\}\in \mathcal{C}(W_{12})$, the outer cycle of $W_{12}$ is decomposed into 6 paths, as depicted in Figure \ref{fig:w12}.
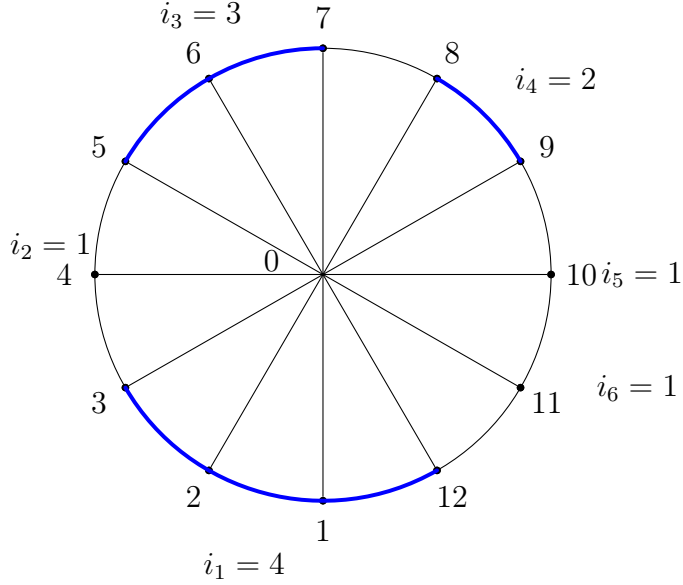
\begin{figure}[H]
\centering
\begin{tikzpicture}[line cap=round,line join=round]
\node[draw=none]  at  (165:0.7)   {0};
\draw (0,0) circle (3);
\foreach \s in {1,2,...,12} \draw node[circle,fill,inner sep=1pt]  (\s) at  ({-30*(\s-1)-90}:3)   {};
\foreach \s in {1,2,...,12} \draw (0,0)--(\s);
\foreach \s in {1,2,...,12}  \node[draw=none]  at  ({-30*(\s-1)-90}:3.4)   {\s};
\foreach \s in {1,2,...,12} \draw node[circle,fill,inner sep=1pt]  at  ({-30*(\s-1)-90}:3)   {};
\draw[color=blue,line width=1.5pt]  (12)   arc (-60:-150:3);
\draw[color=blue,line width=1.5pt]  (5)   arc (150:90:3);
\draw[color=blue,line width=1.5pt]  (8)   arc (60:30:3);
\node[draw=none]     at  ({-30*(1.5-1)-90}:4)     {$i_1=4$};
\node[draw=none]     at  ({-30*(4-1)-90-6}:3.6)     {$i_2=1$};
\node[draw=none]     at  ({-30*(6-1)-90-5}:4-0.2)     {$i_3=3$};
\node[draw=none]     at  ({-30*(8.5-1)-90-5}:4)     {$i_4=2$};
\node[draw=none]     at  ({-30*(10-1)-90}:4.2)     {$i_5=1$};
\node[draw=none]     at  ({-30*(11-1)-90+10}:4.4)     {$i_{6}=1$};
\end{tikzpicture}
\captionsetup{font=footnotesize}
\captionof{figure}{$W_{12}[\{\{0\},\{12,1,2,3\},\{4\},\{5,6,7\},\{8,9\},\{10\},\{11\}\}]$}
\label{fig:w12}
\end{figure}

It is clear that $W_n/C$ is isomorphic to $W_{|C|-1}$.
Thus, by Lemma \ref{graphsum} we have
\begin{align}\label{w2-weight}
t^{-|C|}\chi_{W_n[C]}(t) P_{W_n/C}(t)=P_{W_{|C|-1}}(t)\times \prod_{j=1}^{|C|-1}{\frac{\chi_{H_{i_j}}(t)}{t}}
\end{align}
in view of the obvious fact that the chromatic polynomial of the single-vertex graph is equal to $t$.
Therefore, the generating function $\Psi_2(u)$ could be considered as the ordinary generating function of type $\mathcal{B} \bullet  \mathcal{A}$ structures, as defined immediately before Proposition \ref{prop-composition-formula-variant}, with $\mathcal{A}_n=\{(n)\}$ for $n\geq 1$ weighted by $w_{\mathcal{A}}((n))=\frac{\chi_{H_{n}}(t)}{t}$ and $\mathcal{B}_n=\{(n)\}$ for $n\geq 2$ weighted by $w_{\mathcal{B}}((n))=P_{W_n}(t)$. (Here we assume that $\mathcal{A}_0, \mathcal{B}_0$ and $\mathcal{B}_1$ are empty.)

By \eqref{eq-path-gf} we know that the generating function of $\mathcal{A}$ is given by
\begin{align*}
A(u)&=\frac{u}{1-(t-1)u}.
\end{align*}
While, by \eqref{eqn-wheelkl-gf}, the generating function of $\mathcal{B}$ is given by $B(u)=\Phi_W(t,u)$.
Now by Proposition  \ref{prop-composition-formula-variant} we obtain that
\begin{align*}
 \Psi_2(u)&=uA'(u)\frac{B(A(u))}{A(u)}=\frac{1}{1-u(t-1)}\times \Phi_W\left(t,\frac{u}{1-u(t-1)}\right).
\end{align*}
This completes the proof.
\end{proof}

Analogously, we could determine the series $\Psi_3(u)$.

\begin{lem}\label{w3}
 We have
\begin{align*}
 \Psi_3(u)=u   \frac{\partial\Psi^{eo}(u)}{\partial u} \left(\frac{1}{1-\Psi^{eo}(u)} \right).
\end{align*}
\end{lem}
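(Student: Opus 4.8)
The plan is to treat $\mathcal{C}^{\langle 3\rangle}(W_n)$ by the same two-stage strategy used for fan graphs in Lemmas \ref{combij} and \ref{wterm}: first describe these compositions, then factor each summand $t^{-|C|}\chi_{W_n[C]}(t)P_{W_n/C}(t)$ through the weights $w^{o}$ and $w^{e}$, and finally assemble the generating function by combining the pointing formula (Proposition \ref{gf-pointed}) with the cyclic composition formula (Proposition \ref{prop-composition-formula-variant}).

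\textbf{Structure of the compositions.} Drawing $W_n$ with outer $n$-cycle $[1,n]$ labelled clockwise, a composition $C\in\mathcal{C}^{\langle 3\rangle}(W_n)$ has its block $B_0\ni 0$ equal to $\{0\}\cup S$ for a nonempty proper subset $S\subseteq[1,n]$, and $|C|\geq 2$. I would write $S$ as the disjoint union of its maximal arcs $S_1,\dots,S_j$ ($j\geq 1$, listed clockwise), which alternate around the cycle with the maximal arcs $R_1,\dots,R_j$ of the complement $[1,n]\setminus S$; since $|C|\geq 2$, every $R_i$ is nonempty, and $C$ subdivides each $R_i$ into consecutive arcs $H_{i,1},\dots,H_{i,\ell_i}$ with $\ell_i\geq 1$, these being exactly the blocks of $C$ other than $B_0$. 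Thus $|C|=1+\sum_{i=1}^j\ell_i$.

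\textbf{Factoring the summand.} This is the analogue of Lemma \ref{wterm} and the technical heart of the argument. The induced subgraph $W_n[C]$ is the disjoint union of the paths $H_{i,m}$ with the connected subgraph $W_n[B_0]$, and $W_n[B_0]$ is the one-point union of the fans $F_{|S_1|},\dots,F_{|S_j|}$ glued at the hub $0$; hence by Lemma \ref{graphsum},
\[
\chi_{W_n[C]}(t)=t^{-(j-1)}\prod_{i=1}^j\chi_{F_{|S_i|}}(t)\prod_{i=1}^j\prod_{m=1}^{\ell_i}\chi_{H_{|H_{i,m}|}}(t).
\]
Contracting $B_0$ to a vertex $v_0$ (which is then joined to every contracted arc, while arcs inside a common $R_i$ become a path and arcs from different $R_i$'s stay non-adjacent) shows that $W_n/C$ is the one-point union of the fans $F_{\ell_1},\dots,F_{\ell_j}$ glued at $v_0$, so Lemma \ref{graphsum} gives $P_{W_n/C}(t)=\prod_{i=1}^jP_{F_{\ell_i}}(t)$. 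Using $|C|=1+\sum_i\ell_i$, the powers of $t$ cancel and
\[
t^{-|C|}\chi_{W_n[C]}(t)P_{W_n/C}(t)=\prod_{i=1}^j\left(\frac{\chi_{F_{|S_i|}}(t)}{t}\cdot P_{F_{\ell_i}}(t)\prod_{m=1}^{\ell_i}\frac{\chi_{H_{|H_{i,m}|}}(t)}{t}\right)=\prod_{i=1}^jw^{eo}\big((A_i^e,A_i^o)\big),
\]
where $A_i^o$ is the type $\mathcal{A}^o$ structure $(|S_i|)$ and $A_i^e$ the type $\mathcal{A}^e$ structure $(|H_{i,1}|,\dots,|H_{i,\ell_i}|)\in\mathcal{S}_{|R_i|}$.

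\textbf{Assembling the generating function.} Reading clockwise and attaching to each maximal gap $R_i$ the maximal arc $S_{i+1}$ immediately following it (indices mod $j$), a composition $C$ becomes a splitting of the outer $n$-cycle into $j$ consecutive arcs $Q_1,\dots,Q_j$, the $i$-th carrying the type $\mathcal{A}^{eo}$ structure $(A_i^e,A_{i+1}^o)$; by the previous step this correspondence is weight-preserving. When $j\geq 2$, such structures are exactly the type $\mathcal{B}\bullet\mathcal{A}^{eo}$ structures with $\mathcal{B}$ the trivial type assigning one weight-$1$ structure to every sequence of length $\geq 2$ (so $B(u)=u^2/(1-u)$), so Proposition \ref{prop-composition-formula-variant} applies (its hypotheses hold since every $\mathcal{A}^{eo}$ structure has size at least $2$ and $\mathcal{B}$ has no structure of size $0$ or $1$) and shows that the contribution of compositions with $j\geq 2$ equals $u(\Psi^{eo})'(u)\,\Psi^{eo}(u)/(1-\Psi^{eo}(u))$. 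When $j=1$, the single arc $Q_1=R_1\cup S_1$ is the whole cycle; cutting it at the clockwise-start of $R_1$ linearizes it into an interval carrying the type $\mathcal{A}^{eo}$ structure $(A_1^e,A_1^o)$, and recording the position of the cycle's vertex $1$ in this interval identifies these compositions bijectively and weight-preservingly with pointed type $\mathcal{A}^{eo}$ structures, so by Proposition \ref{gf-pointed} their contribution to $\Psi_3(u)$ is $u(\Psi^{eo})'(u)$. Adding the two contributions,
\[
\Psi_3(u)=u(\Psi^{eo})'(u)\left(1+\frac{\Psi^{eo}(u)}{1-\Psi^{eo}(u)}\right)=u\,\frac{\partial\Psi^{eo}(u)}{\partial u}\left(\frac{1}{1-\Psi^{eo}(u)}\right),
\]
as claimed. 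I expect the weight factorization in the second step to be the main obstacle — pinning down the biconnected structure of $W_n[C]$ and $W_n/C$ and matching the powers of $t$ — while the generating-function bookkeeping merely reuses the toolkit of Section \ref{sec-2.5}, the separation of the $j=1$ case (which falls just outside Proposition \ref{prop-composition-formula-variant}) being the one point requiring care.
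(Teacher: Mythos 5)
Your proposal is correct and takes essentially the same route as the paper: the paper likewise splits $\mathcal{C}^{\langle 3\rangle}(W_n)$ according to the number $\kappa_C$ of (gap, hub-adjacent arc) pairs, identifies each pair as a weighted type $\mathcal{A}^{eo}$ structure via the multiplicativity of $\chi$ and $P$ over one-point unions, and then treats $\kappa_C=1$ by the pointing formula and $\kappa_C\geq 2$ by Proposition \ref{prop-composition-formula-variant} with $B(u)=u^2/(1-u)$, summing to $u(\Psi^{eo})'(u)/(1-\Psi^{eo}(u))$.
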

\begin{proof}
Given $C \in \mathcal{C}^{\langle 3\rangle}(W_n)$, there exists a unique and positive integer $k$, denoted by $\kappa_C$, satisfying the following condition:

($\diamond$)\textit{Removing all edges incident to $0$ from the induced subgraph $W_n[C]$ will result in a cyclically ordered sequence of non-empty forests $S_1,T_1,S_2,T_2,\ldots,S_k,T_k$ on the outer cycle where each $S_i$ is a sequence of paths with any of its vertices not adjacent to $0$, while each $T_i$ is a path with each of its vertices adjacent to $0$.}

Taking $C=\{\{0,4,9,10,11\},\{12,1,2\},\{3\},\{5\},\{6,7\},\{8\}\}\in \mathcal{C}(W_{12})$, we see that $\kappa_C=2$ as shown in
Figure \ref{fig:w12_2}.

\begin{figure}[H]
\centering
\begin{tikzpicture}[line cap=round,line join=round]
\node[draw=none]  at  (165:0.7)   {0};
\draw (0,0) circle (3);
\foreach \s in {1,2,...,12} \draw node[circle,fill,inner sep=1pt]  (\s) at  ({-30*(\s-1)-90}:3)   {};
\foreach \s in {1,2,...,12} \draw (0,0)--(\s);
\foreach \s in {1,2,...,12}  \node[draw=none]  at  ({-30*(\s-1)-90}:3.4)   {\s};
\foreach \s in {1,2,...,12} \draw node[circle,fill,inner sep=1pt]  at  ({-30*(\s-1)-90}:3)   {};
\foreach \s in {4,9,10,11} \draw[color=blue,line width=1.5pt]  (\s)--(0,0);;
\draw[color=blue,line width=1.5pt]  (9)   arc (-30*(9-1)-90:-30*(11-1)-90:3);
\draw[color=blue,line width=1.5pt]  (12)   arc (-60:-120:3);
\draw[color=blue,line width=1.5pt]  (6)   arc (120:90:3);

\node[draw=none]    at  (4.5,2)     {$S_1=$};
\foreach \s in {12} \draw node[circle,fill,inner sep=1pt,label=above:\s] (m\s) at  (\s-3.5-3,2) {};
\foreach \s in {1,2,3} \draw node[circle,fill,inner sep=1pt,label=above:\s] (m\s) at  (\s-3.5+9,2) {};

\draw[color=blue,line width=1.5pt]  (m12)--(m1)--(m2);
\node[draw=none]     at  (4.5,1)   {$T_1=$};
\draw node[circle,fill,inner sep=1pt,label=above:4]  at  (5.5,1) {};
\node[draw=none]     at  (4.5,0)   {$S_2=$};
\foreach \s in {5,6,7,8} \draw node[circle,fill,inner sep=1pt,label=above:\s] (t\s) at  (\s+0.5,0) {};
\draw[color=blue,line width=1.5pt]  (t6)--(t7);

\node[draw=none]     at  (4.5,-1)   {$T_2=$};
 
\foreach \s in {9,10,11} \draw node[circle,fill,inner sep=1pt,label=above:\s] (n\s) at  (\s-3.5,-1) {};
\draw[color=blue,line width=1.5pt]  (n9)--(n11);
\end{tikzpicture}
\captionsetup{font=footnotesize}
\captionof{figure}{Construction of $S_i$'s and $T_i$'s}
\label{fig:w12_2}
\end{figure}
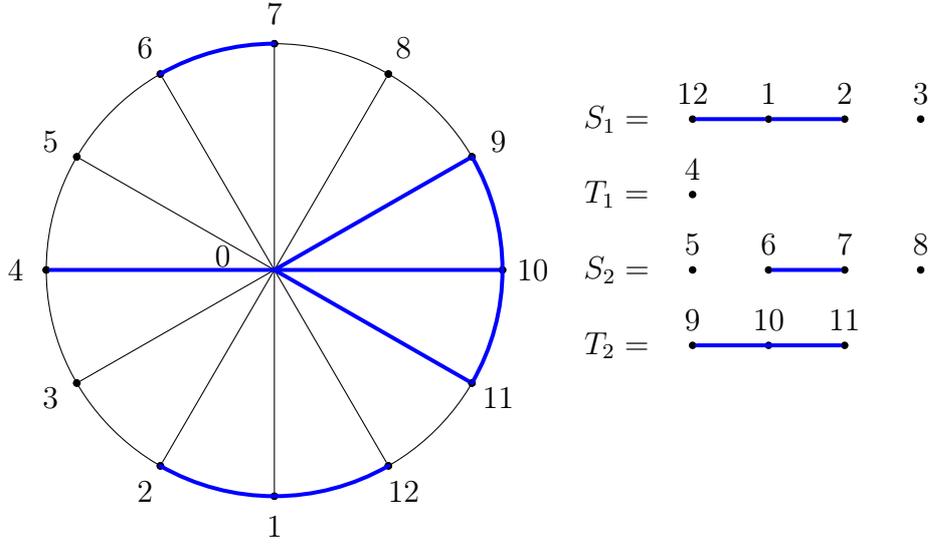

This is very similar to the case of fan graphs as discussed in Lemma \ref{combij}.
According to the value of $\kappa_C$, the set $\mathcal{C}^{\langle 3\rangle}(W_n)$ is divided into the following two subsets:
\begin{align*}
\mathcal{C}^{\langle 3,1\rangle}(W_n)&=\{C\in \mathcal{C}^{\langle 3\rangle}(W_n)\,|\,\kappa_C=1\},\\
\mathcal{C}^{\langle 3,2\rangle}(W_n)&=\{C\in \mathcal{C}^{\langle 3\rangle}(W_n)\,|\,\kappa_C>1\}.
\end{align*}
For $i=1,2$, let
\begin{align*}
\Psi_{3,i}(u)=\sum_{n=2}^{\infty}\left(\sum_{C\in \mathcal{C}^{\langle 3,i\rangle}(W_n)}t^{-|C|}\chi_{W_n[C]}(t) P_{W_n/C}(t)\right)u^n.
\end{align*}
Thus, we have $\Psi_{3}(u)=\Psi_{3,1}(u)+\Psi_{3,2}(u)$.

Next we shall first compute $\Psi_{3,1}(u)$. For each $C\in \mathcal{C}^{\langle 3,1\rangle}(W_n)$, let $S_1,T_1$
be those two forests appeared in condition ($\diamond$).
Clearly, the induced subgraph of $W_n[C]$ is composed of $S_1$ and the fan graph, denoted $F_{T_1}$, by connecting the vertex $0$ to each vertex of $T_1$. If the ordered paths in $S_1$ are $H_{b_{11}},\ldots,H_{b_{1,|C|-1}}$, then $W_n/C$ is isomorphic to $F_{|C|-1}$, a fan graph with $|C|$ vertices. Again by Lemma \ref{graphsum},
we get that
\begin{align*}
t^{-|C|}\chi_{W_n[C]}(t) P_{W_n/C}(t)=\left( P_{F_{|C|-1}}(t)\cdot \prod_{j=1}^{|C|-1}{\frac{\chi_{H_{b_{1j}}}(t)}{t}}\right)\times \frac{\chi_{F_{T_1}}(t)}{t}.
\end{align*}
If we weight $S_1$ by $P_{F_{|C|-1}}(t)\cdot \prod_{j=1}^{|C|-1}{(\chi_{H_{b_{1j}}}(t)/(t))}$ and $T_1$ by $\chi_{F_{T_1}}(t)/(t)$, then the ordered pair $(S_1,T_1)$
is essentially a weighted structure of type $\mathcal{A}^{eo}$ in view of \eqref{eq-weight-eo} as defined in Subsection \ref{sec:non-ekl}. Note that when we consider $(S_1,T_1)$ as a type $\mathcal{A}^{eo}$ structure, the first vertex of $S_1$ is always assumed to be $1$. But for each $C\in \mathcal{C}^{\langle 3,1\rangle}(W_n)$, the first vertex of the corresponding $S_1$ is fixed, which could be any value in the interval $[1,n]$. For this reason, each $C$ could be considered as a pointed type $\mathcal{A}^{eo}$ structure, where the first vertex of $S_1$ is attached to the pointer of size $0$. Therefore, by Proposition \ref{gf-pointed} we have
\begin{align}\label{eq-phi31}
\Psi_{3,1}(u)&=u\frac{\partial\Psi^{eo}(u)}{\partial u}
\end{align}
since both $\mathcal{A}^{eo}_0$ and $\mathcal{A}^{eo}_1$ are empty, where $\Psi^{eo}(u)$ is defined by \eqref{gf-def-3-eo}.

We proceed to determine $\Psi_{3,2}(u)$. Given $C\in \mathcal{C}^{\langle 3,2\rangle}(W_n)$,
let $S_1,T_1,S_2,T_2,\ldots,S_{\kappa_C},T_{\kappa_C}$ be those forests appeared in condition ($\diamond$).
By the same argument as above,
each ordered pair $(S_i,T_i)$ could be considered as  a structure $A_i^{eo}$ of type $\mathcal{A}^{eo}$ weighted by
$$
w^{eo}(A_i^{eo})=\left( P_{F_{\ell_i}}(t)\cdot \prod_{j=1}^{\ell_i}{\frac{\chi_{H_{b_{ij}}}(t)}{t}}\right)\times \frac{\chi_{F_{T_i}}(t)}{t},
$$
where $H_{b_{i1}},\ldots,H_{b_{i\ell_i}}$ are the ordered paths in $S_i$.
Moreover, it is routine to verify that
\begin{align*}
t^{-|C|}\chi_{W_n[C]}(t) P_{W_n/C}(t)=\prod_{i=1}^{\kappa_C}w^{eo}(A_i^{eo}),
\end{align*}
since $W_n/C$, as shown in Figure \ref{fig:w12_2/c}, is just the graph obtained from some fan graphs by identifying their $0$ vertices.

\begin{figure}[H]
\centering
\begin{tikzpicture}[line cap=round,line join=round]
\draw node[circle,fill,inner sep=1pt]  at  (0,0)  {};
\node[draw=none]  at  (180:1.6)   {$\{0,4,9,10,11\}$};
\foreach \s in {1,2,6,7,8} \draw node[circle,fill,inner sep=1pt]  (\s) at  ({-30*(\s-1)-90}:3)   {};
\foreach \s in {1,2,6,7,8} \draw (0,0)--(\s);
\foreach \s in {1,2,6,7,8}  \node[draw=none]  at  ({-30*(\s-1)-90}:3.4)  {};
\draw  (6)   arc (-30*(6-1)-90:-30*(8-1)-90:3);
\draw  (1)   arc (-30*(1-1)-90:-30*(2-1)-90:3);
\node[draw=none]  at  ({-30*(6-1)-90}:3.5)   {$\{5\}$};
\node[draw=none]  at  ({-30*(7-1)-90}:3.5)   {$\{6\}$};
\node[draw=none]  at  ({-30*(8-1)-90}:3.5)   {$\{7\}$};
\node[draw=none]  at  (-120:3.5)   {$\{3\}$};
\node[draw=none]  at  (-90:3.5)   {$\{12,1,2\}$};
\end{tikzpicture}
\captionsetup{font=footnotesize}
\captionof{figure}{$W_{12}/\{\{0,4,9,10,11\},\{12,1,2\},\{3\},\{5\},\{6,7\},\{8\}\}$}
\label{fig:w12_2/c}
\end{figure}
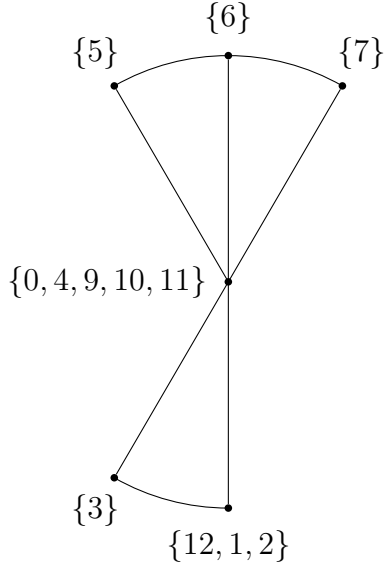

Thus $\Psi_{3,2}(u)$ could be considered as the ordinary generating function of type $\mathcal{B} \bullet  \mathcal{A}^{eo}$ structures, with $\mathcal{B}_n=\{(n)\}$ for $n\geq 2$ weighted by $w_{\mathcal{B}}((n))=1$.
The generating function of $\mathcal{B}$ is given by
$$B(u)=\sum_{n=2}^{\infty}w_{\mathcal{B}}((n))u^n=\frac{u^2}{1-u}.$$
Recall that the generating function of $\mathcal{A}^{eo}$ is
$\Psi^{eo}(u)$.
Therefore, by Proposition \ref{prop-composition-formula-variant}, we have
\begin{align}
\Psi_{3,2}(u)&=u\frac{\partial\Psi^{eo}(u)}{\partial u}\times \frac{B(\Psi^{eo}(u))}{\Psi^{eo}(u)}\nonumber\\
&=u\frac{\partial\Psi^{eo}(u)}{\partial u} \times \frac{1}{\Psi^{eo}(u)} \times \left(\frac{(\Psi^{eo}(u))^2}{1-\Psi^{eo}(u)} \right)\nonumber\\
&=u\frac{\partial\Psi^{eo}(u)}{\partial u} \left(\frac{\Psi^{eo}(u)}{1-\Psi^{eo}(u)} \right).\label{eq-phi32}
\end{align}
Combining \eqref{eq-phi31} and \eqref{eq-phi32}, we obtain
\begin{align*}
 \Psi_3(u)&=\Psi_{3,1}(u)+\Psi_{3,2}(u)\\
 &=u\frac{\partial\Psi^{eo}(u)}{\partial u}+u\frac{\partial\Psi^{eo}(u)}{\partial u} \left(\frac{\Psi^{eo}(u)}{1-\Psi^{eo}(u)} \right)\\
 &=u\frac{\partial\Psi^{eo}(u)}{\partial u} \left(\frac{1}{1-\Psi^{eo}(u)} \right).
\end{align*}
This completes the proof.
\end{proof}

Combining Lemma \ref{w1}, Lemma \ref{w2} and Lemma \ref{w3},  we obtain an explicit formula of $\Psi_W(u)$. Recall that $\Psi_W(u)$ denotes the right hand side of \eqref{eq-compostion-wheel}.

\begin{lem}\label{psi-w}
We have
\begin{align*}
 \Psi_W(u)&=\frac{(t-2) (t-1) u^2}{(u+1) (1-(t-2) u)}+\frac{(t-1) u^2}{(u+1) (1-(t-1) u)}
 \\
 &+\frac{1}{1-u(t-1)}\times \Phi_W\left(t,\frac{u}{1-u(t-1)}\right)+u \frac{\partial\Psi^{eo}(u)}{\partial u} \left(\frac{1}{1-\Psi^{eo}(u)} \right).
\end{align*}
\end{lem}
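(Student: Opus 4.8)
The plan is to assemble the three pieces already in hand. Recall from \eqref{eq-compostion-wheel} that $\Psi_W(u)=\sum_{n\geq 2}\bigl(\sum_{C\in\mathcal{C}(W_n)}t^{-|C|}\chi_{W_n[C]}(t)P_{W_n/C}(t)\bigr)u^n$, so it suffices to check that the families $\mathcal{C}^{\langle 1\rangle}(W_n)$, $\mathcal{C}^{\langle 2\rangle}(W_n)$ and $\mathcal{C}^{\langle 3\rangle}(W_n)$ partition $\mathcal{C}(W_n)$ for every $n\geq 2$; once this is known, summing \eqref{eq-psi-123} over $i=1,2,3$ gives $\Psi_W(u)=\Psi_1(u)+\Psi_2(u)+\Psi_3(u)$, and we are reduced to substituting the closed forms already computed.

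First I would verify the partition. Every composition $C\in\mathcal{C}(W_n)$ falls into exactly one of two mutually exclusive cases according to whether $\{0\}$ is a block of $C$. If $\{0\}\in C$, then $C\setminus\{\{0\}\}$ is a composition of the outer cycle $W_n[[1,n]]$, hence $|C|\geq 2$; the subcase $|C|=2$ is exactly $\{\{0\},[1,n]\}\in\mathcal{C}^{\langle 1\rangle}(W_n)$, and $|C|\geq 3$ is precisely $\mathcal{C}^{\langle 2\rangle}(W_n)$ by \eqref{eq-wheel-gd-2}. If $\{0\}\notin C$, the block of $C$ containing $0$ has at least two vertices; the subcase $|C|=1$ is exactly $\{[0,n]\}\in\mathcal{C}^{\langle 1\rangle}(W_n)$, and $|C|\geq 2$ is precisely $\mathcal{C}^{\langle 3\rangle}(W_n)$ by \eqref{eq-wheel-gd-3}. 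Therefore $\mathcal{C}(W_n)=\mathcal{C}^{\langle 1\rangle}(W_n)\sqcup\mathcal{C}^{\langle 2\rangle}(W_n)\sqcup\mathcal{C}^{\langle 3\rangle}(W_n)$ for all $n\geq 2$, which one should also confirm directly in the boundary case $n=2$, where $W_2$ is a triangle and the five compositions split as $2+1+2$.

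With the partition established, the proof concludes by plugging the closed forms of $\Psi_1(u)$, $\Psi_2(u)$ and $\Psi_3(u)$ from Lemma \ref{w1}, Lemma \ref{w2} and Lemma \ref{w3} respectively into $\Psi_W(u)=\Psi_1(u)+\Psi_2(u)+\Psi_3(u)$. I expect no genuine obstacle at this stage: all the analytic work — the generating-function identities, the bijective bookkeeping modeled on Lemma \ref{combij}, and the cycle composition formula of Proposition \ref{prop-composition-formula-variant} — has already been spent in proving those three lemmas, so what remains here is purely a matter of combining them. The one point deserving a moment's care is the matching of index ranges: each $\Psi_i(u)$ is a power series starting in degree $2$ (because $\rk W_n=n\geq 2$, and $\mathcal{A}^{eo}_0$, $\mathcal{A}^{eo}_1$ are empty), so adding the three pieces introduces no spurious low-order terms, and the stated expression for $\Psi_W(u)$ follows.
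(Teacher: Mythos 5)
Your proposal is correct and matches the paper's argument: Lemma \ref{psi-w} is obtained there exactly by summing Lemmas \ref{w1}, \ref{w2} and \ref{w3} over the decomposition $\mathcal{C}(W_n)=\mathcal{C}^{\langle 1\rangle}(W_n)\sqcup\mathcal{C}^{\langle 2\rangle}(W_n)\sqcup\mathcal{C}^{\langle 3\rangle}(W_n)$, which the paper asserts just before \eqref{eq-psi-123}. Your explicit case check of that partition (including the $n=2$ count $2+1+2=5$) is a welcome bit of extra care but does not change the route.
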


Now we come to the proof of the main result of this subsection.

\begin{proof}[{Proof of Theorem \ref{thm-klpol-wheel}.}]
By \eqref{eq-compostion-wheel} and Lemma \ref{psi-w}, we obtain  that
$\Phi_W(t,u)$  satisfies the functional equation
\begin{align*}
 \Phi_W(t^{-1},tu)=&\frac{(t-2) (t-1) u^2}{(u+1) (1-(t-2) u)}+\frac{(t-1) u^2}{(u+1) (1-(t-1) u)}
 \\
 &+\frac{1}{1-u(t-1)}\times \Phi_W\left(t,\frac{u}{1-u(t-1)}\right)+u \frac{\partial\Psi^{eo}(u)}{\partial u} \left(\frac{1}{1-\Psi^{eo}(u)} \right).
\end{align*}

To complete the proof of the theorem, we further verify that
the above equation still holds if we substitute
$\Phi_W(t,u)$ by using the right hand side of \eqref{eq-gf-klpol-wheel}. As in the case of fan graphs,
we prefer to give a computer aided proof as follows.
%
%
%
%
%

{\renewcommand\baselinestretch{2.5}
\begin{mma}
\In \Psi ^{eo}[u\_]:=\Psi^e[u] \Psi ^o[u];\\
\In \Psi _W[u\_]:=\frac{(t-2) (t-1) u^2}{(u+1) (1-(t-2) u)}+\frac{(t-1) u^2}{(u+1) (1-(t-1) u)}\linebreak+\frac{1}{1-(t-1) u}\Phi _W\left[\frac{u}{1-(t-1) u}\right] +u \frac{\partial \Psi ^{\text{eo}}(u)}{\partial u} \frac{1}{1-\Psi ^{\text{eo}}(u)};\\
\In \Phi _W[u\_] :=\frac{2 (u-1)}{\sqrt{(u-1)^2-4 t u^2}-u+1}-\frac{2 \left(u^2+u-1\right)}{(u+1) \left(\sqrt{(u-1)^2-4 t u^2}+u+1\right)}\linebreak+\frac{2 u}{(u+1) \sqrt{(u-1)^2-4 t u^2}} ;\\
\end{mma}
}
\begin{mma}
\In|Simplify|[(\Phi_W[u]/.\{t\to t^{-1},u\to tu\})==\Psi_W[u],|Assumptions|\to1-(t-1)u>0]\\
\Out |True| \\
\end{mma}

Note that here we assume that $1-(t-1) u>0$ because $u$ is sufficiently small. This completes the proof.
\end{proof}

Based on \eqref{eq-gf-klpol-wheel} we are able to prove
\eqref{eq-klpol-wheel} of Theorem \ref{klcoef}.

\begin{proof}[Proof of \eqref{eq-klpol-wheel}.]
It is sufficient to show that \eqref{eq-klpol-wheel} and \eqref{eq-gf-klpol-wheel} are equivalent to each other.
From \eqref{eq-klpol-wheel} it follows that $P_{W_{n+2}}(t)=a_n(t)$, where
$$a_n(t)=\sum_{k=0}^{n} a(n,k)t^k$$
and
\begin{align*}
a(n,k)=\left(\frac{k+1}{n-k+2} +\frac{k}{n-k+3}-\frac{k}{n-k+1} \right)\binom{n+2}{k,k+1,n-2k+1}.
\end{align*}
Note that for $n\geq k>\left\lfloor \frac{n+1}{2}\right\rfloor $ we have $a(n,k)=0$.
Since \eqref{eqn-wheelkl-gf} could be restated as
$$
\frac{\Phi_W(t,u)}{u^2}=\sum_{n=0}^{\infty}{P_{W_{n+2}}(t)u^n},
$$
to prove the equivalence between \eqref{eq-klpol-wheel} and \eqref{eq-gf-klpol-wheel} it suffices to show that
\begin{align}
\sum_{n=0}^{\infty}a_n(t)u^n&=\frac{2 (u-1)}{u^2 \left(\sqrt{(u-1)^2-4 t u^2}-u+1\right)}-\frac{2 \left(u^2+u-1\right)}{u^2 (u+1) \left(\sqrt{(u-1)^2-4 t u^2}+u+1\right)}\nonumber\\
&+\frac{2 u}{u^2 (u+1) \sqrt{(u-1)^2-4 t u^2}}.\label{eq-wheel-de}
\end{align}

To this end, we use the same method as in the proof of Theorem \ref{thm-klpol-fan}, but omit some details here. The following lines enable us to obtain a recurrence relation of $a_n(t)$.

\begin{mma}
\In  a[n\_, k\_] := \left(\frac{k+1}{n-k+2}+\frac{k}{n-k+3}-\frac{k}{n-k+1} \right)|Multinomial|[k, k + 1, n -2 k+1];\\
\end{mma}
\begin{mma}
\In |ReleaseHold|[|First|[|Zb|[|FunctionExpand|[a[n, k]] t^k, {k, 0, n}, n] /. |SUM| \to  a]];  \\
\end{mma}
\begin{mma}
\In|Simplify|[|FunctionExpand|[\%], |Assumptions| \to n  \in \mathbb{Z}]; \\
\begin{mma}
\end{mma}
\In  rec = |Collect|[\%, a[\_], |Factor|] \\
\Out  (-60-12 n+1758 t+1372 n t+446 n^2 t+68 n^3 t+4 n^4 t-738 t^2-881 n t^2-426 n^2 t^2-85 n^3 t^2-6 n^4 t^2+84 t^3+166 n t^3+106 n^2 t^3+26 n^3 t^3+2 n^4 t^3) a[2+n]+(7+n) (6-150 t-85 n t-21 n^2 t-2 n^3 t+12 t^2+22 n t^2+12 n^2 t^2+2 n^3 t^2) a[3+n]==(3+n) t (-1+4 t) (6-258 t-133 n t-27 n^2 t-2 n^3 t+48 t^2+52 n t^2+18 n^2 t^2+2 n^3 t^2) a[n]+(-18-6 n+906 t+693 n t+214 n^2 t+33 n^3 t+2 n^4 t-4956 t^2-4198 n t^2-1408 n^2 t^2-224 n^3 t^2-14 n^4 t^2+264 t^3+952 n t^3+618 n^2 t^3+146 n^3 t^3+12 n^4 t^3) a[1+n]\\
\end{mma}

To obtain an equivalent differential equation satisfied by $\sum_{n=0}^{\infty} a_n(t) u^n$, we also need some initial terms, which could be easily implemented by using the following command.
%

\begin{mma}
\In |Table|[a[n] == |Sum|[a[n, k] t^k, {k, 0, n}], {n, 0, 2}]\\
\Out \{a[0] == 1, a[1] == 1 + t, a[2] == 1 + 5 t\};\\
\end{mma}

Now we can obtain a differential equation satisfied by  $\sum_{n=0}^{\infty} a_n(t) u^n$ by using the function \textbf{RE2DE}.
%

\begin{mma}
\In de=|RE2DE|[\{rec,a[0]==1,a[1]==1+t,a[2]==1+5t\}, a[n], f[u]]\\
\Out \{-24+120 t+6 u-108 t u+336 t^2 u-6 (-4+20 t+6 u-53 t u+16 t^2 u-2 u^2+66 t u^2-326 t^2 u^2-34 t^3 u^2-3
 t u^3+141 t^2 u^3-540 t^3 u^3+96 t^4 u^3) f[u]-6 (-u+20 t u+2 u^2-75 t u^2+18 t^2 u^2-u^3+85 t u^3-500 t^2 u^3+78 t^3 u^3-t u^4+155 t^2 u^4-660 t^3 u^4+224 t^4 u^4) f^{'}[u]-3 (24 t u^2-90 t u^3+71 t^2 u^3-14 t^3 u^3+72 t u^4-474 t^2 u^4+210 t^3 u^4+109 t^2 u^5-500 t^3 u^5+256 t^4 u^5) f^{''}[u]-(23 t u^3-14 t^2 u^3-60 t u^4+73 t^2 u^4-22 t^3 u^4+37 t u^5-252 t^2 u^5+170 t^3 u^5+45 t^2 u^6-216 t^3 u^6+144 t^4 u^6) f^{(3)}[u] -2 (t u^4-t^2 u^4-2 t u^5+3 t^2 u^5-t^3 u^5+t u^6-7 t^2 u^6+6 t^3 u^6+t^2 u^7-5 t^3 u^7+4 t^4 u^7) f^{(4)}[u]==0,f[0]==1,f^{'}[0]==1+t, f^{''}[0]==2 (1+5 t),f^{(3)}[0]==6 (1+11 t+5 t^2) \}\\
\end{mma}

Next, we have to show that the right hand side of \eqref{eq-wheel-de} is indeed the solution of the above differential equation \textit{de}. To this end, we also
need to verify its value at $u=0$. Since each of three terms on the right hand side of \eqref{eq-wheel-de} is not defined for $u=0$, we should simplify the summation of these terms.
We denote the resulting function by $\phi1(u)$.
%

\begin{mma}
\In \Phi1[u\_]=\frac{\Phi_W[u]}{u^2};\\
\scalebox{0.92}[1]{\In \frac{2u\left(\sqrt{(u-1)^2-4 t u^2}-4 t u+2 t+u-3\right)+4}{\sqrt{(u-1)^2-4 t u^2} \left(\sqrt{(u-1)^2-4 t u^2}+u+1\right)\left(\sqrt{(u-1)^2-4tu^2}-u(2tu+1)+1\right)};\\}
\par
\In \phi1[u\_ ]:=\%;\\
\end{mma}
\begin{mma}
\In |Simplify|[\phi1[u]==\Phi1[u]]\\
\Out |True| \\
\end{mma}

Finally, we verify that $\phi1(u)$ satisfies the differential equation \textit{de} with the correct initial values.
%

\begin{mma}
\In |Simplify|\left[|de|\text{/.}\, f\to \phi1\right]\\

\Out \{|True|,|True|,|True|,|True|,|True|\}\\
\end{mma}

Thus we establish the equivalence between
\eqref{eq-klpol-wheel} and \eqref{eq-gf-klpol-wheel}.
\end{proof}

\subsection{Whirl matroids}\label{subsect-whirl}

The aim of this subsection is to prove \eqref{eq-klpol-whirl}.  As in the case of wheel graphs, the computation of \KL  polynomials for whirl matroids is also based on that for fan graphs. We consider the following generating function of $P_{W^n}(t)$ given by
\begin{align}\label{eqn-whirlkl-gf}
\Phi^W(t,u) := \sum_{n=1}^\infty P_{W^n}(t) u^n,
\end{align}
where $W^1$ is  the graphic  matroid of a path with two vertices  and  $W^2$ is a simple matroid obtained from the graphic matroid  of $W_2^{'}$ (defined at the beginning of Subsection \ref{subsect-wheel}) by declaring $\{(1,2),(1,2)\}$ is also an  independent set.

The main result of this subsection is as follows.

\begin{thm}\label{thm-klpol-whirl}
We have
\begin{align}
\Phi^W(t,u)&=\frac{u+1}{2 (t u+1) \sqrt{(u-1)^2-4 t u^2}}-\frac{1}{2 (t u+1)}.\label{eq-gf-klpol-whirl}
\end{align}
\end{thm}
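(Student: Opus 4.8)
The plan is to follow the exact same strategy that was used for fan graphs and wheel graphs, namely: derive a functional equation for $\Phi^W(t,u)$ from the recursive definition \eqref{klpol-reform}, and then verify that the proposed closed form \eqref{eq-gf-klpol-whirl} satisfies it, the verification being done by computer algebra. First I would set $\Psi^W(u) := \Phi^W(t^{-1},tu)$, which by \eqref{klpol-reform} together with the fact that $\operatorname{rk} W^n = n$ equals
\begin{align*}
\Psi^W(u) = \sum_{n=1}^{\infty}\left(\sum_{F \in L(W^n)} t^{-\operatorname{rk} F}\chi_{(W^n)_F}(t)\, P_{(W^n)^F}(t)\right)u^n.
\end{align*}
Here one must work with the lattice of flats of the whirl matroid rather than with graph compositions, since $W^n$ is not graphic; the key structural point is that the flats of $W^n$ are almost the same as the compositions of the wheel graph $W_n$, except that the outer cycle $[1,n]$ is now independent, so the flat whose only nontrivial block is the whole outer rim is removed, and the contraction/localization behave accordingly.

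Next I would classify the flats of $W^n$ into analogues of the three families used for wheel graphs in Lemma~\ref{w2} and Lemma~\ref{w3}: flats containing $\{0\}$ as a singleton (giving a $\mathcal{B}\bullet\mathcal{A}$-type contribution, with $\mathcal{B}$ now weighted by $P_{W^m}(t)$ instead of $P_{W_m}(t)$ and with the ``all-rim'' term omitted), flats not containing $\{0\}$ as a singleton (giving a $\mathcal{B}\bullet\mathcal{A}^{eo}$-type contribution, identical in form to $\Psi_3(u)$ since contracting a whirl at such a flat still yields a fan), plus the small explicit corrections coming from the modified low-rank flats. For each piece I would apply Proposition~\ref{prop-composition-formula-variant} exactly as in the proofs of Lemmas~\ref{w2} and~\ref{w3}, using $A(u)=u/(1-(t-1)u)$ for the path-chromatic generating function and $\Psi^{eo}(u)=\Psi^e(u)\Psi^o(u)$ from Subsection~\ref{sec:non-ekl}. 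Assembling these gives a functional equation of the shape
\begin{align*}
\Phi^W(t^{-1},tu) = (\text{explicit rational function of }u,t) + \frac{1}{1-(t-1)u}\,\Phi^W\!\left(t,\frac{u}{1-(t-1)u}\right) + u\frac{\partial \Psi^{eo}(u)}{\partial u}\cdot\frac{1}{1-\Psi^{eo}(u)},
\end{align*}
in close parallel with Lemma~\ref{psi-w}.

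Finally I would verify, using \texttt{Simplify} in Mathematica under the assumption $1-(t-1)u>0$ (legitimate since $u$ is a formal small parameter), that substituting the right-hand side of \eqref{eq-gf-klpol-whirl} for $\Phi^W(t,u)$ turns this functional equation into an identity; by the uniqueness of the Kazhdan-Lusztig polynomials this establishes \eqref{eq-gf-klpol-whirl}. The main obstacle I anticipate is not the generating-function bookkeeping, which is essentially a template copy of the wheel-graph argument, but rather the careful description of $L(W^n)$ and of the contractions $(W^n)^F$ and localizations $(W^n)_F$: one must check that declaring the outer rim independent changes the flat lattice only by deleting exactly the one ``full-rim'' flat and its refinements in the relevant way, that every other contraction of a whirl is still a smaller whirl or a fan, and hence that the only difference from the wheel computation is (i) replacing $P_{W_m}$ by $P_{W^m}$ in the $\mathcal{B}$-weights and (ii) a clean modification of the explicit $\Psi_1$-type term. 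Getting those combinatorial identifications exactly right — and confirming that the rank function $\operatorname{rk}(W^n)_F$ agrees with the graph rank everywhere except on the rim flat — is where the real care is needed; once that is pinned down, the computer check closes the argument.
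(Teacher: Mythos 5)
Your overall strategy is exactly the paper's: classify the flats of $W^n$, convert the defining recursion into a functional equation for $\Phi^W(t,u)$ whose pieces are the whirl analogues of $\Psi_1,\Psi_2,\Psi_3$ from the wheel computation, and close the argument by a computer verification of the proposed closed form. The two main pieces you describe — the $\mathcal{B}\bullet\mathcal{A}$ contribution with $P_{W^m}$ replacing $P_{W_m}$ in the $\mathcal{B}$-weights, and a $\mathcal{B}\bullet\mathcal{A}^{eo}$ contribution literally equal to $\Psi_3(u)$ — are precisely Lemmas \ref{whirl22} and \ref{whirl23}.

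The genuine gap is in your description of $L(W^n)$. You assert that declaring the rim independent ``changes the flat lattice only by deleting exactly the one full-rim flat and its refinements.'' That is not what happens: no refinement of $O_n$ is deleted (every $F\in L(M(W_n))$ with $O_n\not\subset F$ remains a flat of $W^n$), the single flat $O_n$ disappears, and — this is the point you are missing — $n$ \emph{new} flats appear, namely the sets $O_n\setminus\{e\}$ for $e$ a rim edge. (In $M(W_n)$ the closure of $O_n\setminus\{e\}$ is $O_n$, so these are not flats of the wheel matroid; in $W^n$ the rim is independent, so adjoining $e$ raises the rank and $O_n\setminus\{e\}$ is closed.) This is the content of the paper's Lemma \ref{whirl-flats}, and these flats contribute a nontrivial term $\Psi^W_1(u)=\sum_{n\ge 1} n(t-1)^{n-1}u^n = u/(1-(t-1)u)^2$ to the functional equation, since each has localization a path matroid on $n$ vertices and contraction whose simplification is $W^1$. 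Omitting them yields a functional equation that is off by exactly this term — in the paper's bookkeeping it is what cancels the boundary correction $-u/(1-(t-1)u)^2$ arising because the $\mathcal{B}$-weights in the $\mathcal{B}\bullet\mathcal{A}$ piece start at $n=2$ while $\Phi^W$ starts at $n=1$ — so your final \texttt{Simplify} would return \texttt{False} and the proof would not close. A smaller slip: since $W^n$ is not graphic you should use the matroid recursion $t^{\rk W^n}P_{W^n}(t^{-1})=\sum_F\chi_{(W^n)_F}(t)\,P_{(W^n)^F}(t)$ directly; the factor $t^{-\rk F}$ in your displayed sum is spurious (the $t^{-|C|}$ in \eqref{klpol-reform} converts chromatic polynomials into characteristic polynomials and equals $t$ to the minus the number of blocks, not $t^{-\rk F}$).
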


We first give an outline of the proof of Theorem  \ref{thm-klpol-whirl}. By using the same arguments as for fan graphs and wheel graphs, it is not difficult to show that
\begin{align}\label{eq-whirl}
\Phi^W(t^{-1},tu)=\sum_{n=1}^{\infty}
\left(\sum_{F\in L(W^n) }{\chi_{W^n_F}(t) P_{W^n/F}(t)}  \right)u^n
\end{align}
by the  definition of the \KL  polynomial and  the fact that $\rk W^n=n$ for any $n\geq 1$. Denote the right hand side of \eqref{eq-whirl} as $\Psi^W(u)$.
As in the case of fan graphs and wheel graphs, we shall further give an alternative expression of $\Psi^W(u)$ in terms of $\Phi^W(t,u)$, which will lead to a functional equation satisfied by $\Phi^W(t,u)$. Then Theorem \ref{thm-klpol-whirl} could be proved with the help of a computer algebra system.

To determine $\Psi^W(u)$, we need to analyze what a flat of $W^n$ could be.
Note that  for $n\geq 2$ the matroid  $W^n$ is not a graphic matroid any more. Due to the close relationship between whirl matroids and wheel matroids, it is possible to describe the flats of $W^n$ in terms of the edges of the wheel graph $W_n$. For $n\geq 3$, let $O_n$ be the edge set of the outer cycle of $W_n$, and let
\begin{align*}\label{flat-whirl-l12}
L_1(W^n)&=\cup_{e \in O_n}\{O_n\setminus \{e\}\},\\[5pt]
L_2(W^n)&=\{E(W_n)\} \cup  \{F \in L(M(W_n))\,|\, O_n  \not \subset F\}.
\end{align*}
For $n=1$ we may assume that $W^1=M(G)$ with $V(G)=\{0,1\}$ and $E(G)=\{(0,1)\}$ and further let $L_1(W^1)=\{\emptyset\}$ and $L_2(W^1)=\{E(G)\}$.
For $n=2$ we may assume that the unique pair of multiple edges of $W_2^{'}$ is $\{e_1,e_2\}$ and further let  $L_1(W^2)=\{\{e_1\},\,\{e_2\}\}$ and $L_2(W^2)=\{\emptyset,\{(0,1)\},\{(0,2)\},E(W_2^{'})\}$.
We have the following result.

\begin{lem}\label{whirl-flats}
For any $n\geq 1$, the set $L(W^n)$ of flats is the disjoint union of $L_1(W^n)$ and $L_2(W^n)$.
\end{lem}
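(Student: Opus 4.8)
The plan is to unravel the matroid structure of the whirl $W^n$ directly from its definition, recalling that $W^n$ is obtained from the graphic matroid $M(W_n)$ by declaring the outer cycle $O_n$ to be independent while keeping all other independent sets unchanged. The key observation is that this single change --- turning the unique circuit $O_n$ of rank $n-1$ into an independent set of rank $n$ --- affects the closure operator only for those flats of $M(W_n)$ that "see" all of $O_n$. Concretely, a subset $A \subseteq E(W_n)$ spans all of $O_n$ in $M(W_n)$ precisely when either $O_n \subseteq \overline{A}$ in the graphic matroid; for such $A$, adding one more edge of $O_n$ (or a spoke) no longer increases rank in $M(W_n)$, but in $W^n$ the set $O_n$ itself is now independent, so closures change. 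My first step would therefore be to compare the rank functions $\rk_{M(W_n)}$ and $\rk_{W^n}$: they agree on every set not containing a basis of $O_n$-span, and differ by $+1$ exactly on sets whose $M(W_n)$-closure contains $O_n$.

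Next I would translate this into a statement about flats. Every flat $F$ of $W^n$ is the $W^n$-closure of some set $A$; I would split into two cases according to whether $\rk_{W^n}(A) = \rk_{M(W_n)}(A)$ or not. In the first case the closure operators coincide (one checks $\operatorname{cl}_{W^n}(A) = \operatorname{cl}_{M(W_n)}(A)$ whenever $O_n \not\subseteq \operatorname{cl}_{M(W_n)}(A)$, since adding any single non-spanning edge behaves identically), so $F$ is a graphic flat of $M(W_n)$ not containing $O_n$ --- together with the top flat $E(W_n)$ itself, which always occurs, these give exactly $L_2(W^n)$. In the second case, $A$ spans $O_n$ in the graphic matroid but $O_n$ is independent in $W^n$; then the $W^n$-closure of $A$ is a flat of rank strictly less than $n$ whose ground set is a proper subset of $E(W_n)$, and a short rank count shows it must be $O_n \setminus \{e\}$ for some edge $e \in O_n$ --- these are the rank-$(n-2)$ flats making up $L_1(W^n)$. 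I would verify each $O_n \setminus \{e\}$ really is a $W^n$-flat (its closure cannot pick up $e$, since $O_n$ is independent, nor any spoke, since adding a spoke to $O_n\setminus\{e\}$ would create a graphic circuit of size $<n$ hence a $W^n$-circuit, a contradiction) and that these are pairwise distinct and distinct from everything in $L_2(W^n)$ (they contain $O_n\setminus\{e\}$ but not all of $O_n$, and they are not graphic flats of $M(W_n)$ because in $M(W_n)$ the closure of $O_n\setminus\{e\}$ is all of $O_n$). Finally I would handle the degenerate small cases $n=1,2$ by direct inspection against the ad hoc definitions given in the excerpt, and assemble the disjointness and exhaustiveness into the claimed decomposition $L(W^n) = L_1(W^n) \sqcup L_2(W^n)$.

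The main obstacle I anticipate is the bookkeeping in the second case: correctly identifying that a $W^n$-flat $F$ arising from an $O_n$-spanning set must be exactly $O_n\setminus\{e\}$ and not, say, a larger or structurally different set. This requires pinning down how the "independence of $O_n$" interacts with the spokes of $W_n$ --- one must argue that no flat of $W^n$ properly contained in $E(W_n)$ can contain any spoke together with $n-1$ of the outer edges, using the fact that such a configuration would contain a small graphic circuit that remains a circuit in $W^n$. A clean way to organize this is to note that in $W^n$ the only circuits are the graphic circuits of $M(W_n)$ that avoid being equal to $O_n$, together with sets of the form $O_n \cup \{s\}$ for a spoke $s$ and certain "theta-like" configurations; making this circuit description precise, or alternatively working purely with the rank function as above, is the technical heart of the argument, but everything else is routine case analysis.
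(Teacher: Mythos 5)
Your overall plan --- compare $W^n$ with $M(W_n)$, verify that every member of $L_1\cup L_2$ is a flat, and then argue that no other flats exist --- is the same strategy the paper uses, and your verification that each $O_n\setminus\{e\}$ is a $W^n$-flat and that $L_1$ and $L_2$ are disjoint is fine. But the technical claim on which you hang the exhaustiveness direction is false. The rank functions of $M(W_n)$ and $W^n$ do \emph{not} ``differ by $+1$ exactly on sets whose $M(W_n)$-closure contains $O_n$'': relaxing the circuit-hyperplane $O_n$ changes the rank of the single set $O_n$ only. For any $A\neq O_n$ with $A\not\supseteq O_n$ the two matroids have the same independent subsets of $A$, and for any $A\supsetneq O_n$ both ranks already equal $n$; in particular a spanning set such as $E(W_n)$, or $O_n\cup\{s\}$ for a spoke $s$, has closure containing $O_n$ yet identical rank in both matroids.

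This error breaks your dichotomy. If you split flats $F$ according to whether $\rk_{W^n}(F)=\rk_{M(W_n)}(F)$, then the only set landing in the second case is $O_n$ itself, whose $W^n$-closure is $E(W_n)\in L_2$ --- so the new flats $O_n\setminus\{e\}$ never appear there. Instead they fall into your first case (their rank is $n-1$ in both matroids), where you assert the closure operators coincide; but for $A=O_n\setminus\{e\}$ they do not, since $\operatorname{cl}_{M(W_n)}(A)=O_n$ while $\operatorname{cl}_{W^n}(A)=A$. The two conditions you conflate --- ``ranks agree'' and ``$O_n\not\subseteq\operatorname{cl}_{M(W_n)}(A)$'' --- are genuinely different, and the argument as written does not establish that $L_1\cup L_2$ exhausts $L(W^n)$. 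The repair is essentially what the paper does: use only the correct fact that $\rk_{M(W_n)}X=\rk_{W^n}X$ for all $X\neq O_n$. Given a $W^n$-flat $F\notin L_2$, note $F\neq O_n$ (as $O_n$ is independent of rank $n$ in $W^n$, its closure is the whole ground set), so $F$ is not a flat of $M(W_n)$; hence some $e\notin F$ satisfies $\rk_{M(W_n)}(F\cup\{e\})=\rk_{M(W_n)}(F)$, while flat-ness in $W^n$ gives $\rk_{W^n}(F\cup\{e\})>\rk_{W^n}(F)=\rk_{M(W_n)}(F)$. The two ranks of $F\cup\{e\}$ therefore disagree, which forces $F\cup\{e\}=O_n$ and $F=O_n\setminus\{e\}\in L_1$. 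This avoids both the closure bookkeeping and the circuit classification you flag as the ``technical heart.''
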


\begin{proof}
We only need to prove the lemma for $n\geq 3$.
Let  $\rk_W$ (resp. $\rk^W$) be the  rank function  of  $M(W_n)$ (resp. $W^n$). Note that  $\rk_W X= \rk^W X$ for any $X\neq O_n$,
which will be frequently used in our proof.

We first show that each element of
$L_1(W^n)$ or $L_2(W^n)$ is a flat of $W^n$. It is clear that each element of $L_1(W^n)$ is a flat of $W^n$ since the outer cycle is an independent set. It is also clear that $E(W_n)$ is
a flat of $W^n$, which is actually the maximal flat in $L(W^n)$.
It remains to show that each $F \in L(M(W_n))$ satisfying $O_n  \not \subset F$ is also a flat of $W^n$. In this case we must have $|O_n\cap F| \leq n-2 $, which means that $L_1(W^n)$ and $L_2(W^n)$ are disjoint.  Otherwise, $|O_n\cap F| = n-1$.
Since $F$ is a flat of $W_n$, we must have $O_n\subset F$, a contradiction.  Hence, neither $F$ nor $F \cup{e}$ is $\{O_n\}$ for any  $e\in E(W_n)\setminus F$. Recall that the ground set of $W^n$ is $E(W_n)$. Thus  $rk^W F \cup\{e\}=\rk_W F \cup\{e\}>\rk_W F=rk^W F$ for any  $e\in W^n\setminus F$. That is to say $F$ is a flat of $W^n$.

It remains to show that each flat $F \in L(W^n)$ belongs to either $L_1(W^n)$ or $L_2(W^n)$. From the previous characterization of compositions of $W_n$
it follows that $L(M(W_n))=L_2(W^n)\cup\{O_n\}$.
Suppose that $F\not\in L_2(W^n)$. We proceed to show that $F\in L_1(W^n)$. Since $O_n$ is not a flat of $W^n$, we have $F\neq O_n$. Thus we must have $F \not \in L(M(W_n))$, which implies the existence of some edge $e \not \in F$ satisfying $\rk_W F\neq  \rk_W F\cup \{e\}$. On the other hand, we have $\rk^W F= \rk^W F\cup \{e\}$ since $F$ is a flat of $W^n$.
Recalling that $\rk_W F=\rk^W F$ for $F\neq O_n$, we get that $\rk_W F\cup \{e\}\neq \rk^W F\cup \{e\}$, and hence $F\cup \{e\}=O_n$. Therefore $F \in L_1(W^n)$, as desired. This completes the proof.
\end{proof}

By the above lemma  $\Psi^W(u)$ admits the following decomposition
\begin{align}
\Psi^W(u)=\Psi^W_1(u)+\Psi^W_2(u),
\end{align}
where
 \begin{align}
\Psi^W_1(u)&=\sum_{n=1}^{\infty}\left( \sum_{F \in L_1(W^n) }{\chi_{W^n_F}(t) P_{(W^n)^F}(t)} \right) u^n,\label{eq-whirl-l1}\\[5pt]
\Psi^W_2(u)&=\sum_{n=1}^{\infty}\left(\sum_{F \in L_2(W^n) }{\chi_{W^n_F}(t) P_{(W^n)^F}(t)}\right)u^n.\label{eq-whirl-l2}
\end{align}

We first determine $\Psi^W_1(u)$.

\begin{lem}\label{whirl1}
We have
\begin{align*}
\Psi^W_1(u)=\frac{1}{1-(t-1) u} \times \frac{u}{1-(t-1) u}.
\end{align*}
\end{lem}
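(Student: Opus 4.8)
The plan is to evaluate the inner sum in \eqref{eq-whirl-l1} term by term. By Lemma \ref{whirl-flats} and the definition of $L_1(W^n)$, for $n\ge 3$ the flats belonging to $L_1(W^n)$ are exactly the sets $O_n\setminus\{e\}$ with $e\in O_n$, so $|L_1(W^n)|=n$; under the conventions fixed for $n=1,2$ one still has $|L_1(W^1)|=1$ and $|L_1(W^2)|=2$. Hence it suffices to show that every summand $\chi_{W^n_F}(t)\,P_{(W^n)^F}(t)$ with $F\in L_1(W^n)$ equals $(t-1)^{n-1}$, and then to sum the resulting series.

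Fix $F\in L_1(W^n)$. Since $W^n$ is obtained from the graphic matroid $M(W_n)$ by declaring the outer cycle $O_n$ to be independent, every subset of $O_n$ is independent in $W^n$; in particular $F$ is an independent flat of $W^n$ of rank $|F|=n-1$. Therefore the localization $W^n_F$ is the free (Boolean) matroid of rank $n-1$, whose lattice of flats is the Boolean lattice $B_{n-1}$, so $\chi_{W^n_F}(t)=(t-1)^{n-1}$. On the other hand, using $\rk W^n=n$ we get $\rk (W^n)^F=\rk W^n-\rk F=1$. Being the contraction of $W^n$ at a flat, $(W^n)^F$ is loopless, so its lattice of flats is a two-element chain, which is modular; thus $P_{(W^n)^F}(t)=1$ by the result of Elias, Proudfoot and Wakefield \cite{elias2016kazhdan}. (Alternatively, $\deg P_{(W^n)^F}(t)<\tfrac12\rk (W^n)^F=\tfrac12$ already forces $P_{(W^n)^F}(t)=1$.) Consequently
\[
\sum_{F\in L_1(W^n)}\chi_{W^n_F}(t)\,P_{(W^n)^F}(t)=n\,(t-1)^{n-1}\qquad\text{for all }n\ge 1.
\]

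Substituting this into \eqref{eq-whirl-l1} and summing the geometric-type series gives
\[
\Psi^W_1(u)=\sum_{n=1}^{\infty}n\,(t-1)^{n-1}u^n=\frac{u}{\bigl(1-(t-1)u\bigr)^2}=\frac{1}{1-(t-1)u}\cdot\frac{u}{1-(t-1)u},
\]
which is precisely the claimed identity.

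The only delicate point is the rank/closure bookkeeping in the second paragraph. Since $W^n$ is genuinely non-graphic for $n\ge 2$, the flats in $L_1(W^n)$ are not flats of the wheel matroid $M(W_n)$, and one must invoke the defining property of the whirl — independence of the outer cycle $O_n$ — to see simultaneously that $F$ has full rank $n-1$ (whence $\chi_{W^n_F}(t)=(t-1)^{n-1}$) and that $(W^n)^F$ is loopless of rank exactly $1$ (whence $P_{(W^n)^F}(t)=1$). Once this is established, the degenerate cases $n=1,2$ are verified directly against the stated conventions.
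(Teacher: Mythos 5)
Your proof is correct and follows essentially the same route as the paper: identify the $n$ flats of $L_1(W^n)$, observe that each localization is the free matroid of rank $n-1$ (the paper phrases this as the graphic matroid of an $n$-vertex path) with characteristic polynomial $(t-1)^{n-1}$, that each contraction has rank $1$ and hence trivial Kazhdan--Lusztig polynomial, and sum $\sum_{n\ge 1} n(t-1)^{n-1}u^n$. Your degree-bound justification of $P_{(W^n)^F}(t)=1$ is a harmless variant of the paper's appeal to the simplification being $W^1$.
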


\begin{proof}
Given a flat $F\in L_1(W^n)$, we need to
analyze what kinds of matroids $W^n_F$ and
$(W^n)^F$ are.
For $n=1$ we see that $L_1(W^1)=\{\emptyset\}$,
the matroid $W^n_{\emptyset}$ is the empty matroid
and $(W^1)^{\emptyset}$ is $W^1$. Thus, we have
\begin{align*}
\chi_{W^1_{\emptyset}}(t)=1,\quad P_{(W^1)^{\emptyset}}(t)=P_{W^1}(t)=1.
\end{align*}
Now suppose that $F \in L_1(W^n)$ for $n\geq 2$.
By the definition of $L_1(W^n)$, the matroid
$W^n_F$ is isomorphic to the graphic matroid of a path with $n$ vertices, and
$(W^n)^F$ is a matroid with its  simplification isomorphic to $W^1$. Thus, for $n\geq 2$, we have
\begin{align*}
\chi_{W^n_{F}}(t)=(t-1)^{n-1},\quad P_{(W^n)^{F}}(t)=P_{W^1}(t)=1.
\end{align*}
In view of the fact that $L_1(W^n)$ has exactly $n$ distinct flats, we get that
\begin{align*}
\Psi^W_1(u)&=\sum_{n=1}^{\infty}{n (t-1)^{n-1} u^n}=\frac{1}{1-(t-1) u} \times \frac{u}{1-(t-1) u}.
\end{align*}
This completes the proof.
\end{proof}

We proceed to determine $\Psi^W_2(u)$.
By \eqref{eq-wheel-gd-2}, \eqref{eq-wheel-gd-3} and the definition of $L_2(W^n)$, we get the following decomposition:
$$L_2(W^n)=L_2^{\langle 1\rangle}(W^n)\uplus L_2^{\langle 2\rangle}(W^n)\uplus L_2^{\langle 3\rangle}(W^n),$$
where $L_2^{\langle 1\rangle}(W^n)$ contains the ground set of $W^n$ as its unique flat, both $L_2^{\langle 2\rangle}(W^1)$ and $L_2^{\langle 3\rangle}(W^1)$ are empty, and
\begin{align*}
L_2^{\langle 2\rangle}(W^n)=\{E(W_n[C])\,|\,C \in \mathcal{C}^{\langle 2\rangle}(W_n)\}\\[5pt]
L_2^{\langle 3\rangle}(W^n)=\{E(W_n[C])\,|\,C \in \mathcal{C}^{\langle 3\rangle}(W_n)\}
\end{align*}
for any $n\geq 2$. Accordingly, for $i=1,2,3$ let
\begin{align}\label{eq-whirl-gf-l2-123}
\Psi^W_{2,i}(u)=\sum_{n=1}^{\infty}\left(\sum_{F \in L_2^{\langle i\rangle}(W^n) }{\chi_{(W^n)_F}(t) P_{(W^n)^F}(t)}\right) u^n.
\end{align}
Then we have
$$\Psi^W_2(u)=\Psi^W_{2,1}(u)+\Psi^W_{2,2}(u)+\Psi^W_{2,3}(u).$$
Next we shall compute $\Psi^W_{2,1}(u), \Psi^W_{2,2}(u)$ and $\Psi^W_{2,3}(u)$ successively.

\begin{lem}\label{whirl21}
Let $\Psi^W_{2,1}(u)$ be defined as in \eqref{eq-whirl-gf-l2-123}.
Then
\begin{align*}
\Psi^W_{2,1}(u)&=\frac{1}{1-(t-2) u}-\frac{1}{u+1}.
\end{align*}
\end{lem}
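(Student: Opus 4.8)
The plan is to evaluate the single ``top flat'' contribution $\Psi^W_{2,1}(u)$ directly from its definition in \eqref{eq-whirl-gf-l2-123}. By definition, $L_2^{\langle 1\rangle}(W^n)$ consists of exactly one flat, namely the entire ground set $E(W_n)$ of $W^n$, which for $n\geq 2$ is $E(W_n)$ and for $n=1$ is $E(G)$. First I would identify the two relevant matroid operations at this flat: the localization $(W^n)_{E(W_n)}$ is all of $W^n$ itself, so $\chi_{(W^n)_{E(W_n)}}(t)=\chi_{W^n}(t)$, while the contraction $(W^n)^{E(W_n)}$ has rank $0$, so $P_{(W^n)^{E(W_n)}}(t)=1$ by the first defining property of the \KL polynomial. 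Hence the inner sum in \eqref{eq-whirl-gf-l2-123} for $i=1$ collapses to $\chi_{W^n}(t)$, and $\Psi^W_{2,1}(u)=\sum_{n=1}^{\infty}\chi_{W^n}(t)\,u^n$.

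The next step is to make the characteristic polynomial of the whirl matroid $W^n$ explicit. I would compute $\chi_{W^n}(t)$ using the relationship between $W^n$ and the wheel graph $W_n$: the whirl differs from $M(W_n)$ only in that the outer cycle $O_n$ becomes independent, so their lattices of flats differ only in that $O_n$ is no longer a flat of $W^n$ (by Lemma \ref{whirl-flats}, $L(W^n)=L_1(W^n)\uplus L_2(W^n)$ and $O_n\notin L(W^n)$). One can then obtain $\chi_{W^n}(t)$ either by a direct rank computation (Möbius function on $L(W^n)$) or, more cleanly, by relating it to the already-recorded chromatic polynomials $\chi_{W_n}(t)=t((t-2)^n-(-1)^{n-1}(t-2))$ and $\chi_{H_n}(t)=t(t-1)^{n-1}$ of the wheel and the path, together with the effect of removing the single flat $O_n$ from the lattice. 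I expect the outcome to be $\chi_{W^n}(t)=(t-2)^n$ for $n\geq 1$ (one checks small cases: $n=1$ gives $t-2$, consistent with $W^1$ being a rank-one matroid on one element viewed correctly after the normalization in \eqref{eq-c-chr}), possibly up to the $t^{-k}$ normalization factor in \eqref{eq-c-chr}.

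Finally, with $\chi_{W^n}(t)=(t-2)^n$ in hand, the generating function is the geometric-type sum $\sum_{n=1}^{\infty}(t-2)^n u^n=\frac{(t-2)u}{1-(t-2)u}$, and a one-line algebraic manipulation gives $\frac{(t-2)u}{1-(t-2)u}=\frac{1}{1-(t-2)u}-1=\frac{1}{1-(t-2)u}-\frac{u+1}{u+1}$; adjusting to match the stated closed form $\frac{1}{1-(t-2)u}-\frac{1}{u+1}$ will require keeping careful track of which indices $n$ actually contribute (in particular whether the $n=1$ term and the $W^2$ normalization behave as in the general pattern), since the $\frac{1}{u+1}$ rather than $1$ signals a small correction coming from the low-rank base cases. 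The main obstacle I anticipate is precisely this bookkeeping at $n=1,2$: establishing the correct value of $\chi_{W^n}(t)$ uniformly, including the degenerate matroids $W^1$ and $W^2$ described at the start of this subsection, and confirming that the resulting sum telescopes to the asserted expression rather than to $\frac{1}{1-(t-2)u}-1$. Everything else is routine manipulation of rational generating functions.
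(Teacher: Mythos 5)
Your setup is the same as the paper's: for the unique flat $F=E(W_n)\in L_2^{\langle 1\rangle}(W^n)$ the localization is $W^n$ itself, the contraction is the empty matroid with \KL polynomial $1$, so $\Psi^W_{2,1}(u)=\sum_{n\geq 1}\chi_{W^n}(t)\,u^n$. The gap is in the value of $\chi_{W^n}(t)$. It is not $(t-2)^n$; it is
\begin{align*}
\chi_{W^n}(t)=(t-2)^n-(-1)^n,
\end{align*}
which the paper obtains by deletion--contraction (equivalently, relaxing the circuit-hyperplane $O_n$ changes the characteristic polynomial of $M(W_n)$, namely $(t-2)^n+(-1)^n(t-2)$, by $(-1)^n(1-t)$, and the two correction terms combine to $-(-1)^n$). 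The term $-(-1)^n$ is present for \emph{every} $n$, not just at low rank: your own sanity check at $n=1$ already fails, since $W^1$ is a simple rank-one matroid on one element and so has characteristic polynomial $t-1=(t-2)-(-1)$, not $t-2$. Consequently the discrepancy between your answer $\frac{1}{1-(t-2)u}-1$ and the stated $\frac{1}{1-(t-2)u}-\frac{1}{1+u}$ is not a boundary/bookkeeping issue at $n=1,2$ that can be patched; it is exactly the sum $\sum_{n\geq 1}\left(-(-1)^n\right)u^n=\frac{u}{1+u}$, which converts the $-1$ into $-\frac{1}{1+u}$. To close the gap you need to actually establish $\chi_{W^n}(t)=(t-2)^n-(-1)^n$ rather than guess it; once that is in place the rest of your argument goes through verbatim.
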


\begin{proof}
Suppose that $F$ is the unique flat in $L_2^{\langle 1\rangle}(W^n)$. As $F$ is the ground set of $W^n$, the matroid  $(W^n)_F$ is indeed $W^n$ and the matroid $(W^n)^F$ is the empty matroid. It is known that $\chi_{W^n}(t)=(t-2)^{n}-(-1)^{n}$, which can be easily proved  by using the deletion-contraction rule. Note that the \KL polynomial of the empty matroid is equal to $1$. Thus we have
$$\Psi^W_{2,1}(u)=\sum_{n=1}^{\infty}{\chi_{W^n}(t)P_{(W^n)^F}(t) u^n}=\sum_{n=1}^{\infty}{((t-2)^{n}-(-1)^{n}) u^n}=\frac{1}{1-(t-2) u}-\frac{1}{1+u}.$$
\end{proof}

\begin{lem}\label{whirl22}
Let $\Phi^W(t,u)$ be defined as in \eqref{eqn-whirlkl-gf}. Then
we have
\begin{align*}
\Psi^W_{2,2}(u)&=\frac{1}{1-(t-1) u} \left(\Phi^W\left(t,\frac{u}{1-(t-1) u}\right)-\frac{u}{1-(t-1) u}\right).
\end{align*}
\end{lem}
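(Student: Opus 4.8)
The plan is to run the argument of Lemma~\ref{w2} in essentially the same structure, but with whirl matroids playing both roles (the ``outer'' structure and the contracted structure), and to finish the computation by a single application of Proposition~\ref{prop-composition-formula-variant}. Concretely, $\Psi^W_{2,2}(u)$ sums $\chi_{(W^n)_F}(t)P_{(W^n)^F}(t)u^n$ over flats $F\in L_2^{\langle 2\rangle}(W^n)$, and by definition every such $F$ equals $E(W_n[C])$ for a unique $C\in\mathcal{C}^{\langle 2\rangle}(W_n)$; exactly as in Lemma~\ref{w2}, such a $C$ (having $\{0\}$ as a block) is the same datum as a decomposition of the outer $n$-cycle into $|C|-1\ge 2$ consecutive segments, of sizes say $i_1,\dots,i_{|C|-1}$.

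First I would evaluate the summand for such an $F$. Since $\{0\}$ is a block of $C$, the graph $W_n[C]$ is the disjoint union of the isolated vertex $0$ and the $|C|-1$ paths $H_{i_1},\dots,H_{i_{|C|-1}}$; moreover $O_n\not\subset F$, so the localization $(W^n)_F$ agrees with $M(W_n)_F=M(W_n[C])$, and by \eqref{eq-c-chr} together with $\chi_{H_i}(t)=t(t-1)^{i-1}$ one gets $\chi_{(W^n)_F}(t)=t^{-|C|}\chi_{W_n[C]}(t)=\prod_{j=1}^{|C|-1}\chi_{H_{i_j}}(t)/t=\prod_{j=1}^{|C|-1}(t-1)^{i_j-1}$. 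For the contraction I claim $(W^n)^F\cong W^{|C|-1}$: the set $F$ is independent in $W^n$ with $|F|=n-(|C|-1)$, so the contracted matroid has rank $|C|-1$ and its underlying graph is the wheel $W_{|C|-1}$ obtained by collapsing each path block; writing $O'=O_n\setminus F$ for the new outer cycle, the computation $\rk_{W^n}(O'\cup F)-\rk_{W^n}(F)=\rk_{W^n}(O_n)-|F|=n-(n-(|C|-1))=|C|-1=|O'|$ shows that $O'$ remains independent, so $(W^n)^F$ is precisely the relaxation of $M(W_{|C|-1})$ along $O'$, that is, the whirl $W^{|C|-1}$. Hence $P_{(W^n)^F}(t)=P_{W^{|C|-1}}(t)$ and the summand equals $P_{W^{|C|-1}}(t)\prod_{j=1}^{|C|-1}(t-1)^{i_j-1}$.

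With these evaluations, $\Psi^W_{2,2}(u)$ is exactly the ordinary generating function of the type $\mathcal{B}\bullet\mathcal{A}$ structures of Proposition~\ref{prop-composition-formula-variant}, where $\mathcal{A}_i=\{(i)\}$ for $i\ge 1$ is the path structure with $w_{\mathcal{A}}((i))=\chi_{H_i}(t)/t=(t-1)^{i-1}$, so $A(u)=u/(1-(t-1)u)$ by \eqref{eq-path-gf}, and $\mathcal{B}_k=\{(k)\}$ for $k\ge 2$ with $w_{\mathcal{B}}((k))=P_{W^k}(t)$, so $B(u)=\Phi^W(t,u)-P_{W^1}(t)u=\Phi^W(t,u)-u$ by \eqref{eqn-whirlkl-gf} and $P_{W^1}(t)=1$; the hypotheses $0\notin\mathbb{A}$ and $0,1\notin\mathbb{B}$ both hold (the degenerate cases $n\le 2$, including the multigraph $W_2'$, are checked directly as in Lemma~\ref{w2}). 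Proposition~\ref{prop-composition-formula-variant} then gives $\Psi^W_{2,2}(u)=uA'(u)B(A(u))/A(u)$; since $A'(u)=1/(1-(t-1)u)^2$ we have $uA'(u)/A(u)=1/(1-(t-1)u)$, and substituting $A(u)$ into $B$ yields $\Psi^W_{2,2}(u)=\frac{1}{1-(t-1)u}\bigl(\Phi^W(t,\tfrac{u}{1-(t-1)u})-\tfrac{u}{1-(t-1)u}\bigr)$, which is the assertion. (The extra term $-\tfrac{u}{1-(t-1)u}$ compared with Lemma~\ref{w2} is exactly the effect of $B(u)=\Phi^W(t,u)-u$, since $\Phi^W$ includes the $n=1$ term while $\mathcal{B}$ admits only sizes $\ge 2$.)

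The main obstacle is the contraction computation in the second step: one must verify that collapsing the path blocks inside the \emph{whirl} $W^n$ produces the smaller whirl $W^{|C|-1}$ and not the wheel $M(W_{|C|-1})$, which amounts to tracking the relaxed circuit $O_n$ through the contraction and confirming that the resulting outer cycle $O'$ is a basis of the contracted matroid (and that nothing else is relaxed or un-relaxed). Everything else — the identification of $\mathcal{C}^{\langle 2\rangle}(W_n)$ with cyclic segment decompositions and the handling of the small cases — is routine and parallels the wheel-graph proof.
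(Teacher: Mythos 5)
Your proposal is correct and follows essentially the same route as the paper: identify $L_2^{\langle 2\rangle}(W^n)$ with $\mathcal{C}^{\langle 2\rangle}(W_n)$, observe that the localization agrees with that of the graphic matroid while the contraction is the smaller whirl $W^{|C|-1}$, and apply Proposition~\ref{prop-composition-formula-variant} with $w_{\mathcal{B}}((k))=P_{W^k}(t)$, the subtracted term $u/(1-(t-1)u)$ arising because $\mathcal{B}$ excludes size $1$. Your rank computation verifying that $O_n\setminus F$ stays independent in $(W^n)^F$ is a welcome explicit justification of a step the paper dismisses as clear from the definition of contraction.
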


\begin{proof}
First we notice that
\begin{align}\label{eq-rewritten-0}
\Psi^W_{2,2}(u)&=\sum_{n=2}^{\infty}\left(\sum_{F \in L_2^{\langle 2\rangle}(W^n) }{\chi_{(W^n)_F}(t) P_{(W^n)^F}(t)}\right) u^n
\end{align}
since $L_2^{\langle 2\rangle}(W^1)=\emptyset$.
Note that for $n\geq 2$ the set $L_2^{\langle 2\rangle}(W^n)$ is exactly composed of those flats of $M(W_n)$ corresponding to the compositions of $\mathcal{C}^{\langle 2\rangle}(W_n)$.
Given  $C \in \mathcal{C}^{\langle 2\rangle}(W_n)$, let $F=E(W_n[C])$. On one hand, considering $F$ as a flat of $M(W_n)$, we have
\begin{align*}
t^{-|C|}\chi_{W_n[C]}(t)=\chi_{(M(W_n))_F}(t),\quad
\end{align*}
which can be obtained by the previous discussion in Section \ref{sec-2}.
On the other hand, considering $F$ as a flat of $W^n$,
we have $O_n \not \subset F$ since $F \in L_2^{\langle 2\rangle}(W^n)$, which implies that
$M(W_n)_F$ and $(W^n)_F$ have the same independent sets.
Therefore,
\begin{align}\label{eq-l2}
M(W_n)_F\simeq (W^n)_F,\quad \forall F \in L_2^{\langle 2\rangle}(W^n).
\end{align}
Thus
\begin{align*}
t^{-|C|}\chi_{W_n[C]}(t)=\chi_{(W^n)_F}(t).
\end{align*}
Due to the bijection between between $L_2^{\langle 2\rangle}(W^n)$ and $\mathcal{C}^{\langle 2\rangle}(W_n)$,
we also use $(W^n)^C$  to represent the simplification of $(W^n)^F$ if $F=E(W_n[C])$.
With this notation \eqref{eq-rewritten-0} could be rewritten as
\begin{align}\label{eq-rewritten-00}
\Psi^W_{2,2}(u)&=\sum_{n=2}^{\infty}\left(\sum_{C\in \mathcal{C}^{\langle 2\rangle}(W_n)}t^{-|C|}\chi_{W_n[C]}(t) P_{(W^n)^C}(t)\right)u^n.
\end{align}

To prove this lemma, we will use the same arguments of Lemma \ref{w2}, which gives an expression of
\begin{align}\label{eq-psi-2}
\Psi_2(u)&=\sum_{n=2}^{\infty}\left(\sum_{C\in \mathcal{C}^{\langle 2\rangle}(W_n)}t^{-|C|}\chi_{W_n[C]}(t) P_{W_n/C}(t)\right)u^n.
\end{align}
Recall that for each $C\in \mathcal{C}^{\langle 2\rangle}(W_n)$
the graph $W_n/C$ is isomorphic to $W_{|C|-1}$. By the definition of contraction, it is also clear that $(W^n)^C$ is isomorphic to $W^{|C|-1}$ since $E(W_n[C])\subsetneq O_n$.
By comparing \eqref{eq-rewritten-00} and \eqref{eq-psi-2} and using the same reasoning as in Lemma \ref{w2}, the generating function $\Psi^W_{2,2}(u)$ could be considered as the ordinary generating function of type $\mathcal{B} \bullet  \mathcal{A}$ structures, where $\mathcal{A},\mathcal{B}$ are the same as
before except that we take $w_{\mathcal{B}}((n))=P_{W^n}(t)$.
Thus we get that
\begin{align*}
\Psi^W_{2,2}(u)&=\frac{1}{1-u(t-1)}\times\sum_{n=2}^{\infty}{P_{W^n} \left(\frac{u}{1-u(t-1)}\right)^n}\\
&=\frac{1}{1-(t-1) u} \left(\Phi^W\left(t,\frac{u}{1-(t-1) u}\right)-\frac{u}{1-(t-1) u}\right).
\end{align*}
This completes the  proof.
\end{proof}

\begin{lem}\label{whirl23}
We have
\begin{align*}
\Psi^W_{2,3}(u)&=u\frac{\partial\Psi^{eo}(u)}{\partial u} \left(\frac{1}{1-\Psi^{eo}(u)} \right).
\end{align*}
\end{lem}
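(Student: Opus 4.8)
The strategy is to reduce $\Psi^W_{2,3}(u)$ to the series $\Psi_3(u)$ already evaluated in Lemma~\ref{w3}. Recall that $F\mapsto C$ with $F=E(W_n[C])$ sets up a bijection between $L_2^{\langle 3\rangle}(W^n)$ and $\mathcal{C}^{\langle 3\rangle}(W_n)$, that $L_2^{\langle 3\rangle}(W^1)$ is empty, and that, by \eqref{eq-psi-123},
\begin{align}
\Psi_3(u)=\sum_{n=2}^{\infty}\left(\sum_{C\in\mathcal{C}^{\langle 3\rangle}(W_n)}t^{-|C|}\chi_{W_n[C]}(t)\,P_{W_n/C}(t)\right)u^n .
\end{align}
So it is enough to prove, for each $n\ge 2$ and each $C\in\mathcal{C}^{\langle 3\rangle}(W_n)$ with $F=E(W_n[C])$, the termwise identity $\chi_{(W^n)_F}(t)\,P_{(W^n)^F}(t)=t^{-|C|}\chi_{W_n[C]}(t)\,P_{W_n/C}(t)$; summing it over $C$ and over $n\ge 2$ gives $\Psi^W_{2,3}(u)=\Psi_3(u)$, and Lemma~\ref{w3} then yields the assertion.

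For the localization factor I would repeat the argument behind \eqref{eq-l2}: since $C\in\mathcal{C}^{\langle 3\rangle}(W_n)$ forces $O_n\not\subset F$, the matroids $M(W_n)_F$ and $(W^n)_F$ have exactly the same independent sets, so by the graph-theoretic formula for the characteristic polynomial of a localization recalled in Section~\ref{sec-2} we get $\chi_{(W^n)_F}(t)=\chi_{(M(W_n))_F}(t)=t^{-|C|}\chi_{W_n[C]}(t)$. For the contraction factor the key claim is that the simplification of $(W^n)^F$ is the graph $W_n/C$, i.e.\ $(W^n)^F=M(W_n)^F$ as matroids. Recall that $W^n$ is obtained from $M(W_n)$ by relaxing the circuit--hyperplane $O_n$, so the only independent set of $W^n$ that fails to be independent in $M(W_n)$ is $O_n$ itself. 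Since $O_n\not\subset F$, the flat $F$ has the same bases in $W^n$ and in $M(W_n)$; fix such a basis $B_F$. If some $X\subseteq E(W_n)\setminus F$ were independent in $(W^n)^F$ but not in $M(W_n)^F$, then $X\cup B_F$ would be independent in $W^n$ but dependent in $M(W_n)$, which forces $X\cup B_F=O_n$. But the block of $C$ containing $0$ is not a singleton, so $B_F$ restricts to a spanning tree of that block and hence contains a spoke edge, whereas $O_n$ contains none --- a contradiction. The reverse implication is immediate because the whirl has more independent sets than the wheel. Therefore $(W^n)^F=M(W_n)^F$, and consequently $P_{(W^n)^F}(t)=P_{M(W_n/C)}(t)=P_{W_n/C}(t)$.

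Putting the two factors together proves the termwise identity, hence $\Psi^W_{2,3}(u)=\Psi_3(u)=u\,\dfrac{\partial\Psi^{eo}(u)}{\partial u}\Bigl(\dfrac{1}{1-\Psi^{eo}(u)}\Bigr)$ by Lemma~\ref{w3}. The degenerate cases $n=1$ and $n=2$ are covered by the conventions introduced for $W^1$ and $W^2$ (the sum is empty for $n=1$, and for $n=2$ one checks the two admissible compositions of $W_2'$ by hand). I expect the main obstacle to be precisely the contraction step: one must ensure that relaxing $O_n$ does not introduce any extra independent set into $(W^n)^F$, and this is exactly where the hypothesis $\{0\}\notin C$ --- equivalently, that $F$ contains a spoke edge --- is needed.
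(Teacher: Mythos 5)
Your proposal is correct, and its overall skeleton is the one the paper uses: reduce $\Psi^W_{2,3}(u)$ to $\Psi_3(u)$ by proving, for each $F=E(W_n[C])$ with $C\in\mathcal{C}^{\langle 3\rangle}(W_n)$, that the localization and contraction of $W^n$ at $F$ agree with those of $M(W_n)$, and then invoke Lemma \ref{w3}. The localization half is handled exactly as in the paper (the argument behind \eqref{eq-l2}), and you correctly isolate the contraction half as the real content. Where you diverge is in how you prove $(W^n)^F\simeq (M(W_n))^F$. The paper picks a spoke $(0,i)\in F$ (which exists because the block of $C$ containing $0$ is not a singleton), observes $M(W_n)/(0,i)\simeq W^n/(0,i)\simeq M(F_{n-1})$, and then contracts the remaining elements of $F$ inside this common fan matroid. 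You instead use the description of $W^n$ as the relaxation of the circuit--hyperplane $O_n$: the unique set independent in $W^n$ but not in $M(W_n)$ is $O_n$ itself, and a basis $B_F$ of $F$ necessarily contains a spoke, so $X\cup B_F$ can never equal $O_n$; hence no new independent sets appear in the contraction. Both arguments hinge on the same structural fact (a spoke lies in $F$), but yours is a direct basis-exchange verification that stays inside matroid axioms, while the paper's reduces both matroids to a single graphic matroid in one stroke and is slightly shorter. Your proof is a valid, self-contained alternative for that step; everything else, including the treatment of the degenerate cases $n=1,2$, matches the paper.
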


\begin{proof}
Since $L_2^{\langle 3\rangle}(W^1)=\emptyset$, we have
\begin{align}\label{eq-23-1}
\Psi^W_{2,3}(u)&=\sum_{n=2}^{\infty}\left(\sum_{F \in L_2^{\langle 3\rangle}(W^n) }{\chi_{(W^n)_F}(t) P_{(W^n)^F}(t)}\right) u^n.
\end{align}
In the following we always assume that $n\geq 2$.
Note that each $F \in L_2^{\langle 3\rangle}(W^n)$
is also a flat of $M(W_n)$. We proceed to show that
$M(W_n)_F\simeq (W^n)_F$ and $(M(W_n))^F\simeq (W^n)^F$.
The former can be proved along the same lines as in
the proof of \eqref{eq-l2}. It remains to prove the latter.
Recalling the definition of $L_2^{\langle 3\rangle}(W^n)$,
there must exist a positive  integer $i$ such that $(0,i)\in F$.
It is clear that $M(W_n)/(0,i)\simeq M(F_{n-1})$. In fact, by definition of contraction we also have $W^n/(0,i)\simeq M(F_{n-1})$ since $(0,i)\not\in O_n$. Thus $(M(W_n))^F\simeq (W^n)^F$ turns out to be valid since
$$(M(W_n))^F\simeq (M(W_n)^{(0,i)})^{F\setminus\{(0,i)\} }
\simeq (M(F_{n-1}))^{F\setminus\{(0,i)\} } $$
and
$$(W^n)^F\simeq ((W^n)^{(0,i)})^{F\setminus\{(0,i)\} }
\simeq (M(F_{n-1}))^{F\setminus\{(0,i)\} }. $$
Therefore \eqref{eq-23-1} could be rewritten as
\begin{align}\label{eq-23-2}
\Psi^W_{2,3}(u)&=\sum_{n=2}^{\infty}\left(\sum_{F \in L_2^{\langle 3\rangle}(W^n) }{\chi_{(M(W_n))_F}(t) P_{(M(W_n))^F}(t)}\right) u^n.
\end{align}
Note that for each $F \in L_2^{\langle 3\rangle}(W^n)$ there exists a unique graph composition $C\in \mathcal{C}^{\langle 3\rangle}(W_n)$ such that $F=E(W_n[C])$. By the previous discussion in Section \ref{sec-2}, from \eqref{eq-psi-123}
it follows that $\Psi^W_{2,3}(u)=\Psi_3(u)$.
By Lemma \ref{w3} we obtain the desired result.
\end{proof}

Combining Lemmas \ref{whirl1}, \ref{whirl21}, \ref{whirl22} and \ref{whirl23}, we get the following result.

\begin{lem}\label{psi-whirl}
We have
$$
\Psi^W(u)=\frac{1}{1-(t-2) u}-\frac{1}{u+1}+\frac{1}{1-(t-1) u} \Phi^W\left(t,\frac{u}{1-(t-1) u}\right)+u \frac{\partial\Psi^{eo}(u)}{\partial u}  \frac{1}{1-\Psi^{eo}(u)}.
$$
\end{lem}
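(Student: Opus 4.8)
The proof amounts to assembling the four preceding lemmas, so the plan is short. First I would invoke Lemma~\ref{whirl-flats}: for every $n\geq 1$ the lattice of flats $L(W^n)$ is the disjoint union $L_1(W^n)\uplus L_2(W^n)$, whence, splitting the inner sum in \eqref{eq-whirl} accordingly and using the definitions \eqref{eq-whirl-l1} and \eqref{eq-whirl-l2}, one gets $\Psi^W(u)=\Psi^W_1(u)+\Psi^W_2(u)$. Next, the three-fold decomposition $L_2(W^n)=L_2^{\langle 1\rangle}(W^n)\uplus L_2^{\langle 2\rangle}(W^n)\uplus L_2^{\langle 3\rangle}(W^n)$ recorded just before \eqref{eq-whirl-gf-l2-123} gives, upon splitting the sum over $L_2(W^n)$ in \eqref{eq-whirl-l2}, the refinement $\Psi^W_2(u)=\Psi^W_{2,1}(u)+\Psi^W_{2,2}(u)+\Psi^W_{2,3}(u)$. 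Combining the two decompositions yields
$$\Psi^W(u)=\Psi^W_1(u)+\Psi^W_{2,1}(u)+\Psi^W_{2,2}(u)+\Psi^W_{2,3}(u).$$

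Then I would substitute the explicit formulas from Lemmas~\ref{whirl1}, \ref{whirl21}, \ref{whirl22} and \ref{whirl23}. The only point worth noticing is a cancellation: by Lemma~\ref{whirl1} the first term equals $\Psi^W_1(u)=\frac{1}{1-(t-1)u}\cdot\frac{u}{1-(t-1)u}$, while by Lemma~\ref{whirl22} the term $\Psi^W_{2,2}(u)$ contains precisely the summand $-\frac{1}{1-(t-1)u}\cdot\frac{u}{1-(t-1)u}$. These two cancel, leaving $\Psi^W_1(u)+\Psi^W_{2,2}(u)=\frac{1}{1-(t-1)u}\,\Phi^W\!\left(t,\frac{u}{1-(t-1)u}\right)$. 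Adding the remaining contributions $\Psi^W_{2,1}(u)=\frac{1}{1-(t-2)u}-\frac{1}{u+1}$ from Lemma~\ref{whirl21} and $\Psi^W_{2,3}(u)=u\frac{\partial\Psi^{eo}(u)}{\partial u}\cdot\frac{1}{1-\Psi^{eo}(u)}$ from Lemma~\ref{whirl23}, and rearranging the terms, produces exactly the asserted identity.

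I do not expect any genuine obstacle. Every ingredient is already in place, and once the four auxiliary lemmas above are on the table the derivation is a single line of algebra; the only thing demanding a moment's care is to be sure that the two decompositions used here (of $L(W^n)$ and of $L_2(W^n)$) are disjoint and exhaustive, but that is precisely what Lemma~\ref{whirl-flats} and the discussion preceding \eqref{eq-whirl-gf-l2-123} guarantee. Thus the real work was already carried out in establishing Lemmas~\ref{whirl1}, \ref{whirl21}, \ref{whirl22} and \ref{whirl23}, and this lemma is merely their summation.
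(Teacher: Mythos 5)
Your proposal is correct and is essentially the paper's own argument: the paper simply states that the lemma follows by combining Lemmas \ref{whirl1}, \ref{whirl21}, \ref{whirl22} and \ref{whirl23}, exactly as you do. Your explicit observation of the cancellation between $\Psi^W_1(u)$ and the subtracted term inside $\Psi^W_{2,2}(u)$ correctly accounts for why $\Psi^W_1(u)$ does not appear separately in the final formula.
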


Now we come to the proof of the main result of this subsection.

\begin{proof}[{Proof of Theorem \ref{thm-klpol-whirl}.}]
The proof is similar to that of Theorem \ref{thm-klpol-wheel}.
By   Lemma \ref{psi-whirl}, we obtain  that
$\Phi^W(t,u)$  satisfies the functional equation
\begin{align*}
 \Phi^W(t^{-1},tu)=&\frac{1}{1-(t-2) u}-\frac{1}{u+1}+\frac{1}{1-(t-1) u} \Phi^W\left(t,\frac{u}{1-(t-1) u}\right)\\
 &+u \frac{\partial\Psi^{eo}(u)}{\partial u}  \frac{1}{1-\Psi^{eo}(u)}.
\end{align*}

To complete the proof of the theorem, we further verify that
the above equation still holds if we substitute
$\Phi^W(t,u)$ by using the right hand side of \eqref{eq-gf-klpol-whirl}. As in the case of fan graphs and wheel graphs, we prefer to give a computer aided proof as follows.

%
\begin{mma}
\In   \Phi^W [u\_ ]:=\frac{u+1}{2  (t u+1) \sqrt{(u-1)^2-4 t u^2}}-\frac{1}{2  (t u+1)};\\
\end{mma}
\begin{mma}
\In \Psi ^W[ u$\_$ ]:=\frac{1}{1-(t-2) u}-\frac{1}{u+1}+\frac{1}{1-(t-1) u}\Phi ^W\left[\frac{u}{1-(t-1) u}\right]+u\frac{\partial  \Psi^{eo}[u] }{\partial u}  \frac{1}{1-\Psi^{eo}[u]};\\
\end{mma}
\begin{mma}
\In |Simplify| [(\Phi^W[u]/.\{t\to t^{-1},u\to tu\})==\Psi^W[u],|Assumptions|\to 1-(t-1)u>0]\\
\Out |True| \\
\end{mma}
Note that here we assume that $1-(t-1) u>0$ because $u$ is sufficiently small. This completes the proof.
\end{proof}

Now \eqref{eq-klpol-whirl} of Theorem \ref{klcoef} can be proved
based on \eqref{eq-gf-klpol-whirl}.

\begin{proof}
As before, it is sufficient to show that \eqref{eq-klpol-whirl} and \eqref{eq-gf-klpol-whirl} are equivalent to each other.
From \eqref{eq-klpol-whirl} it follows that $P_{W^{n+1}}(t)=a_n(t)$, where
$$a_n(t)=\sum_{k=0}^{n} a(n,k)t^k$$ and
 \begin{align*}
a(n,k)=\frac{n+1}{n+1-k} \binom{n}{k, k, n-2k}.
\end{align*}
Note that for $n\geq k>\left\lfloor \frac{n}{2}\right\rfloor $ we have $a(n,k)=0$.
Since \eqref{eqn-wheelkl-gf} could be restated as
$$
\frac{\Phi^W(t,u)}{u}=\sum_{n=0}^{\infty}{P_{W^{n+1}}(t)u^n},
$$
to prove the equivalence between \eqref{eq-klpol-whirl} and \eqref{eq-gf-klpol-whirl} it suffices to show that
\begin{align}
\sum_{n=0}^{\infty}a_n(t)u^n&=\frac{u+1}{2u (t u+1) \sqrt{(u-1)^2-4 t u^2}}-\frac{1}{2u (t u+1)}.\label{eq-whirl-de}
\end{align}

To this end, we use the same method as in the proofs of Theorems \ref{thm-klpol-fan} and \ref{thm-klpol-wheel}. The following lines enable us to obtain a recurrence relation of $a_n(t)$.


\begin{mma}
\In  a[n\_, k\_] := \frac{n+1}{n+1-k}|Multinomial|[k, k, n -2 k];\\
\end{mma}

\begin{mma}
\In |ReleaseHold|[|First|[|Zb|[|FunctionExpand|[a[n, k]] t^k, {k, 0, n}, n] /. |SUM| \to a]];  \\
\end{mma}
\begin{mma}
\In|Simplify|[|FunctionExpand|[\%], |Assumptions| \to n  \in \mathbb{Z}]; \\
\end{mma}
\begin{mma}
\In  rec = |Collect|[\%, a[\_], |Factor|] \\
\Out  (34 + 24 n + 4 n^2 - 46 t - 35 n t - 6 n^2 t + 16 t^2 + 12 n t^2 +
     2 n^2 t^2) a[2 + n] + (4 + n) (-5 - 2 n + 4 t + 2 n t) a[
    3 + n] == (2 + n) t (-1 + 4 t) (-7 - 2 n + 6 t + 2 n t) a[
    n] + (14 + 11 n + 2 n^2 - 102 t - 78 n t - 14 n^2 t + 74 t^2 +
     62 n t^2 + 12 n^2 t^2) a[1 + n]\\
\end{mma}
To obtain an equivalent differential equation satisfied by $\sum_{n=0}^{\infty} a_n(t) u^n$, we also need some initial terms, which could be easily implemented by using the following command.
\begin{mma}
\In |Table|[a[n] == |Sum|[a[n, k] t^k, {k, 0, n}], {n, 0, 2}]\\
\Out \{a[0] == 1, a[1] == 1, a[2] == 1 + 3 t\};\\
\end{mma}

Now we can obtain a differential equation satisfied by  $\sum_{n=0}^{\infty} a_n(t) u^n$ by using the function \textbf{RE2DE}.
\begin{mma}
\In de=|RE2DE|[\{rec,a[0]==1,a[1]==1+t,a[2]==1+3t\}, a[n], f[u]]\\
\Out \{-1 + 2 t - (-1 + 2 t - 2 u + 5 u^2 - 38 t u^2 + 24 t^2 u^2 +
14 t u^3 - 68 t^2 u^3 + 48 t^3 u^3) f[u] - (3 u - 2 t u - 12 u^2 + 17 t u^2 - 6 t^2 u^2 + 9 u^3-64 t u^3 + 50 t^2 u^3 + 13 t u^4 - 64 t^2 u^4 +48 t^3 u^4) f^{'}[u]-2 (u^2 - t u^2 - 2 u^3 + 3 t u^3 - t^2 u^3 + u^4 - 7 t u^4 +6 t^2 u^4 + t u^5 - 5 t^2 u^5 + 4 t^3 u^5) f^{''}[u] == 0, f[0] == 1, f^{'}[0] == 1\} \\
\end{mma}

Next, we have to show that the right hand side of \eqref{eq-whirl-de} is indeed the solution of the above differential equation \textit{de}. To this end, we also
need to verify its value at $u=0$. Since each of two terms on the right hand side of \eqref{eq-whirl-de} is not defined for $u=0$, we should simplify the summation of these terms. We denote the resulting function by $\phi2(u)$.
%

\begin{mma}
\In \phi2[u\_]:=\frac{2}{\sqrt{(u-1)^2-4 t u^2} \left(\sqrt{(u-1)^2-4 t u^2}+u+1\right)};\\
\In |Simplify|[ \phi2 [u ]==\frac{\Phi^W[u]}{u}]\\
\Out |True| \\
\end{mma}

Finally, we verify that $\phi2(u)$ satisfies the differential equation \textit{de} with the correct initial values.
%

\begin{mma}
\In |Simplify|\left[de\text{/.}\, f\to \phi2\right]\\
\Out \{|True|,|True|,|True|\}\\
\end{mma}

Thus we establish the equivalence between
\eqref{eq-klpol-whirl} and \eqref{eq-gf-klpol-whirl}.
\end{proof}

\subsection{Squares of paths}

The main objective of this subsection is prove \eqref{eq-klpol-square} of Theorem \ref{klcoef}.
Our proof of \eqref{eq-klpol-square} is based on
the isomorphism between the graphic matroid $M(S_n)$ and the graphic matroid $M(F_n)$, which is implied by
Whitney's 2-isomorphism theorem.

Now let us recall some related concepts. Throughout this subsection all graphs are assumed to be free of isolated vertices. Given two graphs $G$ and $G'$, we say that $G$ is $2$-isomorphic to $G'$ if $G'$ can be obtained from $G$ by a sequence of operations of the following three types:

\begin{enumerate}[(a)]
 \item Vertex identification. This operation acts on a graph $G$ by identifying $v_1$ and $v_2$ as a new vertex ${v}$,
     where $v_1$ and $v_2$ are two vertices lying in distinct components of $G$.

 \item Vertex splitting.  This operation is inverse to vertex identification. However, a graph can be split only at a cut-vertex.

  \item Twisting.  This operation acts on a graph $G$ in the following way. Suppose that there are two vertices $u$ and $v$ in $G$ such that $G$ can be obtained from two disjoint graphs $G_1$ and $G_2$  by identifying $u_1\in V(G_1)$ and $u_2\in V(G_2)$ as $u$, and identifying $v_1\in V(G_1)$ and $v_2\in V(G_2)$ as $v$. Then we
      identify $u_1$ with $v_2$ and identify $v_1$ with $u_2$
      to obtain a twisting $\tilde{G}$ of $G$ about $\{u, v\}$.
\end{enumerate}

The celebrated Whitney's 2-isomorphism theorem is stated as follows.

\begin{thm}\cite{whitney19332}\label{thm-whitney}
Let $G$ and $G'$ be graphs having no isolated vertices. Then $M(G)$ and $M(G')$ are isomorphic if and only if $G$ and $G'$ are 2-isomorphic.
\end{thm}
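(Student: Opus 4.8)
The plan is to prove the two implications of Theorem~\ref{thm-whitney} separately, with the forward direction (2-isomorphic $\Rightarrow$ isomorphic matroids) being routine and the reverse direction carrying essentially all the weight. Throughout I would work with the characterization of $M(G)$ by its circuits, namely that the circuits are exactly the edge sets of cycles of $G$, and with its cocircuits, namely the minimal edge cuts (bonds) of $G$.

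For the forward direction, first I would check that each of the operations (a)--(c) preserves the cycle matroid. Vertex identification at vertices lying in distinct components, together with its inverse operation of vertex splitting at a cut-vertex, leaves every cycle of $G$ intact: no cycle can traverse a cut-vertex as a genuine passage, nor can one span two components, so the family of cycles — and hence the matroid on the fixed ground set $E(G)$ — is literally unchanged. For twisting about $\{u,v\}$, I would classify each cycle of $G$ as lying entirely in $G_1$, entirely in $G_2$, or meeting both pieces; a cycle of the third type decomposes into a path from $u$ to $v$ inside $G_1$ and a path from $u$ to $v$ inside $G_2$, and the twisting only interchanges the two attachment vertices, so this edge set is again a pair of paths joining the two gluing vertices, i.e.\ a cycle, in the twisted graph $\tilde G$. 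Thus circuits are preserved and $M(G)\cong M(\tilde G)$.

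For the reverse direction, suppose $M(G)\cong M(G')$ via an independence-preserving bijection $\varphi\colon E(G)\to E(G')$. First I would reduce to the 2-connected case. By Lemma~\ref{graphsum}, $M(G)$ is the direct sum of the matroids of the biconnected components of $G$, and the decomposition of a matroid into its connected (indecomposable) summands is intrinsic; hence $\varphi$ matches the biconnected components of $G$ with those of $G'$ and restricts to a matroid isomorphism on each. Using vertex splitting to separate the blocks of $G$ and of $G'$ into disjoint unions and vertex identification to reassemble them in the matched pattern, it suffices to treat a single pair of 2-connected graphs $B,B'$ with $M(B)\cong M(B')$. The core is then to show that two 2-connected graphs with isomorphic cycle matroids differ only by twistings. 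The crucial structural input is that a 3-connected graph is reconstructible from its cycle matroid up to isomorphism, so that when $B$ is 3-connected one obtains $B\cong B'$ outright with no twisting needed; for the general 2-connected case I would invoke the decomposition of $B$ along its 2-vertex-cuts into 3-connected pieces, cycles, and bonds arranged in a tree, observe that the matroid determines both the tree and its pieces, and note that the only freedom in re-gluing a piece across a 2-separation is exactly the interchange of attachment vertices, which is precisely a twisting.

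The hard part will be this last step: establishing a clean dictionary between the matroid-theoretic 2-separations of $M(B)$ and the graph-theoretic vertex 2-cuts of $B$, and verifying that the reassembly ambiguity at each 2-separation is captured \emph{exactly} by operation (c), neither more nor less. Everything upstream — the circuit and cocircuit descriptions and the reduction to blocks — is essentially bookkeeping, but isolating the twisting freedom rests on the structural theory of 3-connected components and on the fact that 3-connected graphic matroids arise from a unique graph, which is where the genuine mathematical content of Whitney's theorem resides.
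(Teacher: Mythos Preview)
The paper does not prove Theorem~\ref{thm-whitney}; it is stated with a citation to Whitney's 1933 paper and used as a black box. Immediately after the statement the authors remark that only the easy direction (2-isomorphic $\Rightarrow$ isomorphic matroids) is needed for their application to Theorem~\ref{thm-sq-fan}, and they explicitly note that the other direction is difficult without supplying an argument. So there is no proof in the paper against which to compare your proposal.

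That said, your outline is the standard modern route to Whitney's theorem and is correct at the level of a sketch. The forward direction is fine as written. For the reverse direction, your reduction to blocks via the connected-component decomposition of the matroid, followed by the appeal to uniqueness for 3-connected graphs and the tree decomposition of a 2-connected graph along its 2-vertex-cuts, is the right architecture. You are also right to flag the genuinely hard ingredients: that a 3-connected graph is determined up to isomorphism by its cycle matroid is itself a theorem of Whitney requiring real work, and the clean matching of matroid 2-separations with graph 2-cuts (and the identification of the gluing ambiguity with a twist) rests on the Tutte or Cunningham--Edmonds decomposition theory. None of this is developed in the present paper, nor need it be, since only the trivial direction is invoked. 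If your aim is to match the paper, the appropriate move is simply to cite Whitney and note, as the authors do, which direction you actually use.
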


We would like to point out that it is easy to prove one direction of the above theorem, namely, if two graphs  $G$ and $G'$ are 2-isomorphic then  $M(G)$ and $M(G')$ are isomorphic. While it is difficult to prove the other direction. Note that only the easy part of Whitney's 2-isomorphism theorem will be used for our purpose here. With this theorem, we are able to prove the following result.

\begin{thm}\label{thm-sq-fan}
For any $n\geq 1$ the graphic matroid $M(S_n)$ and the graphic matroid $M(F_n)$ are isomorphic.
\end{thm}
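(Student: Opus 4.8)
The plan is to deduce Theorem~\ref{thm-sq-fan} from the easy (forward) direction of Whitney's theorem (Theorem~\ref{thm-whitney}): it is enough to exhibit $S_n$ as the result of applying a sequence of twistings to $F_n$ (up to relabeling of vertices), since then $M(S_n)$ and $M(F_n)$ are isomorphic. For $n\le 3$ the graphs $F_n$ and $S_n$ are in fact isomorphic -- they are $K_2$, $K_3$, and $K_4$ minus an edge, respectively -- so I may assume $n\ge 3$ and induct on $n$.

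The structural fact I would exploit is that both $F_n$ and $S_n$ are ``strips of triangles'' built up one triangle at a time. Writing $F_n$ with hub $0$ and path $v_1v_2\cdots v_n$, the fan $F_n$ is $F_{n-1}$ with a new degree-two vertex $v_n$ joined to the endpoints $0$ and $v_{n-1}$ of the spoke $0v_{n-1}$; likewise, writing the vertex set of $S_n$ as $\{1,2,\dots,n+1\}$, the graph $S_n$ is $S_{n-1}$ with a new degree-two vertex $n+1$ joined to the endpoints $n-1$ and $n$ of the edge $\{n-1,n\}$. The inductive hypothesis I would carry is that there is a $2$-isomorphism $F_{n-1}\to S_{n-1}$ which matches the edge $0v_{n-1}$ (onto which $F_n$'s extra triangle is attached) with the edge $\{n-1,n\}$ (onto which $S_n$'s extra triangle is attached), arranged so that the resulting $2$-isomorphism $F_n\to S_n$ can again be brought into the form needed at the next stage; this is checked directly for small $n$.

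For the inductive step I would use that \emph{attaching a triangle to an edge is compatible with twisting}: if $e=xy$ is an edge of a graph $H$ and $\tilde H$ denotes $H$ with a new degree-two vertex $z$ joined to $x$ and $y$, then any twisting of $H$ about a vertex pair $\{a,b\}$ extends to a twisting of $\tilde H$ that leaves the triangle $xyz$ untouched -- either $e$ lies within one side of the $\{a,b\}$-separation, in which case $z$ simply rides along on that side, or $\{a,b\}=\{x,y\}$, in which case the twisting acts only on the $H$-part and fixes the triangle. Lifting the inductive $2$-isomorphism $F_{n-1}\to S_{n-1}$ in this way, and attaching the new triangle on each side along the matched edges $0v_{n-1}$ and $\{n-1,n\}$, produces a $2$-isomorphism $F_n\to S_n$; a further twisting of $F_n$ about a suitable pair $\{0,v_j\}$ -- which reflects the portion of the strip lying beyond $v_j$ -- is then used to re-align the correspondence so that the data is again in the shape prescribed by the inductive hypothesis. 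Whitney's theorem then yields $M(S_n)\cong M(F_n)$. (Equivalently, one can run the whole argument matroid-theoretically: $M(F_n)$ and $M(S_n)$ are both obtained as an iterated parallel connection of $n-1$ copies of $U_{2,3}=M(K_3)$ along a path-shaped pattern of basepoints, and since $U_{2,3}$ is uniform there is at each step an automorphism interchanging the two available basepoints, so the isomorphism type is independent of the choices -- which makes the two matroids manifestly isomorphic.)

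The step I expect to be the main obstacle is exactly this re-alignment. A naive ``extend the previous $2$-isomorphism by gluing on one more triangle'' does not suffice, because in $F_n$ every successive triangle is glued at the single vertex $0$, whereas in $S_n$ the successive gluing edges $\{1,2\},\{2,3\},\{3,4\},\dots$ march along the underlying path and share no common vertex; thus the $2$-isomorphism must be genuinely modified at each stage, not merely extended. Pinning down precisely which twistings carry out this modification, and verifying that each of them leaves the previously attached triangles undisturbed, is the bookkeeping that requires care.
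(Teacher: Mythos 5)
Your proposal is correct, and its main line is the same as the paper's: invoke the easy direction of Whitney's $2$-isomorphism theorem and exhibit $S_n$ as the result of a sequence of twistings applied to $F_n$. The difference is organizational. The paper checks $F_n\cong S_n$ directly for $n\le 4$ and, for $n\ge 5$, writes down one explicit global sequence of $n-3$ twistings of $F_n$, about the pairs $\{0,n-2\},\{0,n-3\},\dots,\{0,2\}$ in turn, verifying pictorially that the end result is the zigzag graph $S_n$; there is no induction and no lemma about attaching triangles. Your inductive scheme does work, and the re-alignment you flag as the main obstacle is lighter than you fear: if the lifted $2$-isomorphism $F_n\to S_n$ sends $0v_n$ to the wrong one of the two new edges, precomposing with a single twisting of $F_n$ about the attachment pair $\{0,v_{n-1}\}$ (with pieces $F_{n-1}$, containing the edge $0v_{n-1}$, and the path $0\,v_n\,v_{n-1}$) swaps the roles of $0v_n$ and $v_{n-1}v_n$ while fixing every other edge, so the strengthened hypothesis is restored at each stage with at most one extra local twist; no global reflection about a pair $\{0,v_j\}$ with $j<n-1$ is needed. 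Your parenthetical argument --- both matroids are iterated parallel connections of $n-1$ copies of $U_{2,3}$ along a path of basepoints, and the transitivity of $\mathrm{Aut}(U_{2,3})$ on ordered pairs of distinct elements makes the outcome independent of which two elements of each triangle serve as basepoints --- is a genuinely different route that bypasses Whitney's theorem altogether and is arguably cleaner, at the cost of quoting the standard facts that the parallel connection is determined up to isomorphism by the pointed pieces and that for graphic matroids it is realized by the edge-amalgam of graphs.
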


\begin{proof}
For $1\leq n\leq 4$, it is routine to verify that $S_n$ and $F_n$ are isomorphic graphs, and hence their graphic matroids are isomorphic. For $n\geq 5$, it suffices to prove that the graph $S_n$ and the graph $F_n$ are 2-isomorphic by Theorem \ref{thm-whitney}. We shall show that $S_n$ can be obtained from $F_n$ by a sequence of twisting operations. To illustrate this process, we first label the vertices of $F_n$ as in Figure \ref{fw1}.


\begin{figure}[H]
\centering
\begin{tikzpicture}[line cap=round,line join=round]
\draw (1,0)   node[circle,fill,inner sep=1pt,label=below:$0$] (0) {};
\draw (-1,0)   node[circle,fill,inner sep=1pt,label=below:$n$] (n) {};
\draw (0,1.7)   node[circle,fill,inner sep=1pt,label=above:$n-1$] (n-1) {};
\draw (2,1.7)   node[circle,fill,inner sep=1pt,label=above:$n-2$] (n-2) {};
\draw (3.2,1.7)   node[circle,fill,inner sep=1pt,label=above:$n-3$] (n-3) {};
\draw (4.3,1.7)   node[circle,fill,inner sep=1pt,label=above:$n-4$] (n-4) {};
\draw (4.8,1.7)   node[circle,fill,inner sep=1pt] (2) {};
\draw (5.3,1.7)   node[circle,fill,inner sep=1pt] (4) {};
\draw (6,1.7)   node[circle,fill,inner sep=1pt] (3) {};
\draw (7,1.7)   node[circle,fill,inner sep=1pt,label=above:$1$] (1) {};
\node[draw=none]  at  (5.4,2)   {$\cdots$};
\node[draw=none]  at  (6.4,2)   {$\cdots$};
\draw (1)--(n-1)--(n) ;
\foreach \x in {n,n-1,n-2,n-3,n-4,1,2,3,4} \draw (0)--(\x) ;
\end{tikzpicture}
\caption{$F_n$ }
\label{fw1}
\end{figure}

Then we construct a sequence of graphs $F_n^{\langle 0\rangle},F_n^{\langle 1\rangle},F_n^{\langle 2\rangle},\ldots,F_n^{\langle n-3\rangle}$ such that
$F_n^\langle 0\rangle$ is the original fan graph $F_n$,
and for each $1\leq i\leq n-3$ the graph $F_n^{\langle i\rangle}$
is the twisting of $F_n^{\langle i-1\rangle}$ about $\{0,n-1-i\}$.
The first two operations are illustrated as in  Figure \ref{fw2} and Figure \ref{fw3}.

\begin{figure}[H]
\centering
\begin{tikzpicture}[line cap=round,line join=round]
\draw (2,1.7)    node[circle,fill,inner sep=1pt,label=above:$0$] (0) {};
\draw (-1,0)   node[circle,fill,inner sep=1pt,label=below:$n$] (n) {};
\draw (0,1.7)   node[circle,fill,inner sep=1pt,label=above:$n-1$] (n-1) {};
\draw  (1,0) node[circle,fill,inner sep=1pt,label=below:$n-2$] (n-2) {};
\draw (3.2,0)   node[circle,fill,inner sep=1pt,label=below:$n-3$] (n-3) {};
\draw (4.3,0)   node[circle,fill,inner sep=1pt,label=below:$n-4$] (n-4) {};
\draw (4.8,0)   node[circle,fill,inner sep=1pt] (2) {};
\draw (5.3,0)   node[circle,fill,inner sep=1pt] (4) {};
\draw (6,0)   node[circle,fill,inner sep=1pt] (3) {};
\draw (7,0)   node[circle,fill,inner sep=1pt,label=above:$1$] (1) {};
\node[draw=none]  at  (5.4,-0.3)   {$\cdots$};
\node[draw=none]  at  (6.4,-0.3)   {$\cdots$};
\draw (1)--(n)--(n-1)--(n-2) ;
\foreach \x in {n-1,n-2,n-3,n-4,1,2,3,4} \draw (0)--(\x) ;
\end{tikzpicture}
\caption{$F_n^{\langle 1\rangle}$: twisting of $F_n^{\langle 0\rangle}$ about $\{0,n-2\}$ }
\label{fw2}
\end{figure}

\begin{figure}[H]
\centering
\begin{tikzpicture}[line cap=round,line join=round]
\draw  (3,0)   node[circle,fill,inner sep=1pt,label=below:$0$] (0) {};
\draw (-1,0)   node[circle,fill,inner sep=1pt,label=below:$n$] (n) {};
\draw (0,1.7)   node[circle,fill,inner sep=1pt,label=above:$n-1$] (n-1) {};
\draw (2,1.7)   node[circle,fill,inner sep=1pt,label=above:$n-3$] (n-3) {};
\draw   (1,0) node[circle,fill,inner sep=1pt,label=below :$n-2$] (n-2) {};
\draw (4,1.7)   node[circle,fill,inner sep=1pt,label=above:$n-4$] (n-4) {};
\draw (5.2,1.7)   node[circle,fill,inner sep=1pt,label=above:$n-5$] (n-5) {};
\draw (6,1.7)   node[circle,fill,inner sep=1pt] (4) {};
\draw (6.6,1.7)   node[circle,fill,inner sep=1pt] (3) {};
\draw (7.2,1.7)   node[circle,fill,inner sep=1pt] (2) {};
\draw (8,1.7)   node[circle,fill,inner sep=1pt,label=above:$1$] (1) {};
\node[draw=none]  at  (6.5,2)   {$\cdots$};
\node[draw=none]  at  (7.3,2)   {$\cdots$};
\draw (1)--(n-1)--(n) --(n-2)--(n-3) (n-1)--(n-2);
\foreach \x in {n-2,n-3,n-4,n-5,1,2,3,4} \draw (0)--(\x) ;
\end{tikzpicture}
\caption{$F_n^{\langle 2\rangle}$: twisting of $F_n^{\langle 1\rangle}$ about $\{0,n-3\}$}\label{fw3}
\end{figure}

It is straightforward to show that
$F_n^{\langle n-3\rangle}$ is isomorphic to $S_n$.
Figure \ref{fw4} and Figure \ref{fw5} give the resulting labeled graphs according to the parity of $n$.

\begin{figure}[H]
\centering
\begin{tikzpicture}[line cap=round,line join=round]
\draw (-1,0)   node[circle,fill,inner sep=1pt,label=below:$n$] (n) {};
\draw   (1,0) node[circle,fill,inner sep=1pt,label=below :$n-2$] (n-2) {};
\draw  (3,0)   node[circle,fill,inner sep=1pt,label=below:$n-4$] (n-4) {};
\draw  (5,0)   node[circle,fill,inner sep=1pt,label=below:$n-6$] (n-6) {};
\draw  (7,0)   node[circle,fill,inner sep=1pt,label=below:$0$] (0) {};
\draw (0,1.7)   node[circle,fill,inner sep=1pt,label=above:$n-1$] (n-1) {};
\draw (2,1.7)   node[circle,fill,inner sep=1pt,label=above:$n-3$] (n-3) {};
\draw (4,1.7)   node[circle,fill,inner sep=1pt,label=above:$n-5$] (n-5) {};
\draw (6,1.7)   node[circle,fill,inner sep=1pt] (n-7) {};
\draw (8,1.7)   node[circle,fill,inner sep=1pt,label=above:$1$] (1) {};
\draw (n)--(n-1)--(n-2)--(n-3)--(n-4)--(n-5)--(n-6)--(n-7)--(0)--(1);
\draw (n-1)--(1);
\draw (n)--(0);
\node[draw=none]  at  (7,2)   {$\cdots$};
\node[draw=none]  at  (6.2,-0.3)   {$\cdots$};
\end{tikzpicture}
\caption{$F_n^{\langle n-3\rangle}$ for odd $n$: twisting  of $F_n^{\langle n-4\rangle}$ about $\{0,2\}$}\label{fw4}
\end{figure}

\begin{figure}[H]
\centering
\begin{tikzpicture}[line cap=round,line join=round]
\draw (-1,0)   node[circle,fill,inner sep=1pt,label=below:$n$] (n) {};
\draw   (1,0) node[circle,fill,inner sep=1pt,label=below :$n-2$] (n-2) {};
\draw  (3,0)   node[circle,fill,inner sep=1pt,label=below:$n-4$] (n-4) {};
\draw  (5,0)   node[circle,fill,inner sep=1pt,label=below:$n-6$] (n-6) {};
\draw  (7,0)   node[circle,fill,inner sep=1pt,label=below:$2$] (2) {};
\draw  (9,0)   node[circle,fill,inner sep=1pt,label=below:$1$] (1) {};

\draw (0,1.7)   node[circle,fill,inner sep=1pt,label=above:$n-1$] (n-1) {};
\draw (2,1.7)   node[circle,fill,inner sep=1pt,label=above:$n-3$] (n-3) {};
\draw (4,1.7)   node[circle,fill,inner sep=1pt,label=above:$n-5$] (n-5) {};
\draw (6,1.7)   node[circle,fill,inner sep=1pt] (n-7) {};
\draw (8,1.7)   node[circle,fill,inner sep=1pt,label=above:$0$] (0) {};
\draw (n-1)--(n-2)--(n-3)--(n-4)--(n-5)--(n-6)--(n-7)--(2)--(0)--(1)--(n) --(n-1)--(0);
\node[draw=none]  at  (7,2)   {$\cdots$};
\node[draw=none]  at  (6.2,-0.3)   {$\cdots$};
\end{tikzpicture}
\caption{$F_n^{\langle n-3\rangle}$ for even $n$: twisting  of $F_n^{\langle n-4\rangle}$ about $\{0,2\}$}\label{fw5}
\end{figure}
This completes the proof.
\end{proof}

Now we come to the proof of \eqref{eq-klpol-square} of Theorem \ref{klcoef}.

\noindent \textit{Proof of  \eqref{eq-klpol-square}.}
Combining Theorem \ref{thm-sq-fan} and Theorem \ref{thm-klpol-fan}, we immediately obtain the desired result. \qed

\section{Real zeros of \KL polynomials}\label{sec-4}

In Section \ref{sec-3} we obtained explicit expressions of
the \KL polynomials of square of paths, fan graphs, wheel graphs and whirl matroids. The main objective of this section is to
prove that these polynomials are real-rooted.

Let us first consider the \KL polynomials of square of paths and fan graphs. We have the following result.

\begin{thm}\label{kl-fan-roots}
For any $n\geq 3$ the  polynomial $P_{F_n}(t)$ has only negative zeros, so does $P_{S_n}(t)$. Moreover, we have
$P_{F_n}(t) \preceq P_{F_{n+1}}(t)$.
\end{thm}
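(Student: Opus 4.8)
The plan is to prove all three assertions at once by a single induction driven by the three‑term recurrence for $p_n:=P_{F_n}(t)$ that the Zeilberger computation in the proof of \eqref{eq-klpol-fan} already supplies (rewritten after a shift of the index): $p_0=p_1=1$ and
\[
(n+2)\,p_{n+1}=(2n+1)\,p_{n}+(n-1)(4t-1)\,p_{n-1},\qquad n\ge 1 .
\]
The inductive assertion $S(n)$ reads: $\deg p_n=d_n:=\lfloor (n-1)/2\rfloor$ with positive leading coefficient, $p_n$ has only real negative zeros, and $p_{n-1}\preceq p_n$. Two preliminary facts make the induction run. First, every coefficient of $p_n$ is positive, so $p_n(0)=1>0$ and $p_n(\tfrac14)>0$; hence, if $p_{n-1}$ and $p_n$ shared a zero, the recurrence run downward would force it onto $p_1=1$, which is impossible, so $p_{n-1}$ and $p_n$ never share a zero and every interlacing obtained below is strict. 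Second, at a zero $\beta$ of $p_n$ the recurrence collapses to $(n+2)\,p_{n+1}(\beta)=(n-1)(4\beta-1)\,p_{n-1}(\beta)$, and for $n\ge 2$ and $\beta<0$ the factor $(n-1)(4\beta-1)$ is strictly negative, so $p_{n+1}(\beta)$ and $p_{n-1}(\beta)$ have opposite signs.

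For the inductive step, write the zeros of $p_n$ as $\beta_{d}<\dots<\beta_1<0$ with $d=d_n$. From $S(n)$ a routine sign count gives $\operatorname{sign}p_{n-1}(\beta_i)=(-1)^{i-1}$, uniformly in both degree regimes ($\deg p_{n-1}=\deg p_n$ when $n$ is even, $\deg p_{n-1}=\deg p_n-1$ when $n$ is odd): consecutive $\beta_i$'s land in consecutive sign‑intervals of $p_{n-1}$, and $\beta_1$ lies to the right of all zeros of $p_{n-1}$, where $p_{n-1}$ agrees in sign with $p_{n-1}(0)>0$. The second fact then yields $\operatorname{sign}p_{n+1}(\beta_i)=(-1)^{i}$. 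Comparing $p_{n+1}$ at consecutive $\beta_i$ produces a zero of $p_{n+1}$ in each of the $d-1$ gaps $(\beta_{i+1},\beta_i)$; comparing $p_{n+1}(\beta_1)$ (sign $-1$) with $p_{n+1}(0)>0$ produces one more in $(\beta_1,0)$; and comparing $\operatorname{sign}p_{n+1}(\beta_d)=(-1)^d$ with the sign $(-1)^{d_{n+1}}$ of $p_{n+1}$ near $-\infty$ produces exactly one further zero in $(-\infty,\beta_d)$ precisely when $d_{n+1}=d+1$. Since $\deg p_{n+1}=d_{n+1}\in\{d,d+1\}$, these are all the zeros of $p_{n+1}$; they are real and negative, and their location relative to the $\beta_i$'s is exactly the chain required by case~(1) of $\preceq$ when $d_{n+1}=d$ and by case~(2) when $d_{n+1}=d+1$. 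This proves $S(n+1)$; the cases $n\le 3$ (where $p_2=1$, $p_3=1+t$) are checked directly. The assertion for $P_{S_n}(t)$ then follows instantly from Theorem~\ref{thm-sq-fan}, which gives $M(S_n)\cong M(F_n)$ and hence $P_{S_n}(t)=P_{F_n}(t)$.

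The step needing the most care is the bookkeeping of the two degree regimes: because $d_n$ alternately stays put and increases by one, the ``last'' zero of $p_{n+1}$ is either located in $(-\infty,\beta_d)$ or simply absent, and one must match the forced‑zero count to $d_{n+1}$ on the nose, so that no zero is overlooked and the correct one of the two forms of the $\preceq$‑chain is read off. A second route, closer to the multiplier‑sequence and $n$‑sequence machinery advertised in this section, would start from the manifestly real‑rooted polynomial $\sum_k\binom{n-1}{2k}t^k=\tfrac12\bigl((1+\sqrt{t})^{n-1}+(1-\sqrt{t})^{n-1}\bigr)$ (real, nonpositive zeros) and try to pass to $P_{F_n}(t)=\sum_k\binom{n-1}{2k}C_k\,t^k$ by a zero‑preserving transformation inserting the Catalan weights $C_k$; the crux there would be exhibiting the right $n$‑sequence, which is exactly the sort of verification the cited work of Craven--Csordas is built for.
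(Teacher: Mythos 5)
Your proposal is correct, but it proves the theorem by a genuinely different route from the paper. The paper's own proof is a change of variables: it invokes the identity $N_n(t)=(1+t)^{n-1}P_{F_n}\bigl(t/(1+t)^2\bigr)$ relating $P_{F_n}$ to the Narayana polynomial $N_n$, uses the known facts that $N_n$ has only simple negative zeros with $N_n\preceq N_{n+1}$ together with the symmetry of $N_n$, and transports the zeros of $N_n$ lying in $(-1,0)$ through the increasing map $t\mapsto t/(1+t)^2$; real-rootedness and the interlacing chain for $P_{F_n}$ drop out simultaneously. You instead run a Sturm-sequence induction on the certified three-term recurrence $(n+2)p_{n+1}=(2n+1)p_n+(n-1)(4t-1)p_{n-1}$, which is legitimate since that recurrence is exactly the Zeilberger output already produced in the proof of \eqref{eq-klpol-fan} and the needed side facts (positivity of all coefficients, $\deg p_n=\lfloor (n-1)/2\rfloor$ with positive leading coefficient) follow from the closed formula. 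I checked the delicate points: the descent argument does rule out common zeros (any shared zero is negative, hence the coefficient $(n-1)(4t-1)$ is nonvanishing there for $n\ge 2$, and the zero propagates down to the constant $p_1=1$); simplicity and strictness of the interlacing propagate through the induction because each new zero is isolated in its own open interval; and the count of forced zeros matches $d_{n+1}\in\{d_n,d_n+1\}$ exactly in both parity regimes, yielding the correct one of the two forms of the $\preceq$-chain. What each approach buys: the paper's argument is shorter and exhibits the zeros of $P_{F_n}$ explicitly in terms of Narayana zeros, but leans on the external Coker identity and the known interlacing of Narayana polynomials; yours is self-contained given the recurrence, produces the interlacing $P_{F_n}\preceq P_{F_{n+1}}$ directly rather than as an image of the Narayana chain, and is the kind of argument that generalizes to other families satisfying a three-term recurrence with the right sign structure. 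The reduction of the $P_{S_n}$ statement to the $P_{F_n}$ statement via Theorem \ref{thm-sq-fan} is the same in both treatments. Your sketched second route via multiplier sequences is not needed and you rightly do not rely on it.
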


\begin{proof}
By Theorem \ref{klcoef}, we know that
 \begin{align*}
P_{F_n}(t)=P_{S_n}(t)&=\sum_{k=0}^{\lfloor \frac{n-1}{2}\rfloor} {\frac{1}{k+1}\binom{n-1}{k,k,n-2k-1} t^k}.
\end{align*}
There is a close relationship between $P_{F_n}(t)$ and
the classical Narayana polynomial
 \begin{align*}
N_{n}(t)=\sum_{k=0}^{n-1} \, \frac{1 }{ n} \, \binom{n}{k}\binom{n}{k+1}  t^{k}.
\end{align*}
Precisely, we have
\begin{align}\label{eq-nara-kl}
N_n(t)=(1+t)^{n-1} P_{F_n}\left(\frac{t}{(1+t)^2}\right)
\end{align}
see \cite{coker2003enumerating}.
It is  well known that $N_n(t)$ has only simple negative zeros and moreover $N_n(t) \preceq N_{n+1}(t)$.
Suppose that $t_1, \ldots, t_{\lfloor \frac{n-1}{2}\rfloor}$ are those distinct zeros of $N_n(t)$ in the interval $(-1,0)$. From the symmetry of $N_n(t)$ it follows that $t^{-1}_1,\ldots, t^{-1}_{\lfloor \frac{n-1}{2}\rfloor}$ are those distinct zeros
in the interval $(-\infty,-1)$. In addition, for even $n$ we have $N_n(-1)=0$, while for odd $n$ we have $N_n(-1)\neq 0$.
Note that $\deg(P_{F_n}(t))=\lfloor \frac{n-1}{2}\rfloor$ and for $t\neq -1$ there holds
$$\frac{t^{-1}}{(1+t^{-1})^2}=\frac{t}{(1+t)^2}.$$
Thus, by \eqref{eq-nara-kl}, we see that
$$\frac{t_1}{(1+t_1)^2},\ldots,\frac{t_{\lfloor \frac{n-1}{2}\rfloor}}{\left(1+t_{\lfloor \frac{n-1}{2}\rfloor}\right)^2}$$
are exactly all zeros of $P_{F_n}(t)$.
It is easy to see that $\frac{t}{(1+t)^2}$ is a strictly increasing function on the interval $(-1,0)$. Thus we have the desired result.
\end{proof}

Next we consider the \KL polynomials of wheel  graphs. We have the following result.

\begin{thm}\label{thm-wheel-zeros}
For any $n\geq 3$ the  polynomial $P_{W_n}(t)$ has only negative zeros.
\end{thm}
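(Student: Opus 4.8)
The plan is to derive the real-rootedness of $P_{W_n}(t)$ from that of the fan polynomial $P_{F_n}(t)$, which is already available from Theorem~\ref{kl-fan-roots}, by realizing $P_{W_n}(t)$ as the image of $P_{F_n}(t)$ under a coefficient-wise multiplier. First I would rewrite the coefficients of Theorem~\ref{klcoef}: using the identity $\binom{n}{k,k+1,n-2k-1}=\frac{n}{k+1}\binom{n-1}{k,k,n-2k-1}$, one gets, for $0\le k\le d:=\lfloor\frac{n-1}{2}\rfloor$,
$$[t^k]P_{W_n}(t)=\gamma_{n,k}\,[t^k]P_{F_n}(t),\qquad \gamma_{n,k}=n\Big(\tfrac{k+1}{n-k}+\tfrac{k}{n-k+1}-\tfrac{k}{n-k-1}\Big)=\frac{n\big[(k+1)((n-k)^2-1)-2k(n-k)\big]}{(n-k-1)(n-k)(n+1-k)}.$$
Setting $m=n-k\ge 2$ and expanding the numerator, one checks $\gamma_{n,k}>0$ throughout $0\le k\le d$ (this in passing re-proves positivity of the coefficients of $P_{W_n}$). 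Thus $P_{W_n}(t)=T_n\big(P_{F_n}(t)\big)$, where $T_n$ is the diagonal operator $t^k\mapsto\gamma_{n,k}t^k$ acting on polynomials of degree at most $d$, and $\deg P_{F_n}=d$.

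The core of the argument is then to prove that the finite sequence $(\gamma_{n,0},\dots,\gamma_{n,d})$ is an $n$-sequence, i.e. that $T_n$ sends any polynomial of degree $\le d$ with only real nonpositive zeros to a polynomial of the same kind; since $P_{F_n}(t)$ has only negative zeros, this forces $P_{W_n}(t)$ to have only negative zeros. By the finite form of the P\'olya--Schur theorem, being an $n$-sequence is equivalent to the single statement that $\sum_{k=0}^{d}\gamma_{n,k}\binom{d}{k}t^k$ has only real nonpositive zeros. I would verify this by combining the closure properties of multiplier sequences and $n$-sequences developed in \cite{craven1977multiplier,craven1983location1} — peeling off the polynomial-in-$k$ numerator factor and the linear-in-$k$ denominator factors one at a time — with the interlacing relation $P_{F_n}(t)\preceq P_{F_{n+1}}(t)$ from Theorem~\ref{kl-fan-roots}, which is the natural way to feed information about the fan family into the operator.

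A second, parallel route mimics the Narayana argument used for fans. Put $\widetilde N_n(t):=(1+t)^{n-1}P_{W_n}\!\big(\tfrac{t}{(1+t)^2}\big)$; one checks this is a palindromic polynomial of degree $n-1$. Because $s\mapsto\tfrac{s}{(1+s)^2}$ is a two-to-one map of $(-\infty,-1)\cup(-1,0)$ onto $(-\infty,0)$ that carries each reciprocal pair $\{r,r^{-1}\}$ to a single point, and the preimage equation $rt^2+(2r-1)t+r=0$ has discriminant $1-4r>0$ for $r<0$, we get that $P_{W_n}$ has only negative zeros if and only if $\widetilde N_n$ does. Writing $\widetilde N_n(t)=(1+t)^{\varepsilon}t^{d}R_n(t+t^{-1})$ with $\varepsilon\in\{0,1\}$ and $\deg R_n=d$ reduces the problem to showing that the explicit polynomial $R_n$ has all its zeros in $(-\infty,-2]$, which can be attempted by an interlacing induction driven by the three-term recurrence for $P_{W_n}(t)$ recorded in Section~\ref{sec-3}.

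The hard part, in either approach, is the same: a non-routine real-rootedness assertion about an auxiliary family that does not coincide with a classical named family on inspection — the $n$-sequence test polynomial $\sum_{k}\gamma_{n,k}\binom{d}{k}t^k$ in the first route, or the reduced polynomial $R_n$ (equivalently $\widetilde N_n$) in the second. Pushing it through will require either meticulous bookkeeping with the multiplier-sequence calculus together with the known interlacing of the fan polynomials, or a self-contained inductive interlacing argument from the recurrence, in which the delicate point is the sign of the factor $4t-1$ — negative on the whole zero locus $(-\infty,0)$ — that appears in these recurrences.
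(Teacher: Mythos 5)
Your reduction is set up correctly: the identity $\binom{n}{k,k+1,n-2k-1}=\frac{n}{k+1}\binom{n-1}{k,k,n-2k-1}$ does give $[t^k]P_{W_n}(t)=\gamma_{n,k}\,[t^k]P_{F_n}(t)$ with your $\gamma_{n,k}$, the positivity check goes through, and if $(\gamma_{n,k})_{k=0}^{d}$ were known to be a $d$-sequence then Theorem \ref{kl-fan-roots} would indeed finish the job. But the proof stops exactly at the one statement that carries all the difficulty: you never establish that $\sum_{k=0}^{d}\gamma_{n,k}\binom{d}{k}t^k$ has only real zeros of one sign, and the tools you propose for it do not suffice. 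The closure properties you cite cover sequences like $\{1/(k+c)!\}$ and $\{1/(n-k)!\}$ (Lemmas \ref{ms-shift} and \ref{ms-rev}), not the individual reciprocal linear factors $1/(n-k)$, $1/(n-k\pm1)$ appearing in $\gamma_{n,k}$, so ``peeling off the linear-in-$k$ denominator factors one at a time'' is not a licensed operation; and the interlacing $P_{F_n}\preceq P_{F_{n+1}}$ is a statement about the fan family, which has no bearing on whether the fixed diagonal operator $t^k\mapsto\gamma_{n,k}t^k$ preserves real-rootedness (that property is tested on $(1+t)^d$, per Theorem \ref{n-char}, not on the fan polynomials). Your second route faces the same wall: the palindromic transform $\widetilde N_n$ is a correct equivalence, but its real-rootedness is precisely the original problem in disguise, and no recurrence-based interlacing induction is actually carried out. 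As you yourself concede, ``the hard part'' is left open in both routes, so this is a plan rather than a proof.

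For comparison, the paper avoids your ratio $\gamma_{n,k}$ altogether and instead factors the wheel coefficient itself as $[t^k]P_{W_n}(t)=a_kb_kc_k$ with $a_k$ a cubic polynomial in $k$, $b_k=\frac{n!}{(n-1)(k+1)!(n+1-k)!}$, and $c_k=\frac{(n-1)(n-2-k)!}{k!(n-1-2k)!}$. Each factor is then tractable by a different classical device: $\sum_kc_kt^k$ is a rescaled Lucas polynomial (not the fan polynomial), hence real-rooted; $\{b_k\}$ is a product of the two multiplier sequences of Lemmas \ref{ms-shift} and \ref{ms-rev}; and $\{a_k\}$ is shown to be an $m$-sequence by expanding $a_k$ in falling factorials so that the test polynomial collapses to $(1+t)^{m-3}h(t)$ for an explicit cubic $h$ whose three negative roots are located by sign changes at $-\infty,-2,-1,0$. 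If you want to salvage your single-operator approach, you would need an analogous explicit handle on $\sum_k\gamma_{n,k}\binom{d}{k}t^k$; the paper's choice of factorization is exactly the device that makes such a computation feasible.
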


Our proof of Theorem \ref{thm-wheel-zeros}  is based on the  theory of multiplier sequences and the theory of $n$-sequences,
for which we refer the reader to \cite{craven1983location1,craven1980intersections,craven2004composition}.
Let us recall some related concepts and results.
A sequence $\Gamma=\{\gamma_k\}_{k=0}^{\infty}$ of real numbers  is called a \emph{multiplier sequence} if, whenever any  real polynomial
$$f(t)=\sum_{k=0}^{n}a_kt^k$$ has only real zeros, so does the polynomial $$\Gamma[f(t)]=\sum_{k=0}^{n}\gamma_ka_kt^k.$$
A sequence $\Gamma=\{\gamma_i\}_{k=0}^{n}$  is called an \emph{$n$-sequence} if for every polynomial $f(t)$ of degree less than or equal to $n$ and with only real zeros, the polynomial $\Gamma[f(t)]$ also has only real zeros.

We will need  the following two lemmas about multiplier sequences.

\begin{lem}\label{ms-shift}
For any non-negative integer $c$, the sequence  $\{\frac{1}{(k+c)!}\}_{k=0}^{\infty}$
is a multiplier sequence.
\end{lem}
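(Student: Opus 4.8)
The plan is to verify the hypothesis of the P\'olya--Schur characterization of multiplier sequences: a sequence $\Gamma=\{\gamma_k\}_{k\ge0}$ of nonnegative reals is a multiplier sequence precisely when its generating function $\Phi_\Gamma(t):=\sum_{k\ge0}\frac{\gamma_k}{k!}t^k$ belongs to the Laguerre--P\'olya class $\mathcal{LP}^+$, that is, is a locally uniform limit of real polynomials all of whose zeros lie in $(-\infty,0]$ (see \cite{craven1983location1,craven1980intersections,craven2004composition}). For $\gamma_k=\frac{1}{(k+c)!}$ this function is
\[
\Phi_\Gamma(t)=\sum_{k\ge0}\frac{t^k}{k!\,(k+c)!},
\]
so the whole task reduces to showing $\Phi_\Gamma\in\mathcal{LP}^+$.

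The main step is to reduce to the case $c=0$. First I would observe that $\{1/(k+c)!\}_{k\ge0}$ is just the tail $\{1/j!\}_{j\ge c}$ of the sequence $\{1/j!\}_{j\ge0}$, reindexed, and that deleting the first $c$ terms of a sequence differentiates its generating function $c$ times: starting from $\Phi_0(t):=\sum_{k\ge0}\frac{t^k}{(k!)^2}$, the generating function attached to $\{1/k!\}$, term-by-term differentiation gives $\Phi_0'(t)=\sum_{k\ge0}\frac{t^k}{k!\,(k+1)!}$ and, iterating, $\Phi_0^{(c)}(t)=\Phi_\Gamma(t)$. Now $\{1/k!\}$ is a classical multiplier sequence, equivalently $\Phi_0\in\mathcal{LP}^+$; this can be cited from the references above, and can also be seen directly since $\Phi_0(t)=J_0\bigl(2\sqrt{-t}\,\bigr)$ is entire of order $\tfrac12$, whose zeros are exactly the numbers $-j_{0,m}^2/4<0$ arising from the (necessarily real) positive zeros $j_{0,m}$ of the Bessel function $J_0$, so that its genus-zero Hadamard factorization $\Phi_0(t)=\prod_{m\ge1}\bigl(1+4t/j_{0,m}^2\bigr)$ exhibits it as a member of $\mathcal{LP}^+$. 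Finally I would invoke the stability of $\mathcal{LP}^+$ under differentiation: if $p$ is a real polynomial with all zeros in $(-\infty,0]$, then by Rolle's theorem $p'$ has $\deg p-1$ real zeros, again all in $(-\infty,0]$, and since differentiation commutes with locally uniform limits of analytic functions this property is inherited by $\mathcal{LP}^+$. Applying this $c$ times yields $\Phi_\Gamma=\Phi_0^{(c)}\in\mathcal{LP}^+$, hence $\Gamma$ is a multiplier sequence.

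I expect essentially no genuine obstacle here. The only non-formal input is the membership $\Phi_0\in\mathcal{LP}^+$ (equivalently, that $\{1/k!\}$ is a multiplier sequence), which is entirely standard; the differentiation identity $\Phi_0^{(c)}=\Phi_\Gamma$ and the closure of $\mathcal{LP}^+$ under $d/dt$ are routine. An alternative, slightly more computational route would be to identify $\Phi_\Gamma$ directly with a rescaled Bessel function $J_c$ and argue that it is entire of order $\tfrac12$ with only negative zeros; the derivative argument above seems cleaner and is the one I would write up.
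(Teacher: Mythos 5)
Your proof is correct, but it takes a different route from the paper's. The paper disposes of this lemma in one line by invoking Laguerre's theorem (in the form of \cite[Theorem 4.1]{craven2004composition}): the function $\phi(x)=1/\Gamma(x+c+1)$ is entire, lies in the Laguerre--P\'olya class, and has all its zeros at the negative integers $-(c+1),-(c+2),\dots$, so the sequence of its values at the non-negative integers, $\phi(k)=1/(k+c)!$, is a multiplier sequence. You instead go through the P\'olya--Schur transcendental characterization, identify $\Phi_0(t)=\sum_k t^k/(k!)^2$ with $J_0\bigl(2\sqrt{-t}\bigr)$ to place it in $\mathcal{LP}^+$, and then obtain the general case by differentiating $c$ times, using closure of $\mathcal{LP}^+$ under $d/dt$. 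All three ingredients (the P\'olya--Schur criterion for non-negative sequences, the Bessel-function zero facts together with the genus-zero Hadamard factorization, and Rolle plus Hurwitz for the derivative step) are standard and correctly deployed, and the differentiation identity $\Phi_0^{(c)}=\Phi_\Gamma$ is right. The trade-off is that the paper's argument is a single application of one classical theorem to one explicit entire function and handles all $c$ uniformly, whereas yours is longer but more self-contained in spirit: it reduces everything to the single classical fact that $\{1/k!\}$ is a multiplier sequence and makes the shift by $c$ a purely formal derivative computation.
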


This lemma can be obtained  by applying   a classical theorem due to Laguerre to the  gamma function $\Gamma(x)$, see \cite[p .270]{titchmarsh1939theory} or a  more recent literature   \cite[Theorem 4.1]{craven2004composition}.

\begin{lem}\label{ms-rev}
For any positive integer $n$, the sequence  $\{\frac{1}{(n-k)!}\}_{k=0}^{\infty}$
is a multiplier sequence.
\end{lem}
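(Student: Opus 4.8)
The plan is to deduce the lemma from the P\'olya--Schur characterization of multiplier sequences in terms of the Laguerre--P\'olya class, in the same spirit as the proof of Lemma~\ref{ms-shift}. Adopting the usual convention $1/m!:=0$ for negative integers $m$, the sequence in question is the finite sequence $\Gamma=\{\gamma_k\}_{k=0}^{\infty}$ with $\gamma_k=\tfrac{1}{(n-k)!}$ for $0\le k\le n$ and $\gamma_k=0$ for $k>n$. Recall that $\Gamma$ is a multiplier sequence if and only if the function
\[
\Phi(x)=\sum_{k=0}^{\infty}\frac{\gamma_k}{k!}\,x^{k}
\]
is entire, belongs to the Laguerre--P\'olya class, and has all of its zeros of the same sign; equivalently $\Phi(x)$ (or $\Phi(-x)$) admits a factorization $Cx^{m}e^{ax}\prod_{j}(1+b_jx)$ with $C\in\mathbb{R}$, $m\in\mathbb{Z}_{\ge 0}$, $a\ge 0$, $b_j>0$ and $\sum_j b_j<\infty$; see \cite{craven1983location1,craven2004composition} and the references therein.

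The only computation needed is to evaluate $\Phi(x)$. Since $\dfrac{1}{(n-k)!\,k!}=\dfrac{1}{n!}\dbinom{n}{k}$, we get
\[
\Phi(x)=\sum_{k=0}^{n}\frac{1}{(n-k)!\,k!}\,x^{k}=\frac{1}{n!}\sum_{k=0}^{n}\binom{n}{k}x^{k}=\frac{(1+x)^{n}}{n!}.
\]
This is a polynomial, hence entire, and is visibly of the prescribed shape with $m=0$, $a=0$, and all $b_j=1$; its zeros are all equal to $-1$, so in particular real and of the same sign. By the P\'olya--Schur criterion, $\Gamma$ is a multiplier sequence, which is exactly the assertion of the lemma.

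An alternative route, closer to the finite-sequence form of the theorem, is also available and perhaps preferable for self-containedness. A finite sequence $\{\gamma_0,\dots,\gamma_n,0,0,\dots\}$ is a multiplier sequence if and only if the polynomial $g(x)=\sum_{k=0}^{n}\binom{n}{k}\gamma_k x^{k}$ has only real zeros, all of the same sign. For our $\Gamma$, after the substitution $k\mapsto n-k$ one finds $g(x)=\sum_{k=0}^{n}\binom{n}{k}\frac{x^{k}}{(n-k)!}=x^{n}L_n(-1/x)$, where $L_n$ denotes the $n$-th Laguerre polynomial. Since $L_n$ has $n$ distinct positive zeros, $x^{n}L_n(-1/x)$ is a polynomial of degree $n$ all of whose zeros are negative, so the criterion applies. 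Equivalently, this exhibits the truncation $\{1/j!\}_{j=0}^{n}$ of the multiplier sequence of Lemma~\ref{ms-shift} as again a multiplier sequence, with $\Gamma$ its reversal; reversing a finite multiplier sequence preserves the multiplier-sequence property, since $g(x)$ is then replaced by $x^{n}g(1/x)$, whose zeros are the reciprocals of those of $g$ together with possibly a zero at the origin, hence again real and of one sign.

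I do not expect a genuine obstacle here; the step requiring care is the bookkeeping with the convention $1/m!=0$ for $m<0$. It is precisely this convention that makes the identity $\Phi(x)=(1+x)^{n}/n!$ hold with the sum stopping at $k=n$, and it is what guarantees that the lemma covers real-rooted polynomials of arbitrary degree and not merely those of degree at most $n$ (which is all that will actually be used in Section~\ref{sec-4}). One should also note that a truncation of an infinite multiplier sequence need not itself be a multiplier sequence, so a naive appeal to Lemma~\ref{ms-shift} is not enough, and the P\'olya--Schur characterization (or the Laguerre-polynomial computation above) is genuinely required.
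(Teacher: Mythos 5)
Your primary argument is correct and is essentially the paper's: the paper proves this lemma by citing the P\'olya--Schur characterization, and your computation $\Phi(x)=\sum_{k\ge 0}\gamma_k x^k/k!=(1+x)^n/n!$, a polynomial with all zeros at $-1$, is exactly what that citation hides. The convention $1/m!=0$ for $m<0$ is indeed the only bookkeeping point, and you handle it correctly.

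Your ``alternative route,'' however, rests on a false criterion. The statement that a finitely supported sequence $\{\gamma_0,\dots,\gamma_n,0,0,\dots\}$ is a multiplier sequence if and only if $g(x)=\sum_{k=0}^n\binom{n}{k}\gamma_k x^k=\Gamma[(1+x)^n]$ has only real zeros of one sign is the characterization of $n$-sequences (Theorem~\ref{n-char}), not of multiplier sequences; a multiplier sequence must send $(1+x)^N$ to a real-rooted polynomial for \emph{every} $N$. For example, $\Gamma=\{1,1,1,0,0,\dots\}$ gives $\Gamma[(1+x)^2]=(1+x)^2$, which passes the test, yet $\Gamma[(1+x)^3]=1+3x+3x^2$ has non-real zeros, so $\Gamma$ is not a multiplier sequence (consistently, $\Phi(x)=1+x+x^2/2$ has non-real zeros). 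The same objection applies to the closing claim that reversing a finite multiplier sequence preserves the property ``since $g$ is replaced by $x^n g(1/x)$'': that manipulation only controls the single polynomial $\Gamma[(1+x)^n]$. Your Laguerre-polynomial computation therefore only shows that $\{1/(n-k)!\}$ is an $n$-sequence, which is weaker than the lemma. None of this damages the proof, since the first route already establishes the full statement; just delete or downgrade the alternative.
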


This lemma can be obtained  by applying  the   characterization theorem of  multiplier sequences due to P{\'o}lya and Schur, see \cite{polya1914zwei} or
\cite[Theorem 3.3]{craven2004composition}.

We also need the following algebraic characterization of $n$-sequences.

\begin{thm}\cite{craven1983location1}\label{n-char}
Let $\Gamma=\{\gamma_k\}_{k=0}^{n}$ be a sequence of  real numbers. Then $\Gamma$ is an $n$-sequence if and only if the zeros of the polynomial
$\Gamma[(1+t)^n]$ are all real and of the same sign.
\end{thm}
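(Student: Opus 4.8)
The plan is to reinterpret the action of the operator $f\mapsto\Gamma[f]$ on polynomials of degree at most $n$ as a \emph{Schur--Szeg\H{o} composition} with the single polynomial $G(t):=\Gamma[(1+t)^n]=\sum_{k=0}^n\binom{n}{k}\gamma_k t^k$, and then to deduce the equivalence from the classical composition theorem together with a converse. The first observation is that if $f(t)=\sum_{k=0}^n a_k t^k$ is any polynomial of degree at most $n$ and we write $a_k=\binom{n}{k}\bar a_k$, then
\begin{align*}
\Gamma[f(t)]=\sum_{k=0}^n\gamma_k a_k t^k=\sum_{k=0}^n\binom{n}{k}\gamma_k\bar a_k\,t^k,
\end{align*}
which is exactly the Schur--Szeg\H{o} composition of $G$ and $f$, both written with the $\binom{n}{k}$-normalisation of their coefficients. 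Hence Theorem~\ref{n-char} is equivalent to saying that composing with $G$ preserves real-rootedness on polynomials of degree at most $n$ precisely when $G$ is real-rooted with all zeros of one sign.

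For the ``if'' direction I would simply invoke the classical Schur--Szeg\H{o} composition theorem (see e.g.\ \cite{craven2004composition}): if two polynomials of degree at most $n$, written with the $\binom{n}{k}$-normalisation, are both real-rooted and one of them has all its zeros of the same sign, then their composition is real-rooted. Taking that one to be $G$ and the other to be an arbitrary real-rooted $f$ of degree at most $n$ (padding with zero coefficients where necessary), we conclude that $\Gamma[f]$ is real-rooted for every such $f$; that is, $\Gamma$ is an $n$-sequence.

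For the ``only if'' direction, assume $\Gamma$ is an $n$-sequence. Applying $\Gamma$ to $(1+t)^n$, whose zeros are all $-1$, shows at once that $G$ is real-rooted, so the only remaining point---and this is where I expect the real work---is that the zeros of $G$ must all have the same sign. I would argue by contradiction: assuming $G$ has both a positive and a negative zero, the goal is to exhibit a real-rooted polynomial $f$ of degree at most $n$ with $\Gamma[f]$ not real-rooted. The natural test family is $(1+xt)^{\,j}(1-yt)^{\,n-j}$ with $x,y>0$ and $0\le j\le n$, all of which are real-rooted; feeding these into $\Gamma$ and using the fact that a real-rooted polynomial has all of its zeros of one sign exactly when its coefficient sequence is of constant sign or strictly alternating in sign, one can choose the index $j$ and the parameters $x,y$ so as to force a non-real complex-conjugate pair of zeros in $\Gamma[f]$. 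Equivalently, one may appeal to the known near-converse of the composition theorem---a coefficient multiplier $\{b_k\}$ for which $\sum_k\binom{n}{k}b_k t^k$ does not have all its zeros of one sign must destroy the real-rootedness of some real-rooted polynomial of degree at most $n$. A useful intermediate step along the way is to apply $\Gamma$ to the pencil $(t-a)(1+t)^{n-1}$, $a\in\mathbb{R}$: this forces $G(t)-c\,\Gamma[(1+t)^{n-1}](t)$ to be real-rooted for all real $c$, so by the Hermite--Kakeya--Obreschkoff theorem $\Gamma[(1+t)^{n-1}]$ interlaces $G$, and iterating through $\Gamma[(1+t)^{n-j}]$ progressively constrains the zero pattern of $G$.

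In summary, the reformulation and the ``if'' direction are routine once the Schur--Szeg\H{o} composition theorem is in hand, and the real-rootedness half of the ``only if'' direction is immediate. The main obstacle will be the ``same sign'' half of the ``only if'' direction: ruling out zeros of opposite sign for $G$ is what forces the delicate choice of test polynomials (or the invocation of the converse composition theorem) sketched above. Degenerate cases---a zero of $G$ at the origin, equivalently $\gamma_0=0$ or $\gamma_n=0$, or $\deg G<n$---should be dispatched by the same arguments with minor degree bookkeeping.
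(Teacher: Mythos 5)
The paper does not prove this statement at all: it is quoted verbatim from Craven--Csordas \cite{craven1983location1} as a black box, so there is no internal proof to compare against. Judging your proposal on its own terms: the reformulation of $f\mapsto\Gamma[f]$ as a Schur--Szeg\H{o} composition with $G(t)=\sum_k\binom{n}{k}\gamma_k t^k$ is correct, and the ``if'' direction really is a direct application of the Malo--Schur--Szeg\H{o} composition theorem (modulo the degree-padding/limit argument you allude to, which is routine via $f(t)(1+\epsilon t)^{n-\deg f}$ and Hurwitz). The first half of the ``only if'' direction (real-rootedness of $G$) is likewise immediate, as you say.

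The genuine gap is the one you yourself flag: the ``same sign'' half of necessity is only sketched, and it is the substantive content of the theorem. Saying that ``one can choose $j$, $x$, $y$ so as to force a non-real pair of zeros'' is the entire difficulty, not a step; and the ``known near-converse of the composition theorem'' you invoke as an alternative is essentially the statement being proved, so appealing to it is circular. The HKO/interlacing observation about the pencil $G(t)-c\,\Gamma[(1+t)^{n-1}](t)$ is correct but does not by itself exclude zeros of both signs. What is actually needed is a sign-regularity lemma for $n$-sequences: applying $\Gamma$ to test polynomials such as $t^{j}(1+t)^{2}$ (giving Newton-type inequalities $\gamma_{j+1}^{2}\ge\gamma_j\gamma_{j+2}$) and, crucially, to polynomials with zeros of both signs such as $t^{j}(1-t)(1+t)$ (forcing $\gamma_j\gamma_{j+2}\ge 0$), one shows that the nonzero $\gamma_k$ must be either of constant sign or strictly alternating; since $G$ is real-rooted, constant-sign coefficients exclude positive zeros and alternating coefficients exclude negative zeros, which yields the conclusion. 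The example $\Gamma=(1,1,-1)$ with $f=1-t^{2}$, $\Gamma[f]=1+t^{2}$, shows concretely that tests of the form $(1+xt)^{j}(1-yt)^{n-j}$ with the ``obvious'' choices do not suffice and that polynomials with zeros of both signs must enter the argument. Until this lemma (or an equivalent construction) is supplied, the necessity direction is not proved.
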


To use the theory of multiplier sequences and the theory of $n$-sequences to prove the real-rootedness of $P_{W_n}(t)$, we shall rewrite its coefficient sequence $\{[t^k]P_{W_n}(t)\}_{k=0}^{\lfloor \frac{n-1}{2}\rfloor}$ as the Hadmard product of
three sequences $\{a_k\}_{k=0}^{\lfloor \frac{n-1}{2}\rfloor}$, $\{b_k\}_{k=0}^{\lfloor \frac{n-1}{2}\rfloor}$ and $\{c_k\}_{k=0}^{\lfloor \frac{n-1}{2} \rfloor}$, namely
\begin{align}\label{eq-hadmard}
[t^k]P_{W_n}(t)=a_kb_kc_k,
\end{align}
where
\begin{align}
a_k&=(k+1) n^2-(2k^2+4k) n+k^3+3 k^2-k-1,\label{seqa}\\[5pt]
b_k&=\frac{n!}{(n-1) (k+1)! (n+1-k)!},\label{seqb}\\[5pt]
c_k&=\frac{(n-1) (n-2-k)!}{k! (n-1-2 k)!}.\label{seqc}
\end{align}
It is straightforward to verify \eqref{eq-hadmard}.
We shall subsequently prove that the sequence $\{a_k\}_{k=0}^{\lfloor \frac{n-1}{2}\rfloor}$ is a $\lfloor \frac{n-1}{2} \rfloor$-sequence, the sequence $\{b_k\}_{k=0}^{\infty}$
is a multiplier sequence and the polynomial $\sum_{k=0}^{\lfloor \frac{n-1}{2}\rfloor}c_kt^k$ has only real zeros. Firstly, we prove the following result.

 \begin{lem}\label{wheel-n-seq}
  For any $n\geq 3$ the sequence
  $\{a_k\}_{k=0}^{\lfloor \frac{n-1}{2} \rfloor}$ given by \eqref{seqa} is a $\lfloor \frac{n-1}{2} \rfloor$-sequence.
 \end{lem}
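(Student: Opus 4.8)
The plan is to apply the characterization of $n$-sequences in Theorem~\ref{n-char}. Set $N:=\lfloor\frac{n-1}{2}\rfloor$. By that theorem it suffices to prove that $\sum_{k=0}^{N}a_k\binom{N}{k}t^{k}$ has only real zeros, all of the same sign; I will in fact show that all its zeros are real and negative. The first step is to put $a_k$ in a convenient form. From \eqref{seqa} one checks directly that $a_k=(k+1)(n-k)^2-2k(n-k)-(k+1)$, and, since $a_k$ is a cubic polynomial in $k$, it is determined by its values at $k=0,1,2,3$; expanding in the binomial basis gives
$$a_k=c_0+c_1\binom{k}{1}+c_2\binom{k}{2}+c_3\binom{k}{3},\qquad c_0=n^2-1,\quad c_1=n^2-6n+3,\quad c_2=-4(n-3),\quad c_3=6.$$
Combining this with the identity $\sum_{k}\binom{k}{j}\binom{N}{k}t^{k}=\binom{N}{j}t^{j}(1+t)^{N-j}$, we obtain
$$\sum_{k=0}^{N}a_k\binom{N}{k}t^{k}=\sum_{j=0}^{3}c_j\binom{N}{j}t^{j}(1+t)^{N-j}.$$

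For the finitely many values $n\in\{3,4,5,6\}$, that is, $N\le 2$, the right-hand side is a polynomial of degree at most $2$ and the claim is verified by hand from the explicit values $a_0=n^2-1$, $a_1=2(n^2-3n+1)$, $a_2=3n^2-16n+17$: for $N=1$ it is linear with a single negative zero, and for $N=2$ one checks $a_0,a_1,a_2>0$ and $a_1^{2}\ge a_0a_2$, so both zeros are real and negative.

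Now assume $n\ge 7$, so $N\ge 3$. Factoring out $(1+t)^{N-3}$, whose zeros are all equal to $-1$, reduces the claim to showing that the cubic
$$R(t):=\sum_{j=0}^{3}c_j\binom{N}{j}t^{j}(1+t)^{3-j}$$
has three negative real zeros. I would establish this by a sign-change argument. The coefficient of $t^{3}$ in $R$ is $\sum_{j}c_j\binom{N}{j}=a_N$; substituting $n-N\in\{N+1,N+2\}$ into $a_N=(N+1)(n-N)^2-2N(n-N)-(N+1)$ gives $a_N=N^{2}(N+1)$ when $n=2N+1$ and $a_N=(N+1)^{2}(N+3)-2N(N+2)$ when $n=2N+2$, so $a_N>0$ and $R(t)\to-\infty$ as $t\to-\infty$. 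Next $R(0)=c_0=n^{2}-1>0$, and since the terms with $j<3$ vanish at $t=-1$ we get $R(-1)=-c_3\binom{N}{3}=-6\binom{N}{3}<0$. Finally
$$R(-2)=-(n^{2}-1)-2N(n^{2}-6n+3)+8N(N-1)(n-N-1),$$
which simplifies to $4N^{2}$ when $n=2N+1$ and to $4N^{2}-6N-3$ when $n=2N+2$; both are positive for $N\ge 3$. Thus $R$ has signs $-,+,-,+$ at $-\infty,-2,-1,0$, so it has a zero in each of the disjoint intervals $(-\infty,-2)$, $(-2,-1)$, $(-1,0)$; being cubic, these are all of its zeros, and they are all negative. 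Hence $\sum_{k=0}^{N}a_k\binom{N}{k}t^{k}=(1+t)^{N-3}R(t)$ has only real negative zeros, and Theorem~\ref{n-char} finishes the proof.

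The computations above are elementary; the main labour is purely bookkeeping: verifying the binomial-basis coefficients $c_0,\dots,c_3$, the positivity of the leading coefficient $a_N$, and --- the genuine crux --- the evaluation and positivity of $R(-2)$, which is precisely what forces the case split according to the parity of $n$. These verifications can be carried out by hand or, as with the generating-function identities in Section~\ref{sec-3}, checked with a computer algebra system.
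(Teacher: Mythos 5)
Your proposal is correct and follows essentially the same route as the paper's proof: both expand $a_k$ in the falling-factorial/binomial basis (your $c_0,\dots,c_3$ are exactly the paper's coefficients $(n^2-1)$, $(n^2-6n+3)$, $-(2n-6)$, $1$ up to the factorials $j!$), both reduce to a cubic times $(1+t)^{N-3}$ after disposing of $n\le 6$ by hand, and both finish with the identical sign-change argument at $-\infty,-2,-1,0$, with the same parity split and the same values $4N^2=(n-1)^2$ and $4N^2-6N-3=(n-6)(n-1)+1$ at $t=-2$. All of your computations check out against the paper's.
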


 \begin{proof}
 To simplify notation, let $m=\lfloor \frac{n-1}{2} \rfloor$.
  Note that for any $0 \leq k \leq m$
 there exists $x\geq 0$ such that $n=2k+1+x$ since $k\leq \frac{n-1}{2}$. Moreover, $x$ and $k$ can not be $0$ simultaneously since $n\geq 3$. Now it is routine to verify that $$a_k=k^2 + k^3 + 2 x + 2 k x + 2 k^2 x + x^2 + k x^2>0$$
 for any $0 \leq k \leq m$.

By Theorem \ref{n-char}, it suffices to show that the  polynomial
 \begin{align*}
  f_n(t)=\sum_{k=0}^{m} a_k\binom{m}{k}t^k
  \end{align*}
  has only negative zeros.
By straightforward computations, we get that
$$f_3(t)=2(t+4),\quad f_4(t)=5 (2 t+3),\quad f_5(t)=4(t+3) (3 t+2),\quad f_6(t)=29 t^2+76 t+35.$$
It is easy to see that each of $f_3(t),f_4(t),f_5(t)$ and $f_6(t)$ has only real zeros.

It remains to show that $f_n(t)$
has only negative zeros for any $n\geq 7$. Note that $a_k$, considered as a polynomial of $k$, can be expanded in the falling factorials basis as given below:
  \begin{align*}
a_k
&= (k)_3 -  (2n-6)\cdot (k)_2 + (n^2 - 6 n + 3) \cdot  (k)_1 + (n^2 - 1),
  \end{align*}
where $(k)_i=k(k-1)\cdots (k-i+1)$.
Thus, letting $g(t)=(1+t)^m$, we get
 \begin{align*}
  f_n(t)=t^3g'''(t) -  (2n-6)t^2g''(t)+(n^2 - 6 n + 3)tg'(t)+(n^2 - 1)g(t),
  \end{align*}
where $g'(t)$ (resp. $g''(t)$ or $g'''(t)$) are the first order (resp. the second order, or the third order) derivative of $g(t)$. Therefore, for $n\geq 7$ and hence $m\geq 3$, we have $$f_n(t)=(1+t)^{m-3}h(t),$$ where
 \begin{align*}
  h(t)=&m(m-1)(m-2)t^3 -  m(m-1)(2n-6)t^2(1+t)\\
  &+m(n^2 - 6 n + 3)t(1+t)^2+(n^2 - 1)(1+t)^3.
  \end{align*}
It suffices to show that $h(t)$ has only negative zeros.
By the definition of $h(t)$, we know that $h(t)$ is a  cubic polynomial with the leading coefficient equal to $a_m>0$.
Therefore,  we have  $h(-\infty)=-\infty$. By straightforward computations, we get that
\begin{align*}
 h(-2)&=-8 m^3+8 m^2 n-2 m n^2+4 m n+2 m-n^2+1,\\
 h(-1)&=-(m-2) (m-1) m,\\
 h(0)&=(n-1) (n+1).
\end{align*}
When $n$ is odd, we have $m=\frac{n-1}{2}$  and $h(-2)=(n-1)^2$.
When $n$ is even, we have $m=\frac{n-2}{2}$  and  $h(-2)=(n-6) (n-1)+1$.
Since  $h(-\infty)=-\infty,h(-2)>0,h(-1)<0$ and $h(0)>0$ by $n\geq 7$ and $m\geq 3$, from the intermediate value theorem it follows that $h(t)$ has three distinct negative zeros.
Thus, all zeros of $f_n(t)$ are real and have the same sign for any $n\geq 7$. This completes the proof.
\end{proof}

Secondly, we have the following result.

\begin{lem}\label{lem-ms-wheel}
  For any $n\geq 3$ the sequence
  $\{b_k\}_{k=0}^{\infty}$ given by \eqref{seqb} is a multiplier sequence.
\end{lem}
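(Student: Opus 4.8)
The plan is to recognize that $b_k$ factors, up to a positive constant, as the termwise product of two sequences that have already been shown to be multiplier sequences, and then to invoke the elementary fact that the Hadamard product of multiplier sequences is again a multiplier sequence.

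First I would write
$$b_k=\frac{n!}{n-1}\cdot\frac{1}{(k+1)!}\cdot\frac{1}{(n+1-k)!},$$
with the usual convention that $1/m!=0$ whenever $m$ is a negative integer, so that the last factor vanishes for $k\ge n+2$ and the sequence is finitely supported, as it should be. The constant $\frac{n!}{n-1}$ is positive for every $n\ge 3$, and multiplying a multiplier sequence by a positive constant again yields a multiplier sequence; hence it suffices to prove that $\bigl\{\frac{1}{(k+1)!\,(n+1-k)!}\bigr\}_{k=0}^{\infty}$ is a multiplier sequence.

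Next, by Lemma \ref{ms-shift} applied with $c=1$, the sequence $\bigl\{\frac{1}{(k+1)!}\bigr\}_{k=0}^{\infty}$ is a multiplier sequence; and by Lemma \ref{ms-rev} applied with $n$ replaced by $n+1$, which is a positive integer since $n\ge 3$, the sequence $\bigl\{\frac{1}{(n+1-k)!}\bigr\}_{k=0}^{\infty}$ is a multiplier sequence. It is classical that if $\Gamma_1$ and $\Gamma_2$ are multiplier sequences then so is their Hadamard (termwise) product: applying the operator associated with $\Gamma_2$ to the polynomial $\Gamma_1[f(t)]$, which has only real zeros whenever $f$ does, again produces a polynomial with only real zeros, and this composite operator is precisely the one associated with $\Gamma_1\cdot\Gamma_2$. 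Applying this to the two sequences above shows that $\bigl\{\frac{1}{(k+1)!\,(n+1-k)!}\bigr\}_{k=0}^{\infty}$ is a multiplier sequence, and therefore so is $\{b_k\}_{k=0}^{\infty}$.

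There is essentially no obstacle here beyond spotting the factorization; the only point requiring a little care is the convention for $1/m!$ at negative integers, which is the natural one inherited from the reciprocal of the Gamma function and is already implicit in the statements of Lemmas \ref{ms-shift} and \ref{ms-rev}.
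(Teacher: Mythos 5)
Your proof is correct and follows essentially the same route as the paper, which simply observes that the claim is an immediate consequence of Lemmas \ref{ms-shift} and \ref{ms-rev}; you have merely made explicit the factorization of $b_k$, the closure of multiplier sequences under termwise products, and the convention $1/m!=0$ for negative $m$, all of which the paper leaves implicit.
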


\begin{proof}
This immediately follows from Lemma \ref{ms-shift} and Lemma \ref{ms-rev}.
\end{proof}

Thirdly, to prove the real-rootedness of $P_{W_n}(t)$, we also need the following result.

\begin{lem}\label{lucas}
For any $n\geq 3$ the polynomial
\begin{align}
 f_n(t)=\sum _{k=0}^{m} c_k t^k
\end{align}
has only real zeros, where
$m=\lfloor \frac{n-1}{2}\rfloor$ and
$c_k$ is given by \eqref{seqc}.
 \end{lem}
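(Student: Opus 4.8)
The plan is to recognize $f_n(t)$ as (up to an index shift and a constant) a Motzkin-type polynomial, or more precisely to reduce it to a polynomial whose real-rootedness is already known. Recall from \eqref{seqc} that
\begin{align*}
c_k=\frac{(n-1)(n-2-k)!}{k!\,(n-1-2k)!}.
\end{align*}
First I would strip off the harmless constant factor $(n-1)$ and rewrite $c_k/(n-1)=\binom{n-2-k}{k}$, so that
\begin{align*}
f_n(t)=(n-1)\sum_{k=0}^{m}\binom{n-2-k}{k}t^k.
\end{align*}
The sum $\sum_{k}\binom{N-k}{k}t^k$ is, up to normalization, a Fibonacci-type polynomial, and it is classically expressible via Chebyshev polynomials of the second kind: one has $\sum_{k=0}^{\lfloor N/2\rfloor}\binom{N-k}{k}x^k = \text{(a scalar)}\cdot U_N\!\big(\tfrac{1}{2\sqrt{-x}}\big)\cdot(-x)^{N/2}$ in a suitable sense. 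Since the zeros of $U_N$ are all real and lie in $(-1,1)$, the substitution $x=-t$ followed by $t\mapsto 1/(4t)$-type manipulations shows that all zeros of $\sum_k\binom{N-k}{k}t^k$ are real and negative. I would carry out this identification carefully with $N=n-2$.

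Alternatively, and perhaps more in the spirit of the rest of the paper, I would argue via a three-term recurrence. Setting $g_N(t)=\sum_{k\ge 0}\binom{N-k}{k}t^k$, Pascal's rule gives the recursion $g_N(t)=g_{N-1}(t)+t\,g_{N-2}(t)$ with $g_0=g_1=1$. This is a Sturm-type (orthogonal-polynomial-like) recurrence with positive $t$-coefficient, so a standard induction shows that $g_{N-1}(t)$ interlaces $g_N(t)$ and that all $g_N(t)$ are real-rooted with negative zeros: assuming $g_{N-2}(t)\preceq g_{N-1}(t)$ with all roots negative, evaluate $g_N = g_{N-1}+t g_{N-2}$ at the roots of $g_{N-1}$ to see sign alternation, which forces $g_N$ to have a root strictly between consecutive roots of $g_{N-1}$ (and one more to the left), giving $g_{N-1}\preceq g_N$. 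Since $f_n(t)=(n-1)g_{n-2}(t)$ and $m=\lfloor\frac{n-1}{2}\rfloor=\deg g_{n-2}$, the lemma follows for all $n\ge 3$ after checking the base cases $n=3,4$ directly.

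The main obstacle is bookkeeping rather than conceptual: one must make sure the upper summation limit $m=\lfloor\frac{n-1}{2}\rfloor$ genuinely matches the degree of $g_{n-2}$ (it does, since $\binom{n-2-k}{k}\neq 0$ exactly when $2k\le n-2$, i.e. $k\le\lfloor\frac{n-2}{2}\rfloor$, and $\lfloor\frac{n-2}{2}\rfloor=\lfloor\frac{n-1}{2}\rfloor$ precisely when $n$ is even, while for odd $n$ the top coefficient $c_{(n-1)/2}$ vanishes — so effectively $\deg f_n=\lfloor\frac{n-2}{2}\rfloor$, and the stated sum simply has a possibly-zero leading term). Handling this off-by-one in the parity of $n$, together with verifying the initial conditions of the recurrence, is the only delicate point; everything else is the routine interlacing induction already used for Narayana polynomials in the proof of Theorem \ref{kl-fan-roots}.
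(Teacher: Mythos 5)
There is a genuine error at the very first step of your argument: the simplification $c_k/(n-1)=\binom{n-2-k}{k}$ is false. From \eqref{seqc} we have $c_k/(n-1)=\frac{(n-2-k)!}{k!\,(n-1-2k)!}$, whereas $\binom{n-2-k}{k}=\frac{(n-2-k)!}{k!\,(n-2-2k)!}$; these differ by the $k$-dependent factor $\frac{1}{n-1-2k}$, which is not a harmless constant. The correct identity is
\begin{align*}
c_k=\frac{n-1}{\,n-1-k\,}\binom{n-1-k}{k},
\end{align*}
so $f_n(t)$ is a \emph{Lucas}-type polynomial, not $(n-1)$ times the \emph{Fibonacci}-type polynomial $g_{n-2}(t)=\sum_k\binom{n-2-k}{k}t^k$. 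A concrete check: for $n=5$ one computes $f_5(t)=1+4t+2t^2$, a genuine quadratic, while your claimed $(n-1)g_{n-2}(t)=4(1+2t)=4+8t$ is linear. Everything downstream (the Chebyshev-$U_N$ identification, and the recurrence $g_N=g_{N-1}+tg_{N-2}$ with $g_0=g_1=1$) therefore proves real-rootedness of the wrong polynomial --- which happens to be exactly the content of the separate Lemma \ref{fib} used for whirl matroids, not of the present lemma.

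The good news is that your overall strategy survives the correction. The paper's own proof identifies $f_n(t)=t^{(n-1)/2}L_{n-1}(t^{-1/2})$ with $L_{n-1}$ the Lucas polynomial and cites the fact that Lucas polynomials have only pure imaginary zeros. If you prefer a self-contained recurrence argument, note that $\ell_N(t):=\sum_k\frac{N}{N-k}\binom{N-k}{k}t^k$ satisfies the same three-term recurrence $\ell_N=\ell_{N-1}+t\,\ell_{N-2}$ (inherited from $L_N(x)=xL_{N-1}(x)+L_{N-2}(x)$), only with the different initial data $\ell_1=1$, $\ell_2=1+2t$ (coming from $L_1=x$, $L_0=2$); your interlacing induction then goes through verbatim with $f_n=\ell_{n-1}$. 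So the fix is to redo the coefficient identification and restate the base cases; as written, the proof does not establish the lemma.
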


 \begin{proof}
 Let  $L_n(t)$ be the $n$-th  Lucas polynomial.
 It is well know that
 $$L_n(t)=\sum_{k=0}^{\lfloor \frac{n}{2} \rfloor} \frac{ n (n-k-1)!}{k! (n-2 k)! } t^{n-2k}$$
 and  $L_n(t)$ has only pure imaginary zeros for $n\geq 2$
 see \cite{hoggatt1973roots,lachal2013trick}. Moreover, it is straightforward to verify that
 $$t^{(n-1)/2} L_{n-1}\left(t^{-1/2}\right)=f_n(t).$$
 Thus $f_n(t)$ has only real zeros. The proof is complete.
 \end{proof}

  Now we are in the position to prove Theorem \ref{thm-wheel-zeros}.

  \begin{proof}[Proof of Theorem \ref{thm-wheel-zeros} ]
  By \eqref{eq-hadmard}, we have
    $$P_{W_n}(t)=\sum_{k=0}^{\lfloor \frac{n-1}{2}\rfloor}a_kb_kc_kt^k.$$
  By Lemma \ref{lem-ms-wheel} and Lemma \ref{lucas}, we obtain that
  the polynomial
  $$\sum_{k=0}^{\lfloor \frac{n-1}{2}\rfloor}b_kc_kt^k$$
  has only real zeros. Then by Lemma \ref{wheel-n-seq} we get that $P_{W_n}(t)$
  has only real zeros. This completes the proof.
  \end{proof}

The third main result of this section is as follows, which
states the real-rootedness of the \KL polynomials of whirl matroids.

\begin{thm}\label{thm-whirl-zeros}
For any $n\geq 3$ the  polynomial $P_{W^n}(t)$ has only negative zeros.
\end{thm}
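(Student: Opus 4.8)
The plan is to run the same kind of argument as for Theorem~\ref{thm-wheel-zeros}, except that here the relevant factorization of the coefficients is much simpler, so no $n$-sequence is needed. The key first step is the observation that, since $(n-k)!=(n-k)\,(n-1-k)!$, one has the identity
\[
\frac{n}{n-k}\binom{n-1}{k,k,n-2k-1}=\binom{n}{k}\binom{n-1-k}{k},
\]
and hence, by \eqref{eq-klpol-whirl},
\[
P_{W^n}(t)=\sum_{k=0}^{\lfloor\frac{n-1}{2}\rfloor}\binom{n}{k}\binom{n-1-k}{k}\,t^k .
\]
This displays the coefficient sequence of $P_{W^n}(t)$ as the Hadamard product of $\bigl\{\binom{n}{k}\bigr\}_{k\ge0}$ (with the convention $\binom{n}{k}=0$ for $k>n$) with the coefficient sequence $\bigl\{\binom{n-1-k}{k}\bigr\}_{k\ge0}$ of the Fibonacci polynomial.

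Next I would check that $\Gamma:=\bigl\{\binom{n}{k}\bigr\}_{k\ge0}$ is a multiplier sequence all of whose terms are nonnegative. Indeed $\binom{n}{k}=n!\cdot\frac{1}{k!}\cdot\frac{1}{(n-k)!}$; by Lemma~\ref{ms-shift} (with $c=0$) the sequence $\{1/k!\}_{k\ge0}$ is a multiplier sequence, and by Lemma~\ref{ms-rev} so is $\{1/(n-k)!\}_{k\ge0}$. Since the Hadamard product of two multiplier sequences is again a multiplier sequence and rescaling by the positive constant $n!$ changes nothing, the claim follows.

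Then I would prove that $Q_n(t):=\sum_{k=0}^{\lfloor\frac{n-1}{2}\rfloor}\binom{n-1-k}{k}\,t^k$ has only negative zeros, by the device already used in the proof of Lemma~\ref{lucas}. The Fibonacci polynomial $F_n(x)=\sum_{k}\binom{n-1-k}{k}x^{n-1-2k}$ has only pure imaginary zeros for $n\ge2$ (see \cite{hoggatt1973roots}); since $t^{(n-1)/2}F_n(t^{-1/2})=Q_n(t)$ and $Q_n(t)$ has degree exactly $\lfloor\frac{n-1}{2}\rfloor$ and strictly positive coefficients, each pair of conjugate zeros $\pm i\mu$ of $F_n$ with $\mu\in\mathbb{R}\setminus\{0\}$ produces a zero $-\mu^{-2}<0$ of $Q_n(t)$, and these account for all zeros of $Q_n(t)$.

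Finally I would assemble the pieces. By the previous step $Q_n(t)$ has only real zeros, nonnegative coefficients, and nonzero constant and leading terms; applying the nonnegative multiplier sequence $\Gamma$ preserves reality of the zeros and keeps all coefficients nonnegative with nonzero constant and leading terms, and a polynomial with nonnegative coefficients and only real zeros has no positive zero. Hence $P_{W^n}(t)=\Gamma[Q_n(t)]$ has only negative zeros. The one thing that has to be spotted rather than computed is the binomial identity of the first step; once it is available everything else follows directly from Lemmas~\ref{ms-shift}, \ref{ms-rev} and the Fibonacci-polynomial fact, the only mild bookkeeping being to confirm, in both parities of $n$, that $Q_n(t)$ has degree exactly $\lfloor\frac{n-1}{2}\rfloor$ so that the substitution $x\mapsto t^{-1/2}$ matches up all of the zeros.
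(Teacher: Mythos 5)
Your proof is correct and follows essentially the same route as the paper: the identity $\frac{n}{n-k}\binom{n-1}{k,k,n-2k-1}=\binom{n}{k}\binom{n-1-k}{k}$ is exactly the paper's rewriting \eqref{rewrite-whirl}, your multiplier-sequence step is the paper's Lemma~\ref{lem-ms-whirl}, and your Fibonacci-polynomial step is the paper's Lemma~\ref{fib}. The only cosmetic difference is that you spell out the degree check and the sign argument that the paper leaves implicit.
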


In order to use the theory of multiplier sequences to prove the real-rootedness of $P_{W^n}(t)$, we shall rewrite \eqref{eq-klpol-whirl} as
\begin{align}\label{rewrite-whirl}
P_{W^n}(t)&=\sum _{k=0}^{\left\lfloor \frac{n-1}{2}\right\rfloor }{\frac{n!}{k!(n-k)!}\binom{n - k - 1 }{k} t^k}.
\end{align}

By a similar proof to that of Lemma \ref{lem-ms-wheel}, we obtain the following result.

\begin{lem}\label{lem-ms-whirl}
  For any $n\geq 3$ the sequence
  $\{\frac{n!}{k!(n-k)!}\}_{k=0}^{\infty}$ is a multiplier sequence.
\end{lem}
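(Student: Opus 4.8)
The plan is to recognize the sequence $\{n!/(k!(n-k)!)\}_{k=0}^{\infty}=\{\binom{n}{k}\}_{k=0}^{\infty}$ as a Hadamard (entrywise) product of two sequences already known to be multiplier sequences, up to a fixed nonzero scalar, exactly in parallel with the proof of Lemma \ref{lem-ms-wheel}. First I would write
$$\frac{n!}{k!\,(n-k)!}=n!\cdot\frac{1}{k!}\cdot\frac{1}{(n-k)!},$$
adopting the convention $1/(n-k)!=0$ for $k>n$ (consistent with the pole of the Gamma function), so that this identity holds for every $k\ge 0$. By Lemma \ref{ms-shift} applied with $c=0$, the sequence $\{1/k!\}_{k=0}^{\infty}$ is a multiplier sequence, and by Lemma \ref{ms-rev} the sequence $\{1/(n-k)!\}_{k=0}^{\infty}$ is a multiplier sequence.

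Next I would invoke the elementary closure property that the entrywise product of two multiplier sequences is again a multiplier sequence: if $\Gamma=\{\gamma_k\}$ and $\Delta=\{\delta_k\}$ are multiplier sequences and $f(t)=\sum_k a_k t^k$ has only real zeros, then $\Gamma[f(t)]=\sum_k \gamma_k a_k t^k$ has only real zeros, hence so does $\Delta[\Gamma[f(t)]]=\sum_k \gamma_k\delta_k a_k t^k$; thus $\{\gamma_k\delta_k\}$ is a multiplier sequence. Applying this to $\{1/k!\}_{k\ge 0}$ and $\{1/(n-k)!\}_{k\ge 0}$ shows that $\{1/(k!\,(n-k)!)\}_{k=0}^{\infty}$ is a multiplier sequence. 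Finally, multiplying every term of a multiplier sequence by the fixed nonzero constant $n!$ again yields a multiplier sequence, since scaling a polynomial's coefficients by a common nonzero constant does not change its set of zeros. This establishes the claim, the only difference from Lemma \ref{lem-ms-wheel} being that Lemma \ref{ms-shift} is used here with shift $c=0$ rather than $c=1$.

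There is essentially no serious obstacle. The single point meriting a word of care is the interpretation of the terms with $k>n$: since $\binom{n}{k}=0$ there, the sequence carries an infinite tail of zeros, but this causes no difficulty, because the factor $\{1/(n-k)!\}_{k\ge 0}$ already has precisely this tail, and both the Hadamard-product step and the scalar-multiplication step act termwise and therefore respect it. Consequently $\{n!/(k!\,(n-k)!)\}_{k=0}^{\infty}$ is a multiplier sequence for every $n\ge 3$ (indeed for every $n\ge 1$), which is what is needed.
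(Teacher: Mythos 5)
Your proof is correct and follows essentially the same route as the paper: the paper derives this lemma exactly as it derives Lemma \ref{lem-ms-wheel}, namely by factoring the sequence as a termwise product of $\{1/k!\}$ (Lemma \ref{ms-shift} with $c=0$), $\{1/(n-k)!\}$ (Lemma \ref{ms-rev}) and a nonzero scalar, using the closure of multiplier sequences under Hadamard products. You merely make explicit the closure argument and the vanishing convention for $k>n$ that the paper leaves implicit.
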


To prove Theorem \ref{thm-whirl-zeros}, we also  need the  following lemma, which is similar to Lemma \ref{lucas}.

\begin{lem}\label{fib}
For $n\geq 3$, the polynomial
\begin{align}
 g_n(t)=\sum _{k=0}^{\left\lfloor \frac{n-1}{2}\right\rfloor }\binom{n - k - 1 }{k}t^k
\end{align}
has only real zeros.
 \end{lem}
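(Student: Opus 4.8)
The plan is to imitate the proof of Lemma~\ref{lucas}, with the Lucas polynomials replaced by the Fibonacci polynomials. Recall that the $n$-th Fibonacci polynomial $F_n(t)$ has the closed form
\[
F_n(t)=\sum_{k=0}^{\lfloor (n-1)/2\rfloor}\binom{n-k-1}{k}\,t^{\,n-1-2k},
\]
and that, by a classical result of Hoggatt and Bicknell, $F_n(t)$ has only pure imaginary zeros for every $n\geq 2$; see \cite{hoggatt1973roots,lachal2013trick}.

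The first step is to record the substitution identity, valid as an identity of polynomials in the variable $s$,
\[
s^{\,n-1}F_n\!\left(s^{-1}\right)=\sum_{k=0}^{\lfloor (n-1)/2\rfloor}\binom{n-k-1}{k}\,s^{2k}=g_n\!\left(s^{2}\right),
\]
where the first equality follows by distributing $s^{\,n-1}$ across the closed form of $F_n$, using that $n-1-2k\geq 0$ throughout the summation range. The second step uses this identity to locate the zeros of $g_n$. Suppose $s^{2}$ is a zero of $g_n$. Then $s^{\,n-1}F_n(s^{-1})=0$; since $g_n(0)=\binom{n-1}{0}=1\neq 0$, we must have $s\neq 0$, so $F_n(s^{-1})=0$, that is, $s^{-1}$ is a zero of $F_n$. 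Because $s^{-1}$ is a nonzero pure imaginary number, its square $s^{-2}$ is a negative real number, and hence so is $s^{2}$. Thus every zero of $g_n$ is a negative real number, which proves the lemma.

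I do not expect a genuine obstacle here: the only points needing care are the bookkeeping of exponents in the substitution $t\mapsto s^{2}$ (so that one stays within genuine polynomials rather than Laurent polynomials) and the invocation of the correct reference for the location of the zeros of $F_n$. Alternatively, one could argue directly from the Pascal-type recurrence $g_{n+1}(t)=g_n(t)+t\,g_{n-1}(t)$ together with the initial values $g_1(t)=g_2(t)=1$, proving by induction that $g_{n-1}$ and $g_n$ have interlacing real negative zeros; but the Fibonacci-polynomial route mirrors the proof of Lemma~\ref{lucas} almost verbatim and is the shorter of the two.
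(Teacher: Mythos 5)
Your proof is correct and follows essentially the same route as the paper: both rely on the closed form of the Fibonacci polynomial $F_n$, the classical fact that its zeros are pure imaginary, and the substitution identity (the paper writes it as $t^{(n-1)/2}F_n(t^{-1/2})=g_n(t)$, which is your identity with $t=s^2$). Your version is slightly more careful in spelling out why the zeros of $g_n$ are then negative reals, which is a welcome refinement but not a different argument.
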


 \begin{proof}
 Let  $F_n(t)$ be the $n$-th  Fibonacci  polynomial.
It is well know that
 $$F_n(t)=\sum_{k=0}^{\lfloor \frac{n-1}{2} \rfloor} \binom{n-k-1}{k} t^{n-2k-1}$$
 and $F_n(t)$ has only pure imaginary zeros  for $n\geq 3$,
 see \cite{hoggatt1973roots,lachal2013trick}.
 Moreover, it is straightforward to verify that
 $$t^{(n-1)/2} F_{n}\left(t^{-1/2}\right)=g_n(t).$$
 Thus $g_n(t)$ has only real zeros. The proof is complete.
\end{proof}

We proceed to prove Theorem \ref{thm-whirl-zeros}.

\begin{proof}[Proof of Theorem \ref{thm-whirl-zeros} ]
This is an immediate corollary of Lemma \ref{lem-ms-whirl} and Lemma \ref{fib} in view of \eqref{rewrite-whirl}.
\end{proof}

\section{\texorpdfstring{$Z$}{Z}-polynomials}\label{sec-5}

The aim of this section is to prove Theorems \ref{zcoef} and \ref{zroot}. We shall first determine the $Z$-polynomials of fan graphs, wheel graphs and whirl matroids and then prove their real-rootedness.

To determine the $Z$-polynomials, we will also use the method of generating functions as in Section \ref{sec-3}, though it will be slightly different from the previous arguments. To make this point clear, let us compare the defining relation of \KL polynomials with that of $Z$-polynomials.

Recall that the \KL polynomials  of a matroid  $M$ satisfy  the following relation:
\begin{align*}
t^{\rk M}P_M(t^{-1}) =\sum_{F \in L(M)} \chi_{M_F}(t) P_{M^F}(t),
\end{align*}
while the $Z$-polynomials of $M$ is defined by
\begin{align*}
Z_M(t):= \sum_{F \in L(M)}{t^ {\rk M_F} P_{M^F}(t)}.
\end{align*}
Suppose that $\{M_d,M_{d+1},M_{d+2},\ldots\}$ is a sequence of matriods with $\rk M_n=n$ for $n\geq d$. The key idea to determine the generating function
\begin{align*}
\Phi(t,u)&=\sum_{n=d}^{\infty}{P_{M_n}(t)u^n},
\end{align*}
is to interpret
\begin{align*}
\Phi(t^{-1},tu)&=\sum_{n=d}^{\infty}\left( t^{\rk M_n}P_{M_n}(t^{-1})\right)u^n,
\end{align*}
namely, the summation
\begin{align}\label{summation-1}
\sum_{n=d}^{\infty}\left(\sum_{F \in L(M_n)}   \chi_{(M_n)_F}(t) P_{(M_n)^F}(t)\right)u^n
\end{align}
by the defining relation of $P_{M_n}(t)$,
as certain generating function of weighted combinatorial structures. By the previous arguments in Section \ref{sec-3},
we know that the method of generating functions is applicable because both $\chi_{M}(t)$ and $P_{M}(t)$ are multiplicative on direct sums of matroids.

In order to use the method of generating functions
to determine
$$Z(t,u)=\sum_{n=d}^{\infty}{Z_{M_n}(t)u^n},$$
by the defining relation, it is desirable to interpret
\begin{align}\label{summation-2}
\sum_{n=d}^{\infty}\left(\sum_{F \in L(M_n)}
t^{\rk (M_n)_F} P_{(M_n)^F}(t)\right)u^n
\end{align}
as certain generating function of weighted combinatorial structures, which might be applicable since $t^{\rk M}$ is obviously multiplicative on direct sums of matriods.
As shown below, this is indeed doable for fan graphs, wheel graphs and whirl matroids. Since the index set of \eqref{summation-1} is the same as that of \eqref{summation-2},
we can use the same combinatorial structures for \KL polynomials to interpret the latter summation, while $t^{\rk M}$ will take the role of $\chi_{M}(t)$ in the corresponding weight functions.

\subsection{Fan graphs}\label{subsection-zpol-fan}

The main objective of this subsection is to determine the $Z$-polynomials of fan graphs. Let
\begin{align}\label{eqn-fanz-gf}
Z_F(t,u) := \sum_{n=0}^\infty Z_{F_n}(t) u^{n},
\end{align}
where $F_0$ is the single-vertex graph.
We have the following result.

\begin{thm}\label{thm-zpol-fan}
We have
\begin{align}\label{eq-gf-zpol-fan}
Z_F(t,u)=\frac{2}{\sqrt{(1-(t+1) u)^2-4 t u^2}-(t+1) u+1}.
\end{align}
\end{thm}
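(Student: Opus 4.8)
The plan is to mimic the strategy already used for the Kazhdan-Lusztig generating function $\Phi_F(t,u)$ in Theorem \ref{thm-klpol-fan}, but with the weight $\chi_{F_a}(t)/t$ on the ``odd'' pieces replaced by $t^{\rk F_a}=t^a$ (more precisely, $t^{\rk G[C]}$ replaces $t^{-|C|}\chi_{G[C]}(t)$ in the summand). Concretely, I would start from the defining relation \eqref{zpol-reform},
\begin{align*}
Z_{F_n}(t)=\sum_{C\in\mathcal{C}(F_n)}t^{\rk F_n[C]}P_{F_n/C}(t),
\end{align*}
multiply by $u^n$ and sum over $n\ge 0$. Using the bijection $\phi\colon \mathcal{C}(F_n)\to\mathcal{C}'_n$ of Lemma \ref{combij} and an analogue of Lemma \ref{wterm}, each summand $t^{\rk F_n[C]}P_{F_n/C}(t)u^n$ is a product over the pieces of $\phi(C)$; since $\rk F_n[C]$ is additive over the biconnected components of $F_n[C]$ (the fan blocks $F_{a_{2i-1}}$ and the path blocks $H_{b_{ij}}$), the weight factorizes just as in \eqref{eq-weight-function}, with $\chi_{F_{a}}(t)/t$ replaced by $t^{\rk F_a}=t^{a}$ and $\chi_{H_b}(t)/t$ replaced by $t^{\rk H_b}=t^{b-1}$.

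Next I would run the same generating-function bookkeeping of Section \ref{sec:non-ekl}. The ``odd'' generating function becomes
\begin{align*}
\widetilde{\Psi}^o(u)=\sum_{n\ge 1}t^{n}u^{n}=\frac{tu}{1-tu},
\end{align*}
the ``even'' generating function becomes $\widetilde{\Psi}^e(u)=\Phi_F\!\big(t,\tfrac{u}{1-tu}\big)-1$ (the inner substitution coming from summing the path-weight series $\sum_{b\ge1}t^{b-1}u^b=\tfrac{u}{1-tu}$ and then composing with $\Phi_F(t,\cdot)$), and exactly as in Lemma \ref{fan-eqn} the whole structure $\mathcal{A}^o\times\mathcal{A}^m\times\mathcal{A}^e$ gives
\begin{align*}
Z_F(t,u)=\Phi_F(t^{-1},tu)\Big|_{\text{new weights}}=\frac{(1+\widetilde{\Psi}^o(u))(1+\widetilde{\Psi}^e(u))}{1-\widetilde{\Psi}^e(u)\widetilde{\Psi}^o(u)}.
\end{align*}
Substituting $\widetilde{\Psi}^o(u)=\tfrac{tu}{1-tu}$ and $\widetilde{\Psi}^e(u)=\Phi_F\!\big(t,\tfrac{u}{1-tu}\big)-1$, and then plugging in the closed form for $\Phi_F$ from \eqref{eq-gf-klpol-fan}, reduces the claim to an identity among explicit algebraic functions of $t$ and $u$; this final simplification I would verify by a short \texttt{Simplify} computation in Mathematica, exactly in the style of the displayed \texttt{mma} blocks in the proof of Theorem \ref{thm-klpol-fan}, under the assumption $1-tu>0$ (legitimate since $|u|$ is taken small). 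As a sanity check one can also confirm \eqref{eq-gf-zpol-fan} against \eqref{eq-zpol-fan-sect1} via the \texttt{fastZeil}/\texttt{GeneratingFunctions} route used for \eqref{eq-klpol-fan}.

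The one genuinely new point — and the step I expect to require the most care — is the reduction from $Z_{F_n}(t)$ to the weighted sum over $\mathcal{C}'_n$: one must check that replacing the factor $t^{-|C|}\chi_{F_n[C]}(t)$ by $t^{\rk F_n[C]}$ does not disturb the multiplicativity that makes the composition-formula machinery apply, i.e.\ that $\rk$ is additive over biconnected components (immediate, since $\rk M(G)=|V(G)|-(\text{\#components})$ and biconnected components share at most cut vertices) and that the contracted graph $F_n/C$ still splits as the wedge of fans $F_{\ell_i}$ so that $P_{F_n/C}(t)=\prod_i P_{F_{\ell_i}}(t)$ by Lemma \ref{graphsum} — this last fact is already established in the proof of Lemma \ref{wterm} and is unchanged here. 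Once this is in place, everything else is a mechanical re-run of Subsection \ref{sec:non-ekl} with the single weight substitution, followed by the computer-verified algebraic identity.
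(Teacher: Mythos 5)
Your proposal is correct and follows essentially the same route as the paper: Section 5.1 runs exactly this parallel construction, replacing $t^{-|C|}\chi_{F_n[C]}(t)$ by $t^{\rk F_n[C]}$ in the weights, obtaining $\tilde\Psi^o(u)=\tfrac{tu}{1-tu}$ and $\tilde\Psi^e(u)=\Phi_F\bigl(t,\tfrac{u}{1-tu}\bigr)-1$, concluding $Z_F(t,u)=\frac{(1+\tilde\Psi^o(u))(1+\tilde\Psi^e(u))}{1-\tilde\Psi^e(u)\tilde\Psi^o(u)}$, and finishing with a Mathematica simplification under $1-tu>0$. The one caveat you flag (additivity of $\rk$ over biconnected components so the weight still factorizes) is precisely the point the paper's parallel Lemma on $\tilde w(\phi(C))$ handles, so nothing is missing.
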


The proof of \eqref{eq-gf-zpol-fan} is very similar to that of \eqref{eq-gf-klpol-fan}. By the proceeding arguments, it suffices to establish a parallel result to Lemma \ref{fan-eqn}.
Recall that, to prove Lemma \ref{fan-eqn}, we introduce a sequence of combinatorial structures in Subsection \ref{sec:non-ekl}, and obtain many intermediate results.
Here we will introduce some parallel objects and state some parallel results without proofs.

Let $\mathcal{C}^{'}_n$ be defined as in \eqref{eq-cnprime}.
Parallel to \eqref{eq-weight-function}, we define another weight function on $\mathcal{C}^{'}_n$ by
\begin{align}
\tilde{w}(A)=\prod_{i=1}^{k}  t^{\rk  F_{a_{2i-1}}} \cdot P_{F_{\ell_i}}(t) \cdot\prod_{j=1}^{\ell_i}  t^{\rk H_{b_{ij}}},
\end{align}
for $A=(A_1,A_2,\ldots,A_{2k-1},A_{2k}) \in \mathcal{C}'_n$,
where $A_{2i-1}=(a_{2i-1}),A_{2i}=(b_{i1},b_{i2},\ldots,b_{i\ell_i})\in\mathcal{S}_{a_{2i}}$ for $1\leq i\leq k$.

Parallel to Lemma \ref{wterm}, we have the following result.

\begin{lem}\label{wterm-zpol}
For any $C\in \mathcal{C}(F_n)$, we have
\begin{align}\label{eq-transform-zpol}
{t^{\rk F_n[C]} P_{F_n/C}(t)} =\tilde{w}(\phi(C)),
\end{align}
where $\phi(C)$ is defined as in the  Lemma \ref{combij}.
\end{lem}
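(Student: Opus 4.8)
The plan is to mirror the proof of Lemma \ref{wterm} almost word for word, the single change being that the normalized chromatic factor $\chi_{(-)}(t)/t$ appearing in the weight $w$ is replaced by the monomial $t^{\rk(-)}$ appearing in $\tilde{w}$; in place of the multiplicativity of $\chi_{M(G)}(t)$ recorded in Lemma \ref{graphsum} I will use the additivity of the matroid rank over direct sums (equivalently, over biconnected components).

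First I would recall, from Lemma \ref{combij} and the proof of Lemma \ref{wterm}, the shape of $\phi(C)$: writing $\phi(C)=(A_1,\ldots,A_{2k-1},A_{2k})\in\mathcal{C}'_n$ with $A_{2i-1}=(a_{2i-1})$ and $A_{2i}=(b_{i1},\ldots,b_{i\ell_i})\in\mathcal{S}_{a_{2i}}$ for $1\le i\le k$, one has $|C|=1+\ell_1+\cdots+\ell_k$; the induced subgraph $F_n[C]$ is the disjoint union of the paths $H_{b_{ij}}$ together with the component $K$ containing the vertex $0$, where $K$ is obtained by gluing the fans $F_{a_1},F_{a_3},\ldots,F_{a_{2k-1}}$ along their common vertex $0$; and $F_n/C$ is the graph obtained from $F_{\ell_1},\ldots,F_{\ell_k}$ by identifying their $0$ vertices, so that $P_{F_n/C}(t)=\prod_{i=1}^{k}P_{F_{\ell_i}}(t)$ by Lemma \ref{graphsum}. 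All of this is inherited verbatim from the $P$-polynomial case.

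The only new computation is that of $t^{\rk F_n[C]}$. Since $F_n[C]$ has $n+1$ vertices and, being the induced subgraph attached to a composition, exactly $|C|$ connected components, $\rk F_n[C]=(n+1)-|C|=n-\sum_{i=1}^{k}\ell_i$. On the other hand, comparing sizes in $\phi(C)\in\mathcal{C}'_n$ gives $n=\sum_{i=1}^{k}a_{2i-1}+\sum_{i=1}^{k}\sum_{j=1}^{\ell_i}b_{ij}$, and since $\rk F_a=a$ and $\rk H_b=b-1$ we obtain
\[
\rk F_n[C]=\sum_{i=1}^{k}a_{2i-1}+\sum_{i=1}^{k}\sum_{j=1}^{\ell_i}(b_{ij}-1)=\sum_{i=1}^{k}\rk F_{a_{2i-1}}+\sum_{i=1}^{k}\sum_{j=1}^{\ell_i}\rk H_{b_{ij}}.
\]
(One may also read this off directly as rank additivity over the biconnected components of $F_n[C]$, using $\rk K=\sum_{i}\rk F_{a_{2i-1}}$ since gluing blocks at a single vertex adds their ranks; both facts follow from $M(G)=\oplus_i M(G_i)$ as in Lemma \ref{graphsum}.) Hence $t^{\rk F_n[C]}=\prod_{i=1}^{k}t^{\rk F_{a_{2i-1}}}\cdot\prod_{i=1}^{k}\prod_{j=1}^{\ell_i}t^{\rk H_{b_{ij}}}$.

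Finally I would simply multiply the two factors: combining the displayed rank identity with $P_{F_n/C}(t)=\prod_{i}P_{F_{\ell_i}}(t)$ yields
\[
t^{\rk F_n[C]}\,P_{F_n/C}(t)=\prod_{i=1}^{k}\Bigl(t^{\rk F_{a_{2i-1}}}\,P_{F_{\ell_i}}(t)\,\prod_{j=1}^{\ell_i}t^{\rk H_{b_{ij}}}\Bigr)=\tilde{w}(\phi(C)),
\]
which is precisely \eqref{eq-transform-zpol}. I do not expect a genuine obstacle here: the combinatorial bookkeeping is exactly that of Lemma \ref{wterm}, and the one substantive replacement — $t^{\rk(-)}$ in place of $\chi_{(-)}(t)/t$ — is legitimate because $\rk$ is additive over the block decomposition (equivalently, $M(G)$ is the direct sum of the matroids of its biconnected components), exactly paralleling the multiplicativity of $\chi$ used in Lemma \ref{wterm}. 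The only minor care needed is to permit the degenerate parts allowed in $\mathcal{E}_n$ (namely $a_1=0$ or $a_{2k}=0$), where the corresponding factor is $t^{\rk F_0}=1$ or an empty product and hence harmless.
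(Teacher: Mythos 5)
Your proof is correct and is exactly the argument the paper intends: the paper states Lemma \ref{wterm-zpol} without proof as being ``parallel to Lemma \ref{wterm},'' and you carry out that parallel faithfully, with the only substantive point --- replacing the multiplicativity of $\chi_{(-)}(t)/t$ by the additivity of $\rk$ over the block decomposition (verified both by direct vertex/component counting and via $M(G)=\oplus_i M(G_i)$) --- handled correctly, including the degenerate parts $a_1=0$ or $a_{2k}=0$. No gaps.
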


Recall that $\mathcal{A}$ is the disjoint union of $\mathcal{C}^{'}_n$ structures weighted by ${w}(A)$.
Parallel to that, we let $\tilde{\mathcal{A}}$ be the disjoint union of $\mathcal{C}^{'}_n$ structures weighted by $\tilde{w}(A)$. By \ref{zpol-reform}  we have
\begin{align*}
Z_F(t,u)=\sum_{n=0}^{\infty}
\left(\sum_{C\in \mathcal{C}(F_n)} {t^{\rk F_n[C]}(t) P_{F_n/C}(t)}\right)u^n.
\end{align*}
The above lemma implies that $Z_F(t,u)$ can be considered as the generating function of type $\tilde{\mathcal{A}}$ structures. Precisely, we have
\begin{align*}
Z_F(t,u)=\sum_{n= 0}^{\infty} \left(\sum_{A\in \mathcal{C}'_n}\tilde{w}(A)\right)u^n.
\end{align*}

To compute the right hand side of the above equation, we now
define combinatorial structures of type $\tilde{\mathcal{A}}^o,\tilde{\mathcal{A}}^e,\tilde{\mathcal{A}}^{eo}$ and  $\tilde{\mathcal{A}}^m$, which are respectively parallel to those combinatorial structures of type $\mathcal{A}^o,\mathcal{A}^e,\mathcal{A}^{eo}$ and  $\mathcal{A}^m$. Precisely, type $\tilde{\mathcal{A}}^o$ structure will assign to an interval of size $n$ the weak composition $(n)$ with the weight function $\tilde{w}^o$ defined by $$\tilde{w}^o((n))= t^{\rk  F_{n}},$$ and type $\tilde{\mathcal{A}}^e$ structure will assign to an interval of size $n$ a composition $(b_1,\ldots,b_k)\in\mathcal{S}_n$
with the weight function $\tilde{w}^e$ defined by
$$\tilde{w}^e((b_1,\ldots,b_k))=P_{F_{k}}(t) \cdot\prod_{j=1}^{k} t^{\rk H_{b_j}}.$$
Note that the unique $\tilde{\mathcal{A}}^o$ structure of size $0$ is $(0)$, weighted by $1$, and the unique $\tilde{\mathcal{A}}^e$ structure of size $0$ is $(\,)$, also weighted by $1$.
Let $\tilde{\mathcal{A}}^{eo}$ be the set of pairs $(\tilde{A}^e,\tilde{A}^o)$, where $\tilde{A}^e$ is a structure of type $\tilde{\mathcal{A}}^e$, and $A^o$ is of type $\tilde{\mathcal{A}}^o$, and moreover neither  $\tilde{A}^e$ nor $\tilde{A}^o$ is empty. The weight function $\tilde{w}^{eo}$ of
$\tilde{\mathcal{A}}^{eo}$ is defined by
\begin{align*}
\tilde{w}^{eo}((\tilde{A}^e,\tilde{A}^o))=\tilde{w}^e(A^e)\tilde{w}^o(A^o).
\end{align*}
Let $\tilde{\mathcal{A}}^{m}$ be the set of combinatorial structures each of which is a sequence $(\tilde{A}^{eo}_1,\ldots,\tilde{A}^{eo}_k)$ of $\tilde{\mathcal{A}}^{eo}$ structures. Define the weight function $\tilde{w}^m$ of $\tilde{\mathcal{A}}^{m}$ as
$$\tilde{w}^{m}((\tilde{\mathcal{A}}^{eo}_1,\ldots,\tilde{\mathcal{A}}^{eo}_k))=\prod_{i=1}^k \tilde{w}^{eo}(\tilde{\mathcal{A}}^{eo}_i).$$
Correspondingly, let $\tilde{\mathcal{A}}^o_n$ (resp. $\tilde{\mathcal{A}}^e_n$ or  $\tilde{\mathcal{A}}^{eo}_n$ or $\tilde{\mathcal{A}}^m_n$ ) denote the set of
type $\tilde{\mathcal{A}}^o$ (resp. $\tilde{\mathcal{A}}^e$) or $\tilde{\mathcal{A}}^{eo}$ or  $\tilde{\mathcal{A}}^m$)  structures which can be built on an interval of size $n$.

Let
\begin{align*}
\tilde{\Psi}^o(u)&=\sum_{n= 1}^{\infty} \left(\sum_{\tilde{A}^o\in \tilde{\mathcal{A}}^o_n}\tilde{w}^o(A^o)\right)u^n,\\
\tilde{\Psi}^e(u)&=\sum_{n= 1}^{\infty} \left(\sum_{\tilde{A}^e\in \tilde{\mathcal{A}}^e_n}\tilde{w}^e(A^e)\right)u^n,\\
\tilde{\Psi}^{eo}(u)&=\sum_{n= 0}^{\infty} \left(\sum_{\tilde{A}^{eo}\in \tilde{\mathcal{A}}^{eo}_n}\tilde{w}^{eo}(\tilde{\mathcal{A}}^{eo})\right)u^n,\\
\tilde{\Psi}^m(u)&=\sum_{n= 0}^{\infty} \left(\sum_{\tilde{A}^m\in \tilde{\mathcal{A}}^m_n}\tilde{w}^m(\tilde{A}^m)\right)u^n.
\end{align*}

Parallel to Lemma \ref{gen-o and e}, we have the following result.

\begin{lem}\label{lem-gf-o-e}
We have
\begin{align*}
\tilde{\Psi}^o(u)&=\frac{t u}{1-t u},\\
\tilde{\Psi}^e(u)&=\Phi_F\left(t, \frac{u}{1-t u}\right)-1,
\end{align*}
where $\Phi_F(t,u)$ is the generating function of \KL polynomials for fan graphs.
\end{lem}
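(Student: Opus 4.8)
The plan is to follow the proof of Lemma \ref{gen-o and e} essentially verbatim, replacing the chromatic weights $\chi_{F_n}(t)/t$ and $\chi_{H_b}(t)/t$ used there by the rank weights $t^{\rk F_n}$ and $t^{\rk H_b}$ appearing in $\tilde{w}^o$ and $\tilde{w}^e$. The only new ingredients are the two elementary rank identities $\rk F_n = n$ (since the fan graph $F_n$ is connected on $n+1$ vertices) and $\rk H_b = b-1$ (since the path $H_b$ is connected on $b$ vertices).

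For the first identity, I would note that $\tilde{\mathcal{A}}^o_n$ is a singleton for each $n\geq 1$, consisting of the weak composition $(n)$ with weight $t^{\rk F_n}=t^n$. Hence $\tilde{\Psi}^o(u)=\sum_{n\geq 1} t^{\rk F_n}u^n=\sum_{n\geq 1}(tu)^n$, and summing this geometric series gives $\tilde{\Psi}^o(u)=tu/(1-tu)$.

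For the second identity, I would observe that a type $\tilde{\mathcal{A}}^e$ structure on an interval of size $n$ is precisely a $\mathcal{B}\circ\mathcal{A}$ structure in the sense of Proposition \ref{prop-composition-formula}: one splits the interval into an ordered sequence of $k$ nonempty blocks, weights the block of size $b$ by $t^{\rk H_b}=t^{b-1}$, and then weights the resulting ordered $k$-tuple of blocks by $P_{F_k}(t)$. The inner generating function is $A(u)=\sum_{b\geq 1}t^{b-1}u^b=u/(1-tu)$, which satisfies $0\notin\mathbb{A}$ as required; the outer one is $B(u)=\sum_{k\geq 0}P_{F_k}(t)u^k=\Phi_F(t,u)$. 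The composition formula then yields the generating function $\Phi_F(t,u/(1-tu))$ over all $n\geq 0$, and subtracting the $k=0$ contribution $P_{F_0}(t)=1$ (the empty composition of the empty interval) gives $\tilde{\Psi}^e(u)=\Phi_F(t,u/(1-tu))-1$.

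There is essentially no real obstacle here: the argument is entirely parallel to Lemma \ref{gen-o and e}, and the subtlety—if any—is purely bookkeeping, namely matching the index ranges so that the constant term $1$ coming from $F_0$ is correctly excluded from $\tilde{\Psi}^e(u)$ (and, trivially, from $\tilde{\Psi}^o(u)$), exactly as in the proof of Lemma \ref{gen-o and e}.
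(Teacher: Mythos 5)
Your proof is correct and is exactly the argument the paper intends: the paper states this lemma without proof, remarking only that it is parallel to Lemma \ref{gen-o and e}, and your substitution of the rank weights $t^{\rk F_n}=t^n$ and $t^{\rk H_b}=t^{b-1}$ for the chromatic weights, together with the geometric series for $\tilde{\Psi}^o$ and the composition formula (with inner series $u/(1-tu)$ and outer series $\Phi_F(t,u)$, minus the constant term $1$) for $\tilde{\Psi}^e$, is precisely that parallel argument.
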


Parallel to Lemma \ref{gen-oe}, we have the following result.

\begin{lem}\label{lem-gf-m}
We have
\begin{align*}
\tilde{\Psi}^{eo}(u)&=\tilde{\Psi}^{e}(u)\tilde{\Psi}^{o}(u),\\
\tilde{\Psi}^m(u)&=\frac{1}{1-\tilde{\Psi}^{eo}(u)}.
\end{align*}
\end{lem}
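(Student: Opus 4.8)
The plan is to mimic exactly the proof strategy of Lemma \ref{gen-oe} (and the product/composition formulas of Section \ref{sec-2.5}), only with the chromatic-polynomial weights replaced throughout by the rank weights $t^{\rk M}$. First I would prove the identity $\tilde{\Psi}^{eo}(u)=\tilde{\Psi}^{e}(u)\tilde{\Psi}^{o}(u)$. Observe that a type $\tilde{\mathcal{A}}^{eo}$ structure on an interval $I$ of size $n$ is, by definition, an ordered pair $(\tilde A^e,\tilde A^o)$ obtained by splitting $I$ into a left part $I_2$ carrying a type $\tilde{\mathcal{A}}^e$ structure and a right part $I_1$ carrying a type $\tilde{\mathcal{A}}^o$ structure (with either part allowed to be empty, since the unique size-$0$ structures of both types have weight $1$), and the weight multiplies: $\tilde{w}^{eo}((\tilde A^e,\tilde A^o))=\tilde{w}^e(\tilde A^e)\,\tilde{w}^o(\tilde A^o)$. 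Hence $\tilde{\mathcal{A}}^{eo}=\tilde{\mathcal{A}}^e\times\tilde{\mathcal{A}}^o$ in the sense defined before Proposition \ref{prop-product-formula}, and the product formula (Proposition \ref{prop-product-formula}) gives $\tilde{\Psi}^{eo}(u)=\tilde{\Psi}^{e}(u)\tilde{\Psi}^{o}(u)$ immediately.

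Next I would prove $\tilde{\Psi}^m(u)=\dfrac{1}{1-\tilde{\Psi}^{eo}(u)}$. By definition, a type $\tilde{\mathcal{A}}^m$ structure on an interval $I$ is a sequence $(\tilde A_1^{eo},\dots,\tilde A_k^{eo})$ of $\tilde{\mathcal{A}}^{eo}$ structures, meaning $I$ is split from left to right into subintervals on each of which a type $\tilde{\mathcal{A}}^{eo}$ structure sits, and $\tilde{w}^m$ multiplies the weights. Since this is precisely a sequence (i.e.\ a composition in the sense $\mathcal{B}\circ\mathcal{A}^{eo}$ where $\mathcal{B}$ is the ``sequences of arbitrary length'' structure with ordinary generating function $B(x)=\sum_{k\ge0}x^k=\tfrac{1}{1-x}$), the composition formula (Proposition \ref{prop-composition-formula}) yields $\tilde{\Psi}^m(u)=\dfrac{1}{1-\tilde{\Psi}^{eo}(u)}$. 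Here I should check the hypothesis $0\notin\mathbb{A}$ for the inner structure: the smallest nonempty $\tilde{\mathcal{A}}^{eo}$ structure has size at least $1$ (since a type $\tilde{\mathcal{A}}^{eo}$ structure is a pair neither component of which is empty, and the smallest nonempty $\tilde{\mathcal{A}}^o$ structure has size $1$), so the composition formula applies. This is the same argument as for $\Psi^m$ in Lemma \ref{gen-oe}, whose proof was omitted there; so I would likewise be brief, simply citing Propositions \ref{prop-product-formula} and \ref{prop-composition-formula}.

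I do not expect a genuine obstacle here: the entire content is bookkeeping, making sure the combinatorial definitions of $\tilde{\mathcal{A}}^{eo}$ and $\tilde{\mathcal{A}}^m$ literally match the hypotheses ``$\mathcal{C}=\mathcal{A}\times\mathcal{B}$'' and ``$\mathcal{C}=\mathcal{B}\circ\mathcal{A}$'' of the two formulas, and that the size-$0$ structures have weight $1$ so that empty parts contribute correctly. The only mild subtlety to flag is the index convention: $\tilde{\Psi}^o$ and $\tilde{\Psi}^e$ start their summations at $n=1$, whereas the product and composition formulas are most cleanly stated with sums starting at $n=0$; this is harmless because one simply uses $1+\tilde{\Psi}^o(u)$ and $1+\tilde{\Psi}^e(u)$ as the full ordinary generating functions including the weight-$1$ empty structure, exactly as was done in the proof of Lemma \ref{fan-eqn}. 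Given the parallelism with the already-established Lemma \ref{gen-oe}, I would present this proof in one short paragraph, deferring to the cited propositions. \qed
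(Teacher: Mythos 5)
Your approach is exactly the intended one: the paper states this lemma without proof, as the direct analogue of \eqref{psi_eo_gf} and Lemma \ref{gen-oe}, both of which rest on Propositions \ref{prop-product-formula} and \ref{prop-composition-formula} with the chromatic weights replaced by rank weights, and your argument reproduces that.

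One slip to fix: in your first paragraph you assert that either component of a type $\tilde{\mathcal{A}}^{eo}$ pair may be empty. That contradicts the paper's definition (``neither $\tilde{A}^e$ nor $\tilde{A}^o$ is empty''), and if it were true the product formula would give $\bigl(1+\tilde{\Psi}^e(u)\bigr)\bigl(1+\tilde{\Psi}^o(u)\bigr)$ rather than $\tilde{\Psi}^e(u)\tilde{\Psi}^o(u)$, since $\tilde{\Psi}^e$ and $\tilde{\Psi}^o$ are defined as sums starting at $n=1$. You in fact use the correct (both-nonempty) convention in your second paragraph when you verify $0\notin\mathbb{A}$ for the composition formula, so the parenthetical should simply be deleted; with both parts required nonempty, the identity $\tilde{\Psi}^{eo}=\tilde{\Psi}^e\tilde{\Psi}^o$ follows as you say, and the rest of the argument is fine.
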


Finally, parallel to Lemma \ref{fan-eqn}, we obtain the following result.

\begin{lem}\label{eq-another-zpol}
We have
\begin{align*}
Z_F(t,u)&=\frac{(1+\tilde{\Psi}^o(u))(1+\tilde{\Psi}^e(u))}{1-\tilde{\Psi}^{e}(u)\tilde{\Psi}^{o}(u)}.
\end{align*}
\end{lem}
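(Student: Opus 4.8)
The plan is to mirror exactly the decomposition of the type-$\mathcal{A}$ structure that was carried out for the \KL polynomials in Lemma \ref{fan-eqn}, but now with the weight function $\tilde{w}$ in place of $w$. Recall that an element $A=(A_1,A_2,\ldots,A_{2k-1},A_{2k})\in\mathcal{C}'_n$ factors uniquely as a triple: the first component $A_1=A^o$ is a type $\tilde{\mathcal{A}}^o$ structure (possibly the empty structure $(0)$), the last component $A_{2k}=A^e$ is a type $\tilde{\mathcal{A}}^e$ structure (possibly the empty structure $(\,)$), and the middle block $(A_2,A_3,\ldots,A_{2k-1})$ regroups into a sequence of $\tilde{\mathcal{A}}^{eo}$ pairs, i.e.\ a type $\tilde{\mathcal{A}}^m$ structure. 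The multiplicativity of $\tilde{w}$ over the three parts — which is immediate from the definition of $\tilde{w}$ as a product over the blocks of $A$ — gives $\tilde{w}(A)=\tilde{w}^o(A^o)\,\tilde{w}^m(A^m)\,\tilde{w}^e(A^e)$, exactly as in the proof of Lemma \ref{fan-eqn}. Hence $\tilde{\mathcal{A}}=\tilde{\mathcal{A}}^o\times\tilde{\mathcal{A}}^m\times\tilde{\mathcal{A}}^e$ as weighted combinatorial structures.

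Next I would apply the product formula of ordinary generating functions (Proposition \ref{prop-product-formula}) to this factorization. Since each of the three factors may be the empty structure on the empty interval, the generating functions to be multiplied are $1+\tilde{\Psi}^o(u)$, $\tilde{\Psi}^m(u)$, and $1+\tilde{\Psi}^e(u)$ respectively (note $\tilde{\Psi}^o$ and $\tilde{\Psi}^e$ were defined to start their index at $n=1$, so the empty structure contributes the extra $1$, whereas $\tilde{\Psi}^m$ already counts the empty sequence). This yields
\begin{align*}
Z_F(t,u)=\bigl(1+\tilde{\Psi}^o(u)\bigr)\,\tilde{\Psi}^m(u)\,\bigl(1+\tilde{\Psi}^e(u)\bigr).
\end{align*}
Substituting the expression $\tilde{\Psi}^m(u)=\dfrac{1}{1-\tilde{\Psi}^{eo}(u)}=\dfrac{1}{1-\tilde{\Psi}^e(u)\tilde{\Psi}^o(u)}$ from Lemma \ref{lem-gf-m} gives precisely the claimed identity
\begin{align*}
Z_F(t,u)=\frac{\bigl(1+\tilde{\Psi}^o(u)\bigr)\bigl(1+\tilde{\Psi}^e(u)\bigr)}{1-\tilde{\Psi}^e(u)\tilde{\Psi}^o(u)}.
\end{align*}

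Because every ingredient here is an exact transcription of the corresponding step in Section \ref{sec:non-ekl} with $t^{\rk}$ replacing $\chi$, there is really no serious obstacle; the only thing requiring a word of care is the bookkeeping of which generating functions carry an extra additive $1$ for the empty structure, and the verification that $\tilde w$ genuinely splits as a product over the three parts (which follows at once because $\tilde w(A)$ is by definition the product of local weights attached to blocks of $A$, and the factorization $A\mapsto(A^o,A^m,A^e)$ merely partitions those blocks). One should also remark, as the authors do elsewhere, that Lemma \ref{wterm-zpol} together with \eqref{zpol-reform} is what legitimizes viewing $Z_F(t,u)$ as $\sum_n\sum_{A\in\mathcal{C}'_n}\tilde w(A)u^n$ in the first place. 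With the identity in hand, the closed form \eqref{eq-gf-zpol-fan} follows by plugging in $\tilde{\Psi}^o$ and $\tilde{\Psi}^e$ from Lemma \ref{lem-gf-o-e} and simplifying (a routine computation, best confirmed with the same computer-algebra check used for Theorem \ref{thm-klpol-fan}).
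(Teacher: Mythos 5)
Your proof is correct and is exactly the argument the paper intends: the paper omits the proof of this lemma, stating only that it is "parallel to Lemma \ref{fan-eqn}," and your decomposition $\tilde{\mathcal{A}}=\tilde{\mathcal{A}}^o\times\tilde{\mathcal{A}}^m\times\tilde{\mathcal{A}}^e$ together with the product formula and the substitution $\tilde{\Psi}^m(u)=1/(1-\tilde{\Psi}^{eo}(u))$ reproduces that argument faithfully, including the correct bookkeeping of the additive $1$'s for the empty structures.
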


We proceed to prove Theorem \ref{thm-zpol-fan}.

\begin{proof}[Proof of Theorem \ref{thm-zpol-fan}.]
In Subsection \ref{sec:non-ekl} we already determined the value of $\Phi_F(t,u)$. Combining Lemmas \ref{lem-gf-o-e}, \ref{lem-gf-m} and \ref{eq-another-zpol}, it remains to verify the equality based on \eqref{eq-gf-klpol-fan}, which can be completed with the aid of Mathematica.
%

{\renewcommand\baselinestretch{2}
\begin{mma}
\In \tilde{\Psi }^o[u\_]|:=|\frac{t u}{1-t u}; \\
\end{mma}
\begin{mma}
\In \tilde{\Psi }^e[u\_]|:=|\phi _F\left[\frac{u}{1-t u}\right]-1; \\
\In \tilde{\Psi }^{eo}[u\_]|:=|\tilde{\Psi }^e[u] \tilde{\Psi }^o[u];\\
\In Z_{F}[t\_,u\_] |:=|\frac{(1+\tilde{\Psi }^e[u])  (\tilde{\Psi }^o[u]+1)}{1-\tilde{\Psi }^{eo}[u]};\\
\end{mma}
}
\begin{mma}
\In |FullSimplify|[Z_F(t,u)==\frac{2}{\sqrt{(1-(t+1) u)^2-4 t u^2}-(t+1) u+1},\linebreak|Assumptions|\to 1-t u>0]\\
\Out |True|;\\
\end{mma}
This completes the proof.
\end{proof}

Now we are able to prove \eqref{eq-zpol-fan-sect1} of Theorem \ref{zcoef}.

\begin{proof}[Proof of \eqref{eq-zpol-fan-sect1}.]
The equivalence  between  \eqref{eq-zpol-fan-sect1} and   \eqref{eq-gf-zpol-fan} is well  known, see  \cite[Section 4]{coker2003enumerating}. The proof is complete.
\end{proof}

\subsection{Wheel graphs}

The main objective of this subsection is to determine the $Z$-polynomials of wheel graphs.
Let
\begin{align}\label{eqn-wheelz-gf}
Z_W(t,u) := \sum_{n=2}^\infty Z_{W_n}(t) u^{n},
\end{align}
where $W_2$ is a simple circle with three vertices.

We have the following result.
\begin{thm}\label{thm-zpol-wheel}
We have
\begin{align}\label{eq-gf-zpol-wheel}
Z_W(t,u)=&-\frac{2 u (1-(t+1) u) (t (u+1)+1)}{1-(t+1) u-2 t u^2+\sqrt{(1-(t+1) u)^2-4 t u^2}}\nonumber\\[5pt]
&+\frac{1}{\sqrt{(1-(t+1) u)^2-4 t u^2}}-1.
\end{align}
\end{thm}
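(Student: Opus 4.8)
The plan is to mirror, almost verbatim, the development already carried out for the $Z$-polynomial of fan graphs in Subsection \ref{subsection-zpol-fan}, with the structural decomposition of compositions of $W_n$ borrowed from Subsection \ref{subsect-wheel}. Concretely, starting from the definition \eqref{zpol-reform}, we write
\begin{align*}
Z_W(t,u)=\sum_{n=2}^{\infty}\left(\sum_{C\in\mathcal{C}(W_n)}t^{\rk W_n[C]}P_{W_n/C}(t)\right)u^n.
\end{align*}
Recalling that $\mathcal{C}(W_n)$ is the disjoint union of $\mathcal{C}^{\langle 1\rangle}(W_n)$, $\mathcal{C}^{\langle 2\rangle}(W_n)$ and $\mathcal{C}^{\langle 3\rangle}(W_n)$ from Subsection \ref{subsect-wheel}, we split $Z_W(t,u)$ into three pieces $\tilde{\Psi}_1(u)$, $\tilde{\Psi}_2(u)$, $\tilde{\Psi}_3(u)$ exactly as $\Psi_W(u)=\Psi_1(u)+\Psi_2(u)+\Psi_3(u)$ was split there, the only difference being that the weight of a subpath $H_b$ changes from $\chi_{H_b}(t)/t$ to $t^{\rk H_b}=t^{b-1}$, the weight of a fan factor $F_a$ changes from $\chi_{F_a}(t)/t$ to $t^{\rk F_a}=t^{a}$, and the weight of the whole-graph factor $W_n$ changes from $\chi_{W_n}(t)/t$ to $t^{\rk W_n}=t^n$. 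All the combinatorial bijections (Lemma \ref{combij}, the $\kappa_C$-decomposition in Lemma \ref{w3}, the cycle composition formula Proposition \ref{prop-composition-formula-variant}) are purely structural and carry over unchanged; only the generating functions of the building blocks get replaced.

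The first step is to compute $\tilde{\Psi}_1(u)$: for $C=\{[0,n]\}$ the contribution is $t^{\rk W_n}=t^n$, and for $C=\{\{0\},[1,n]\}$ the contribution is $t^{\rk W_n[C]}=t^{n}$ as well (the induced subgraph is an $n$-cycle plus an isolated vertex, of rank $n$), since $W_n/C\cong K_2$ has $\rk$-one and trivial \KL polynomial; summing $\sum_{n\ge 2}2t^nu^n=\frac{2t^2u^2}{1-tu}$. The second step is $\tilde{\Psi}_2(u)$: using Proposition \ref{prop-composition-formula-variant} with $\mathcal{A}_n=\{(n)\}$ weighted by $t^{n-1}$ (so $A(u)=u/(1-tu)$, from the analogue of \eqref{eq-path-gf}) and $\mathcal{B}_n=\{(n)\}$ weighted by $Z_{W_n}(t)$ (so $B(u)=Z_W(t,u)$), giving $\tilde{\Psi}_2(u)=uA'(u)B(A(u))/A(u)=\frac{1}{1-tu}Z_W\!\left(t,\frac{u}{1-tu}\right)$. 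The third step is $\tilde{\Psi}_3(u)$: here I introduce the modified weighted structure $\tilde{\mathcal{A}}^{eo}$ already defined in Subsection \ref{subsection-zpol-fan} with generating function $\tilde{\Psi}^{eo}(u)$, and repeat the $\kappa_C=1$ versus $\kappa_C>1$ split of Lemma \ref{w3} verbatim to get $\tilde{\Psi}_3(u)=u\frac{\partial\tilde{\Psi}^{eo}(u)}{\partial u}\cdot\frac{1}{1-\tilde{\Psi}^{eo}(u)}$. Assembling these into a functional equation
\begin{align*}
Z_W(t,u)=\frac{2t^2u^2}{1-tu}+\frac{1}{1-tu}Z_W\!\left(t,\tfrac{u}{1-tu}\right)+u\frac{\partial\tilde{\Psi}^{eo}(u)}{\partial u}\cdot\frac{1}{1-\tilde{\Psi}^{eo}(u)}
\end{align*}
and then, exactly as in the proof of Theorem \ref{thm-klpol-wheel}, verifying by a \textit{Mathematica} computation that the closed form \eqref{eq-gf-zpol-wheel} satisfies this equation (together with the correct initial coefficients, using the uniqueness of the power-series solution) completes the proof; the known generating functions $\Phi_F(t,u)$ from Theorem \ref{thm-klpol-fan} enter through $\tilde{\Psi}^{eo}(u)=\tilde{\Psi}^e(u)\tilde{\Psi}^o(u)$ as in Lemmas \ref{lem-gf-o-e}--\ref{lem-gf-m}.

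The main obstacle I anticipate is not conceptual but bookkeeping: getting the weight exponents right in each of the three summands, particularly in $\tilde{\Psi}_1$ where one must check that $\rk W_n[\{\{0\},[1,n]\}]=n$ and not $n-1$ (the cycle component has rank $n$, the isolated vertex adds nothing), and in $\tilde{\Psi}_2,\tilde{\Psi}_3$ where the ranks of the fan and path factors must be added consistently across the identified $0$-vertex so that the total matches $\rk W_n[C]$. Once the functional equation is correctly assembled, the final verification is a routine \textit{Mathematica} simplification under the assumption $1-tu>0$ (valid since $u$ is a formal variable, or taken sufficiently small), fully parallel to the verification already performed for Theorems \ref{thm-klpol-fan}, \ref{thm-klpol-wheel} and \ref{thm-zpol-fan}.
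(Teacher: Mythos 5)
Your overall strategy --- splitting $\mathcal{C}(W_n)$ into the three classes $\mathcal{C}^{\langle 1\rangle},\mathcal{C}^{\langle 2\rangle},\mathcal{C}^{\langle 3\rangle}$ from Subsection \ref{subsect-wheel} and replacing the weights $\chi_{H_b}(t)/t$, $\chi_{F_a}(t)/t$ by $t^{\rk H_b}$, $t^{\rk F_a}$ --- is exactly the paper's, but two of your three summands are computed incorrectly, and one of the errors changes the logical shape of the final step.

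First, in $\tilde{\Psi}_1(u)$ you assert that $\rk W_n[\{\{0\},[1,n]\}]=n$ because ``the cycle component has rank $n$.'' It does not: the outer cycle is a graph on $n$ vertices with one connected component, so its rank is $n-1$, and adding the isolated vertex $0$ keeps the rank at $(n+1)-2=n-1$. The correct contribution is $\sum_{n\ge 2}(t^n+t^{n-1})u^n=\frac{t^2u^2}{1-tu}+\frac{tu^2}{1-tu}$, not $\frac{2t^2u^2}{1-tu}$. Second, and more seriously, in $\tilde{\Psi}_2(u)$ you weight the $\mathcal{B}$-structure by $Z_{W_n}(t)$ so that $B(u)=Z_W(t,u)$. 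But the defining sum for the $Z$-polynomial is $\sum_{F}t^{\rk M_F}P_{M^F}(t)$ --- the contraction carries the Kazhdan--Lusztig polynomial, not the $Z$-polynomial --- and since $W_n/C\cong W_{|C|-1}$ for $C\in\mathcal{C}^{\langle 2\rangle}(W_n)$, the correct weight is $P_{W_n}(t)$, giving $\frac{1}{1-tu}\,\Phi_W\bigl(t,\frac{u}{1-tu}\bigr)$ with $\Phi_W$ the already-known generating function from Theorem \ref{thm-klpol-wheel}. Your self-referential functional equation in $Z_W$ is therefore not the identity this decomposition produces, and attempting to verify the closed form \eqref{eq-gf-zpol-wheel} against it would fail. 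Once the weight is corrected, there is in fact no functional equation and no uniqueness argument needed: $Z_W(t,u)$ is expressed explicitly in terms of $\Phi_W$ and $\tilde{\Psi}^{eo}$, and the proof reduces to a direct algebraic verification of \eqref{eq-gf-zpol-wheel}, which is how the paper concludes. Your treatment of $\tilde{\Psi}_3(u)$ via $\tilde{\Psi}^{eo}$ is correct as stated.
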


The proof of \eqref{eq-gf-zpol-wheel} is very similar to that of \eqref{eq-gf-klpol-wheel}.
By \ref{zpol-reform} we have
\begin{align*}
Z_W(t,u)=\sum_{n=2}^{\infty}
\left(\sum_{C\in \mathcal{C}(W_n)} {t^{\rk W_n[C]} P_{W_n/C}(t)}\right)u^n.
\end{align*}
Recall that the proof of \eqref{eq-gf-klpol-wheel} is based on the computation of the following summation
\begin{align*}
\sum_{n=2}^{\infty}
\left(\sum_{C\in \mathcal{C}(W_n)}t^{-|C|}\chi_{W_n[C]}(t) P_{W_n/C}(t)\right)u^n.
\end{align*}

For that purpose we divide $\mathcal{C}(W_n)$ into three disjoint parts  $\mathcal{C}^{\langle 1\rangle}(W_n),\mathcal{C}^{\langle 2\rangle}(W_n)$ and $\mathcal{C}^{\langle 3\rangle}(W_n)$, and introduce $\Psi_1(u),\,\Psi_2(u),\,\Psi_3(u)$ as defined in \eqref{eq-psi-123}. Again by the arguments immediately before Subsection \ref{subsection-zpol-fan}, we may introduce parallel functions to compute $Z_W(t,u)$. Correspondingly, for $i=1,2,3$ let
\begin{align}\label{eq-z-wheel-123}
Z_W^{\langle i\rangle}(u)=\sum_{n=2}^{\infty}\left(\sum_{C\in \mathcal{C}^{\langle i\rangle}(W_n)}{t^{\rk W_n[C]} P_{W_n/C}(t)}\right)u^n.
\end{align}
Then
$$Z_W(t,u)=Z_W^{\langle 1\rangle}(u)+Z_W^{\langle 2\rangle}(u)+Z_W^{\langle 3\rangle}(u).$$

In the following we shall determine $Z_W^{\langle 1\rangle}(u),Z_W^{\langle 2\rangle}(u)$ and $Z_W^{\langle 3\rangle}(u)$ successively.

Parallel to Lemma \ref{w1}, we have the following result.

\begin{lem}\label{z-w1} We have
\begin{align*}
Z_W^{\langle 1\rangle}(u)=\frac{t^2 u^2}{1-t u}+\frac{t u^2}{1-t u}.
\end{align*}
\end{lem}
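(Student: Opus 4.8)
The plan is to mirror the proof of Lemma \ref{w1} exactly, replacing the weight $t^{-|C|}\chi_{W_n[C]}(t)$ by $t^{\rk W_n[C]}$ throughout. Recall that $\mathcal{C}^{\langle 1\rangle}(W_n)$ consists of precisely two compositions: $C=\{[0,n]\}$ and $C=\{\{0\},[1,n]\}$. First I would handle $C=\{[0,n]\}$: here $W_n[C]=W_n$, which has $n+1$ vertices and is connected, so $\rk W_n[C]=n$; moreover $W_n/C$ is the single-vertex graph, whose \KL polynomial is $1$. This composition therefore contributes $t^n$ to the inner sum. Next I would handle $C=\{\{0\},[1,n]\}$: the induced subgraph $W_n[C]$ is the disjoint union of the outer $n$-cycle (for $n\geq 3$; for $n=2$ the single edge plus isolated vertex, but in either case an $n$-edge graph) and the isolated vertex $0$, so $\rk W_n[C]=n-1$; and $W_n/C\cong K_2$, giving $P_{W_n/C}(t)=1$. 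This composition contributes $t^{n-1}$.

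Having pinned down the two summands, I would then simply sum over $n\geq 2$:
\begin{align*}
Z_W^{\langle 1\rangle}(u)=\sum_{n=2}^{\infty}\bigl(t^n+t^{n-1}\bigr)u^n
=\sum_{n=2}^{\infty}t^n u^n+\sum_{n=2}^{\infty}t^{n-1}u^n.
\end{align*}
The first series is $\dfrac{t^2u^2}{1-tu}$ and the second is $\dfrac{tu^2}{1-tu}$, which is exactly the claimed formula. This is a routine geometric-series computation requiring only $|tu|<1$, valid since $u$ is a formal variable (or taken sufficiently small).

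The only point that needs a word of care — and this is the mildest possible obstacle — is the degenerate case $n=2$, where $W_2$ is the triangle rather than a genuine wheel. One checks directly that $\rk W_2=2$ agrees with $\rk W_n=n$, that $W_2[\{\{0\},[1,2]\}]$ is a single edge together with an isolated vertex (rank $1=n-1$), and that $W_2/\{\{0,1,2\}\}$ is the single-vertex graph; so the two contributions $t^2$ and $t^1$ are still correct at $n=2$. Thus no separate treatment is needed, and including the $n=2$ term in the summation (as the statement does) causes no inconsistency. This completes the proof sketch; the argument is entirely parallel to Lemma \ref{w1}, with $t^{\rk M}$ playing the role of $\chi_M(t)/t^{|C|}$ in the weight, exactly as explained in the remarks opening Section \ref{sec-5}.
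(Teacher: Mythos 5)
Your proposal is correct and follows essentially the same route as the paper: the paper's proof likewise just observes that $\mathcal{C}^{\langle 1\rangle}(W_n)$ consists of the two compositions $\{[0,n]\}$ and $\{\{0\},[1,n]\}$, computes $\rk W_n[C]$ to be $n$ and $n-1$ respectively, and sums the resulting geometric series. Your extra check of the degenerate case $n=2$ is a harmless (and reasonable) addition that the paper omits.
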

\begin{proof}
Recall that $\mathcal{C}^{\langle 1\rangle}(W_n) = \{\{[0,n]\},\{\{0\},[1,n]\} \}$. Since $\rk W_n[\{[0,n]\}]=n$ and $\rk W_n[\{\{0\},[1,n]\}]=n-1$, we get the desired result.
\end{proof}

Recall that a composition $C \in \mathcal{C}^{\langle 2\rangle}(W_n)$ naturally decomposes the outer cycle of $W_n$  into $|C|-1$ paths.  Suppose that these paths have lengths $i_1,i_2,\ldots,i_{|C|-1}$ respectively. Parallel to \eqref{w2-weight}, we can  prove that
$${t^{\rk W_n[C]} P_{W_n/C}(t)} =  P_{W_{|C|-1}}(t)\times \prod_{j=1}^{|C|-1}{t ^{\rk  H_{i_j} }}.$$
Based on  this fact, $Z_W^{\langle 2\rangle}(u)$  could be considered as the ordinary generating function of type $\mathcal{B} \bullet  \mathcal{A}$ structures, with $\mathcal{A}_n=\{(n)\}$ for $n\geq 1$ weighted by $w_{\mathcal{A}}((n))= t^{\rk  H_{n}}$ and $\mathcal{B}_n=\{(n)\}$ for $n\geq 2$ weighted by $w_{\mathcal{B}}((n))=P_{W_n}(t)$. (Here we assume that $\mathcal{A}_0, \mathcal{B}_0$ and $\mathcal{B}_1$ are empty.)
Parallel to Lemma \ref{w2}, we have the following result.

\begin{lem}\label{z-w2}
We have
\begin{align*}
Z_W^{\langle 2\rangle}(u)&=\frac{1}{1-tu}\times \Phi_W\left(t,\frac{u}{1-tu}\right),
\end{align*}
where $\Phi_W(t,u)$ is the generating function of the \KL polynomials for wheel graphs.
\end{lem}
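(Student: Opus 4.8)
The plan is to follow the proof of Lemma~\ref{w2} essentially verbatim, with the chromatic weight $\chi_{H_{i_j}}(t)/t$ replaced throughout by the rank weight $t^{\rk H_{i_j}}$. The first step is to record the weight identity stated just above: for $C\in\mathcal{C}^{\langle 2\rangle}(W_n)$, whose outer-cycle part splits into paths on $i_1,\ldots,i_{|C|-1}$ vertices, the induced subgraph $W_n[C]$ is the disjoint union of these paths with the isolated vertex $0$, so by the additivity of matroid rank over the biconnected decomposition of $M(W_n[C])$ (as in Lemma~\ref{graphsum}) we have $\rk W_n[C]=\sum_{j=1}^{|C|-1}\rk H_{i_j}$; combining this with $W_n/C\cong W_{|C|-1}$ and the multiplicativity in Lemma~\ref{graphsum} gives
\[
t^{\rk W_n[C]}\,P_{W_n/C}(t)=P_{W_{|C|-1}}(t)\prod_{j=1}^{|C|-1}t^{\rk H_{i_j}},
\]
which is the analogue of \eqref{w2-weight}.

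Next I would invoke the same bijective description of $\mathcal{C}^{\langle 2\rangle}(W_n)$ as in Lemma~\ref{w2}, which exhibits $Z_W^{\langle 2\rangle}(u)$ as the ordinary generating function of type $\mathcal{B}\bullet\mathcal{A}$ structures in the sense of Proposition~\ref{prop-composition-formula-variant}, where $\mathcal{A}_n=\{(n)\}$ for $n\geq 1$ is weighted by $t^{\rk H_n}$ and $\mathcal{B}_n=\{(n)\}$ for $n\geq 2$ is weighted by $P_{W_n}(t)$; in particular $0\notin\mathbb{A}$ and $0,1\notin\mathbb{B}$, so the hypotheses of Proposition~\ref{prop-composition-formula-variant} are met. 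Since $\rk H_n=n-1$, the generating function of $\mathcal{A}$ is $A(u)=\sum_{n\geq1}t^{n-1}u^n=\frac{u}{1-tu}$, while by \eqref{eqn-wheelkl-gf} the generating function of $\mathcal{B}$ is $B(u)=\Phi_W(t,u)$. A direct computation gives $A'(u)=1/(1-tu)^2$, hence $uA'(u)/A(u)=1/(1-tu)$, and Proposition~\ref{prop-composition-formula-variant} yields
\[
Z_W^{\langle 2\rangle}(u)=uA'(u)\,\frac{B(A(u))}{A(u)}=\frac{1}{1-tu}\,\Phi_W\!\left(t,\frac{u}{1-tu}\right),
\]
which is the claimed formula.

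The only delicate point — and the nearest thing to an obstacle — is the justification that the weight-preserving correspondence between the compositions in $\mathcal{C}^{\langle 2\rangle}(W_n)$ and the cycle-decomposition structures counted by $\mathcal{B}\bullet\mathcal{A}$ is exactly the one already established in Lemma~\ref{w2}, so that no fresh combinatorial bookkeeping is required; granting that, the argument collapses to the rank identity above together with the elementary manipulation of $A(u)$. I therefore expect the proof to be a single short paragraph, and in keeping with the treatment of the other parallel lemmas in this subsection it would be reasonable to present only the key identities and omit the routine verification.
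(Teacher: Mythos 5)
Your proposal is correct and follows exactly the route the paper takes: the paper itself states the rank-weight identity $t^{\rk W_n[C]}P_{W_n/C}(t)=P_{W_{|C|-1}}(t)\prod_j t^{\rk H_{i_j}}$ in the paragraph preceding the lemma, identifies $Z_W^{\langle 2\rangle}(u)$ as the generating function of the same $\mathcal{B}\bullet\mathcal{A}$ structures as in Lemma~\ref{w2} with $w_{\mathcal{A}}((n))=t^{\rk H_n}$, and applies Proposition~\ref{prop-composition-formula-variant} with $A(u)=u/(1-tu)$ and $B(u)=\Phi_W(t,u)$. Your computation $uA'(u)/A(u)=1/(1-tu)$ and the resulting formula agree with the paper, which omits the formal proof as being parallel to Lemma~\ref{w2}.
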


We proceed to determine $Z_W^{\langle 3\rangle}(u)$.
Parallel to Lemma \ref{w3}, we obtain the following result by replacing type $\mathcal{A}^{eo}$ structures by type $\tilde{\mathcal{A}^{eo}}$ structures.

\begin{lem}\label{z-w3}
We have
\begin{align*}
Z_W^{\langle 3\rangle}(u)&= {u \frac{\partial \tilde{\Psi} _{eo}(u)}{\partial u}}\frac{1}{1-\tilde{\Psi} _{eo}(u)},
\end{align*}
where $\tilde{\Psi} _{eo}(u)$ is defined as in Lemma \ref{lem-gf-m}.
\end{lem}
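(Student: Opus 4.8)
The plan is to transcribe the proof of Lemma \ref{w3} almost verbatim, replacing the type $\mathcal{A}^{eo}$ structures (weighted by $w^{eo}$) by the type $\tilde{\mathcal{A}}^{eo}$ structures (weighted by $\tilde{w}^{eo}$); concretely, every local factor $\chi_{H_{b_{ij}}}(t)/t$ becomes $t^{\rk H_{b_{ij}}}$ and every $\chi_{F_{T_i}}(t)/t$ becomes $t^{\rk F_{T_i}}$. First I would recall, for each $C\in\mathcal{C}^{\langle 3\rangle}(W_n)$, the unique positive integer $\kappa_C$ together with the cyclically ordered sequence of non-empty forests $S_1,T_1,\ldots,S_{\kappa_C},T_{\kappa_C}$ obtained by deleting from $W_n[C]$ all edges incident to $0$, exactly as in condition ($\diamond$); this decomposition is purely combinatorial and unchanged. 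As in Lemma \ref{w3} I would split $\mathcal{C}^{\langle 3\rangle}(W_n)$ into the subsets with $\kappa_C=1$ and $\kappa_C>1$ and write $Z_W^{\langle 3\rangle}(u)=Z_W^{\langle 3,1\rangle}(u)+Z_W^{\langle 3,2\rangle}(u)$ accordingly.

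The only genuinely new computation is the factorization of the summand. By Lemma \ref{graphsum} the graph $W_n/C$ is obtained by gluing the fan graphs $F_{\ell_1},\ldots,F_{\ell_{\kappa_C}}$ along their $0$-vertices, so $P_{W_n/C}(t)=\prod_i P_{F_{\ell_i}}(t)$; likewise $W_n[C]$ is the disjoint union of the paths $H_{b_{ij}}$ appearing in the $S_i$ together with the component through $0$, which is obtained by gluing the fans $F_{T_i}$ along their $0$-vertices, so since rank is additive over biconnected components (again Lemma \ref{graphsum}) one gets $\rk W_n[C]=\sum_i\rk F_{T_i}+\sum_i\sum_j\rk H_{b_{ij}}$. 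Combining these,
\[
t^{\rk W_n[C]}\,P_{W_n/C}(t)=\prod_{i=1}^{\kappa_C}\left(t^{\rk F_{T_i}}\,P_{F_{\ell_i}}(t)\,\prod_{j=1}^{\ell_i}t^{\rk H_{b_{ij}}}\right)=\prod_{i=1}^{\kappa_C}\tilde{w}^{eo}(\tilde{A}_i^{eo}),
\]
where $\tilde{A}_i^{eo}$ is the type $\tilde{\mathcal{A}}^{eo}$ structure carried by the pair $(S_i,T_i)$. Then, exactly as in Lemma \ref{w3}, each $C$ with $\kappa_C=1$ is a pointed type $\tilde{\mathcal{A}}^{eo}$ structure (the first vertex of $S_1$ ranging freely over $[1,n]$), so Proposition \ref{gf-pointed} gives $Z_W^{\langle 3,1\rangle}(u)=u\,\frac{\partial \tilde{\Psi}^{eo}(u)}{\partial u}$; and the $\kappa_C>1$ part is a type $\mathcal{B}\bullet\tilde{\mathcal{A}}^{eo}$ structure with $\mathcal{B}_n=\{(n)\}$ weighted by $1$ for $n\ge2$, hence $B(u)=u^2/(1-u)$ and Proposition \ref{prop-composition-formula-variant} yields $Z_W^{\langle 3,2\rangle}(u)=u\,\frac{\partial \tilde{\Psi}^{eo}(u)}{\partial u}\cdot\frac{\tilde{\Psi}^{eo}(u)}{1-\tilde{\Psi}^{eo}(u)}$. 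Adding the two pieces gives the claimed formula $Z_W^{\langle 3\rangle}(u)=u\,\frac{\partial \tilde{\Psi}^{eo}(u)}{\partial u}\cdot\frac{1}{1-\tilde{\Psi}^{eo}(u)}$.

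I expect the main (and rather mild) obstacle to be the rank bookkeeping: one must check that the edge sets of the paths $H_{b_{ij}}$ and of the fans $F_{T_i}$ are pairwise disjoint and that gluing fans at a common vertex adds ranks, so that $t^{\rk W_n[C]}$ really distributes into the product of the $\tilde{w}^{eo}$-factors with no stray power of $t$; both facts are immediate from the description of the biconnected components of $W_n[C]$ together with the multiplicativity in Lemma \ref{graphsum}. Everything else is a mechanical transcription of the proof of Lemma \ref{w3}, with $t^{\rk(\cdot)}$ playing the role previously played by $\chi_{(\cdot)}(t)/t$.
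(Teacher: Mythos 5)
Your proposal is correct and is exactly the argument the paper intends: the paper omits the proof of this lemma, stating only that it is obtained from Lemma \ref{w3} by replacing type $\mathcal{A}^{eo}$ structures with type $\tilde{\mathcal{A}}^{eo}$ structures, which is precisely what you carry out. Your rank bookkeeping $\rk W_n[C]=\sum_i\rk F_{T_i}+\sum_{i,j}\rk H_{b_{ij}}$ is the one genuinely new check needed (it follows from additivity of rank over biconnected components, as you note), and the rest transcribes correctly.
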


Combining  Lemmas \ref{z-w1},\ref{z-w2} and \ref{z-w3}, we get the following result.

\begin{lem}\label{eq-another-zpol-wheel}
Let $\Phi_W(t,u)$ be given by \eqref{eq-gf-klpol-wheel}.
Then
\begin{align*}
Z_W(t,u)&=\frac{t^2 u^2}{1-t u}+\frac{t u^2}{1-t u} +\frac{\Phi_W\left(t,\frac{u}{1-t u}\right)}{1-t u}+{u \frac{\partial \tilde{\Psi} _{eo}(u)}{\partial u}} \frac{1}{{1-\tilde{\Psi} _{eo}(u)}}.
\end{align*}
\end{lem}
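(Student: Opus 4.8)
The plan is to derive this identity by simply adding up the three contributions already isolated. Recall from the discussion preceding Subsection \ref{subsection-zpol-fan} that, by \eqref{zpol-reform},
\begin{align*}
Z_W(t,u)=\sum_{n=2}^{\infty}\left(\sum_{C\in \mathcal{C}(W_n)} t^{\rk W_n[C]} P_{W_n/C}(t)\right)u^n,
\end{align*}
and that for every $n\geq 2$ the set $\mathcal{C}(W_n)$ is the disjoint union of $\mathcal{C}^{\langle 1\rangle}(W_n)$, $\mathcal{C}^{\langle 2\rangle}(W_n)$ and $\mathcal{C}^{\langle 3\rangle}(W_n)$. Splitting the inner sum according to this partition and invoking the definitions \eqref{eq-z-wheel-123}, I obtain the additive decomposition
\begin{align*}
Z_W(t,u)=Z_W^{\langle 1\rangle}(u)+Z_W^{\langle 2\rangle}(u)+Z_W^{\langle 3\rangle}(u).
\end{align*}

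Next I would substitute in turn the closed forms established earlier: Lemma \ref{z-w1} to replace $Z_W^{\langle 1\rangle}(u)$ by $\dfrac{t^2 u^2}{1-t u}+\dfrac{t u^2}{1-t u}$; Lemma \ref{z-w2} to replace $Z_W^{\langle 2\rangle}(u)$ by $\dfrac{1}{1-t u}\,\Phi_W\!\left(t,\dfrac{u}{1-t u}\right)$; and Lemma \ref{z-w3} to replace $Z_W^{\langle 3\rangle}(u)$ by $u\,\dfrac{\partial \tilde{\Psi}_{eo}(u)}{\partial u}\cdot\dfrac{1}{1-\tilde{\Psi}_{eo}(u)}$. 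Summing these three expressions reproduces verbatim the right-hand side of the asserted formula, which finishes the proof of the lemma.

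There is essentially no obstacle at this stage: the statement is a bookkeeping step that records $Z_W^{\langle 1\rangle}+Z_W^{\langle 2\rangle}+Z_W^{\langle 3\rangle}$ in a single display. The only point needing care is that the three index families genuinely partition $\mathcal{C}(W_n)$ for all $n\geq 2$, including the degenerate case $W_2$, but this has already been established. The actual work lies in Lemmas \ref{z-w1}, \ref{z-w2} and \ref{z-w3}, whose proofs parallel those of Lemmas \ref{w1}, \ref{w2} and \ref{w3}, with $t^{\rk M}$ taking the role of $\chi_M(t)$ in the weight functions and with type $\tilde{\mathcal{A}}^{eo}$ structures replacing type $\mathcal{A}^{eo}$ structures in the generating-function manipulations.
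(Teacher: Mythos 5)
Your proposal is correct and matches the paper's own argument exactly: the paper likewise decomposes $Z_W(t,u)$ as $Z_W^{\langle 1\rangle}(u)+Z_W^{\langle 2\rangle}(u)+Z_W^{\langle 3\rangle}(u)$ via the partition of $\mathcal{C}(W_n)$ and then substitutes Lemmas \ref{z-w1}, \ref{z-w2} and \ref{z-w3} term by term. You are also right that the lemma is pure bookkeeping and that the substantive work lives in those three preceding lemmas.
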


Our proof of Theorem \ref{thm-zpol-wheel} is as follows.

\begin{proof}[Proof of Theorem \ref{thm-zpol-wheel}]
In Subsection \ref{subsect-wheel} we already determined the value of $\Phi_W(t,u)$. Combining Lemmas \ref{lem-gf-m} and \ref{eq-another-zpol-wheel}, it remains to verify the equality based on \eqref{eq-gf-klpol-wheel}, which can be completed with the aid of Mathematica.

\begin{mma}
\In Z_W[t\_,u\_]:=\frac{t^2 u^2}{1-t u}+\frac{t u^2}{1-t u}+\frac{1}{1-t u}\Phi _W\left[\frac{u}{1-t u}\right]+u \frac{\partial \tilde{\Psi }^{|eo|}(u)}{\partial u}\frac{1}{1-\tilde{\Psi }^{|eo|}(u)};\\
\In |T1|[u\_]:=\frac{1}{\sqrt{(1-(t+1) u)^2-4 t u^2}};\\
\end{mma}
\begin{mma}
\In |T2|[u\_]:=\frac{2 u (1-(t+1) u) (t (u+1)+1)}{1-(t+1) u-2 t u^2+\sqrt{(1-(t+1) u)^2-4 t u^2}};\\
\end{mma}
\begin{mma}
\In |Simplify|[Z_W(t,u)=|T1|[u]-|T2|[u]-1,|Assumptions|\to 1-t u>0];\\
\Out |True|\\
\end{mma}
This completes the proof.
\end{proof}

Now we are able to prove \eqref{eq-zpol-wheel-sect1} of Theorem \ref{zcoef}.

\begin{proof}[Proof of \eqref{eq-zpol-wheel-sect1}.]
It   suffices to show that
\begin{align}
 \sum_{n=2}^{\infty}{ \left ( \sum _ {k = 0}^n\binom {n} {k}^2  t^k\right) u^n}=&
 \frac{1}{\sqrt{(1-(t+1) u)^2-4 t u^2}}-1-(1+t)u \label{binsq},\\[5pt]
 \sum _{n=2}^{\infty}\left(\sum _{k=0}^n\frac {2}{n} \binom {n} {k +1}\binom {n} {k - 1} \right)u^n=&\frac{2 u (1-(t+1) u) (t (u+1)+1)}{1-(t+1) u-2 t u^2+\sqrt{(1-(t+1) u)^2-4 t u^2}}\notag \\
 &-(1+t)u.\label{z_wheel_eq2}
\end{align}
Note that \eqref{binsq}  is equivalent to
\begin{align}\label{binsq2}
\sum_{n=0}^{\infty}{ \left ( \sum _ {k = 0}^n\binom {n} {k}^2  t^k\right) u^n}=\frac{1}{\sqrt{(1-(t+1) u)^2-4 t u^2}},
\end{align}
 which can be proved  by using the generating function of Legendre polynomials. But here we give a proof with the aid of  Mathematica.
%

\begin{mma}
\In  a[n\_, k\_] :=|Binomial[n,k]|^2;\\
\end{mma}
\begin{mma}
\In |ReleaseHold|[|First|[|Zb|[|FunctionExpand|[a[n, k]] t^k, {k, 0, n}, n] /. |SUM| \to  a]];  \\
\end{mma}
\begin{mma}
\In|Simplify|[|FunctionExpand|[\%], |Assumptions| \to n  \in \mathbb{Z}]; \\
\end{mma}
\begin{mma}
\In  rec = |Collect|[\%, a[\_], |Factor|] \\
\Out (1+n) (-1+t)^2 a[n]+(2+n) a[2+n]==(3+2n) (1+t) a[1+n] \\
\In |Table|[a[n] == |Sum|[a[n, k] t^k, {k, 0, n}], {n, 0, 1}]\\
\Out \{a[0] == 1, a[1] == 1+t\};\\
\In de=|RE2DE|[\{rec,a[0]==1,a[1]==1+t\}, a[n], f[u]]\\
\Out   \{ ((t-1)^2 u^2-2 (t+1) u+1 ) f'[u]+f[u]  ((t-1)^2 u-t-1 )==0,f[0]==1\}  \\
\In |Simplify|[|de|\text{/.}\, f\to |T1|]\\
\Out \{|True|,|True|\} \\
\end{mma}
It is easy to see that  \eqref{z_wheel_eq2} is equivalent to the following equation:
\begin{align*}
\sum _{n=0}^{\infty}\left(\sum _{k=0}^n   \frac {2}{n+2} \right.&\left. \binom {n+2} {k +2}\binom {n+2} {k }  t^k\right)u^n\\[5pt]
&=\frac{2  (1-(t+1) u) (t (u+1)+1)}{u (1-(t+1) u-2 t u^2+\sqrt{(1-(t+1) u)^2-4 t u^2})}
-\frac{(1+t)u}{u^2}.
\end{align*}
Then it can be proved by the following  Mathematica codes.
%
%
%
%
%

\begin{mma}
\In |T3|[u\_]:=\frac{2t}{1-2 (t+1) u+(t^2+1) u^2+(1-(t+1) u) \sqrt{1-(t+1) u)^2-4 t u^2}};\\
\end{mma}
\begin{mma}
\In |Simplify|[|T3|[u] ==\frac{|T2|[u] }{u^2} -\frac{ (1 + t) u}{u^2}];\\
\Out |True|\\
\end{mma}
\begin{mma}
\In  a[n\_, k\_] :=\frac{2}{n+2}|Binomial|[n +2,k+2] |Binomial|[n +2,k ];\\
\end{mma}
\begin{mma}
\In |ReleaseHold|[|First|[|Zb|[|FunctionExpand|[a[n, k]] t^k, {k, 0, n}, n] /. |SUM| \to a]];  \\
\end{mma}
\begin{mma}
\In|Simplify|[|FunctionExpand|[\%], |Assumptions| \to n  \in \mathbb{Z}]; \\
\end{mma}
\begin{mma}
\In  |rec| = |Collect|[\%, a[\_], |Factor|] \\
\Out (2 + n) (3 + n) (-1 + t)^2 a[n] + (2 + n) (6 + n) a[2 + n] == (3+n) (7 + 2 n) (1 + t) a[1 + n] \\
\In |Table|[a[n] == |Sum|[a[n, k] t^k, {k, 0, n}], {n, 0, 1}]\\
\Out \{a[0] == 1, a[1] == 2+2t\};\\
\In |de|=|RE2DE|[\{|rec|,a[0]==1,a[1]==2+2t\}, a[n], f[u]]\\
\Out  \{ 2 (-5 + 3 u + 3 t^2 u - t (5 + 6 u)) f[u] + (5 - 11 u - 11 t u + 6 u^2 - 12 t u^2 + 6 t^2 u^2) f^{'}[u]+(u - 2 u^2 - 2 t u^2 + u^3 - 2 t u^3 + t^2 u^3) f^{''}[u] == 0, f[0] == 1,f'[0]== 2(1+t) \} \\
\In |Simplify|[|de|/.f\to |T3|]\\
\Out \{|True|,|True|,|True|\} \\
\end{mma}
This completes the proof.
\end{proof}

\subsection{Whirl matroids}
The main objective of this subsection is to determine the $Z$-polynomials of whirl  matriods.
Let
\begin{align}\label{eqn-whirlz-gf}
Z^W(t,u) := \sum_{n=1}^\infty Z_{W^n}(t) u^{n},
\end{align}
where matroid   $W^1$ and matroid  $W^2$ are defined as in the subsection \ref{subsect-whirl}.

We have the following result.
\begin{thm}\label{thm-zpol-whirl}
We have
\begin{align}\label{eq-gf-zpol-whirl}
  Z^W(t,u)=\frac{1}{\sqrt{(1-(t+1) u)^2-4 t u^2}}-1.
\end{align}
\end{thm}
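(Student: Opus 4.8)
The plan is to imitate the proof of Theorem~\ref{thm-klpol-whirl}, with one simplification and one substitution. As explained at the beginning of Section~\ref{sec-5}, the only change in the combinatorial set-up is that $t^{\rk M}$ replaces $\chi_M(t)$ in every weight function. The simplification is that the defining relation of the $Z$-polynomial is not recursive: $Z_{W^n}(t)=\sum_{F\in L(W^n)}t^{\rk(W^n)_F}P_{(W^n)^F}(t)$ involves only the \KL polynomials of contractions, whose generating function $\Phi^W(t,u)$ has already been pinned down in \eqref{eq-gf-klpol-whirl}. So I would first write $Z^W(t,u)=\sum_{n\ge1}\bigl(\sum_{F\in L(W^n)}t^{\rk(W^n)_F}P_{(W^n)^F}(t)\bigr)u^n$ and split the inner sum along the decomposition $L(W^n)=L_1(W^n)\uplus L_2^{\langle1\rangle}(W^n)\uplus L_2^{\langle2\rangle}(W^n)\uplus L_2^{\langle3\rangle}(W^n)$ furnished by Lemma~\ref{whirl-flats}, getting $Z^W(t,u)=Z^W_1(u)+Z^W_{2,1}(u)+Z^W_{2,2}(u)+Z^W_{2,3}(u)$.

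Next I would evaluate the four series by copying the arguments of Lemmas~\ref{whirl1}, \ref{whirl21}, \ref{whirl22} and \ref{whirl23} verbatim, replacing the characteristic polynomial of each localization by $t$ to its rank. The set $L_1(W^n)$ has $n$ flats, each localization being the graphic matroid of a path on $n$ vertices (rank $n-1$) and each contraction simplifying to $W^1$, so $Z^W_1(u)=\sum_{n\ge1}n\,t^{n-1}u^n=u/(1-tu)^2$. The set $L_2^{\langle1\rangle}(W^n)$ is the ground set alone, with $(W^n)_F=W^n$ of rank $n$ and $(W^n)^F$ empty, so $Z^W_{2,1}(u)=\sum_{n\ge1}t^nu^n=tu/(1-tu)$. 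For $L_2^{\langle2\rangle}(W^n)$, using $(W^n)_F\simeq M(W_n)_F$ and $(W^n)^C\simeq W^{|C|-1}$, the series $Z^W_{2,2}(u)$ is the generating function of type $\mathcal B\bullet\mathcal A$ structures with $\mathcal A_n=\{(n)\}$ weighted by $t^{\rk H_n}=t^{n-1}$ and $\mathcal B_n=\{(n)\}$ ($n\ge2$) weighted by $P_{W^n}(t)$, so $A(u)=u/(1-tu)$, $B(u)=\Phi^W(t,u)-u$ (recall $P_{W^1}(t)=1$), and Proposition~\ref{prop-composition-formula-variant} gives $Z^W_{2,2}(u)=\frac1{1-tu}\bigl(\Phi^W(t,\frac u{1-tu})-\frac u{1-tu}\bigr)$. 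For $L_2^{\langle3\rangle}(W^n)$, the identifications $M(W_n)_F\simeq(W^n)_F$ and $M(W_n)^F\simeq(W^n)^F$ established inside the proof of Lemma~\ref{whirl23} show that $Z^W_{2,3}(u)$ coincides with the wheel-graph series $Z_W^{\langle3\rangle}(u)=u\,\frac{\partial\tilde{\Psi}^{eo}(u)}{\partial u}\frac1{1-\tilde{\Psi}^{eo}(u)}$ already computed in Lemma~\ref{z-w3}.

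Adding the four pieces, the term $u/(1-tu)^2$ from $Z^W_1(u)$ cancels the term $-\frac1{1-tu}\cdot\frac u{1-tu}$ inside $Z^W_{2,2}(u)$, leaving the closed identity
\[
Z^W(t,u)=\frac{tu}{1-tu}+\frac1{1-tu}\,\Phi^W\!\Bigl(t,\frac u{1-tu}\Bigr)+u\,\frac{\partial\tilde{\Psi}^{eo}(u)}{\partial u}\,\frac1{1-\tilde{\Psi}^{eo}(u)},
\]
which is the $Z$-analogue of Lemma~\ref{psi-whirl}. To finish, I would substitute the explicit formula \eqref{eq-gf-klpol-whirl} for $\Phi^W(t,u)$ together with $\tilde{\Psi}^{eo}(u)=\tilde{\Psi}^e(u)\tilde{\Psi}^o(u)$, $\tilde{\Psi}^o(u)=tu/(1-tu)$ and $\tilde{\Psi}^e(u)=\Phi_F(t,\frac u{1-tu})-1$ (Lemmas~\ref{lem-gf-o-e} and~\ref{lem-gf-m}, with $\Phi_F$ given by \eqref{eq-gf-klpol-fan}), and check by a Mathematica computation, exactly in the style of the proofs of Theorems~\ref{thm-zpol-fan} and~\ref{thm-zpol-wheel}, that the right-hand side simplifies to $\frac1{\sqrt{(1-(t+1)u)^2-4tu^2}}-1$.

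The genuinely new content is the verification of the four ``$t^{\rk}$-weighted'' analogues of Lemmas~\ref{whirl1}--\ref{whirl23}, and I expect the only delicate point to be the same one as in the \KL case: $W^1$ and $W^2$ are not graphic matroids of wheel graphs, so the rank and flat bookkeeping for $n=1,2$ has to be done separately, using the ad hoc descriptions of $L_1(W^n)$ and $L_2(W^n)$ set up before Lemma~\ref{whirl-flats}. Everything else is a faithful transcription of the Section~\ref{sec-3} arguments plus a routine algebraic identity, so I do not anticipate any real obstacle.
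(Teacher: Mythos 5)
Your proposal is correct and follows essentially the same route as the paper: the same four-way split of $L(W^n)$ via Lemma \ref{whirl-flats}, the same evaluations of $Z^W_1, Z^W_{2,1}, Z^W_{2,2}, Z^W_{2,3}$ (including the cancellation of $u/(1-tu)^2$ that the paper leaves implicit in Lemma \ref{eq-another-zpol-whirl}), and the same final computer-algebra verification against \eqref{eq-gf-klpol-whirl}. No gaps.
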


The proof of \eqref{eq-gf-zpol-whirl} is very similar to that of \eqref{eq-gf-zpol-wheel}. We will prove
\eqref{eq-gf-zpol-whirl} along the lines of proving
\eqref{eq-gf-klpol-whirl}, just as we prove
 \eqref{eq-gf-zpol-wheel} along the lines of proving \eqref{eq-gf-klpol-wheel}.
By \eqref{zpol-reform}, we have
\begin{align*}
Z^W(t,u)=\sum_{n=1}^{\infty}
\left(\sum_{F\in L(W^n) }{{\rk (W^n)_F}(t) P_{(W^n)^F}(t)}  \right)u^n.
\end{align*}
Recall that the proof of \eqref{eq-gf-klpol-whirl} is based on the computation of the following summation
\begin{align}
\sum_{n=1}^{\infty}\left(\sum_{F\in L(W^n) }{\chi_{(W^n)_F}(t) P_{(W^n)^F}(t)}  \right)u^n.
\end{align}
For that purpose we divide $L(W^n)$ into two  disjoint parts  $L_1(W^n)$ and $L_2(W^n)$, and introduce $\Psi^W_1(u)$ and $\Psi^W_2(u)$ as defined in \eqref{eq-whirl-l1} and \eqref{eq-whirl-l2}. To determine $\Psi^W_2(u)$, we also divide $L_2(W^n)$ into three  disjoint parts  $L_2^{\langle 1\rangle}(W^n), L_2^{\langle 2\rangle}(W^n)$ and $L_2^{\langle 3\rangle}(W^n)$, and introduce $\Psi^W_{2,1}(u),\Psi^W_{2,2}(u)$ and $\Psi^W_{2,3}(u)$ as defined in \eqref{eq-whirl-gf-l2-123}.

Again by the arguments immediately before Subsection \ref{subsection-zpol-fan}, we may introduce parallel functions to compute $Z^W(t,u)$. Let
 \begin{align*}
Z^W_1(u)&=\sum_{n=1}^{\infty}\left( \sum_{F \in L_1(W^n) }{t^{\rk\,(W^n)_F}  P_{(W^n)^F}(t)} \right) u^n,\\[5pt]
Z^W_2(u)&=\sum_{n=1}^{\infty}\left(\sum_{F \in L_2(W^n) }{t^{\rk\,(W^n)_F}  P_{(W^n)^F}(t)}\right)u^n.
\end{align*}
Then
$$Z^W(t,u)=Z^W_1(u)+Z^W_2(u).$$
Accordingly, for $i=1,2,3$ let
\begin{align}\label{eq-z-whirl-gf-l2-123}
Z^W_{2,i}(u)=\sum_{n=1}^{\infty}\left( \sum_{F \in L_2^{\langle i\rangle}(W^n)  }{t^{\rk\,(W^n)_F}  P_{(W^n)^F}(t)} \right) u^n.
\end{align}
Then we have
$$Z^W_2(u)=Z^W_{2,1}(u)+Z^W_{2,2}(u)+Z^W_{2,3}(u).$$

Firstly, parallel to Lemma \ref{whirl1}, we obtain the following result.
\begin{lem}\label{z-whirl1}
We have
\begin{align*}
Z^W_1(u)&=\frac{1}{1-t u} \times \frac{u}{1-tu}.
\end{align*}
\end{lem}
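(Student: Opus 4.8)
The plan is to mimic exactly the proof of Lemma \ref{whirl1} (the analogous statement for the Kazhdan--Lusztig generating function), replacing the characteristic-polynomial weight $\chi_{(W^n)_F}(t)$ by the rank weight $t^{\rk (W^n)_F}$. First I would recall that by Lemma \ref{whirl-flats} the set $L_1(W^n)$ consists of exactly $n$ flats, namely the $n$ subsets of the outer cycle $O_n$ obtained by deleting one edge (with the obvious degenerate interpretations for $n=1,2$). For each such flat $F$, the localization $(W^n)_F$ is the graphic matroid of a path on $n$ vertices — hence $\rk (W^n)_F = n-1$ — and the contraction $(W^n)^F$ has simplification isomorphic to $W^1$, so $P_{(W^n)^F}(t)=P_{W^1}(t)=1$. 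For $n=1$ we have $L_1(W^1)=\{\emptyset\}$, the localization is the empty matroid of rank $0$, and the contraction is $W^1$ itself, which still contributes $t^0\cdot 1 = 1$; this is consistent with the formula $t^{n-1}$ at $n=1$.

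Next I would assemble the generating function. Since there are $n$ flats in $L_1(W^n)$, each contributing $t^{\rk (W^n)_F} P_{(W^n)^F}(t) = t^{n-1}$, we get
\begin{align*}
Z^W_1(u) = \sum_{n=1}^{\infty} n\, t^{n-1} u^n = u \sum_{n=1}^{\infty} n\, (tu)^{n-1} = \frac{u}{(1-tu)^2} = \frac{1}{1-tu}\times \frac{u}{1-tu},
\end{align*}
using the standard identity $\sum_{n\ge 1} n x^{n-1} = 1/(1-x)^2$ with $x = tu$. This is precisely the claimed formula.

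There is essentially no obstacle here: all the structural input (the description of $L_1(W^n)$, the identification of localizations and contractions, and the value $P_{W^1}(t)=1$) has already been established in Lemma \ref{whirl-flats} and Lemma \ref{whirl1}, and the remaining work is a one-line geometric series manipulation. If anything deserves care, it is checking the boundary cases $n=1$ and $n=2$ separately so that the uniform formula $t^{\rk(W^n)_F}P_{(W^n)^F}(t)=t^{n-1}$ is verified to hold there as well; since Lemma \ref{whirl1} already handled these degenerate cases for the $\chi$-weighted version, the same verification transfers verbatim. I would therefore present the proof essentially as above, explicitly citing Lemma \ref{whirl1} for the matroid-theoretic identifications so as not to repeat them.
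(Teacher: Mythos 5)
Your proposal is correct and follows essentially the same route as the paper: the paper's proof likewise notes that $L_1(W^n)$ consists of $n$ flats of rank $n-1$ (with $P_{(W^n)^F}(t)=1$ carried over from Lemma \ref{whirl1}) and sums $\sum_{n\ge 1} n\,t^{n-1}u^n$ to get $\frac{1}{1-tu}\cdot\frac{u}{1-tu}$. Your extra attention to the boundary cases $n=1,2$ is harmless and slightly more careful than the paper's one-line argument.
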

\begin{proof}
Recall that  there are $n$ flats of rank $n-1$ in  $L_1(W^n)$.
Therefore, we have
\begin{align*}
Z^W_1(u)&=\sum_{n=1}^{\infty}{n t^{n-1} u^n}=\frac{1}{1-t u} \times \frac{u}{1-tu}.
\end{align*}
This completes the proof.
\end{proof}
Secondly, parallel to Lemmas \ref{whirl21}, \ref{whirl22} and \ref{whirl23}, we obtain the following results. The proofs are very similar to Lemmas \ref{z-w1},\ref{z-w2} and \ref{z-w3}, which will be omitted here.

\begin{lem}\label{z-whirl21}
We have
\begin{align*}
Z^W_{2,1}(u)&=\frac{tu}{1-t u}.
\end{align*}
\end{lem}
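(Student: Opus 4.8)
The plan is to imitate the proof of Lemma \ref{whirl21} line by line, the only change being that the factor $\chi_{(W^n)_F}(t)$ is replaced throughout by $t^{\rk (W^n)_F}$, exactly as described in the paragraphs preceding Subsection \ref{subsection-zpol-fan}. First I would recall from the decomposition of $L_2(W^n)$ given in Subsection \ref{subsect-whirl} that $L_2^{\langle 1\rangle}(W^n)$ consists of the single flat $F$ equal to the full ground set $E(W_n)$ of $W^n$ (and, for the degenerate cases $n=1,2$, the whole ground set, as arranged in the definitions of $L_2(W^1)$ and $L_2(W^2)$). Hence the inner sum in \eqref{eq-z-whirl-gf-l2-123} with $i=1$ has exactly one term for each $n$.

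Next I would identify the two matroids attached to this flat, precisely as in the proof of Lemma \ref{whirl21}: the localization $(W^n)_F$ is $W^n$ itself, so $\rk (W^n)_F=\rk W^n=n$, while the contraction $(W^n)^F$ is the empty matroid, whose \KL polynomial equals $1$. Substituting these two facts into \eqref{eq-z-whirl-gf-l2-123} gives
$$Z^W_{2,1}(u)=\sum_{n=1}^{\infty} t^{\rk (W^n)_F}\, P_{(W^n)^F}(t)\, u^n=\sum_{n=1}^{\infty} t^n u^n=\frac{tu}{1-tu},$$
which is the claimed identity.

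There is no real obstacle here: in contrast to the \KL-polynomial analogue in Lemma \ref{whirl21}, one does not need the chromatic polynomial of $W^n$ at all, so the computation collapses to a single geometric series. The only point meriting a moment's attention is to confirm that the small cases $n=1$ and $n=2$ still contribute $t^n u^n$, which follows directly from the way $L_2^{\langle 1\rangle}(W^1)$ and $L_2^{\langle 1\rangle}(W^2)$ were set up in Subsection \ref{subsect-whirl}; hence the closed form is valid uniformly for all $n\geq 1$.
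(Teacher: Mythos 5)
Your proof is correct and is exactly the argument the paper intends (the paper omits it, noting only that it parallels Lemma \ref{whirl21} with $\chi_{(W^n)_F}(t)$ replaced by $t^{\rk (W^n)_F}$): the unique flat in $L_2^{\langle 1\rangle}(W^n)$ is the ground set, the localization is $W^n$ of rank $n$, the contraction is the empty matroid with \KL polynomial $1$, and the sum collapses to the geometric series $\sum_{n\ge 1} t^n u^n = tu/(1-tu)$.
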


\begin{lem}\label{z-whirl22}
We have
\begin{align*}
Z^W_{2,2}(u)&=\frac{1}{1-tu} \left(\Phi^W\left(t,\frac{u}{1-tu}\right)-\frac{u}{1-tu}\right),
\end{align*}
where $\Phi^W(t,u)$ is the generating function of the \KL polynomials for whirl matroids.
\end{lem}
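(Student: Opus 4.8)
The plan is to follow the proof of Lemma~\ref{whirl22} almost verbatim, replacing the characteristic polynomial by the operator $M\mapsto t^{\rk M}$ throughout, exactly along the lines indicated in the paragraph preceding Subsection~\ref{subsection-zpol-fan}.

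First I would note that $L_2^{\langle 2\rangle}(W^1)=\emptyset$, so
\begin{align*}
Z^W_{2,2}(u)=\sum_{n=2}^{\infty}\left(\sum_{F\in L_2^{\langle 2\rangle}(W^n)}t^{\rk\,(W^n)_F}\,P_{(W^n)^F}(t)\right)u^n.
\end{align*}
Fix $n\geq 2$ and write $F=E(W_n[C])$ for the unique $C\in\mathcal{C}^{\langle 2\rangle}(W_n)$ with $F\in L_2^{\langle 2\rangle}(W^n)$. By \eqref{eq-l2} we have $(W^n)_F\simeq M(W_n)_F=M(W_n[C])$, whence $\rk\,(W^n)_F=\rk W_n[C]$; and, as established inside the proof of Lemma~\ref{whirl22}, the simplification of $(W^n)^F$ is isomorphic to $W^{|C|-1}$, so $P_{(W^n)^F}(t)=P_{W^{|C|-1}}(t)$. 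Since $C$ splits the outer cycle of $W_n$ into $|C|-1$ paths $H_{i_1},\dots,H_{i_{|C|-1}}$ and $W_n[C]$ is the disjoint union of these paths with the isolated vertex $0$, additivity of the rank over connected components gives $t^{\rk W_n[C]}=\prod_{j=1}^{|C|-1}t^{\rk H_{i_j}}$. Hence
\begin{align*}
t^{\rk\,(W^n)_F}\,P_{(W^n)^F}(t)=P_{W^{|C|-1}}(t)\prod_{j=1}^{|C|-1}t^{\rk H_{i_j}},
\end{align*}
which is the precise analogue of \eqref{w2-weight}.

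Next I would argue, exactly as in Lemma~\ref{w2}, that $Z^W_{2,2}(u)$ is the ordinary generating function of type $\mathcal{B}\bullet\mathcal{A}$ structures, with $\mathcal{A}_n=\{(n)\}$ for $n\geq 1$ weighted by $w_{\mathcal{A}}((n))=t^{\rk H_n}=t^{\,n-1}$, and $\mathcal{B}_n=\{(n)\}$ for $n\geq 2$ weighted by $w_{\mathcal{B}}((n))=P_{W^n}(t)$ (with $\mathcal{A}_0,\mathcal{B}_0,\mathcal{B}_1$ empty). Then $A(u)=\sum_{n\geq 1}t^{\,n-1}u^n=\frac{u}{1-tu}$ and $B(u)=\sum_{n\geq 2}P_{W^n}(t)u^n=\Phi^W(t,u)-u$ by \eqref{eqn-whirlkl-gf} and $P_{W^1}(t)=1$. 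A short computation gives $uA'(u)/A(u)=1/(1-tu)$, so Proposition~\ref{prop-composition-formula-variant} yields
\begin{align*}
Z^W_{2,2}(u)=uA'(u)\,\frac{B(A(u))}{A(u)}=\frac{1}{1-tu}\left(\Phi^W\!\left(t,\frac{u}{1-tu}\right)-\frac{u}{1-tu}\right),
\end{align*}
which is the desired identity.

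I do not expect any real obstacle: every structural ingredient ($(W^n)_F\simeq M(W_n)_F$, the simplification of $(W^n)^F$ being $W^{|C|-1}$, and the $\mathcal{B}\bullet\mathcal{A}$ bookkeeping) is already available from Lemmas~\ref{whirl22} and~\ref{w2}. The only point to watch is that the $\mathcal{A}$-weight must be taken to be exactly $t^{\rk H_n}$ with no correction factor, which is legitimate because $M\mapsto t^{\rk M}$ is genuinely multiplicative on direct sums of matroids; this is in fact what makes the $Z$-polynomial computation slightly cleaner than the \KL one.
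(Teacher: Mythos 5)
Your proposal is correct and is exactly the argument the paper intends (the paper omits this proof, remarking only that it parallels Lemmas \ref{whirl22} and \ref{z-w2}); you have supplied the details faithfully, with the right weight $t^{\rk H_n}=t^{n-1}$ giving $A(u)=u/(1-tu)$, the correct $B(u)=\Phi^W(t,u)-u$, and the correct use of Proposition \ref{prop-composition-formula-variant}. No gaps.
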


\begin{lem}\label{z-whirl23}
We have
\begin{align*}
Z^W_{2,3}(u)&={u \frac{\partial \tilde{\Psi} _{eo}(u)}{\partial u}} \frac{1}{{1-\tilde{\Psi} _{eo}(u)}},
\end{align*}
where $\tilde{\Psi} _{eo}(u)$ is defined as in Lemma \ref{lem-gf-m}.
\end{lem}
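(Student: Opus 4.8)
The plan is to mimic the proof of Lemma~\ref{whirl23}, where $\Psi^W_{2,3}(u)$ was identified with $\Psi_3(u)$; here we will identify $Z^W_{2,3}(u)$ with $Z_W^{\langle 3\rangle}(u)$ and then invoke Lemma~\ref{z-w3}. Recall from \eqref{eq-z-whirl-gf-l2-123} that
\begin{align*}
Z^W_{2,3}(u)=\sum_{n=1}^{\infty}\left(\sum_{F\in L_2^{\langle 3\rangle}(W^n)}t^{\rk\,(W^n)_F}P_{(W^n)^F}(t)\right)u^n,
\end{align*}
and that $L_2^{\langle 3\rangle}(W^1)=\emptyset$, so the summation effectively starts at $n=2$. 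First I would recall the two facts established in the proof of Lemma~\ref{whirl23}: for every $n\geq 2$ and every $F\in L_2^{\langle 3\rangle}(W^n)$ there is a unique composition $C\in\mathcal{C}^{\langle 3\rangle}(W_n)$ with $F=E(W_n[C])$, and moreover $M(W_n)_F\simeq(W^n)_F$ and $(M(W_n))^F\simeq(W^n)^F$. Both isomorphisms are purely matroid-theoretic statements --- they were deduced from the two restrictions having the same independent sets, and from $W^n/(0,i)\simeq M(F_{n-1})$ for a suitable $(0,i)\notin O_n$ --- and so they are available verbatim here. In particular $\rk\,(W^n)_F=\rk\,M(W_n)_F=\rk\,W_n[C]$ and $P_{(W^n)^F}(t)=P_{(M(W_n))^F}(t)=P_{W_n/C}(t)$.

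Substituting these identities into the displayed sum and re-indexing the inner sum by $C$ through the bijection $F\leftrightarrow C$ gives
\begin{align*}
Z^W_{2,3}(u)=\sum_{n=2}^{\infty}\left(\sum_{C\in\mathcal{C}^{\langle 3\rangle}(W_n)}t^{\rk\,W_n[C]}P_{W_n/C}(t)\right)u^n=Z_W^{\langle 3\rangle}(u),
\end{align*}
the last equality being the definition \eqref{eq-z-wheel-123}. Applying Lemma~\ref{z-w3} then yields
\begin{align*}
Z^W_{2,3}(u)=Z_W^{\langle 3\rangle}(u)=u\frac{\partial\tilde{\Psi}_{eo}(u)}{\partial u}\,\frac{1}{1-\tilde{\Psi}_{eo}(u)},
\end{align*}
which is the asserted formula.

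This argument is essentially bookkeeping, so I do not anticipate a genuine obstacle; the one point deserving attention is the claim that the matroid isomorphisms imported from Lemma~\ref{whirl23} do not depend on the weighting, i.e.\ that nothing in that proof used the characteristic polynomial rather than the rank. Since the passage from the \KL recursion to the $Z$-polynomial replaces only $t^{-|C|}\chi_{W_n[C]}(t)$ by $t^{\rk\,W_n[C]}$ in the weight function, while leaving the index set $L_2^{\langle 3\rangle}(W^n)$ and the matroids $(W^n)_F,(W^n)^F$ untouched, and since this very substitution has already been built into Lemma~\ref{z-w3} (via the replacement of type $\mathcal{A}^{eo}$ structures by type $\tilde{\mathcal{A}}^{eo}$ structures), the reduction goes through exactly as above.
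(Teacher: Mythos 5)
Your argument is correct and is exactly the proof the paper intends but omits: you transfer the matroid isomorphisms $M(W_n)_F\simeq (W^n)_F$ and $(M(W_n))^F\simeq (W^n)^F$ from the proof of Lemma \ref{whirl23} (which are independent of the weighting), identify $Z^W_{2,3}(u)$ with $Z_W^{\langle 3\rangle}(u)$ via the flat--composition bijection, and invoke Lemma \ref{z-w3}. Your closing remark that only the weight $t^{-|C|}\chi_{W_n[C]}(t)$ is replaced by $t^{\rk W_n[C]}$ is precisely the point the paper makes in the paragraph preceding Subsection \ref{subsection-zpol-fan}.
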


Combining  Lemmas \ref{z-whirl1},\ref{z-whirl23},\ref{z-whirl22} and \ref{z-whirl23}, we get the following result.

\begin{lem}\label{eq-another-zpol-whirl}
We have
\begin{align*}
Z^W(t,u)&=\frac{tu}{1-t u}+\frac{\Phi^W\left(t,\frac{u}{1-t u}\right)}{1-t u}+{u \frac{\partial \tilde{\Psi} _{eo}(u)}{\partial u}} \frac{1}{{1-\tilde{\Psi} _{eo}(u)}}.
\end{align*}
\end{lem}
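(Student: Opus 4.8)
The plan is to assemble the four closed forms obtained in Lemmas~\ref{z-whirl1}, \ref{z-whirl21}, \ref{z-whirl22} and \ref{z-whirl23}. By \eqref{zpol-reform} we have $Z^W(t,u)=\sum_{n\ge 1}\big(\sum_{F\in L(W^n)}t^{\rk (W^n)_F}P_{(W^n)^F}(t)\big)u^n$, and Lemma~\ref{whirl-flats}, together with the decomposition $L_2(W^n)=L_2^{\langle 1\rangle}(W^n)\uplus L_2^{\langle 2\rangle}(W^n)\uplus L_2^{\langle 3\rangle}(W^n)$ introduced in Subsection~\ref{subsect-whirl}, yields
\begin{align*}
Z^W(t,u)=Z^W_1(u)+Z^W_{2,1}(u)+Z^W_{2,2}(u)+Z^W_{2,3}(u),
\end{align*}
exactly parallel to the splitting of $\Psi^W(u)$ used there, but now with the weight $t^{\rk M}$ playing the role that $\chi_M(t)$ played in the Kazhdan--Lusztig computation.

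The next step is to substitute the four formulas: $Z^W_1(u)=\dfrac{u}{(1-tu)^2}$ by Lemma~\ref{z-whirl1}; $Z^W_{2,1}(u)=\dfrac{tu}{1-tu}$ by Lemma~\ref{z-whirl21}; $Z^W_{2,2}(u)=\dfrac{1}{1-tu}\,\Phi^W\!\big(t,\tfrac{u}{1-tu}\big)-\dfrac{u}{(1-tu)^2}$ by Lemma~\ref{z-whirl22}; and $Z^W_{2,3}(u)=u\,\dfrac{\partial\tilde{\Psi}_{eo}(u)}{\partial u}\cdot\dfrac{1}{1-\tilde{\Psi}_{eo}(u)}$ by Lemma~\ref{z-whirl23}. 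The key (and essentially only) observation is that the term $\dfrac{u}{(1-tu)^2}$ coming from $Z^W_1(u)$ cancels the term $-\dfrac{u}{(1-tu)^2}$ sitting inside $Z^W_{2,2}(u)$; adding what remains gives precisely
\begin{align*}
Z^W(t,u)=\frac{tu}{1-tu}+\frac{1}{1-tu}\,\Phi^W\!\Big(t,\frac{u}{1-tu}\Big)+u\,\frac{\partial\tilde{\Psi}_{eo}(u)}{\partial u}\cdot\frac{1}{1-\tilde{\Psi}_{eo}(u)},
\end{align*}
which is the asserted identity.

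I do not expect a genuine obstacle here: all of the substance is carried by the four component lemmas, and the remaining work is an elementary cancellation of rational (and derivative) expressions. The one point requiring a little care is the bookkeeping of summation indices and of the degenerate cases $n=1,2$, but these are already absorbed into the definitions of $L_1(W^n)$ and $L_2^{\langle i\rangle}(W^n)$ and into the statements of Lemmas~\ref{z-whirl1}--\ref{z-whirl23}, so no new argument is needed. As a consistency check one may afterwards confirm that substituting $\Phi^W(t,u)$ from Theorem~\ref{thm-klpol-whirl} into the formula above reproduces \eqref{eq-gf-zpol-whirl} of Theorem~\ref{thm-zpol-whirl}, although that verification is logically downstream of this lemma.
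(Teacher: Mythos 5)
Your proposal is correct and matches the paper exactly: the paper simply states that the lemma follows by combining Lemmas \ref{z-whirl1}, \ref{z-whirl21}, \ref{z-whirl22} and \ref{z-whirl23}, and the only computation involved is the cancellation of $u/(1-tu)^2$ between $Z^W_1(u)$ and $Z^W_{2,2}(u)$, which you carry out correctly. (You also implicitly fix the paper's typo of citing Lemma \ref{z-whirl23} twice in place of Lemma \ref{z-whirl21}.)
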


Now we are able to prove Theorem \ref{thm-zpol-whirl}.

\begin{proof}[Proof of Theorem \ref{thm-zpol-whirl}.]
In Subsection \ref{subsect-whirl} we already determined the value of $\Phi^W(t,u)$. Combining Lemmas \ref{lem-gf-m} and \ref{eq-another-zpol-whirl}, it remains to verify the equality based on \eqref{eq-gf-klpol-whirl}, which can be completed with the aid of Mathematica.

%
%
\begin{mma}
\In Z^W[t\_,u\_]:=\frac{t u}{1-t u}+\frac{1}{1-t u}\Phi^W\left[\frac{u}{1-t u}\right]+u\frac{\partial \tilde{\Psi }^{|eo|}(u)}{\partial u}
\frac{1}{1-\tilde{\Psi}^{|eo|}(u)};\\
\end{mma}
\begin{mma}
\In |Simplify|[Z_W(t,u)==\frac{1}{\sqrt{(1-(t+1) u)^2-4 t u^2}}-1,|Assumptions|\to 1-t u>0];\\
\Out  |True|;\\
\end{mma}
This completes the proof.
\end{proof}

Finally, we are in the position to prove  \eqref{eq-zpol-whirl-sect1} of Theorem \ref{zcoef}.

\begin{proof}[Proof of \eqref{eq-zpol-whirl-sect1}.]
The equivalence between  \eqref{eq-zpol-whirl-sect1}  and  \eqref{eq-gf-zpol-whirl} is clear in view of \eqref{binsq2}.
\end{proof}

\subsection{Real zeros of \texorpdfstring{$Z$}{Z}-polynomials}

The main objective of this subsection is to prove Theorem \ref{zroot}, which states that the $Z$-polynomials of fan graphs, wheel graphs and whirl matroids are all real-rooted.

\begin{proof}[Proof of Theorem \ref{zroot}]
By Theorem \ref{zcoef} we see that the $Z$-polynomials of fan graphs are just the classical Narayana polynomials, and the polynomials of whirl matroids are just the Narayana polynomials of type $B$. It is well known that the Narayana polynomials are real-rooted.

It remains to prove the real-rootedness of the $Z$-polynomials of wheel graphs. For any $n\geq 3$, we may rewrite \eqref{eq-zpol-wheel-sect1} as
\begin{align*}
Z_{W_n}(t)&=\sum_{k = 0}^n ((1 + k) n^2 + (1 - 2 k - k^2) n + 2 k^2)) \frac{(n-1)!}{(k+1)!(n+1-k)!}  \binom {n} {k}t^k.
\end{align*}
It is easy to see that
\begin{align*}
 \sum_{k = 0}^n \frac{(n-1)!}{(k+1)!(n+1-k)!}  \binom {n} {k}t^k
\end{align*}
has only real roots for $n\geq 3$ since $\{\frac{(n-1)!}{(k+1)!(n+1-k)!}\}_{k=0}^{\infty}$ is a multiple sequence by a slight modification of Lemma \ref{lem-ms-wheel}. To prove the real-rootedness of
$Z_{W_n}(t)$, it suffices to show that
$$\{(1+k)n^2-(k^2-2k+1)n+2k^2)\}_{k=0}^{n}$$ is an $n$-sequence, which is clearly a positive sequence for $\geq 3$.
By Theorem \ref{n-char}, it suffices to show that the  polynomial
\begin{align*}
h_n(t)=\sum_{k=0}^{n}{\left((1 + k) n^2-(k^2-2k+1) n + 2 k^2)\right)}\binom{n}{k} t^k
\end{align*}
has only negative zeros.

Along the lines of the proof of Lemma \ref{wheel-n-seq}, we obtain that
\begin{align*}
 h_n(t)&=\sum_{k=0}^{n}{\left((2 - n)\cdot k(k - 1)+(n-1)(n-2)\cdot  k + n(n+1) \right)\binom{n}{k}t^k}\\[5pt]
 &=-n(n-1)(n-2)t^2(1+t)^{n-2}+ n(n-1)(n-2)t(1+t)^{n-1}+ n(n+1)(1+t)^n\\[5pt]
 &=\left(-n (n - 1) (n - 2) t^2  + n (n - 1) (n - 2) t (1 + t) +
 n (n + 1) (1 + t)^2\right)(1+t)^n\\[5pt]
 &=n \left((n+1)t^2+(n^2-n+4)t+n+1\right)(1+t)^n.
 \end{align*}
It is obvious that the quadratic polynomial  $$(n+1)t^2+(n^2-n+4)t+n+1$$ has only positive coefficients and its   discriminant
$$(n^2-n+4)^2-4 (n+1)^2=(n-1)(n-2)(n^2+n+6)>0.$$
Thus $(n+1)t^2+(n^2-n+4)t+n+1$ has two  negative roots and moreover $h_n(t)$ has only negative zeros, as desired. This completes the proof.
\end{proof}

\noindent \textit{Acknowledgements.}
This work was supported by the National Science Foundation of China. 


\begin{thebibliography}{10}

\bibitem{biggs1993algebraic}
N.~Biggs.
\newblock {\em Algebraic graph theory}.
\newblock Cambridge Mathematical Library. Cambridge University Press,
  Cambridge, second edition, 1993.

\bibitem{brenti1999twisted}
F.~Brenti.
\newblock Twisted incidence algebras and {K}azhdan-{L}usztig-{S}tanley
  functions.
\newblock {\em Adv. Math.}, 148(1):44--74, 1999.

\bibitem{brenti2003p}
F.~Brenti.
\newblock {$P$}-kernels, {IC} bases and {K}azhdan-{L}usztig polynomials.
\newblock {\em J. Algebra}, 259(2):613--627, 2003.

\bibitem{coker2003enumerating}
C.~Coker.
\newblock Enumerating a class of lattice paths.
\newblock {\em Discrete Math.}, 271(1-3):13--28, 2003.

\bibitem{craven1977multiplier}
T.~Craven and G.~Csordas.
\newblock Multiplier sequences for fields.
\newblock {\em Illinois J. Math.}, 21(4):801--817, 1977.

\bibitem{craven1983location1}
T.~Craven and G.~Csordas.
\newblock Location of zeros. {I}. {R}eal polynomials and entire functions.
\newblock {\em Illinois J. Math.}, 27(2):244--278, 1983.

\bibitem{craven2004composition}
T.~Craven and G.~Csordas.
\newblock Composition theorems, multiplier sequences and complex zero
  decreasing sequences.
\newblock In {\em Value distribution theory and related topics}, volume~3 of
  {\em Adv. Complex Anal. Appl.}, pages 131--166. Kluwer Acad. Publ., Boston,
  MA, 2004.

\bibitem{craven1980intersections}
T.~C. Craven.
\newblock Intersections of real closed fields.
\newblock {\em Canad. J. Math.}, 32(2):431--440, 1980.

\bibitem{elias2016kazhdan}
B.~Elias, N.~Proudfoot, and M.~Wakefield.
\newblock The {K}azhdan-{L}usztig polynomial of a matroid.
\newblock {\em Adv. Math.}, 299:36--70, 2016.

\bibitem{elias2012hodge}
B.~Elias and G.~Williamson.
\newblock The {H}odge theory of {S}oergel bimodules.
\newblock {\em Ann. of Math. (2)}, 180(3):1089--1136, 2014.

\bibitem{gedeon2017preparation}
K.~Gedeon.
\newblock Kazhdan-lusztig polynomials of matroids and their roots.
\newblock {\em in preparation}, 2017.

\bibitem{gedeon2016survey}
K.~Gedeon, N.~Proudfoot, and B.~Young.
\newblock Kazhdan-{L}usztig polynomials of matroids: a survey of results and
  conjectures.
\newblock {\em arXiv preprint arXiv:1611.07474}, 2016.

\bibitem{gedeon2017equivariant}
K.~Gedeon, N.~Proudfoot, and B.~Young.
\newblock The equivariant {K}azhdan-{L}usztig polynomial of a matroid.
\newblock {\em J. Combin. Theory Ser. A}, 150:267--294, 2017.

\bibitem{gedeon2016thagomizer}
K.~R. Gedeon.
\newblock Kazhdan-{L}usztig polynomials of thagomizer matroids.
\newblock {\em Electron. J. Combin.}, 24(3):Paper 3.12, 10, 2017.

\bibitem{greene1983interpretation}
C.~Greene and T.~Zaslavsky.
\newblock On the interpretation of {W}hitney numbers through arrangements of
  hyperplanes, zonotopes, non-{R}adon partitions, and orientations of graphs.
\newblock {\em Trans. Amer. Math. Soc.}, 280(1):97--126, 1983.

\bibitem{hoggatt1973roots}
V.~E. Hoggatt, Jr. and M.~Bicknell.
\newblock Roots of {F}ibonacci polynomials.
\newblock {\em Fibonacci Quart.}, 11(3):271--274, 1973.

\bibitem{kazhdan1979representations}
D.~Kazhdan and G.~Lusztig.
\newblock Representations of {C}oxeter groups and {H}ecke algebras.
\newblock {\em Invent. Math.}, 53(2):165--184, 1979.

\bibitem{knopfmacher2001graph}
A.~Knopfmacher and M.~Mays.
\newblock Graph compositions {I}: Basic enumeration.
\newblock {\em Integers: Electronic Journal of Combinatorial Number Theory},
  1:A04, 2001.

\bibitem{lachal2013trick}
A.~Lachal.
\newblock A trick around fibonacci, lucas and chebyshev.
\newblock {\em arXiv preprint arXiv:1302.0357}, 2013.

\bibitem{GeneratingFunctionsThesis}
C.~Mallinger.
\newblock {Algorithmic manipulations and transformations of univariate
  holonomic functions and sequences}.
\newblock diploma thesis, RISC, J. Kepler University, Linz, 1996.

\bibitem{paule1995mathematica}
P.~Paule and M.~Schorn.
\newblock A {M}athematica version of {Z}eilberger's algorithm for proving
  binomial coefficient identities.
\newblock {\em J. Symbolic Comput.}, 20(5-6):673--698, 1995.

\bibitem{polya1914zwei}
G.~P{\'o}lya and J.~Schur.
\newblock {\"U}ber zwei arten von faktorenfolgen in der theorie der
  algebraischen gleichungen.
\newblock {\em J. Reine Angew. Math.}, 144:89--113, 1914.

\bibitem{proudfoot2016intersection}
N.~Proudfoot, M.~Wakefield, and B.~Young.
\newblock Intersection cohomology of the symmetric reciprocal plane.
\newblock {\em J. Algebraic Combin.}, 43(1):129--138, 2016.

\bibitem{proudfoot2017z}
N.~Proudfoot, Y.~Xu, and B.~Young.
\newblock The ${Z}$-polynomial of a matroid.
\newblock {\em arXiv preprint arXiv:1706.05575}, 2017.

\bibitem{oeis}
N.~J.~A. Sloane.
\newblock {\em The On-Line Encyclopedia of Integer Sequences, published
  electronically at https://oeis.org}, 2017.

\bibitem{stanley1992subdivisions}
R.~P. Stanley.
\newblock Subdivisions and local {$h$}-vectors.
\newblock {\em J. Amer. Math. Soc.}, 5(4):805--851, 1992.

\bibitem{sage}
W.~Stein et~al.
\newblock {\em {S}age {M}athematics {S}oftware ({V}ersion 8.0)}.
\newblock The Sage Development Team, 2017.
\newblock {\tt http://www.sagemath.org}.

\bibitem{titchmarsh1939theory}
E.~C. Titchmarsh.
\newblock {\em The theory of functions}.
\newblock {O}xford {U}niversity {P}ress, 1939.

\bibitem{wakefield2016partial}
M.~Wakefield.
\newblock Partial flag incidence algebras.
\newblock {\em arXiv preprint arXiv:1605.01685}, 2016.

\bibitem{welsh1976matroid}
D.~J.~A. Welsh.
\newblock {\em Matroid theory}.
\newblock Academic Press [Harcourt Brace Jovanovich, Publishers], London-New
  York, 1976.
\newblock L. M. S. Monographs, No. 8.

\bibitem{whitney19332}
H.~Whitney.
\newblock 2-{I}somorphic {G}raphs.
\newblock {\em Amer. J. Math.}, 55(1-4):245--254, 1933.

\bibitem{zhang2016multiplier}
P.~B. Zhang.
\newblock Multiplier sequences and real-rootedness of local $ h $-polynomials
  of cluster subdivisions.
\newblock {\em arXiv preprint arXiv:1605.04780}, 2016.

\end{thebibliography}
\end{document}